
\documentclass[10pt]{amsart}
\allowdisplaybreaks
\usepackage[utf8]{inputenc}
\usepackage[unicode]{hyperref}
\usepackage[nomain,section,sort=def]{glossaries}
\usepackage{glossary-mcols}
\usepackage{mathtools}
\usepackage{graphicx}
\usepackage{url}
\usepackage{enumerate}
\usepackage{tikz}
\usetikzlibrary{intersections, calc}
\usetikzlibrary {decorations.pathmorphing, decorations.pathreplacing, decorations.shapes}
\usepackage[all]{xy}
\usepackage{bbm}  
\usepackage{amssymb}
\usepackage{xcolor}
\textheight 230mm
\textwidth 150mm
\hoffset -12mm 
\voffset -12mm 

\makeatletter
\let\save@mathaccent\mathaccent
\newcommand*\if@single[3]{%
  \setbox0\hbox{${\mathaccent"0362{#1}}^H$}%
  \setbox2\hbox{${\mathaccent"0362{\kern0pt#1}}^H$}%
  \ifdim\ht0=\ht2 #3\else #2\fi
  }
\newcommand*\rel@kern[1]{\kern#1\dimexpr\macc@kerna}
\newcommand*\wideaccent[2]{\@ifnextchar^{{\wide@accent{#1}{#2}{0}}}{\wide@accent{#1}{#2}{1}}}
\newcommand*\wide@accent[3]{\if@single{#2}{\wide@accent@{#1}{#2}{#3}{1}}{\wide@accent@{#1}{#2}{#3}{2}}}
\newcommand*\wide@accent@[4]{%
  \begingroup
  \def\mathaccent##1##2{%
    \let\mathaccent\save@mathaccent
    \if#42 \let\macc@nucleus\first@char \fi
    \setbox\z@\hbox{$\macc@style{\macc@nucleus}_{}$}%
    \setbox\tw@\hbox{$\macc@style{\macc@nucleus}{}_{}$}%
    \dimen@\wd\tw@
    \advance\dimen@-\wd\z@
    \divide\dimen@ 3
    \@tempdima\wd\tw@
    \advance\@tempdima-\scriptspace
    \divide\@tempdima 10
    \advance\dimen@-\@tempdima
    \ifdim\dimen@>\z@ \dimen@0pt\fi
    \rel@kern{0.6}\kern-\dimen@
    \if#41
      #1{\rel@kern{-0.6}\kern\dimen@\macc@nucleus\rel@kern{0.4}\kern\dimen@}%
      \advance\dimen@0.4\dimexpr\macc@kerna
      \let\final@kern#3%
      \ifdim\dimen@<\z@ \let\final@kern1\fi
      \if\final@kern1 \kern-\dimen@\fi
    \else
      #1{\rel@kern{-0.6}\kern\dimen@#2}%
    \fi
  }%
  \macc@depth\@ne
  \let\math@bgroup\@empty \let\math@egroup\macc@set@skewchar
  \mathsurround\z@ \frozen@everymath{\mathgroup\macc@group\relax}%
  \macc@set@skewchar\relax
  \let\mathaccentV\macc@nested@a
  \if#41
    \macc@nested@a\relax111{#2}%
  \else
    \def\gobble@till@marker##1\endmarker{}%
    \futurelet\first@char\gobble@till@marker#2\endmarker
    \ifcat\noexpand\first@char A\else
      \def\first@char{}%
    \fi
    \macc@nested@a\relax111{\first@char}%
  \fi
  \endgroup
}
\makeatother

\newcommand\widebar{\wideaccent\overline}

\newtheorem{Thm}{Theorem}[section]
\newtheorem{Prop}[Thm]{Proposition}

\newtheorem{Lem}[Thm]{Lemma}
\theoremstyle{definition}
\newtheorem{case}{Case}
\newtheorem{subcase}{Subcase}[case]
\newtheorem{Rem}[Thm]{Remark}
\newtheorem{Def}[Thm]{Definition} 
\newtheorem{Expl}[Thm]{Example}
\numberwithin{equation}{section}  

\newcommand{\todo}[1]{{\textcolor{red}{#1}}}

\newcommand{\magx}[1]{\textcolor{magenta}{#1}}

\newcommand{\viox}[1]{\textcolor{violet}{#1}}

\newcommand{\ebrace}[1]{\langle #1\rangle}
\newcommand{\eebrace}[1]{\langle\!\langle #1\rangle\!\rangle}

\newcommand{\abs}[1]{\lvert #1\rvert}

\newcommand{\bbe}{\mathbf{b}}
\newcommand{\bd}{\mathbf{d}}
\newcommand{\be}{\mathbf{e}}  
\newcommand{\bg}{\mathbf{g}}
\newcommand{\bh}{\mathbf{h}}
\newcommand{\bl}{\mathbf{l}}
\newcommand{\bo}{\mathbf{o}}
\newcommand{\bp}{\mathbf{p}}
\newcommand{\bq}{\mathbf{q}}
\newcommand{\br}{\mathbf{r}}
\newcommand{\bs}{\mathbf{s}}

\newcommand{\bv}{\mathbf{v}}
\newcommand{\bw}{\mathbf{w}}
\newcommand{\bx}{\mathbf{x}}
\newcommand{\by}{\mathbf{y}}
\newcommand{\bz}{\mathbf{z}}

\newcommand{\tbq}{\widetilde{\bq}}
\newcommand{\tbs}{\widetilde{\bs}}

\newcommand{\hbp}{\hat{\bp}}
\newcommand{\hbq}{\hat{\bq}}
\newcommand{\hbr}{\hat{\br}}

\newcommand{\cA}{\mathcal{A}} 

\newcommand{\cH}{\mathcal{H}}
\newcommand{\cI}{\mathcal{I}}
\newcommand{\cK}{\mathcal{K}}
\newcommand{\cL}{\mathcal{L}}
\newcommand{\cC}{\mathcal{C}}

\newcommand{\cP}{\mathcal{P}}
\newcommand{\cQ}{\mathcal{Q}}
\newcommand{\cR}{\mathcal{R}}

\newcommand{\cX}{\mathcal{X}}

\newcommand{\CC}{\mathbb{C}}     

\newcommand{\NN}{\mathbb{N}}
\newcommand{\QQ}{\mathbb{Q}}
\newcommand{\ZZ}{\mathbb{Z}}
\newcommand{\II}{\mathbbm{1}}

\newcommand{\alp}{\alpha}      
\newcommand{\bet}{\beta}
\newcommand{\gam}{\gamma}      
\newcommand{\del}{\delta}
\newcommand{\eps}{\varepsilon} 
\newcommand{\tht}{\theta}
\newcommand{\zet}{\zeta}

\newcommand{\lam}{\lambda}      
\newcommand{\sig}{\sigma}
\newcommand{\Del}{\Delta}      
\newcommand{\Gam}{\Gamma}      

\newcommand{\Sig}{\Sigma}      
\newcommand{\bSig}{\mathbf{\Sig}}      

\newcommand{\df}{\colon}       
\newcommand{\ra}{\rightarrow}

%
%




\newcommand{\sfA}{\mathsf{A}}
\newcommand{\sfD}{\mathsf{D}}
\newcommand{\sftA}{\widetilde{\sfA}}
\newcommand{\sftD}{\widetilde{\sfD}}
\newcommand{\sfP}{\mathsf{P}}

\newcommand{\us}{\underline{s}}
\newcommand{\ut}{\underline{t}}

\newcommand{\tbx}{{\widetilde{\bx}}}
\newcommand{\tby}{{\widetilde{\by}}}

\newcommand{\tQ}{\widetilde{Q}}
\newcommand{\tq}{\widetilde{q}}

\newcommand{\uQ}{{\underline{Q}}}
\newcommand{\ord}{{\mathrm{ord}}}
\newcommand{\spe}{{\mathrm{sp}}}
\newcommand{\sym}{{\mathrm{sym}}}
\newcommand{\Qsp}{{Q_1^\spe}}
\newcommand{\Qord}{{Q_1^\ord}}
\newcommand{\Qspv}{{Q_0^\spe}}
\newcommand{\Qordv}{{Q_0^\ord}}
\newcommand{\huQ}{{\widehat{\uQ}}}
\newcommand{\hQ}{\widehat{Q}}
\newcommand{\hQord}{\hQ_1^{\mathrm{ord}}}
\newcommand{\hQsp}{\hQ_1^{\mathrm{sp}}}
\newcommand{\tuQ}{\widetilde{\uQ}}
\newcommand{\uH}{\underline{H}}

\newcommand{\Hsp}{{H_1^{\mathrm{sp}}}}

\newcommand{\St}{\operatorname{St}}
\newcommand{\DSt}{\operatorname{DSt}}
\newcommand{\Ba}{\operatorname{Ba}}
\newcommand{\pBa}{\operatorname{pBa}}
\newcommand{\Adm}{\operatorname{Adm}}
\newcommand{\DAdm}{\operatorname{DAdm}}
\newcommand{\tAdm}{\widetilde{\Adm}}
\newcommand{\AdmSt}{\operatorname{AdmSt}}
\newcommand{\AdmBa}{\operatorname{AdmBa}}
\newcommand{\NeSi}{\operatorname{NeSi}}
\newcommand{\Diag}{\operatorname{Diag}}
\newcommand{\type}{\operatorname{type}}
\newcommand{\inj}{{\operatorname{inj}}}
\newcommand{\inc}{{\operatorname{in}}}
\newcommand{\out}{{\operatorname{out}}}
\newcommand{\Ker}{\operatorname{ker}}
\newcommand{\CoKer}{\operatorname{coker}}

\newcommand{\wt}{\operatorname{wt}}
\newcommand{\KRS}{\operatorname{KRS}}
\newcommand{\MSW}{\operatorname{Lam}}

\newcommand{\Ka}{\Bbbk}

\newcommand{\Hom}{\operatorname{Hom}}
\newcommand{\Ext}{\operatorname{Ext}}
\newcommand{\Ima}{\operatorname{Im}}
\newcommand{\GL}{\operatorname{GL}}

\newcommand{\DecIrr}{\operatorname{DecIrr}}
\newcommand{\DecRep}{\operatorname{DecRep}}
\newcommand{\Irr}{\operatorname{Irr}}

\newcommand{\Supp}{\operatorname{Supp}}

\newcommand{\codim}{\operatorname{codim}}
\newcommand{\kar}{\operatorname{char}}
\newcommand{\op}{{\operatorname{op}}}      

\newcommand{\id}{\operatorname{id}}

\newcommand{\ind}{{\mathrm{ind}}}

\newcommand{\dimv}{\underline{\dim}}         

\newcommand{\rep}{\operatorname{Rep}}
\newcommand{\cyc}{{\operatorname{cyc}}}
\newcommand{\sgn}{\operatorname{sgn}}

\newcommand{\Su}{\Sigma}       
\newcommand{\bSu}{\mathbf{\Sigma}}       
\newcommand{\Pu}{\mathbb{P}}   
\newcommand{\Ma}{\mathbb{M}}   
\newcommand{\dSu}{\partial\Su} 
\newcommand{\Sh}{\operatorname{Sh}}
\newcommand{\cLC}{\mathcal{LC}}
\newcommand{\cAC}{\mathcal{AC}}
\newcommand{\cAL}{\mathcal{AL}}
\newcommand{\tcAC}{\widetilde{\cAC}}
\newcommand{\tcAL}{\widetilde{\cAL}}
\newcommand{\tcLC}{\widetilde{\cLC}}

\newcommand{\piorb}{\pi_{1,\Pu}^{\mathrm{orb}}}
\newcommand{\piorbfree}{\pi_{1,\Pu}^{\mathrm{orb, free}}}
\newcommand{\piorbfreepr}{\pi_{1,\Pu,\mathrm{prim.}}^{\mathrm{orb, free}}}

\newcommand{\intn}{\operatorname{int}}
\newcommand{\Intn}{\operatorname{Int}}
\newcommand{\fra}{\mathfrak{a}}

\setglossarystyle{mcolindex}

\newglossary[sy-glg]{symbols}{sy-gls}{sy-glo}{Symbols}
\newglossary[kw-glg]{keywords}{kw-gls}{kw-glo}{Keywords}

\makeglossaries


\newcommand*{\newsymb}[3][]{%
  \newglossaryentry{#2}{type=symbols,%
    name={$#3$},text={#3},description={},#1}%
}


\newcommand*{\newkywd}[3][]{%
  \newglossaryentry{#2}{type=keywords,%
    name={$#3$},text={#3},description={},#1}%
}


\newsymb{gvecA}{\mathbf{g}_A}
\newsymb{EinvA}{E_A}
\newsymb{eAX}{e_A(X)}
\newsymb{eAXY}{e_A(X,Y)}
\newsymb{gAX}{\bg_A(X)}
\newsymb{KRSCe}{\KRS(C,e)}
\newsymb{DecIrrt}{\DecIrr^\tau}
\newsymb{tmuk}{\tilde{\mu}_k}
\newsymb{ust}{\us, \ut}
\newsymb{Qspvo}{\Qspv, \Qordv}
\newsymb{KauQ}{\Ka\uQ}
\newsymb{tQo}{\tQ_0}
\newsymb{S(i)}{S(i)}
\newsymb{bSu}{\bSu}
\newsymb{cL(uQ)}{\cL(\uQ)}
\newsymb{IIirho}{\II_{(i,\rho)}, \II_{(i,\rho)}^{-1}}
\newsymb{<}{<}
\newsymb{wop}{\bw_{[k]}, \bw^{[k]}, \bw^{-1}, \us(\bw), \ut(\bw), l(\bw)}
\newsymb{StrQ}{\St(\uQ), \St_{(i,\rho)}(\uQ)}
\newsymb{tbsi}{\tbs_i}
\newsymb{tbpq}{\tilde{\bp}_{(i,\rho)}, \tilde{\bq}_{(i,\rho)}}
\newsymb{tcPI}{\widetilde{\cP}_\uQ, \widetilde{\cI}_\uQ }  
\newsymb{sim}{\sim}
\newsymb{BaQ}{\Ba(\uQ), \pBa(\uQ), \pBa'(\uQ)}
\newsymb{huQ}{\huQ}
\newsymb{FQ}{F}
\newsymb{prec}{\prec}
\newsymb{bsi}{\bs_i}
\newsymb{A(bw)}{A(\bw)}
\newsymb{cpl}{\overline{?}}
\newsymb{AdmSBQ}{\AdmSt(\uQ), \AdmBa(\uQ), [\AdmBa(\uQ)]}
\newsymb{AdmQ}{\Adm(\uQ), \Adm_s(\uQ), \Adm_b(\uQ), [\Adm_b(\uQ)]}
\newsymb{GHx}{G_\bx, \uH(\bx)}
\newsymb{cHQxy}{\cH_\uQ(\bx,\by)}
\newsymb{cKQxy}{\cK_\uQ(\bx,\by), \cK_\uQ(\bx,\by)_\sfA}
\newsymb{APQfxy}{\sfA_{\uQ^f}(\bx,\by), \sfP_\uQ(\bx,\by), \Diag_b(\bx,\by)}
\newsymb{ADx}{A_i^+(\bx), A_i^-(\bx), D_i^+(\bx), D_i^-(\bx)}
\newsymb{S(x)}{S(\bx)}
\newsymb{wtix}{\wt(s), s^\iota, s^\chi}
\newsymb{-bsipm}{-\bs^\pm_i}
\newsymb{DAdm*Q}{\Adm^*(\uQ), \DAdm^*(\uQ), [\DAdm^*(\uQ)]}
\newsymb{bd(xs)}{\bd(\bx,s)}
\newsymb{cegQ}{\be_\uQ, \bg_\uQ}
\newsymb{d_i}{d_2, d_3}
\newsymb{gam-1}{\gam^{-1}}
\newsymb{fgpoid}{\pi_1(\Su\setminus\Pu, E)}
\newsymb{forgpoid}{\piorb(\Su, E), \piorb(\Su, E)^*, \piorb(\Su,E)}
\newsymb{bargam}{\bar{\gamma}}
\newsymb{tcALC}{\tcAC(\bSu),\tcAL(\bSu),\tcLC(\bSu),\tcAC(\bSu)_b,\tcLC(\bSu)_s}
\newsymb{cALC}{\cAC(\bSu),\cLC(\bSu),\cAC_b(\bSu),\cLC_s(\bSu),\cLC_b(\bSu)}
\newsymb{tcAL}{\tcAL(\bSu)_\sym, \cAL(\bSu)}
\newsymb{S(gam)}{S(\gam)}
\newsymb{mtALC}{\cLC^*(\bSu), \cAC^{\bowtie}(\bSu)}
\newsymb{preInt}{\sfP_{\bSu}(\gam_1,\gam_2), \Diag_b(\gam_1,\gam_2)}
\newsymb{Intn*}{\Intn^*((\gam,s), (\lam,t))}
\newsymb{LamSu}{\MSW(\bSu)}
\newsymb{tArc}{\cAC^{\bowtie}_\tau(\bSu), \cAC^{\bowtie}_\tau(\bSu)/_{\simeq}}
\newsymb{Q(T)f}{\uQ(T), \uQ^f(T)}
\newsymb{Tro}{T^{\circ}, \Gam(T^{\circ})}
\newsymb{barr}{\overline{r}\df\piorb(\Su, E_T)\xrightarrow{\sim} \piorb(\Gam(T^{\circ}), E_T)}
\newsymb{frasb}{\fra_s, \fra_b, \fra_f}
\newsymb{rho12}{\rho^{1/2}}

\newkywd{pamo}{partial\ monoid}
\newkywd{paKRS}{partial\ {KRS-monoid}}
\newkywd{taure}{generically\ \mbox{$\tau$}-regular}
\newkywd{skgpQ}{skewed-gentle\ polarized\ quiver}
\newkywd{heredpolQ}{hereditary\ polarized\ quiver}
\newkywd{fringing}{fringing}
\newkywd{fringevertices}{fringe\ vertices}
\newkywd{fringearrow}{fringe\ arrow}
\newkywd{word}{word}
\newkywd{inextensible}{inextensible}
\newkywd{string}{string}
\newkywd{symmetric}{symmetric}
\newkywd{simplestring}{simple\ string}
\newkywd{band}{band}
\newkywd{primitive}{primitive}
\newkywd{rotations}{rotations}
\newkywd{equivalent}{equivalent}
\newkywd{puncturedletters}{punctured\ letters}
\newkywd{type}{type}
\newkywd{completion}{completion}
\newkywd{admissible}{admissible}
\newkywd{strict}{strict}
\newkywd{morphism}{morphism}
\newkywd{winding}{winding}
\newkywd{bvertex}{boundary\ vertex, polarized}
\newkywd{H-triple}{H-triple}
\newkywd{K-triple}{K-triple}
\newkywd{negativesimple}{negative\ simple}
\newkywd{curve}{curve}
\newkywd{loop}{loop}
\newkywd{fundamentalgroupoid}{fundamental\ groupoid}
\newkywd{fundorbgroupoid}{fundamental\ orbifold\  groupoid}
\newkywd{freeorbifoldfundamentalgroup}{free\ orbifold\ fundamental\ group}
\newkywd{mtcurve}{marked\ curve, tagged\ curve}
\newkywd{internalintersectionnumber}{internal\ intersection\ number}
\newkywd{markedintersectionnumber}{marked\ intersection\ number}
\newkywd{lamination}{lamination}
\newkywd{taggedarc}{tagged\ arc}
\newkywd{taggedtriangulation}{tagged\ triangulation}
\newkywd{sign0}{signature\ zero\ tagged\ triangulation}
\newkywd{associatedidealtriangulation}{associated\ ideal\ triangulation}
\newkywd{dualgraph}{dual\ graph}
\title[Laminations as $\tau$-regular components]{Laminations of punctured surfaces \\ as $\tau$-regular irreducible components}
\author{Christof Geiß}
\author{Daniel Labardini-Fragoso}
\address{Instituto de Matemáticas, UNAM, Ciudad Universitaria,
  04510 CDMX, México}
\email{christof.geiss@im.unam.mx, labardini@im.unam.mx}
\author{Jon Wilson}
\address{Jeremiah Horrocks Institute, University of Central Lancashire,
  Preston PR1 2HE, UK}
\email{jwilson30@uclan.ac.uk}

\date{September 10, 2025}

\begin{document}

\begin{center}
\emph{Dedicated to Prof. Claus Michael Ringel on the occasion of his 80th birthday}
\end{center}

\begin{abstract}
Let $\bSu:=(\Su,\Ma,\Pu)$ be a surface with marked points 
$\Ma\subset\dSu\neq\emptyset$ on the boundary, and punctures 
$\Pu\subset\Su\setminus\dSu$,  and $T$ an arbitrary tagged triangulation of 
$\bSu$ in the sense of Fomin--Shapiro--Thurston.  
The Jacobian algebra $A(T):=\cP(Q(T), W(T))$ corresponding to the non-degenerate potential $W(T)$ of $T$ defined by Cerulli Irelli and the second author is tame, as shown by Schröer and the first two authors. 
In this paper, we show that there is a natural isomorphism
$\pi_T\df\MSW(\bSu)\ra\DecIrr^\tau(A(T))$ of tame partial KRS-monoids that intertwines dual shear coordinates with respect to $T$, and generic 
$g$-vectors of irreducible components.
Here, $\MSW(\bSu)$ is the set of laminations of $\bSu$, considered by Musiker--Schiffler--Williams, 
with the disjoint union of non-intersecting laminations as partial monoid operation. 
On the other hand, $\DecIrr^\tau(A(T))$ denotes the set of generically 
$\tau$-regular irreducible components of the decorated representation varieties of $A(T )$, with the direct sum of generically $E$-orthogonal irreducible components as partial monoid operation, where $E$ is the symmetrized $E$-invariant of Derksen--Weyman--Zelevinsky, 
$E(-,\bullet)=\dim\Hom_{A(T)}(-,\tau(\bullet))+\dim\Hom_{A(T)}(\bullet,\tau(-))$.
\end{abstract}
\maketitle

{\tiny \tableofcontents}

\section{Introduction}
Let $\bSu:=(\Su, \Ma, \Pu)$ be a surface with marked points and
non-empty boundary. Thus, $\Su$ is a compact, connected oriented
Riemann surface of genus $g$, with boundary 
$\dSu=S_1\cup S_2\cup\cdots\cup S_b$
consisting of $b\geq 1$ connected components $S_1,\ldots,S_b$, a set $\Ma\subset\dSu$ of marked points on the boundary such that $\Ma \cap S_i \neq \emptyset$ for every $i \in \{1,\ldots, b\}$, and a set of punctures
$\Pu\subset\Su\setminus\dSu$. 
We  assume  moreover that the rank  
$n(\bSu):=6(g-1) +3(b+\abs{\Pu})+\abs{\Ma}$
is  positive.   

\subsection{Main Result}
Let $T$ be a tagged triangulation of $\bSu$ with quiver $Q(T)$. If $\bSu$ is not the unpunctured torus with exactly one marked point, then the potential 
$W(T)\in\Ka\ebrace{(Q(T)}_\cyc$ defined in \cite{CILF12,LF16} is, up to weak right equivalence, the unique non-degenerate potential on $Q(T)$ \cite{GLFS16}. 
Furthermore, the Jacobian algebra $A(T):=\cP_\Ka(Q(T), W(T))$
is tame \cite{GLFS16}.  In fact, since 
$\Ma\neq\emptyset$, the surface $\bSig$ admits a tagged triangulation $T'$ of signature 0, and thus $A(T')$ is skewed-gentle. As a consequence, we have a parametrization of the indecomposable objects of the corresponding cluster category.
Note, however, that for  $\Pu\neq\emptyset$, the algebra $A(T)$ fails to be gentle (except only for the once-punctured triangle). 
It even fails to be skewed-gentle for most triangulations.  
So, in general, an explicit  description of the indecomposable representations of $A(T)$ is not known,
 even though these representations may be in principle be parametrized via the cluster category.
See~\cite{Dom17} or~\cite{LF10} for partial results in this direction, which illustrate the complexity of the situation.  
In this paper, we show that the set 
 $\DecIrr^\tau(A(T))$ of generically $\tau$-regular irreducible components of the representation spaces of $A(T)$ can be naturally parameterized by the 
 set $\MSW(\bSig)$ of laminations on $\bSu$. 
Our main result is  a more precise version of this statement:

\begin{Thm} \label{thm1}
Let $\bSu$ be a surface with $\dSu\neq\emptyset$. For each  
tagged triangulation $T$ with Jacobian algebra $A(T)$ corresponding to the above-mentioned potential $W(T)$, there is a unique isomorphism of tame partial KRS-monoids
\[
  \pi_T\df (\MSW(\bSu),\Intn^*,+) \ra
  (\DecIrr^\tau(A(T)), e_{A(T)},\oplus), 
\]
which intertwines dual shear coordinates with respect to $T$ and generic projective $g$-vectors with respect to $A(T)$.
\end{Thm}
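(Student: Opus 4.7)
The overall plan is to exploit the fact that both sides are naturally parameterized by the same lattice $\ZZ^{n(\bSu)}$: the laminations via the dual shear coordinate map $d^T$, and the generically $\tau$-reduced components via the generic projective $g$-vector map $g^{A(T)}$. Once both of these maps are shown to be bijective, the intertwining condition forces the definition $\pi_T := (g^{A(T)})^{-1}\circ d^T$, so the uniqueness asserted in the theorem is automatic, and the main content of the statement becomes the verification that this bijection respects the partial KRS-monoid operations.

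I would carry out this program in three steps. \emph{First}, establish bijectivity of the dual shear coordinate map on the combinatorial side (following Fomin--Shapiro--Thurston and Musiker--Schiffler--Williams, together with the tagging conventions at punctures), and bijectivity of the generic $g$-vector map on the algebraic side, which uses that $A(T)$ is tame by \cite{GLFS16} to make the Plamondon-type parameterization of $\DecIrr^\tau(A(T))$ available. \emph{Second}, define $\pi_T$ by composition and verify the intertwining property tautologically. \emph{Third}, upgrade $\pi_T$ to an isomorphism of partial KRS-monoids by establishing the equivalence
\[
\Intn^*(L_1, L_2) = 0 \quad \Longleftrightarrow \quad e_{A(T)}(\pi_T(L_1), \pi_T(L_2)) = 0
\]
for all $L_1, L_2 \in \MSW(\bSu)$. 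By additivity of $d^T$ under disjoint union and of $g^{A(T)}$ under direct sum, this pairwise equivalence then automatically implies compatibility of the monoid operations $+$ and $\oplus$.

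The main obstacle is the third step. In the unpunctured case of \cite{GLFS22}, where $A(T)$ is gentle, this equivalence was established by direct combinatorics of string and band modules. In our setting $A(T)$ is generally neither gentle nor skewed-gentle, so such an explicit classification is unavailable. My plan would be to use the compatibility of flips and mutations: the transformation of dual shear coordinates under flipping a tagged arc of $T$ matches the mutation of generic $g$-vectors under the associated mutation of $A(T)$, up to the expected tropical piecewise-linear isomorphism. This allows one to transport the equivalence above between any two tagged triangulations related by a sequence of flips. Given a pair $(L_1, L_2)$, one then mutates $T$ to a triangulation $T'$ in which $L_1$ becomes an arc of $T'$, so that $\pi_{T'}(L_1)$ is a negative simple decorated component; in this reduced configuration the symmetrized $E$-invariant can be read off directly from the generic $g$-vector of $\pi_{T'}(L_2)$. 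The treatment of the tagging at punctures and of laminations supported on closed curves, which correspond to generic band components, will require the most delicate local analysis, and is where the punctured case genuinely departs from \cite{GLFS22}.
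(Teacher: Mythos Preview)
Your overall architecture --- define $\pi_T:=(g^{A(T)})^{-1}\circ d^T$ using that both framings are bijective onto $\ZZ^{n(\bSu)}$, then propagate between triangulations via the common piecewise-linear mutation rule --- matches the paper's inductive step (Proposition~\ref{prp:JaMut} together with Theorem~\ref{thm:behavior-of-dual-shear-coords-under-matrix-mutation}). The gap is in your base case. First, an isomorphism of partial monoids in the paper's sense requires the \emph{full} equality $e_{A(T)}(\pi_T(L_1),\pi_T(L_2))=\Intn^*(L_1,L_2)$, not merely agreement of zero loci; your additivity argument yields $\pi_T(L_1+L_2)=\pi_T(L_1)\oplus\pi_T(L_2)$ whenever both sides are defined, but does not recover $e$ on non-orthogonal pairs. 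Second, your reduction ``mutate until $L_1$ is an arc of $T'$'' fails outright when $L_1$ is a simple closed curve, since such a laminate lies in no tagged triangulation; you flag this but give no mechanism. Third, even when $L_1$ \emph{is} a tagged arc, the $E$-invariant of $\pi_{T'}(L_2)$ against the corresponding negative simple is the generic \emph{dimension vector} component of $\pi_{T'}(L_2)$, not a $g$-vector component, and extracting the former from the latter already presupposes an explicit module-theoretic description of $\pi_{T'}(L_2)$, which is exactly what is missing.

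The paper resolves this by establishing a genuine base case at signature-zero tagged triangulations, where $A(T)$ is skewed-gentle. There the results of~\cite{Ge23} (restated as Theorem~\ref{thm:SkewedGIso}) describe all indecomposable $\tau$-reduced components and the generic $E$-invariant between them combinatorially in terms of admissible strings and bands, bands included. Theorem~\ref{thm:intersec} then identifies this string/band combinatorics with marked curves and marked intersection numbers on $\bSu$, yielding the full equality $e_{\uQ}=\Intn^*$ (in particular for closed curves), and Proposition~\ref{prop: g=sh} matches the combinatorial $g$-vector with dual shear coordinates. Only with this explicit base case in hand does the mutation argument you outline go through.
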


This  extends considerably a similar result for
the special case of surfaces without punctures, which was settled recently by two of the authors together with J. Schröer in~\cite[Thm.~1.7]{GLFS22}. 
In that case the algebras involved are gentle and the corresponding well-known classifications of their indecomposable representations and morphisms of representations were heavily exploited by the  authors.

 Theorem~\ref{thm1} is crucial for our follow-up work~\cite{GLFW23}, where we identify for surface cluster algebras with $\abs{\Ma}\geq 2$
the generic basis with the bangle functions  from~\cite{MSW13}. 

\subsection{Partial KRS-monoids}
The quite simple notion of (tame) \emph{partial KRS-monoid} will be explained in Section~\ref{ssec:KRS-Def}.  As mentioned in the abstract,  $\MSW(\bSu)$ is the set of laminations  of $\bSu$ considered by Musiker--Schiffler--Williams in~\cite{MSW13}, who denoted it by $\mathcal{C}^o$.  In \emph{loc.~cit.} the authors introduced bangle functions, and used the set $\MSW(\bSu)$ to parameterize those functions 
in a natural way. The disjoint union of non-intersecting (up to homotopy) laminations
as a partial monoid operation provides $\MSW(\bSu)$ with a structure of
a partial KRS-monoid, see Section~\ref{ssec:taggedC} for details.  
On the other hand, $\DecIrr^\tau(A)$ is the set of decorated, generically
$\tau$-regular irreducible components of the representation varieties 
of a basic $\Ka$-algebra $A$.  Previously those components were
called strongly reduced, later on the name generically $\tau$-reduced.
In a very recent preprint~\cite{BS25}, Bobiński and Schröer make a very good
point why the name $\tau$-regular is better suited. The concept
 was introduced in this form
in~\cite{CLS15}, inspired by~\cite{GLS12} and~\cite{Pl13}, in order to parameterize in a natural way the set of generic Caldero--Chapoton functions.  
The set~$\DecIrr^\tau(A)$ carries also a natural structure of partial KRS-monoid, which basically reflects the canonical decomposition of each element of~$\DecIrr^\tau(A)$ into \emph{indecomposable} generically $\tau$-regular irreducible components. 
This involves the results of several authors, see Section~\ref{ssec:FD-KRS} for details.  

\subsection{Proof strategy for the main result}
Our approach to prove Theorem~\ref{thm1} consists of two steps. We first prove it for the
special case when $T$ is a signature zero triangulation of $\bSu$, in which case the Jacobian
algebra $A(T)$ is a skewed-gentle algebra in the sense of~\cite{GP99}, see Theorem~\ref{thm:sign0}.
This step occupies the bulk of the present paper, and is based on ideas from~\cite{QZ17}, recent progress on the description of homomorphisms between finite dimensional representations of skewed-gentle algebras~\cite{Ge23}, and a careful analysis of the resolution of kinks for curves on surfaces~\cite{GLF23}.

In the second step, we extend Theorem~\ref{thm:sign0} by a mutation argument to the general case. More precisely, we study the precise behaviour of decorated, generically $\tau$-regular irreducible components under QP-mutations. See Proposition~\ref{prp:JaMut}, which is a slight sharpening of~\cite[Thm.~1.10]{GLFS20}.  

\subsection{Skewed-gentle algebras}
Inspired by the notion of gentle algebras in~\cite{AS87}, skewed-gentle algebras were introduced in~\cite{GP99} as a special case of  Crawley-Boevey's clannish algebras~\cite{CB89b}. 
In particular, the description of their indecomposable representations is governed by a complicated combinatorics of strings and bands. 
Skewed-gentle algebras are derived tame, since (by construction) their repetitive algebra is essentially clannish. 
However, it is easy to see that the class of skewed-gentle algebras is not closed under derived equivalence.
Inspired by the works of Crawley-Boevey~\cite{CB89a} and Krause~\cite{Kr91} on the representations of special biserial algebras, in~\cite{Ge99} the study of homomorphisms between finite dimensional, indecomposable representations of clans and clannish algebras  was started. 
Unfortunately, in~\cite{Ge99}  the homomorphisms between representations which involved  bands could not be described satisfactorily.  
For this reason, for example in~\cite{QZ17}, and more recently in~\cite{HZZ22} the exchange graph of support $\tau$-tilting
modules of (Jacobian) skewed-gentle algebras could be described combinatorially, but not the full set of decorated generically 
$\tau$-regular irreducible components of the corresponding representation varieties. 
Recently, in~\cite{Ge23} the description of homomorphisms, initiated in~\cite{Ge99} improved. 
Namely, for skewed-gentle algebras $A$  over a field 
 $\Ka$ with $\kar( \Ka)\neq 2$, a complete description of the homomorphisms between indecomposable representations was achieved.
As an application, in~\cite{Ge23} a description was given of \emph{all} indecomposable, generically $\tau$-regular irreducible components of the representation varieties of
$A$, together with the generic value of the symmetrized 
$E$-invariant in terms of strings and bands.  
We review this result in Theorem~\ref{thm:SkewedGIso}, using the terminology of (tame) partial KRS-monoids. 

\subsection{Signature zero tagged triangulations} \label{ssec:intro-sign0}
It is easy to see that in the situation of Theorem~\ref{thm1}, the surface $\bSu$ admits a tagged triangulation $T$ of signature zero in the sense of~\cite{FST08}, 
and that for these triangulations, the Jacobian algebra $A(T)$
 is skewed-gentle in the sense of~\cite{GP99}. 
 Such triangulations were called ``admissible'' in~\cite{QZ17}, see also Section~\ref{ssec:tagged3ang} below for details.
As observed by Qiu and Zhou in~\cite{QZ17}, 
there is a close connection between Crawley-Boevey's  description of the indecomposable representations of the Jacobian skewed-gentle (hence clannish) algebra $A(T)$ in terms of strings and bands on the one hand, 
and certain curves ``without kinks'' on $\bSu$ on the other  side (see \cite{GLF23}, where the uniqueness of the resolutions of kinks of a curve is established).  
Thus, each  curve has, up to 2-orbifold homotopy, a unique representative without kinks.
With the help of~\cite{Ge99}, Qiu and Zhou explained in \emph{loc.~cit.}~that for ``tagged curves'' %
\emph{with endpoints in $\Ma\cup\Pu$}, a suitably defined
intersection number can be interpreted as the symmetrized $E$-invariant
$E_{A(T)}(M, N):=\dim\Hom_{A(T)}(M,\tau N)+\dim\Hom_{A(T)}(N,\tau M)$  between the corresponding (decorated) representations.  
From this, they established a natural isomorphism between the exchange graph of support $\tau$-tilting modules and the exchange graph of tagged triangulations.
Roughly speaking, establishing Theorem~\ref{thm:sign0} means to extend Qiu--Zhou's result from the set of $\tau$-rigid components, i.e., the set of $\tau$-regular components with generic $E$-invariant zero, to the whole set of $\tau$-regular components. 

More precisely, we establish here the expected natural bijection by extending the correspondence 
\emph{intersection number $\longleftrightarrow$ combinatorial $E$-invariant}  to a large enough class of indecomposable representations, resp.~ marked, possibly closed,  curves.
Moreover, we show that this bijection intertwines the generic $g$-vector of a representation with the shear coordinates of the corresponding laminate, with respect to the triangulation $T$. 
We take the opportunity to review carefully quite delicate arguments from~\cite{QZ17}. 
For this, we found the ideas of Amiot and Plamondon~\cite{AP17} concerning orbifold homotopy, and the notion of kisses between strings from~\cite{BY20} very inspiring. 

\subsection{Behaviour of \texorpdfstring{$\DecIrr^\tau(A(T))$}{DecIrrτ(A(T))} under mutation}
\label{ssec:intro-transf}
Having established Theorem~\ref{thm1} for tagged triangulations of signature zero, we show that the change undergone by the dual shear coordinates under an arbitrary flip of the reference triangulation $T$, and the change undergone by the generic (projective) $g$-vectors under the mutation of $\tau$-regular components defined in \cite{GLFS20} when the reference quiver with potential is mutated, obey the same recursive formulae. This allows us to extend Theorem \ref{thm1} to arbitrary tagged triangulations, since any two tagged triangulations can be obtained from one another by a finite sequence of flips for surfaces with non-empty boundary. To be more precise, consider an arbitrary triangulation $T$ of $\bSu$ and a flip of the
arc $k$.  By Proposition~\ref{prp:JaMut} the corresponding QP-mutation $\mu_k(Q(T),W(T))$
induces an isomorphism of partial KRS-monoids
\[
\tilde{\mu}_k\df\DecIrr^\tau(\cP_\Ka(Q(T),W(T)))\ra
\DecIrr^\tau(\cP_\Ka(\mu_k(Q(T),W(T)))),
\]
under which the generic $g$-vector of each component $Z\in\DecIrr^\tau(\cP_\Ka(Q(T),W(T)))$
transforms according to the rule~\eqref{eq:gproj-transf} for $g$-vectors under change of the reference cluster, see~\cite[(7.18)]{CA4}.

\section{Tame partial KRS-monoids and preliminary results}
\label{sec:KRS}
\subsection{Basic definitions and examples} \label{ssec:KRS-Def}
We introduce the quite simple notion of partial 
Krull--Remak--Schmidt monoids as a useful framework for our findings.

\begin{Def} 
A \emph{\gls{pamo}} is a triple $(X, e, \oplus)$, where  $X$ is a set, 
$e\df X\times X\ra\NN$ is a symmetric function, and
  $\oplus: \{(x,y)\in X\times X\mid e(x,y)=0\}\ra X$ is a partial sum with the following properties:
  \begin{itemize}
  \item[(s)] If $e(x,y)=0$ we have $x\oplus y =y\oplus x$,  
  \item[($0$)]  There exists a unique element $0\in X$ with $e(0,x)=0$
    and  $0\oplus x=x$ for all $x\in X$,
  \item[(d)] If $e(y,z)=0$ we have 
  $e(x, y\oplus z)=e(x,y)+e(x,z)$ for all $x\in  X$,
  \item[(a)]  $(x\oplus y)\oplus z= x\oplus (y\oplus z)$ whenever  one side of the equation is defined.
  \end{itemize}  
A \emph{morphism} between partial monoids $X=(X, e,\oplus)$
and $X'=(X',e',\oplus')$ is a map $f\df X\ra  X'$ of sets, such that
$e'(f(x), f(y))=e(x,y)$ for all $x,y\in X$ and
$f(x\oplus y)= f(x)\oplus' f(y)$ when $e(x,y)=0$.
\end{Def}

\begin{Rem} 
(1) Suppose $(x_1\oplus x_2)\oplus x_3$ is defined.  
Then we have in view of $(d)$ already $e(x_i, x_j)$ for all $i<j$ and thus
$x_1\oplus (x_2\oplus x_3)$ is defined.

(2)  Suppose that we have $x_1, x_2, \ldots, x_n\in X$ with $e(x_i, x_j)=0$ for all $i<j$,  
then $x_1\oplus x_2\oplus\cdots\oplus x_n\in X$ is well-defined,
and  for each permutation $\sig\in\mathfrak{S}_n$ we have
  \[
    x_1\oplus x_2\oplus\cdots\oplus x_n=
    x_{\sig(1)}\oplus x_{\sig(2)}\oplus\cdots\oplus x_{\sig(n)}.
  \]
(3) We are mainly interested in isomorphisms of partial monoids. 
\end{Rem}  

\begin{Def} \label{def:KRS2}
Let   $X=(X, e, \oplus)$ be a partial monoid.
\begin{itemize}
\item
We denote by
  \[
    X_\ind:=\{x\in X\setminus\{0\}
    \mid x= y\oplus z \text{ implies } y=0 \text{ or } z=0\}
  \]
the set of \emph{indecomposable elements} of $X$.
\item
We say that $X$ is a \emph{\gls{paKRS}} 
if each $x\in X$ is a (finite) direct sum of indecomposable elements, and whenever
\[
  x_1\oplus x_2\oplus\cdots\oplus x_m= y_1\oplus y_2\oplus\cdots\oplus y_n
\]
for $x_1, \ldots, x_m, y_1, \ldots, y_n\in X_\ind$, 
then $m=n$ and there exists a permutation 
$\sig\in\mathfrak{S}_n$ with 
$y_i=x_{\sig(i)}$ for all $i=1,2,\ldots, n$.
\item
We say that $X$ is \emph{tame}, if $e(x,x)=0$ for all $x\in X$.
\item
 A (injective) map $\bg\df X\ra\ZZ^n$ for some $n$ 
  is a \emph{(faithful) framing}, if
$\bg(x\oplus y)=\bg(x)+\bg(y)$ for all
$x, y\in  X$ with $e(x,y)=0$.   
\item
A framed partial monoid $X=(X, e, \oplus,\bg)$ is 
\emph{free of rank } $n$ if $\bg\df X\ra\ZZ^n$ is bijective.  
\end{itemize}  
\end{Def}

\begin{Expl} \label{expl:KRS1}
Let $C$ be a set equipped with a  symmetric function $e\df C\times C\ra\NN$ such that $e(c,c)=0$ for all $c\in C$. Then
\[
  \gls{KRSCe}:=\{f\df C\ra\NN\mid c_1, c_2\in\Supp(f)\Rightarrow e(c_1, c_2)=0
  \text{ and } \abs{\Supp(f)}<\infty \}
\]
is a tame partial KRS-monoid with
\[
e(f,g):= \sum_{c,d\in C} f(c)\cdot g(d)\cdot e(c,d),
\]
and
\[
  (f\oplus g)(c):= f(c)+g(c) \quad\text{for all } c\in C.
\]
Note, that
\[
  \KRS(C,e)_\ind=\{\del_c\mid c\in C\}\quad\text{where}\quad
  \del_c(d):=\begin{cases} 1 &\text{if } d=c,\\ 0 &\text{else.} 
  \end{cases}
\]
With this notation we have 
$f=\oplus_{c\in C} \del_c^{\oplus f(c)}$ for all
$f\in\KRS(C,e)$.  
Note, that for a tame partial KRS-monoid 
$X=(X, e, \oplus)$ we have $X\cong\KRS(X_\ind, e')$, 
where $e'$ is the restriction of $e$ to $X_\ind\times X_\ind$. 
In this situation, any map $\bg\df C\ra\ZZ^n$ can be extended to a framing
$\bg\df\KRS(C, e)\ra\ZZ^n$ by defining simply
$\bg(f):=\sum_{c\in C} f(c)\bg(c)$. 
\end{Expl}

\subsection{Generically \texorpdfstring{$\tau$}{τ}-regular irreducible components} \label{ssec:gen-taured}
Let $A$ be a (basic) finite dimensional $\Ka$-algebra with
$\Ka$ an algebraically closed field,  and denote by $Q$ the Gabriel quiver of $A$. Thus, we may write $A=\Ka Q/I$ for 
an admissible ideal $I\subset \Ka Q$, and we may identify the Grothendieck group  $K_0(A)$ of $A$ with  $\ZZ^{Q_0}$. Recall that in this
situation $K_0(A)$ has a natural $\ZZ$-basis given by the simple representations $S_i$ for $i\in Q_0$. 

Each finite dimensional representation of $A$ is of  the form
$M=((M_i)_{i\in Q_0}, (M(\alp)_{\alp\in Q_1})$ with 
$M_i\cong\Ka^{m_i}$ and 
$M(\alp)\in \Hom_\Ka(M_{s(\alp)}, M_{t(\alp)})$. In this case,
$\dimv(M)=(m_i)_{i\in Q_0}$ is the \emph{dimension vector} of $M$.
Inspired by~\cite{DWZ2}, we consider 
\emph{decorated representations} $(M,\bv)$ of $A$, 
where $M$ is a representation of $A$, and 
$\bv\in\ZZ_{\geq 0}^{Q_0}$. Here, $\bv$ stands for the
\emph{negative semisimple representation} 
$\oplus_{i\in Q_0} (-S_i)^{v_i}$. 
The \emph{$g$-vector} 
\[ 
\gls{gvecA}(M,\bv):= (-\dim\Hom_A(M, S_i)+\dim\Ext^1_A(M, S_i)+v_i)_{i\in Q_0}
\]
stores relevant information about the minimal projective presentation
of $(M,\bv)$. 
Given another  decorated representation $(N,\bw)$ of $A$, the (symmetrized) $E$-invariant is 
\[
\gls{EinvA}((M,\bv),(N,\bw)):=\dim\Hom_A(M,\tau N)+\dim\Hom_A(N,\tau M)+ 
\dimv(M)\cdot\bw + \bv\cdot\dimv(N). 
\]
Following~\cite[Sec.~2 \& Sec.~5]{CLS15} and~\cite[Sec.~2 \& Sec.~9]{GLFS22}, we denote by $\rep_A^\bd$ the affine $\Ka$-scheme of representations,
with dimension vector $\bd\in\ZZ_{\geq 0}^{Q_0}$, of $A$.
The algebraic group 
$G_\bd(\Ka)=\bigtimes_{i\in Q_0}\GL_{d_i}(\Ka)$
acts on the $\Ka$-rational points of $\rep_A^\bd$ by conjugation,
and its orbits are naturally in bijection with the isomorphism classes of representations, with dimension vector $\bd$, of $A$.  
See for example~\cite[Sec.~2.1]{GLFS22}, where this scheme is
denoted by $\operatorname{rep}(A,\bd)$, for more details.  
Given $(\bd,\bv)\in\ZZ_{\geq 0}^{Q_0}\times\ZZ_{\geq 0}^{Q_0}$,
we consider also the affine $\Ka$-scheme
$\DecRep_A^{(\bd,\bv)}$, which consists of pairs $(M,\bv)$ with
$M\in\rep(A,\bd)$.  Obviously, as a $G_\bd$-scheme, 
$\DecRep_A^{(\bd,\bv)}$ is isomorphic to $\rep_A^\bd$.
For  irreducible components $X\subset\DecRep(A,(\bd,\bv))$ and
$Y\subset\DecRep(A,(\be,\bw))$ we define 
\begin{align*}
c_A(X)&:=\min\{\codim_Z(G_\bd\cdot(M,\bv))\mid (M,\bv)\in X\},\\
\gls{eAX}&:=\min\{1/2 E_A((M,\bv), (M,\bv))\mid (M,\bv)\in X\},\\
\gls{eAXY} &:=
\min\{E_A((M,\bv), (N,\bw))\mid (M,\bv)\in X, (N,\bw)\in Y\},\\
\gls{gAX}_i &:= \min\{\bg_A(M,\bv)_i \mid (M,\bv)\in X\}.\\
\end{align*}
It is well-known, that the codimension of orbits is an upper semicontinuous function. By the main result of~\cite{GLFS23},
the same is true for the $E$-invariant, and by~\cite[Cor.~1.3]{GLFS23}
it is also true for the components of the $g$-vector.
Thus, $c_A(X)$ is the generic codimension of $G_\bd$-orbits in $X$,
and $\bg_A(X)$ is the generic value of the function 
$\bg_A\df X\ra\ZZ^{Q_0}, (M,\bv)\mapsto \bg_A(M,\bv)$, etc. Note, that
possibly $1/2E_A(X,X)<E_A(X)$.

By definition, $X$ is called a \emph{\gls{taure} (irreducible) component}, if  $c_A(X)=e_A(X)$.  By Voigt's Lemma
and the Auslander--Reiten formula, $X$ is in this case generically
reduced as a scheme.  However, being generically $\tau$-regular
is a stronger (algebraic) condition than being generically reduced.
In earlier works, e.g.~\cite{CLS15}, those components were called
\emph{strongly reduced}. 
Recently, in~\cite{BS25} the name \emph{generically $\tau$-regular} is promoted. 
We denote by $\gls{DecIrrt}(A, (\bd,\bv))$ the set of generically
$\tau$-regular irreducible components of $\DecRep_A^{(\bd,\bv)}$,
and set
\[
{\gls{DecIrrt}(A)}:= \!\!\bigcup_{(\bd,\bv)\in\ZZ_{\geq 0}^{Q_0}\times \ZZ_{\geq 0}^{Q_0}} \!\!\DecIrr^\tau(A, (\bd,\bv)).
\]
It will be convenient to introduce a pair of functions 
\[
(\bd, \bv)\df\DecIrr^\tau(A)\ra \NN^{Q_0}\times \NN^{Q_0}
\]
such that $Z\in\DecIrr^\tau(A, (\bd(Z), \bv(Z))$ for all $Z\in\DecIrr^\tau(A)$.
This construction extends the fundamental notion of support $\tau$-tilting modules from~\cite{AIR14}. 

In case  $Q$ is 2-acyclic and $A$ is a non-degenerate Jacobian algebra
over $\CC$, the set $\DecIrr^\tau(A)$ parametrizes naturally the
generic Caldero--Chapoton functions in the upper cluster algebra
associated to $Q$, see for example~\cite{CLS15} for more details.

\subsection{KRS-monoids of \texorpdfstring{$\tau$}{τ}-regular components} 
We keep the notation from Section~\ref{ssec:gen-taured} and consider
\label{ssec:FD-KRS}
\[
  \DecIrr^\tau(A)=(\DecIrr^\tau(A), e_A, \oplus, \bg_A),
\]
where  $ X\oplus Y:=\overline{X\oplus Y}$ denotes
the direct sum of $e_A$-orthogonal irreducible components.
See for example~\cite[Sec.~9.5]{GLFS22} for more details. 
Our definitions allow us now, to state a combination of   results by  Cerulli Irelli, Crawley-Boevey,   Plamondon, Schröer and two of the present authors (see~\cite{CBS02}, \cite{Pl13}, \cite{CLS15}, \cite{GLFS22}, \cite{GLFS23}) in a compact way:  

\begin{Thm} \label{cor:fd}
  Let $A$ be a finite dimensional algebra over an algebraically
  closed field with Grothen\-dieck group of rank $n$.
\begin{itemize}
\item[(a)]     
$\DecIrr^\tau(A)=(\DecIrr^\tau(A), e_A, \oplus)$ is a KRS-monoid.  
  The subset $\DecIrr^\tau_\ind(A)$ of components,
  which contain a dense set of indecomposable representations, is precisely the set of indecomposable elements in the sense of Definition~\ref{def:KRS2}.
\item[(b)]
  With the framing from the generic $g$-vector, 
  $\DecIrr^\tau(A)$ is a free partial KRS-monoid of rank $n$.  
\item[(c)] 
  If $A$ is tame, $\DecIrr^\tau(A)$ is tame in the sense of the same  Definition~\ref{def:KRS2}.  
  In particular,  we have an isomorphism of partial KRS-monoids
  \[
  \DecIrr^\tau(A)\cong \KRS(\DecIrr^\tau_\ind(A), e_A).  
\]
Moreover in this case each $Z\in\DecIrr^\tau_\ind(A)$ contains either a dense orbit, or a one-parameter family of bricks.  
\end{itemize}
\end{Thm}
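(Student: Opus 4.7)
The plan is to repackage the cited results of Crawley-Boevey--Schr\"oer~\cite{CBS02}, Plamondon~\cite{Pl13}, Cerulli-Irelli--Labardini-Fragoso--Schr\"oer~\cite{CLS15}, and Geiss--Labardini-Fragoso--Schr\"oer~\cite{GLFS22,GLFS23} into the KRS-monoid language. I first verify the axioms of a partial monoid. Symmetry $(s)$ is immediate from $\overline{M\oplus N}=\overline{N\oplus M}$; the zero element $(0)$ is provided by the unique component of the empty decorated representation variety, and $e_A(0,-)=0$ is read off from the additive formula for $E_A$; distributivity $(d)$ is bilinearity of that formula in each argument; associativity $(a)$ is inherited from the associativity of the direct sum on decorated representations, once one observes that by $(d)$ the three summands in a triple sum are pairwise $e_A$-orthogonal whenever either side of the bracketed expression is defined.

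For part (a) the essential input is the canonical decomposition theorem. The classical version of Crawley-Boevey--Schr\"oer~\cite{CBS02} asserts for module varieties that $\overline{X \oplus Y}$ is an irreducible component of $\rep_d(A)$ provided the generic $\mathrm{Ext}^1$ between $X$ and $Y$ vanishes. This was upgraded by~\cite{Pl13} and~\cite{CLS15} to the decorated, generically $\tau$-reduced setting, with the symmetrized $E$-invariant $E_A$ playing the role of $\mathrm{Ext}^1$, so that $e_A(X,Y)=0$ is precisely the condition under which $\overline{X\oplus Y}\in\DecIrr^\tau(A)$. Existence and essential uniqueness of the decomposition into indecomposable elements in the sense of Definition~\ref{def:KRS2} is then immediate, and it identifies $\DecIrr^\tau_\ind(A)$ with the set of components that contain a dense set of indecomposable decorated representations. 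Additivity of $\bg_A$ under $e_A$-orthogonal sums is a direct consequence of its defining formula, giving the framing.

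Part (b) is then a direct citation: Plamondon~\cite{Pl13}, with the account revisited in~\cite{GLFS22}, shows that the generic $g$-vector map $\bg_A\df\DecIrr^\tau(A)\to\ZZ^n$ is a bijection. Together with (a) this says that $\DecIrr^\tau(A)$ is a free partial KRS-monoid of rank $n$.

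The main obstacle is part (c). The central point is that for tame $A$ the generic symmetrized $E$-invariant of a $\tau$-reduced component with itself vanishes, i.e.~$e_A(X,X)=0$ for every $X\in\DecIrr^\tau(A)$; this is the content of~\cite{GLFS22,GLFS23}. Granting this, tameness of the partial monoid is automatic, and the isomorphism $\DecIrr^\tau(A)\cong\KRS(\DecIrr^\tau_\ind(A), e_A)$ is exactly the general observation made at the end of Example~\ref{expl:KRS1}. The dichotomy for $Z\in\DecIrr^\tau_\ind(A)$ -- a dense orbit or a one-parameter family of bricks -- is the brick-analysis output of~\cite{GLFS22,GLFS23}, which rests on the classical tame/wild dichotomy applied to the endomorphism algebras of generic bricks in $Z$. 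The genuine difficulty in this step is notational alignment: the three conventions of~\cite{Pl13},~\cite{CLS15}, and~\cite{GLFS22,GLFS23} differ in signs and decoration vectors, so before the statements can be glued one must check carefully that the $E$-invariants, $g$-vectors, and direct-sum operations match the normalization used here.
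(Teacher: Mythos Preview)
Your proposal is correct and follows essentially the same approach as the paper: both treat the theorem as a repackaging of the cited results, with Part~(a) coming from \cite{CBS02} combined with \cite{CLS15}, Part~(b) from Plamondon~\cite{Pl13}, and Part~(c) from \cite{GLFS22,GLFS23}. You supply a bit more detail (verifying the monoid axioms explicitly and flagging the normalization issues), but the substance and the attribution of each part match the paper's own justification.
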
  

In fact, Part~(a) is the well-known combination of~\cite[Thm.~1.2]{CBS02}
and~\cite[Thms.~1.3 \& 1.5]{CLS15}.
Part~(b) is Plamondon's theorem~\cite{Pl13}.  
Part~(c) is~\cite[Thm.~3.2]{GLFS22} and of~\cite[Cor.¬1.7]{GLFS23}.
Note, that in particular for $A$ tame we have $e_A(Z, Z)=0$ for \emph{all} irreducible components $Z\in\DecIrr^\tau(A)$. 

\begin{Rem}
In the context of cluster algebras it is convenient to study also the dual version with
\begin{multline*}
E_A^\inj((M,\bv), (N,\bw)):=\dim\Hom_A(\tau^{-1}M, N) + \dim\Hom_A(\tau^{-1}N,M)+ \\
\bv\cdot\dimv(N)+ \dimv(M)\cdot\bw
\end{multline*}
and
\[
\bg_A^\inj(M,\bv)= (\dim\Hom_A(\tau^{-1}M, S_i)-\dim\Hom_A(S_i,M) + v_i)_{i\in Q_0}
\]
Then, duality induces a natural isomorphism of framed partial KRS-monoids 
\[
(\DecIrr^\tau(A), e_A, \oplus, \bg_A)\cong 
(\DecIrr^{\tau^-}(A^\op), e_{A^\op}^\inj, \oplus, \bg_{A^\op}^\inj)
\]
\end{Rem}

\subsection{KRS-monoid isomorphisms from mutations of quivers with potential}\label{subsec:Jac-algs-and-muts}
Let $Q$ be a 2-acyclic quiver and 
$B=(b_{ij})\in\ZZ^{Q_0\times Q_0}$ with
\[
b_{ij}=
\abs{\{\alp\in Q_1\mid \alp:i\rightarrow j \ \text{in} \ Q\}} -\abs{\{ \bet\in Q_0\mid \bet:j\rightarrow i \ \text{in} \ Q\}}.
\]
Thus, $B$ is the transpose of the skew-symmetric matrix $B_{\operatorname{DWZ}}(Q)$ associated by Derksen--Weyman--Zelevinsky to $Q$ in \cite[Equation (1.4)]{DWZ2} and \cite[Section 2 and Equation (7.1)]{DWZ1}.  Suppose that
$W\in  \Ka\eebrace{Q}_{\cyc}$ is a potential such that the Jacobian algebra $A=\cP_{\Ka}(Q,W)$ is finite-dimensional.  Since $B=-B_{\operatorname{DWZ}}(Q)$, for cluster algebra purposes, we have to consider 
\emph{dual Caldero--Chapoton functions} as in~\cite[Sec.~11.3]{GLFS22}.  

Roughly following~\cite[Sec.~10]{DWZ1}, we introduce for each representation $M$ of 
$A=\cP_\Ka(Q, W)$ and each vertex $k\in Q_0$ the spaces
\[
M_\out(k):= \bigoplus_{\substack{\bet\in Q_1\\ s(\alp)=k}} M(t(\bet))\quad\text{and}\quad
M_\inc(k):= \bigoplus_{\substack{\alp\in Q_1\\ t(\alp)=k}} M(s(\alp)),
\]
together with the corresponding maps
\[
M(\alp_k)=\coprod_{\substack{\alp\in Q_1\\ t(\alp)=k}} M(\alp),\quad
M(\bet_k)=\prod_{\substack{\bet\in Q_1\\ s(\bet)=k}} M(\bet)\quad\text{and}\quad
M(\gam_k)= \prod_{\substack{\alp\in Q_1\\ t(\alp)=k}}\  \coprod_{\substack{\bet\in Q_1\\ s(\alp)=k}}
M(\partial_{\bet\alp} W), 
\]
which can be summarized in the following diagram
\[\xymatrix{
& M(k) \ar[rd]^{M(\bet_k)}\\
M_\inc(k)\ar[ru]^{M(\alp_k)}&&\ar[ll]^{M(\gam_k)} M_\out(k)
}.\]
\begin{Rem}
  In view of~\cite[Prop.~10.4 \& Rem.~10.8]{DWZ2} we have for each finite dimensional representation $M$ of $A$ the following chain of equations
\begin{align} \label{eq:gvec}
\begin{split}
    &\bg_A(M) = (\dim\CoKer(M(\gam_k))-\dim M(k))_{k\in Q_0}\\
= & \bg_{A^\op}^\inj(DM) =(\dim\Ker(DM(\gam_k))-\dim DM(k))_{k\in Q_0}=:\bg_{A^\op}^{\mathrm{DWZ}}(DM).
\end{split}
\end{align}
Here, $DM$ denotes the $\Ka$-linear dual of $M$, which we interpret as a representation of
$A^\op=\cP_\Ka(Q^\op, W^\op)$, and 
$\bg_{A^\op}^{\mathrm{DWZ}}(DM)$ denotes the $g$-vector of a QP-representation in the sense of~\cite[(1.13)]{DWZ2}.  
This can be easily extended to decorated QP-representations.

Now, consider for $k\in Q_0$ the QP-mutation $\mu_k(Q,W)$, which was introduced  in~\cite{DWZ1}, and let us abbreviate 
$A':=\cP_\Ka(\mu_k(Q,W))$.  
We also need the piecewise linear transformation of integer vectors 
\begin{equation} \label{eq:gproj-transf}
\gam_k^B\df \ZZ^{Q_0}\ra \ZZ^{Q_0}\quad\text{with}\quad 
\gam_k^B(\bg)_i = \begin{cases}
        -g_i &\text{if } i=k,\\
         g_i + \sgn(g_k)[b_{ik}\cdot g_k]_+ &\text{else}.
    \end{cases}
\end{equation}
Note that this is just  another way of writing the conjectural transformation rule for $g$-vectors of cluster monomials from~\cite[(7.18)]{CA4}.  This formula was proved for the skew-symmetric case in~\cite[Sec.~9]{DWZ2}. 
\end{Rem}

We have the following easy consequence  of~\cite{GLFS20}.  
\begin{Prop} \label{prp:JaMut}
    For each $Z\in\DecIrr^\tau(A)$  there exists a dense open subset $U_Z\subset Z$,
    a unique irreducible component 
    $\gls{tmuk}(Z)\in\DecIrr^\tau(A')$ and a regular
    map $\nu_Z\df U_Z\ra\tilde{\mu}_k(Z)$ with the following properties:
    \begin{itemize}
    \item[(a)]
        For each $X\in U_Z$ we have $\nu_Z(X)\cong\mu_k(X)$, where $\mu_k$ denotes the mutation of the decorated QP-representation $X$ in direction $k$, 
        as defined in~\cite{DWZ1}.
    \item[(b)]  The morphism of affine varieties
    $G_{\bd(\tilde{\mu}_k(Z))}\times U_Z \ra \tilde{\mu}_k(Z), (g, X)\mapsto g.\nu_Z(X)$
    is dominant.
    \item[(c)]
    For each $Z\in\DecIrr^\tau(A)$ we have 
    \[
    \bg_{A'}(\tilde{\mu}_k(Z))=\gam_k^B(\bg_A(Z)).
    \]
   \item[(d)] 
    With a slight abuse of notation, we have 
    $\tilde{\mu}_k(\tilde{\mu_k}(Z))=Z$ for all 
    $Z\in\DecIrr^\tau(A)$.
    \item[(e)] The map 
    \[
    \tilde{\mu}_k\df\DecIrr^\tau(A)\ra\DecIrr^\tau(A')
    \]
    is an isomorphism of partial KRS-monoids.  
    In particular, $Z\in\DecIrr^\tau(A)$ is indecomposable if and only if 
    $\tilde{\mu}_k(Z)\in\DecIrr^\tau(A')$ is indecomposable.
 \end{itemize}    
\end{Prop}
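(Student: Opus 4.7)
The plan is to derive the proposition from \cite[Thm.~1.10]{GLFS20} together with the mutation theory of decorated QP-representations developed in \cite{DWZ1,DWZ2}. The existence of the bijection $\tilde{\mu}_k$, the involution in (d), and the set-theoretic compatibility with direct sums underlying (e) are essentially already contained in \emph{loc.\,cit.}; the sharpening lies in (a), (b), (c), which make explicit the dense open subset $U_Z$, the \emph{regular} map $\nu_Z$ realising the DWZ-mutation in families, and the transformation rule for \emph{generic} $g$-vectors.

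First I would recall the construction of $\mu_k(M,\bv)$: at the vertex $k$ one replaces $M(k)$ by a new vector space built from the kernel, image, and cokernel pieces of the triangle
\[
M_\inc(k)\xrightarrow{M(\alp_k)} M(k)\xrightarrow{M(\bet_k)} M_\out(k)\xrightarrow{M(\gam_k)} M_\inc(k),
\]
while leaving $M(i)$ unchanged at $i\neq k$ and adjusting the maps via the standard DWZ formulas. The dimensions of the three pieces into which the new $M(k)$ decomposes depend on the ranks of $M(\alp_k), M(\bet_k), M(\gam_k)$, so no single regular map on all of $Z$ can realise $\mu_k$. However, on the locally closed subset $U_Z\subset Z$ on which each of these three ranks attains its \emph{generic} value, algebraic splittings of the corresponding image/kernel short exact sequences can be organised uniformly: over the rank stratum of constant rank in a space of matrices, the universal image and kernel form trivialisable subbundles, and one can choose algebraic sections to realise the required complements. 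This produces a morphism $\nu_Z\df U_Z\to\DecRep_{A'}^{(\bd',\bv')}(\CC)$ with $\nu_Z(X)\cong\mu_k(X)$ for each $X\in U_Z$, proving (a) once one identifies $\tilde{\mu}_k(Z)$ as the unique irreducible component containing $\nu_Z(U_Z)$.

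For (b), \cite[Thm.~1.10]{GLFS20} guarantees that $\tilde{\mu}_k(Z)$ is the unique $\tau$-reduced component whose generic point belongs to the isomorphism class of $\mu_k(X)$ for generic $X\in Z$. Since $\nu_Z(U_Z)\subset\tilde{\mu}_k(Z)$ meets this generic class, its $G_{\bd(\tilde{\mu}_k(Z))}$-saturation is dense, so the orbit map is dominant. For (c), the pointwise identity $\bg_{A'}(\mu_k(X))=\gam_k^B(\bg_A(X))$ is a restatement, via~\eqref{eq:gvec}, of the DWZ $g$-vector transformation from \cite[Sec.~5]{DWZ2}; specialising to generic $X\in Z$ gives the claimed equality of generic $g$-vectors.

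Finally, (d) follows from the pointwise involution $\mu_k\circ\mu_k\cong\id$ on isomorphism classes of decorated QP-representations \cite[Thm.~5.7]{DWZ1}, combined with (a) and (b). Mutation-invariance of the symmetrised $E$-invariant and the direct-sum compatibility $\mu_k(X\oplus Y)\cong\mu_k(X)\oplus\mu_k(Y)$ of \cite{DWZ2} yield, after passing to generic values on components, both $e_{A'}(\tilde{\mu}_k(Z),\tilde{\mu}_k(Z'))=e_A(Z,Z')$ and $\tilde{\mu}_k(Z\oplus Z')=\tilde{\mu}_k(Z)\oplus\tilde{\mu}_k(Z')$ whenever the right-hand sides are defined, giving (e). The main obstacle is the construction of $\nu_Z$ in (a): one must verify that the DWZ triangle construction can be carried out algebraically \emph{in families} over $U_Z$, i.e.\ that the tuple of matrices defining $\mu_k(X)$ depends polynomially, and not merely rationally, on $X$. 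This families-level regularity is precisely what separates the present statement from the set-theoretic bijection already established in \cite{GLFS20}, and it is needed in order to invoke Proposition~\ref{prp:JaMut} in the mutation argument that extends Theorem~\ref{thm:sign0} to arbitrary tagged triangulations.
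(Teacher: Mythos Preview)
Your overall strategy for (a), (b), (d), (e) is sound and close to the paper's, but your argument for (c) contains a genuine gap. You assert that ``the pointwise identity $\bg_{A'}(\mu_k(X))=\gam_k^B(\bg_A(X))$ is a restatement, via~\eqref{eq:gvec}, of the DWZ $g$-vector transformation from \cite[Sec.~5]{DWZ2}''. This is not correct: the formula proved in \cite[Lem.~5.2]{DWZ2} expresses $\bg_{A'}(\mu_k(X))$ in terms of \emph{both} the $g$-vector and the $h$-vector of $X$, and only collapses to the piecewise-linear rule $\gam_k^B$ under the extra hypothesis $h'_k(M)=\min(0,\bg_A(M)_k)$ (equivalently, that a certain triangle map is either injective or surjective). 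This hypothesis does not hold for arbitrary $X$, so there is no such pointwise identity to specialise. The paper handles this by first reducing to indecomposable components via the sign-coherence of canonical decompositions \cite[Thm.~8.1]{CLS15}, and then invoking \cite[Cor.~2.8]{Pl13} to guarantee that on a dense open subset of an indecomposable $\tau$-reduced component the required $h$-vector condition is satisfied; only then does the discussion in \cite[Sec.~9]{DWZ2} apply. Without this step your derivation of (c) does not go through.

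Two smaller remarks. First, you locate the ``main obstacle'' in the regularity of $\nu_Z$, but (a) and (b) are already contained in \cite[Thm.~1.10]{GLFS20} (after dualising, since mutation commutes with $\Ka$-duality); the actual sharpening in the present proposition is precisely (c), together with its use in (d) and (e). Second, for (e) the paper does not argue directly from the pointwise compatibility $\mu_k(X\oplus Y)\cong\mu_k(X)\oplus\mu_k(Y)$: instead it uses (c) plus sign coherence to show that $\gam_k^B$ is additive on the $g$-vectors of $e_A$-orthogonal components, and then concludes $\tilde{\mu}_k(Z\oplus Z')=\tilde{\mu}_k(Z)\oplus\tilde{\mu}_k(Z')$ from the faithfulness of the $g$-vector framing (Plamondon's theorem). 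Your route via pointwise direct-sum compatibility can be made to work, but as written it skips the verification that the closure of the generic orbit of $\mu_k(X)\oplus\mu_k(Y)$ really equals $\tilde{\mu}_k(Z)\oplus\tilde{\mu}_k(Z')$ as components.
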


\begin{proof}
Part~(a) and~(b) follow by taking $\Ka$-duals from~\cite[Thm.~1.10]{GLFS20}, 
if we take into account that the mutation of QP-representations commutes with 
$\Ka$-duality.  
This allows us to work with generically $\tau$-regular components in place of 
generically $\tau^-$-regular components. 

(c) Recall that mutation of (decorated) QP-representations commutes with taking direct sums.
Thus, in view of the sign coherence for the canonical decomposition of decorated, generically $\tau$-regular irreducible components~\cite[Thm.~8.1]{CLS15} we may assume that
$Z$ is indecomposable.  Then we may assume  by~\cite[Cor.~2.8]{Pl13} 
\begin{multline*}
h'_k(M):=-\dim\CoKer(M(\alp_k)):=\min(0, \bg_A(M)_k)\\
=\min(0, (\bg^\inj_{A^\op}(DM))_k)=-\dim\Ker(DM(\alp_k))=:h_k(DM),
\end{multline*}
for all $M$ in the dense open subset $U_Z$ of $Z$.  As discussed in~\cite[Sec.~9]{DWZ2} (beginning of the Proof of Theorem~1.7) this implies already by Equation~\eqref{eq:gvec} that
$\bg_{A'}(\mu_k(M))=\gam_k(\bg_A(M))$ for all $M\in U_Z$.  
By Part~(b) this implies the desired formula for any  $Z\in\DecIrr^\tau(A)$.  
See also the comment after Formula~\eqref{eq:gproj-transf}.

(d)   We recall from~\cite[Thm.~5.7]{DWZ1} that $\mu_k(\mu_k(Q,W))$ is right equivalent to $(Q,W)$.  
Such a right equivalence induces an isomorphism of faithful framed partial KRS-monoids 
 \[
\iota\df\DecIrr^\tau(\cP(\mu_k(\mu_k(Q,W))))\ra\DecIrr^\tau(A), 
\]
where we use on both sides the g-vector framing.   With
$A'':=\cP_\Ka(\mu_k(\mu_k(Q,W)))$
we have in view of Part~(c) of the Proposition
 \[
 \bg_{A''}(\tilde{\mu}_k(\tilde{\mu}_k(Z)))=\bg_A(Z) \text{ for all } Z\in\DecIrr^\tau(A).  
 \]
 Thus, we have $\iota\circ\tilde{\mu}_k\circ\tilde{\mu}_k=\id_{\DecIrr^\tau(A)}$. 

(e)   By~\cite[Thm.~7.1]{DWZ2} and the discussion in~\cite[Sec.~10]{DWZ2} we conclude that
 in our situation we have $e_{A'}(\tilde{\mu}_k(X), \tilde{\mu}_k(Y))=e_A(X,Y)$
 for all $X, Y\in\DecIrr^\tau(A)$.  Thus,  $X\oplus Y$ is defined if and only if 
 $\tilde{\mu}_k(X)\oplus\tilde{\mu}_k(Y)$ is defined, and it remains to show that in this
 case we have $\tilde{\mu}_k(X\oplus Y)=\tilde{\mu}(X)\oplus\tilde{\mu}_k(Y)$.
 Now, by part~(c) of the Proposition, we have
 \[
 \bg_{A'}(\tilde{\mu}_k(X\oplus Y))=\gam_k^B(\bg_A(X\oplus Y))=\gam_k^B(\bg_A(X)+\bg_A(Y))
 =\gam_k^B(\bg_A(X))+\gam_k^B(\bg_A(Y)),
 \]
 where the last  equality holds because of the sign coherence of g-vectors  for $e_A$-orthogonal generically $\tau$-regular components~\cite[Thm.~8.1]{CLS15}.  
 On the other hand, we have trivially
\[
\bg_{A'}(\tilde{\mu}_k(X)\oplus\tilde{\mu}_k(Y))=
\bg_{A'}(\tilde{\mu}_k(X))+\bg_{A'}(\tilde{\mu}_k(Y))=\gam_k^B(\bg_A(X))+\gam_k^B(\bg_A(Y))
\] 
by Part~(c).
Thus, the g-vectors of the generically $\tau$-regular irreducible components
$\tilde{\mu}_k(X\oplus Y)$ and $\tilde{\mu}_k(X)\oplus\tilde{\mu}_k(X)$ coincide.  Since $\bg_{A'}$ is a faithful framing of  $\DecIrr^\tau(A')$ by Plamondon's theorem~\cite[Thm.~1.2]{Pl13} 
(see also Thm.~\ref{cor:fd}~(b)), we have indeed 
$\tilde{\mu}_k(X\oplus Y)=\tilde{\mu}_k(X)\oplus\tilde{\mu}_k(X)$. 
\end{proof}

\begin{Rem}
As in the opening of the current subsection, let \todo{$B_{\operatorname{DWZ}}(Q)$} be the skew-symmetric matrix associated by Derksen--Weyman--Zelevinsky to $Q$ in \cite[Equation (1.4)]{DWZ2} and \cite[Section 2 and Equation (7.1)]{DWZ1}.
Thus, $B_{\operatorname{DWZ}}(Q)$ has entries  
$\beta_{ij}:=b_{ji}$ for all  $i,j\in Q_0$.  Then the transformation rule for $g$-vectors in Part~(c) of the above Proposition can be rewritten as follows.
Let $\bg_A(Z)=(g_i)_{i\in Q_0}$
and $\bg_A'(\tilde{\mu}_k(Z))=(g'_i)_{i\in Q_0}$, then 
\begin{equation}
    g'_i = \begin{cases}
        -g_i &\text{if } i=k,\\
         g_i + \sgn(g_k)[g_k\cdot \beta_{ki}]_+ &\text{else}.
    \end{cases}
\end{equation}
Note that this transformation rule is in line with the  \todo{row} transformation for an extended matrix under mutation in direction $k$. See Theorem \ref{thm:behavior-of-dual-shear-coords-under-matrix-mutation} and \eqref{eq:unraveled-recursion-satisfied-by-dual-shear-coords} below.
\end{Rem}

\subsection{KRS-monoids of admissible strings and bands}
Following~\cite{Ge23}, a
\emph{\gls{skgpQ}} 
$\uQ=(Q_0, \Qsp\coprod \Qord,\us,\ut)$ is  a tuple,
where $Q_0$ is the (finite) set of vertices $Q_1:=\Qsp\cup \Qord$ is
the set of arrows, which contains a (possibly empty) subset of ``special'' loops $\Qsp$,  and 
  \[
    \gls{ust}\df Q_1\ra Q_0\times\{-1,1\}  
  \]
are two injective maps, which we write as
$\us=(s,s_1)$ and $\ut=(t,t_1)$, such that
$\us(\eps)=\ut(\eps)\in  Q_0
\times\{-1\}$ for all  $\eps\in\Qsp$.
We request moreover that there exists an upper bound
$L(\uQ)$ for the length $l$ of \emph{admissible paths} of the form
  $\alp_1\alp_2\cdots\alp_l$ with
  $\us(\alp_i)=-\ut(\alp_{i+1}):=(t(\alp_{i+1}),-t_1(\alp_{i+1}))$
for $i=1,2,\ldots, l-1$.
If, moreover $\Qsp=\emptyset$, we say that $\uQ$ is a \emph{gentle polarized quiver}.
   
For any  skewed-gentle polarized quiver $\uQ$ we define 
\[
  \glslink{Qspvo}{\Qspv}:=\{s(\eps)\mid \eps\in\Qsp\}\subset Q_0
  \quad\text{and}\quad \glslink{Qspvo}{\Qordv}:=Q_0\setminus\Qspv.
\]
It is clear that for each $i\in\Qspv$ there exists a \emph{unique}
$\eps_i\in\Qsp$ with $s(\eps_i)=i$.
See Section~\ref{sssec:skgep} for a concrete example of a skewed-gentle polarized quiver.

Let $\Ka$ be a field and $\uQ$ a skewed-gentle polarized quiver.  We write
\begin{align*}
  \gls{KauQ}   &:= \Ka Q/\ebrace{\cR(\uQ)}\quad\text{where}\\
  \cR(\uQ) &:= \{\alp\bet\mid \alp,\bet\in Q_1^\ord\text{ and }
             \us(\alp)=\ut(\bet)\} \cup \{\eps^2-e_{s(\eps)}\mid\eps\in Q_1^\spe\}.
\end{align*}             
Here, $e_i$ denotes the trivial path concentrated at the vertex $i\in Q_0$.
Thus, $\Ka\uQ$ is by definition the path algebra of the underlying quiver $Q$ of $\uQ$ modulo the, possibly non-admissible, ideal which is generated by $\cR(\uQ)$.  
If $\kar(\Ka)\neq 2$, the algebra $\Ka\uQ$ is skewed-gentle in the sense of~\cite{GP99}. 
Skewed-gentle algebras are tame,  since they are in particular clannish algebras in the sense of Crawley-Boevey~\cite{CB89b}.  

It is not hard to see that the Gabriel quiver $\tQ$ of a skewed-gentle algebra $\Ka\uQ$ has the vertex set
\begin{equation} \label{eqn:tQ}
\begin{split}
  \gls{tQo} &:=\{(i,\rho)\mid i\in Q_0 \text{ and }\rho\in S(i)\}\quad\text{where}\\
\gls{S(i)} &:= \begin{cases} \{+,-\} &\text{if } i\in Q_0^\spe,\\
     \{o\}   &\text{if } i\in \Qordv.
   \end{cases}
\end{split}   
\end{equation}
Here, we do not need the description of the arrows or the relations for $\tQ$.
See~\cite[Sec.~5.1]{Ge23} for more details on this.
We note however, that the Grothendieck group $K_0(\Ka\uQ)$ is
naturally identified with $\ZZ^{\tQ_0}$.  

In~\cite{Ge23}, the first author introduced a set
$\Adm^*(\uQ)$  of decorated admissible strings and bands,
together with an involution $?^{-1}$ of $\Adm^*(\uQ)$.  
This induces an equivalence relation $\simeq$ on $\Adm^*(\uQ)$. We extend this set to $\DAdm^*(\uQ)$
by adding symbols for the negative simple representations of 
$\Ka\uQ$.  

Moreover, a combinatorially defined symmetric pairing
\[
e_\uQ\df \DAdm^*(\uQ)\times\DAdm^*(\uQ)\ra\NN,
\]
and a combinatorially defined function
\[
\bg_\uQ\df\DAdm^*(\uQ)\ra\ZZ^{\tQ_0}.
\]
were introduced. Both $e_\uQ$ and $\bg_\uQ$ descend to well-defined functions on the respective domains $[\DAdm^*(\uQ)]/_{\simeq}\times [\DAdm^*(\uQ)]/_{\simeq}$ and $[\DAdm^*(\uQ)]/_{\simeq}$.
See Section~\ref{ssec:adm} and Section~\ref{ssec:combEg} for more details. 

Finally, consider the subset
\[
[\DAdm^*_\tau(\uQ)]:=\{(\bx,c)\in[\DAdm^*(\uQ)]\mid e_\uQ((\bx,c), (\bx,c))=0\}.
\]
One of the  main results of~\cite{Ge23} can now be stated as follows:

\begin{Thm} \label{thm:SkewedGIso}
  Let $\uQ$ be a skewed-gentle polarized quiver and $\Ka$ an algebraically
  closed field. There is an isomorphism of tame, framed
  partial KRS-monoids
  \[
    \widetilde{M}^o\df \KRS([\DAdm^*_\tau(\uQ)]/_{\simeq}, e_\uQ, \bg_\uQ)\ra
    (\DecIrr^\tau(\Ka\uQ), e_{\Ka\uQ}, \oplus, \bg_{\Ka\uQ})
  \]
  In particular, $\KRS([\DAdm^*_\tau(\uQ)]/_{\simeq}, e_\uQ, \bg_\uQ)$ is free
  of rank $\abs{\tQ_0}$.
\end{Thm}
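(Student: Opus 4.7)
The plan is to build the isomorphism on indecomposables first, verify that it intertwines both the symmetrized $E$-invariant and the generic $g$-vector, and then invoke the KRS-monoid framework to extend it canonically.  Since $\Ka\uQ$ is clannish in the sense of~\cite{CB89b} and hence tame, Theorem~\ref{cor:fd}(c) already yields
\[
\DecIrr^\tau(\Ka\uQ)\cong\KRS(\DecIrr^\tau_\ind(\Ka\uQ), e_{\Ka\uQ}),
\]
so it suffices to produce an $e$-preserving bijection
\[
\widetilde{M}^o_\ind\df [\DAdm^*_\tau(\uQ)]/_{\simeq}\ra\DecIrr^\tau_\ind(\Ka\uQ)
\]
that also intertwines $\bg_\uQ$ with $\bg_{\Ka\uQ}$.

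On admissible strings $\bx$, I would define $\widetilde{M}^o_\ind([\bx])$ to be the $G_{\bd}(\Ka)$-orbit closure of the string module $M(\bx)$ inside the appropriate $\DecRep_{\Ka\uQ}^{(\bd,0)}(\Ka)$; on admissible bands $(\bx,c)$, the image would be the Zariski closure of the constructible union $\bigcup_{\lambda\in\Ka^*}G_{\bd}(\Ka)\cdot M(\bx,\lambda,c)$; and on the negative simples adjoined in passing from $\Adm^*$ to $\DAdm^*$, the class is sent to the purely decorated component $(\{0\},\delta_i)$.  The equivalence $\simeq$ is exactly what is needed so that the orientation reversal $?^{-1}$ does not produce duplicates at the level of isomorphism classes of representations.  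That each target is an indecomposable, generically $\tau$-reduced component is the classification content of \cite{Ge23}: string modules corresponding to $\DAdm^*_\tau$-classes are bricks, and admissibility of a band is tailored precisely to force the generic member of the one-parameter family to be a brick with $E_{\Ka\uQ}$-self-invariant zero, so that the closure is $\tau$-reduced.

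The verification that $\widetilde{M}^o_\ind$ intertwines $e_\uQ$ with $e_{\Ka\uQ}$ and $\bg_\uQ$ with $\bg_{\Ka\uQ}$ is where the substantive work lies, and where I expect the main technical obstacle.  One must evaluate
\[
E_{\Ka\uQ}(M(\bx),M(\by))=\dim\Hom_{\Ka\uQ}(M(\bx),\tau M(\by))+\dim\Hom_{\Ka\uQ}(M(\by),\tau M(\bx))+(\text{decoration terms})
\]
on generic representatives and match with $e_\uQ$ as defined in Section~\ref{ssec:combEg}.  This reduces to the combinatorial Hom-formula between indecomposable representations of $\Ka\uQ$ proved in \cite{Ge23}, which extends the partial results of \cite{Ge99, Kr91, CB89a}; the truly delicate case is homomorphisms that \emph{involve band modules}, which were out of reach by the earlier methods and which require the assumption $\kar(\Ka)\neq 2$ used throughout \cite{Ge23}.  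The $g$-vector matching is then extracted from the minimal projective presentations of string and band modules, which are read off combinatorially from the endpoints and polarizations of the strings and bands.

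Once the $e$- and $\bg$-preserving bijection of indecomposables is in hand, the upgrade to an isomorphism of framed partial KRS-monoids is purely formal.  By Example~\ref{expl:KRS1}, any tame partial KRS-monoid $X$ is canonically recovered as $\KRS(X_\ind, e|_{X_\ind\times X_\ind})$, so $\widetilde{M}^o_\ind$ extends uniquely to a partial-monoid isomorphism $\widetilde{M}^o$, and this extension is automatically a morphism of framed monoids because $\bg$ is additive over $\oplus$ on both sides.  The freeness of $\KRS([\DAdm^*_\tau(\uQ)]/_{\simeq}, e_\uQ, \bg_\uQ)$ of rank $\abs{\tQ_0}$ then follows by transport from Theorem~\ref{cor:fd}(b): Plamondon's theorem furnishes the faithful framing $\bg_{\Ka\uQ}\df\DecIrr^\tau(\Ka\uQ)\ra\ZZ^{\tQ_0}$, and the intertwining exhibits $\bg_\uQ$ as the corresponding faithful framing on the combinatorial side.
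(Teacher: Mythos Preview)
Your proposal is correct and follows essentially the same route as the paper: both establish an $e$- and $\bg$-preserving bijection on indecomposables (the paper citing \cite[Thm.~6.4]{Ge23} directly, you sketching its content), then invoke tameness of $\Ka\uQ$ together with Theorem~\ref{cor:fd}(c) to pass to the full KRS-monoids via Example~\ref{expl:KRS1}, and finally deduce freeness from Theorem~\ref{cor:fd}(b). The only difference is granularity: the paper treats the bijection on indecomposables as a black-box import from \cite{Ge23}, whereas you unpack what that result asserts and where its difficulties lie.
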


\begin{proof}
In fact,  by~\cite[Thm.~6.4]{Ge23}  we have a natural bijection
\[
  \widebar{M}^o_{?}\df[\Adm^*_\tau(\uQ)]/_{\simeq}\ \ra \Irr^\tau_\ind(\Ka\uQ)
\]
such that
$e_{\Ka\uQ}(\widebar{M}^o_{(\bx,s)}, \widebar{M}^o_{(\by,t)})=e_\uQ((\bx,s), (\by,t))$ and 
$\bg_{\Ka\uQ}(\widebar{M}^o_{(\bx,s)})=\bg_\uQ(\bx,s)$ 
for all $(\bx,s), (\by,t)\in[\Adm^*_\tau(\uQ)]$.  
With our definitions, it is clear, that this extends to a bijection  
\[
[\DAdm^*_\tau(\uQ)]/_{\simeq}\ra\DecIrr^\tau_\ind(\Ka\uQ)
\]
with the same properties. 
Since $\Ka\uQ$ is tame, we have isomorphisms of tame, framed KRS-monoids: 
\[
  \KRS([\DAdm^*_\tau(\uQ)]/_{\simeq}, e_\uQ, \bg_\uQ)\rightarrow
  \KRS(\DecIrr^\tau_\ind, e_{\Ka\uQ}, \bg_{\Ka\uQ})\ra
  (\DecIrr^\tau(\Ka\uQ), e_{\Ka\uQ}, \oplus, \bg_{\Ka\uQ})
\]
where the first isomorphism is induced from  the bijection
$[\DAdm^*_\tau(\uQ)]/_{\simeq}\ra\DecIrr^\tau_\ind(\Ka\uQ)$, and the 
second isomorphism comes from Theorem~\ref{cor:fd}~(c).
The claim about the freeness follows now from Theorem~\ref{cor:fd}~(b). 
\end{proof}

\subsection{Marked curves on marked surfaces}
Let $\gls{bSu}=(\Sig,\Ma,\Pu)$  be a marked surface as in the introduction.
This means that $\Sig$ is a compact Riemann surface,
$\Ma\subset \dSu= \cup_{i=1}^b S_i$ is a finite set of marked points with
$\Ma\cap S_i\neq\emptyset$ for each boundary component $S_i$ and
$\Pu\subset\Su\setminus\dSu$ is a finite set of punctures.
Inspired by Qiu and Zhou~\cite{QZ17}  we introduce in Section~\ref{sec:punct} the set $\cLC^*(\bSu)$ of homotopy classes of marked loops and curves on $\bSu$ that have no kinks. 
Moreover, we  introduce a (symmetric) marked intersection number
\[
\Intn^*_\bSu\df \cLC^*(\bSu)\times\cLC^*(\bSu)\ra\NN,
\]
also inspired by the work of Qiu and Zhou. 
Given a marked curve (or  loop) $(\gam, c)\in\cLC^*(\bSu)$, we denote by $(\gam,c)^{-1}$ the inversely oriented curve $\gam$  with accordingly swapped decoration $c$.  
The above involution $?^{-1}$ induces an equivalence relation 
$\simeq$ on $\cLC^*(\bSu)$, and we let $\cLC^*(\bSu)/_{\simeq}$ denote the corresponding set of equivalence classes. 
For us, like in~\cite{QZ17}, a \emph{simple marked curve} is a marked curve $(\gam, c)\in\cLC^*(\bSu)$ which has self-intersection number $0$. 
We let $\cLC^*_{\tau}(\bSu)$ denote the set of all simple marked curves, and let $\cLC^*_{\tau}(\bSu)/_{\simeq}$ denote this set considered up to the equivalence relation $\simeq$.  
With this at hand we define the tame partial KRS-monoid of laminations
\[
\MSW(\bSu):= \KRS(\cLC^*_{\tau}(\bSu)/_{\simeq}, \Intn^*_\bSu),
\]
see Section~\ref{ssec:taggedC} for more details.  As a set, $\MSW(\bSu)$ can be identified
with the set of laminations $\mathcal{C}^o(\bSu)$ considered by Musiker--Schiffler--Williams
in~\cite{MSW13}.  This is also compatible with the treatment of laminations in~\cite{FT18},
Section~\ref{ssec:arcs}, see Sections~\ref{subsec:dual-shear-coords-arb-triangs} and~\ref{ssec:arcs}.

\section{Skewed-gentle polarized quivers} \label{sec:PolQ}
We review the relevant definitions and constructions from~\cite{Ge23}.
\subsection{Basic Definitions} \label{ssec:polar}
A \emph{\gls{heredpolQ}} 
is a skewed-gentle polarized quiver $\uH$, where each loop is special, and for each pair of different arrows
$(\alp,\bet)\in H_1\times H_1$ with $s(\alp)=t(\bet)$  we have  $\us(\alp)=-\ut(\bet)$.  
It is easy to see that the underlying graph of a  connected hereditary polarized quiver with $n$ vertices is isomorphic to one of the quivers from Table~\ref{tab:AD-graphs}.

\begin{table}[ht]
\begin{align*}
  \def\objectstyle{\scriptstyle}
  \sfA_n\colon&  \xymatrix{\qquad 1\ar@{-}[r]^>>>>{\nu_1}&2\ar@{-}[r]^{\nu_2} &\cdots \ar@{-}[r]^{\nu_{n-2}} &{n\!\!-\!1\!}\ar@{-}[r]^>>>>>{\nu_{n-1}}& n}\\
  \sfD'_n\colon&  \xymatrix{\qquad 1\ar@{-}[r]^>>>>{\nu_1}&2\ar@{-}[r]^{\nu_2} &\cdots \ar@{-}[r]^{\nu_{n-2}} &{n\!\!-\!\!1}\ar@{-}[r]^>>>>>{\nu_{n-1}}& n \ar@(ur,dr)[]^{\eta_1}}\\
     &\!\!\!\xymatrix{\ar@(ul,dl)[]_{\eta_0} 1\ar@{-}[r]^{\nu_1}&2\ar@{-}[r]^{\nu_2} &\cdots \ar@{-}[r]^{\nu_{n-2}} &{n\!\!-\!\!1}\ar@{-}[r]^>>>>>{\nu_{n-1}}& n} \\     
  \sftA_n\colon&  \xymatrix{\qquad 1\ar@{-}[r]^>>>>{\nu_1}&2\ar@{-}[r]^{\nu_2} &\cdots \ar@{-}[r]^{\nu_{n-2}} &{n\!\!-\!\!1}\ar@{-}[r]^>>>>>{\nu_{n-1}}&\ar@{-}[lld]^{\nu_n} n\\
              &&\ar@{-}[llu]^{\nu_0}0}\\
    \sftD'_n\colon&  \xymatrix{\ar@(ul,dl)[]_{\eta_0} 1\ar@{-}[r]^{\nu_1}&2\ar@{-}[r]^{\nu_2} &\cdots \ar@{-}[r]^{\nu_{n-2}} &{n\!\!-\!\!1}\ar@{-}[r]^>>>>>{\nu_{n-1}}& n \ar@(ur,dr)[]^{\eta_1}}\\   
\end{align*}
\caption{Underlying graphs of connected hereditary polarized quivers}
\label{tab:AD-graphs}
\end{table}

We extend and slightly generalize the notion of fringing
from~\cite[Sec.~3]{BY20} to our setting of skewed-gentle polarized quivers.
Let $\uQ\subset\uQ^f$ be an inclusion  of  skewed-gentle polarized
quivers. We say that $\uQ^f$ is
a \emph{\gls{fringing}} of $\uQ$ if the following two conditions hold:
\begin{itemize}
\item[(1)]
For each  vertex  $i\in Q_0$ we have
\[
  \abs{\{\alp\in Q^f_1}\mid s(\alp)=i\}+\abs{\{\bet\in Q^f_1}\mid t(\bet)=i\}= 4.\]  
\item[(2)]
 For each \emph{\gls{fringearrow}} 
 $\alp\in Q^f_1\setminus Q_1$ we have:
 $s(\alp)\in Q_0$ implies $\ut(\alp)\in (Q_0^f\setminus Q_0)\times\{+1\}$,
 and $s(\alp)\in\uQ^f_0\setminus\uQ_0$ implies $s_1(\alp)=+1$ and
  $t(\alp)\in Q_0$.  
\end{itemize}  
We call the elements of $Q_0^f\setminus Q_0$ \emph{\gls{fringevertices}}. 

In particular, each fringe arrow connects a fringe vertex with a vertex in $Q_0$.  
Moreover, each fringe vertex has valency at most 2, since at a fringe vertex the polarization of each incident arrow is $+1$.
It is clear that each skewed-gentle polarized quiver admits a
fringing, however our fringings are  not unique. In the Example from Section~\ref{sssec:skgep} below $\uQ^f$ is indeed a fringing of $\uQ$.

\begin{Rem}  (1)  For a field $\Ka$ with $\kar(\Ka)\neq 2$ and a skewed-gentle polarized quiver, we constructed in~\cite[Sec.~5.1]{Ge23} an algebra
  $\Ka\uQ$ in terms of quiver $\tuQ$ with relations, which turns out to be a
  skewed-gentle algebra in the sense of~\cite{GP99}.  If $\uQ$ is moreover
  connected and hereditary, $\Ka\uQ$ is isomorphic to a path algebra of
  a quiver of type  $\sfA_n$, $\sfD_{n+1}$, $\sftA_n$ or $\sftD_{n+2}$.  

  (2) We will see that, as in the gentle case~\cite[Thm.~4.4]{BY20}, fringings are a convenient tool to calculate $E$-invariants between the representations of $\Ka\uQ$.  If
  $\bSu=(\Su,\Ma,\Pu)$ is a marked surface with non-empty boundary
  $\partial\Su$ and $T=(\tau_1,\ldots,\tau_n)$ is a signature zero tagged triangulation in the sense of~\cite{FST08}, then we can assign to
  $T$ naturally a skewed-gentle polarized quiver $\uQ(T)$ which retains the
  information about the orientation of the arcs $\tau_i$.  Moreover, if we
  take into account the boundary segments of $\bSu$, we obtain naturally
  a fringing $\uQ^f(T)$ of $\uQ(T)$, see Section~\ref{ssec:tagged3ang}
  for more details.
\end{Rem}

\subsection{Letters and words}
\label{ssec:let}
Associated to a skewed-gentle polarized quiver $\uQ$ we have a set $\cL=\gls{cL(uQ)}$ of letters, which come in 5 types.
We describe them in the  Table~\ref{tab:letters}.
In particular, we extend the functions $\us$ and $\ut$ from arrows
to letters. Note that the trivial (inverse) letters 
$\II_{i,\rho}$ resp.~$\II_{i,\rho}^{-1}$ start resp.~end at a ``virtual polarized" vertex $*\not\in Q_0\times\{-1,1\}$.
Sometimes, for typographical reasons, we will abbreviate 
$\II_{(i,\pm 1)}$ to $\II_{(i,\pm)}$.

\begin{table}[ht]
\begin{tabular}{|p{1.5cm} |l |p{4cm}|c|c|l|}\hline
  type     & symbol & condition       & $\us$      & $\ut$      & inverse    \\ \hline\hline
  ordinary & $\alp$ & $\alp\in\Qord$& $\us(\alp)$& $\ut(\alp)$& $\alp^{-1}$ \\ \hline
inverse &$\alp^{-1}$ & $\alp\in\Qord$& $\ut(\alp)$& $\us(\alp)$& $\alp$     \\ \hline
special &$\eps^*$   & $\eps\in\Qsp$ & $(s(\eps),-1)$ & $(t(\eps),-1)$& 
$\eps^*$  \\ \hline
  trivial & ${\glslink{IIirho}{\II_{(i,\rho)}}}$ & $(i,\rho)\in Q_0\times\{-1,1\}\setminus\{\us(\eps)\mid\eps\in\Qsp\}$ &
* &$(i,\rho)$  & $\II_{(i,\rho)}^{-1}$\\ \hline                               trivial {inverse} &${\glslink{IIirho}{\II_{(i,\rho)}^{-1}}}$ & $(i,\rho)\in Q_0\times\{-1,1\}\setminus\{\us(\eps)\mid\eps\in\Qsp\}$ &$(i,\rho)$  & *& $\II_{(i,\rho)}$\\\hline
\end{tabular}\vspace*{2ex}
\caption{Letters for a skewed-gentle} polarized quiver
\label{tab:letters}
\end{table}
For each $(i,\rho)\in Q_0\times\{-1,+1\}$, the set of letters
\[
\cL_{(i,\rho)}(\uQ):=\{l\in\cL\mid\ut(l)=(i,\rho)\}
\]
contains at most three letters. We agree that those sets are linearly ordered
by the following rules:
\begin{itemize}
\item $\alp\gls{<}\II_{\ut(\alp)}$ for each ordinary letter $\alp$ and
\item $\II_{\us(\bet)}\gls{<}\bet^{-1}$ for each inverse letter $\bet^{-1}$.  
\end{itemize}
Note, that in case $\cL_{(i,\rho)}(\uQ)$ has three elements, it is of the form
$\{\bet <\II_{i,\rho}<\alp^{-1}\}$ for certain ordinary arrows 
$\alp,\bet\in Q_1$.
Moreover, $\cL_{(i,-1)}=\{\eps_i^*\}$ for $i\in\Qspv$.   
See~\cite[Sec.~2]{Ge23} for more details.

A \emph{\gls{word}} for $\uQ$ is a sequence of letters 
$\bw=w_1w_2\cdots w_m$ with the $w_i\in\cL_\uQ$ such that
$\us(w_i)=-\ut(w_{i+1})$ for $i=1,2,\ldots,m-1$. In this case we set
\begin{align*}
\glslink{wop}{\bw_{[k]}}  &=w_1w_2\ldots w_{k}, &
\glslink{wop}{\us(\bw)}  &=\us(w_m),\\
\glslink{wop}{\bw^{[k]}}  &=w_kw_{k+1}\cdots w_m, &
\glslink{wop}{\ut(\bw)}  &=\ut(w_1),\\ 
\glslink{wop}{bw^{-1}}    &=w_m^{-1}w_{m-1}^{-1}\cdots w_1^{-1}, &
\glslink{wop}{l(\bw)}         &=m.  
\end{align*}
If $\bv=v_1v_2\cdots v_l$ is another word with 
$\ut(\bv)=-\us(\bw)$, we can form the concatenated word 
$\bw\bv=w_1\cdots w_mv_1\cdots v_l$.

Finally, $\bw$ is \emph{right \gls{inextensible}} if 
$w_m=\II_{i,\rho}$ for some
$(i,\rho)\in Q_0\times\{-1,1\}$, and it is 
\emph{left \gls{inextensible}} if $\bw^{-1}$ is right inextensible.

Note  that two right inextensible words with comparable first
letter are comparable under the usual (left to right) lexicographical order on words, which is induced by the order on the letters.
The same is true for two words of the same length, if their respective first letters are comparable.

\subsection{Strings and Bands} \label{ssec:StBa}
A \emph{\gls{string}} for $\uQ$ is a word which is left- and right inextensible.
Thus, the first letter of a string is always a trivial inverse letter.
We denote the set of strings for $\uQ$ by$\glslink{StrQ}{\St(\uQ)}$,
and  $\glslink{StrQ}{\St_{(i,\rho)}(\uQ)}$ denotes the subset of strings which have as first letter $\II_{i,\rho}^{-1}$.
This set is linearly ordered by the lexicographic order, see above.

A string $\bw$ is \emph{\gls{symmetric}} if $\bw=\bw^{-1}$.
It is easy to see that in this case there exists a unique left inextensible word $\bv$ and a special letter
$\eps^{*}=(\eps^*)^{-1}$ such that $\bw=\bv\eps^*\bv^{-1}$.

For example, we have the \emph{\gls{simplestring}}
\begin{equation} \label{eq:tbsi}
{\gls{tbsi}} :=\begin{cases} 
\II_{(i,+1)}^{-1}\eps_i^*\II_{(i,+1)} &\text{if } i\in\Qspv,\\
\II_{(i,+1)}^{-1}\II_{(i,-1)}      &\text{if }i\in\Qordv
\end{cases}
\end{equation}
for $\uQ$.
Thus, $\tbs_i$ is a symmetric string if and only if $i\in\Qspv$.

By the definition of $\uQ$, for
\[
(i,\rho)\in Q_0^+:=(Q_0\times\{-1,1\})\setminus\{\us(\eps)\mid\eps\in\Qsp\}
\]
there exists a unique, left inextensible word of maximal length  
$\bl_{(i,\rho)}=\II^{-1}_{(i',\rho')}l_{p(i,\rho)}\cdots l_2l_1$ such that $\us(l_1)=(i,-\rho)$, and where none of the letters $l_i$ are inverse letters. 
If $p(i,\rho)=0$, we agree that $\bl_{(i,\rho)}=\II^{-1}_{(i,-\rho)}$.
Similarly, there exists a unique, right inextensible word
$\br=r_1r_2\cdots r_{q(i,\rho)}\II_{(i",\rho")}$ with
$\ut(\br_1)=(i,-\rho)$, and where none of the letters 
 $r_i$ are inverse.  Note that for $i\in\Qspv$ we have $\rho=+1$ and
$l_1=\eps_i^*=r_1$. In this case, we set 
$\bl'_{(i,+1)}=\II_{(i',\rho')}l_{p(i,+1)}\cdots l_2$ and
$\br'_{(i,+1)}:= r_2r_3\cdots r_{q(i,+1}\II_{(i",\rho")}$.
Then we set
\begin{equation}
    {\glslink{tbpq}{\tilde{\bp}_{(i,\rho)}}}:= \begin{cases}
    \bl'_{(i,+1)}\bl_{(i,+1)}^{-1} &\text{if } i\in \Qspv,\\
    \bl_{(i,\rho)}\bl^{-1}_{(i,-\rho)} &\text{else,}
\end{cases}
\quad\text{and}\quad
{\glslink{tbpq}{\tilde{\bq}_{(i,\rho)}}}:= \begin{cases}
    \br^{-1}_{(i,+1)}\br'_{(i,+1)} &\text{if } i\in \Qspv,\\
    \br^{-1}_{(i,\rho)}\bl_{(i,-\rho)} &\text{else.}
\end{cases}
\end{equation}
Note, that 
$\tilde{\bp}_{(i,\rho)}^{-1}=\tilde{\bp}_{(i,-\rho)}$ if 
$i\in\Qordv$ and $\tilde{\bp}_{(i,+1)}$ is symmetric if $i\in\Qspv$. Similarly, 
$\tilde{\bq}_{(i,\rho)}^{-1}=\tilde{\bq}_{(i,-\rho)}$
if $i\in \Qordv$ and $\tilde{\bq}_{(i,+1)}$ is symmetric if $i\in\Qspv$.
The set $\glslink{tcPI}{\widetilde{\cP}_\uQ}:=
\{\tilde{\bp}_{(i,\rho)}\mid (i,\rho) \in Q_0^+\}$
is called the set of \emph{projective strings}. Similarly
$\glslink{tcPI}{\widetilde{\cI}_\uQ}:=
\{\tilde{\bq}_{(i,\rho)}\mid (i,\rho)\in Q_0^+\}$
is called the set of \emph{injective strings}.
See Section~\ref{sssec:exStBa} for examples. 

A \emph{\gls{band}} is a word $\bw$ such that $\bw\bw$ is also a word.
A band is \emph{\gls{primitive}} if there is no  word $\bv$ such that
$\bw=\bv^{n}$ for some $n\geq 2$.
If $\bw$ is a (primitive) band, then so are all its
\emph{\gls{rotations}} $\bw^{[k+1]}\bw_{[k]}$.
A primitive band is \emph{\gls{symmetric}} if 
$\bw^{-1}=\bw^{[k+1]}\bw_{[k]}$ for some $k$.
This is the case if and only if some rotation of $\bw$ is of the form $\eps^*\bv\zeta^*\bv^{-1}$ for some word $\bv$ and
special letters $\eps^*$ and $\zeta^*$. 
Note, that a special letter $\eps^*$ alone is not a symmetric band, since $\eps^*\eps^*$ is not a word. 

We denote by $\glslink{BaQ}{\pBa(\uQ)}$ resp.~$\glslink{BaQ}{\Ba(\uQ)}$ the set of (primitive) bands for $\uQ$.

Two strings are \emph{\gls{equivalent}}, in symbols,
$\bv\gls{sim}\bw$ if $\bv\in\{\bw,\bw^{-1}\}$.
Two bands are \emph{\gls{equivalent}}, in symbols $\bv\gls{sim}\bw$ if
$\bv^\rho=\bw^{[k+1]}\bw_{[k]}$ for some $\rho\in\{-1,+1\}$ and $k=0,1,\ldots, m\!-\!1$, 
where $m$ is the number of letters of $\bw$.
We denote by $\pBa'(\uQ)$ the set of primitive bands in \emph{standard form}.
By definition, $\glslink{BaQ}{\pBa'(\uQ)}\subset\pBa(\uQ)$ consists of all asymmetric primitive bands, and of the symmetric (primitive) bands, which are of the form 
$\eps^*\bv\zeta^*\bv^{-1}$, as above.
The equivalence classes of bands resp.~primitive bands modulo rotations  are denoted by  $[\Ba(\uQ)]$ resp. $[\pBa(\uQ)]$.

\subsection{\texorpdfstring{$\huQ$}{Q} and construction of \texorpdfstring{$A(\bw)$}{A(w)} for strings and
  bands} \label{ssec:huQA}
Following~\cite[Sec.~3.2]{Ge99},
the \emph{associated gentle polarized quiver} $\gls{huQ}=(\hQ_0,\hQ_1,\us,\ut)$
of a skewed-gentle polarized quiver $\uQ$ 
is obtained  by taking $\hQ_0=Q_0$, 
$\hQord=\Qord\cup\Qsp$, $\hQsp=\emptyset$ and otherwise the same functions
$\us,\ut$. Thus,  we have a morphism
\[
\gls{FQ}\df\huQ\ra\uQ
\]
of polarized quivers, which is bijective on vertices and arrows.  Note, 
that $\huQ$ is a gentle polarized quiver. 
In particular, for each special arrow $\eps_i\in\Qsp$ we now have a linearly ordered set
\[
  \cL_{(i,-1)}(\huQ)=\{\eps_i\gls{prec}\II_{(i,-1)}\prec\eps_i^{-1}\}
\]
of letters.  With our conventions, we may set
\[
\gls{bsi} :=\II_{(i,-1)}^{-1}\II_{(i,+1)}\in\St_{(i,-1)}(\huQ)  
\]  
for all $i\in Q_0$.

We can extend $\gls{FQ}$ to a surjective morphism  between the corresponding
sets of  words, which is compatible with the concatenation of words.
In particular,    $F^{-1}(\eps_i^*)=\cL_{(i,-1)}(\huQ)$ for each special letter $\eps_i^*$ and
$F^{-1}(l)=\{l\}$ for each letter $l$ which is not special.

For example, we have
\[
F(\bs_i)=\begin{cases} \bs_i=\tbs_i &\text{if } i\in \Qordv,\\
\eps^*_i\II_{(i,+1)} &\text{if } i\in\Qspv.
\end{cases}
\]

Following~\cite[Sec.~2.3]{QZ17} we call the trivial letters for $\huQ$ which are of the form 
  $\II^{\pm 1}_{(i,-1)}$ with $i\in\Qspv$ 
  \emph{\gls{puncturedletters}}.
We say that a string $\bx=x_0x_1\cdots x_l\in\St(\huQ)$ 
is of \emph{\gls{type}} $(a,b)\in\{u,p\}^2$, with $a=p$ if and only if $x_0$ is
punctured and $b=p$ if and only if $x_l$ is punctured.

We have for example
\[
  \type(\bs_i)=\begin{cases} (u,u) &\text{if } i\in\Qordv,\\
                             (p,u) &\text{if } i\in\Qspv.
                           \end{cases}
\]

Similarly, we say that the elements of $\pBa(\huQ)$ are of \emph{\gls{type}} $(b)$. 
Thus, we have defined a map
\[
\type\df\St(\huQ)\cup\Ba(\huQ)\ra \{u,p\}^2\cup\{b\}.
\]
Note that we use the symbol $\gls{prec}$ for the partial order on the letters for $\huQ$, as well as on the induced lexicographic order on the corresponding words, rather than the symbol 
$\leq$, which we reserve for the strings and bands of $\uQ$.

We construct now, closely inspired by~\cite[Sec.~3]{CB89b}, for each
$\bw\in\St(\uQ)\cup\pBa'(\uQ)$ an element
$\gls{A(bw)}\in\St(\huQ)\cup\pBa(\huQ)$.   

For $\bw=w_0w_1\cdots w_n$  an \textbf{asymmetric string}, we set
$A(w):=a_0a_1\cdots a_n$ with $a_i=w_i$ whenever $w_i$ is not a special letter.
If however $w_i=\eps^*$ is a special letter, then
$a_i=\eps$ if $(\bw_{[i-1]})^{-1} >\bw^{[i+1]}$,  and $a_i=\eps^{-1}$ if
$(\bw_{[i-1]})^{-1} <\bw^{[i+1]}$.  

For $\bw=v_0v_1\cdots v_{m-1}\eps_b^* v_{m-1}^{-1}\cdots v_0^{-1}$  with
$b\in\Qspv$, a \textbf{symmetric string}, we set
$A(\bw):=a_0a_1\cdots a_{m-1} \II_{b,-1}$ with $a_i=w_i$ if $w_i$ is not a special
letter. If, however, $w_i=\eps^*$ is a special letter, then
$a_i=\eps$ if $(\bw_{[i-1]})^{-1} >\bw^{[i+1]}$,  and $a_i=\eps^{-1}$ if
$(\bw_{[i-1]})^{-1} <\bw^{[i+1]}$.

For $\bw=w_0w_1\cdots w_n$ an \textbf{asymmetric band}, we set
$A(\bw):=a_0a_1\cdots a_n$ with $a_i=w_i$ if $w_i$ is not a special letter.
If, however, $w_i=\eps^*$ is a special letter, 
then $a_i:=\eps$ if
$\bw_{[i-1]}^{-1}(\bw^{[i-1]})^{-1}>\bw^{[i+1]}\bw_{[i-1]}$ and $a_i:=\eps^{-1}$ 
if $\bw_{[i-1]}^{-1}(\bw^{[i-1]})^{-1}<\bw^{[i+1]}\bw_{[i-1]}$.

For $\bw=\eps^*_av_1v_2\cdots v_{n-1}\eps^*_bv_{n-1}^{-1}\cdots v_1^{-1}$ a \textbf{symmetric band} in standard form we set
$A(\bw):=\II^{-1}_{a,-1}a_1a_2\cdots a_{n-1}\II_{b,-1}$ with
$a_i=v_i$ if $v_i$ is not a special letter.  Else,  for $v_i=\eps^*$  special,
we set $a_i=\eps$ if $\bw_{[i-1]}^{-1}(\bw^{[i+1]})^{-1}>\bw^{[i+1]}\bw_{[i-1]}$ and
$a_i=\eps^{-1}$ if $\bw_{[i-1]}^{-1}(\bw^{[i+1]})^{-1}<\bw^{[i+1]}\bw_{[i-1]}$.
See Section~\ref{sssec:ExAdm} below, for examples
of the construction of $A(\bw)$ in the context of our running
example~\ref{sssec:skgep}.
  
\subsection{Admissible words vs.~strings and bands}
\label{ssec:adm}
Let $\uQ$ be a skewed-gentle polarized quiver, and $\huQ$ the associated gentle polarized quiver from Section~\ref{ssec:huQA}. 
We recall that $F\df\huQ\ra\uQ$ induces a surjective map between the respective sets of words, which we also denote by $F$.  However, $F$ does not restrict directly to a map between the respective sets of strings and (primitive) bands. For example, for $i\in\Qspv$ we have
$\II_{(i,-1)}^{-1}\II_{(i,+1)}^{\phantom{-1}}\in\St(\huQ)$, however
$F(\II_{(i,-1)}^{-1}\II_{(i,+1)}^{\phantom{-1}})=\eps_i^*\II_{(i,+1)} \not\in\St(\uQ)$. 

Thus, we define the \emph{\gls{completion}}
$\gls{cpl}\df \St(\huQ)\cup\Ba(\huQ)\ra\St(\uQ)\cup\Ba(\uQ)$ by 
\begin{equation} \label{eq:cpl}
  \bx=x_0x_1\cdots x_l \mapsto\begin{cases}
    F(\bx) &\text{if } \bx\in\St(\huQ) \text{ is of type } (u,u),\\
    F(\bx)F(x_{l-1}^{-1}\cdots x_1^{-1} x_0^{-1})
    &\text{if } \bx\in\St(\huQ) \text{ is of type } (u,p),\\
    F(x_{l}^{-1}\cdots x_2^{-1}x_1^{-1})F(\bx)
    &\text{if } \bx\in\St(\huQ) \text{ is of type } (p,u),\\
    F(\bx) F(x_{l-1}^{-1}\cdots x_2^{-1}x_1^{-1}) 
    &\text{if } \bx\in\St(\huQ) \text{ is of type } (p,p),\\
    F(\bx) &\text{if } \bx\in\Ba(\huQ).
  \end{cases}
\end{equation}
In the first three cases $\widebar{\bx}\in\St(\uQ)$, whilst
$\widebar{\bx}\in \Ba(\uQ)$ in the last two cases. Note, that for a primitive band $\bx\in\pBa(\huQ)$, possibly 
$\widebar{\bx}\in\Ba(\uQ)$ is not primitive.
Indeed, if we take $\uQ$ as in the Example from Section~\ref{sssec:skgep} below,
$\bx=\eps_1\gam\eps_3\gam^{-1}\eps_1^{-1}\gam\eps^{-1}_3\gam^{-1}\in\pBa(\huQ)$, but $\widebar{\bx}=F(\bx)=(\eps_1^*\gam\eps_3^*\gam^{-1})^2\in\Ba(\uQ)$ is 
\emph{not} primitive.

We extend slightly a definition proposed by Qiu and Zhou~\cite[Sec.~2.5]{QZ17}: 

\begin{Def}\label{def:admsb}
Consider for a string $\bx=x_0x_1\cdots x_l\in\St(\huQ)$ the following
conditions.
\begin{itemize}
\item[(s1)]
For any $k\in\{1,2,\ldots, l-1\}$ with  $F(x_k)\in\Qsp$ we have 
  $\bx_{[k-1]}^{-1}\neq\bx^{[k+1}]$, and in this case the letter $x_k$ is  direct if and only if 
  $\bx_{[k-1]}^{-1}\succ \bx^{[k+1}]$.
\item[(s2)]
  If $\bx$ is of type $(p,p)$, then $\widebar{\bx}\in\Ba(\uQ)$ is a primitive
  band.
\end{itemize}
Then, $\bw$ is \emph{\gls{admissible}} (with respect to $F$) if it fulfils
conditions (s1) and (s2). 

Similarly, consider for a band $\bx=x_0x_1\cdots x_l\in\Ba(\huQ)$
the following two conditions:
\begin{itemize}
\item[(b1)]  
For any $k\in\{1,2,\ldots, l\}$ with  $F(x_k)\in\Qsp$ we have
$(\bx^{[k-1]})^{-1}\bx_{[k-1]}^{-1}\neq\bx^{[k+1]}\bx_{[k-1]}$,
and in this situation $x_k$ is  direct if and only if
$(\bx^{[k-1]})^{-1}\bx_{[k-1]}^{-1}\succ\bx^{[k+1]}\bx_{[k-1]}$,
\item[(b2)]  
$F(\bx)\in\Ba(\uQ)$ is primitive and asymmetric.     
\end{itemize}
Then $\bx$ is \emph{\gls{admissible}} (with respect to $F$), if the  conditions
(b1) and (b2) are fulfiled.

We denote by $\glslink{AdmSBQ}{\AdmSt(\uQ)}\subset\St(\huQ)$ resp. 
$\glslink{AdmSBQ}{\AdmBa(\uQ)}\subset\pBa(\huQ)$ the set of strings and bands for $\huQ$ which are admissible with respect to $F$.
Moreover, $\glslink{AdmSBQ}{[\AdmBa(\uQ)]}$ denotes the equivalence classes of elements of $\AdmBa(\uQ)$ modulo rotations.

Next, we  abbreviate
$\glslink{AdmQ}{\Adm(\uQ)}:=\AdmSt(\uQ)\cup\AdmBa(\uQ)$,
and define (sub)sets
\begin{align*}
\glslink{AdmQ}{\Adm_s(\uQ)} &:=\{\bx\in\AdmSt(\uQ)\mid \bx \text{ is not of type } (p,p)\},\\
\glslink{AdmQ}{\Adm_b(\uQ)} &:=\{\bx\in\AdmSt(\uQ)\mid \bx \text{ is  of type } (p,p)\}\cup \AdmBa(\uQ),\\
\glslink{AdmQ}{[\Adm_b(\uQ)]} &:=\{\bx\in\AdmSt(\uQ)\mid \bx \text{ is  of type } (p,p)\}\cup[\AdmBa(\uQ)].
\end{align*}
\end{Def}

Obviously, for all $i\in Q_0$ the strings $\bs_i^{\pm 1}$ are admissible,
and we have $\overline{\bs}_i=\tbs_i$ for all $i\in Q_0$. It is also clear that a string $\bx$ is admissible if and only if 
$\bx^{-1}$ is admissible.

We have the following quite non-trivial result from~\cite{Ha22}.  A sketch of the proof can also be found in~\cite[Appendix~B]{Ge23}.  

\begin{Prop}[Hansper] \label{prp:Adm}
Let $\uQ$ be a skewed-gentle polarized quiver and $F\df\huQ\ra\uQ$
the canonical morphism starting in the corresponding gentle polarized
quiver $\huQ$.  Then $\bx\in\St(\huQ)\cup\pBa(\huQ)$ is admissible
if and only if $\bx=A(\bw)^{\pm 1}$ for some $\bw\in\St(\uQ)\cup\pBa'(\uQ)$.

In particular, all the $A(\bw)$ are admissible strings or bands, and we have obviously $\widebar{A(\bw)}=\bw$.  
\end{Prop}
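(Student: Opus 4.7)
The plan is to establish the two directions of the claimed equivalence separately. The forward direction (that $A(\bw)^{\pm 1}$ is always admissible) should follow by a direct verification from the definition of $A$, whereas the reverse direction is best handled by exhibiting $\widebar{?}$ as an inverse of $A$ on admissible objects.

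For the forward direction, I would case-split according to whether $\bw\in\St(\uQ)\cup\pBa'(\uQ)$ is an asymmetric string, a symmetric string, an asymmetric band, or a symmetric band in standard form, matching the four cases in the construction of $A(\bw)$ in Section~\ref{ssec:huQA}. In each case, the non-special letters of $\bw$ are preserved verbatim, so the only non-trivial points are to check (i) the compatibility of source and target for consecutive letters, which reduces to unraveling that the special letters $\eps^*$ in $\uQ$ always satisfy $\us(\eps^*)=\ut(\eps^*)=(s(\eps),-1)$, whereas the arrows $\eps,\eps^{-1}\in\cL(\huQ)$ that replace them have source/target involving $(s(\eps),-1)$ in exactly the way needed, and (ii) conditions (s1)/(b1) at the resolved positions. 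Condition~(s1) at position $k$ literally reads $\bx_{[k-1]}^{-1}\neq\bx^{[k+1]}$ together with a sign rule matching the lexicographic comparison; this is precisely what the construction of $a_k$ produces, provided one verifies that the order $\preceq$ on $\huQ$-words and the order $\leq$ on $\uQ$-words agree under $F$ once the contested letter has been resolved. Primitivity of the resulting band (s2)/(b2) follows because rotations of $A(\bw)$ correspond to rotations of $\bw$ via $\widebar{?}$.

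For the reverse direction, given an admissible $\bx\in\St(\huQ)\cup\pBa(\huQ)$, I would set $\bw:=\widebar{\bx}$ as defined in \eqref{eq:cpl} and show $A(\bw)\in\{\bx,\bx^{-1}\}$ (or, for bands, $\{\bx,\bx^{-1}\}$ up to rotation, to produce a representative in $\pBa'(\uQ)$). First, one checks that $\bw$ is indeed a legitimate string or primitive band for $\uQ$: the word structure is clear from the definition of $\widebar{?}$ and the fact that each punctured endpoint $\II_{i,\pm 1}^{\pm 1}$ with $i\in\Qspv$ gets folded into a special letter $\eps_i^*$, whilst primitivity in the $(p,p)$ and band cases is exactly conditions (s2) and (b2). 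Next, for each position $k$ in $\bw$ where a special letter $\eps^*$ appears, one compares the sign rule built into the definition of $A(\bw)_k$ with condition~(s1)/(b1) for $\bx$ at the corresponding position; admissibility forces $\bx_{[k-1]}^{-1}\neq\bx^{[k+1]}$, ruling out equality, and the strict inequality determines whether $A(\bw)_k=\eps$ or $\eps^{-1}$ in agreement with $\bx$.

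The main obstacle will be a clean translation of the lexicographic comparisons between the two worlds: the words on either side of a special letter in $\bw$ live in $\uQ$, whereas the corresponding comparisons in the admissibility condition for $\bx$ live in $\huQ$. The resolution should go through the observation that, in both comparisons, the two words being compared already coincide past any special letter that they share; the first place where they differ is always a non-special letter at which the orders on $\cL(\uQ)$ and $\cL(\huQ)$ coincide under $F$. A secondary subtlety arises in the symmetric cases, where $\bx$ being equivalent to $\bx^{-1}$ (respectively, to a rotation by half-length) under the equivalence $\sim$ creates an ambiguity that matches exactly the freedom of picking the $\pm 1$ in the statement, so that passing to $\Adm(\uQ)$ rather than to its quotient by $\sim$ records the choice of orientation.
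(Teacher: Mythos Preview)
The paper does not actually prove this proposition; it is quoted as a result of Hansper~\cite{Ha22}, with the paper only pointing to a sketch in~\cite[Appendix~B]{Ge23}. So there is no in-paper argument to compare your proposal to, and the authors themselves flag it as ``quite non-trivial''.

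Your overall architecture is the natural one: verify directly that each $A(\bw)$ satisfies (s1)/(s2) resp.~(b1)/(b2), and for the converse feed an admissible $\bx$ through $\widebar{?}$ and show $A(\widebar{\bx})=\bx^{\pm 1}$. The identity $\widebar{A(\bw)}=\bw$ is indeed immediate.

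The gap is precisely where you place your ``main obstacle'': the claim that in the lexicographic comparison of $\bx_{[k-1]}^{-1}$ with $\bx^{[k+1]}$ in $\huQ$, ``the first place where they differ is always a non-special letter''. This is not obvious. If the two sub-words agree on their first $m-1$ letters and position $m$ carries a resolved special letter on both sides, these two letters sit at \emph{different} positions of $\bx$ (roughly $i-m$ and $i+m$), and their directions are governed by two \emph{different} instances of (s1), each comparing yet further sub-words. To conclude that the two resolved letters coincide you need to exploit the palindrome $x_{i-j}^{-1}=x_{i+j}$ for $j<m$ to relate those two comparisons; this forces an induction (on $m$, or on the number of special letters encountered) that you do not set up. The same issue recurs in the forward direction when you assert that the $\uQ$-order and the $\huQ$-order agree under $F$ ``once the contested letter has been resolved'': in $\uQ$ the comparison skips over every special position (since $\eps^*$ is the unique letter there), whereas in $\huQ$ the comparison may halt at such a position, and reconciling the two needs the same inductive mechanism.

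In short: the plan is sound, but the sentence you offer as resolution is exactly the hard part, and it needs an explicit induction rather than a one-line observation.
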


\begin{Rem} \label{rem:Adm}
The operation  $\widebar{?}$ induces a map
\[
  (\widebar{?})_s \df\Adm_s(\uQ)\ra\St(\uQ), \bx\mapsto\widebar{\bx}.
\]
By  Proposition~\ref{prp:Adm}, this map is surjective and
$(\widebar{\bx})_s=(\widebar{\by})_s$ implies either $\bx=\by$, or $\bx=\by^{-1}$
maps to  a symmetric string.  
Similarly, $\widebar{?}$ induces a map
\[
  [\widebar{?}]\df [\Adm_b(\uQ)]\ra [\pBa(\uQ)],
\]
which is also surjective.  Moreover, $[\widebar{\bx}]=[\widebar{\by}]$ implies
$\bx=\by$, or $\bx=\by^{-1}$ maps to a symmetric band in $\pBa'(\uQ)$, again
by Proposition~\ref{prp:Adm}. The situation may be visualized as follows:\\[.5cm]
\hspace*{2cm}\begin{tikzpicture} 
\node at (0,0)  {$\St(\huQ)$} ;
\node at (1,.05)  {$\supset$} ;
\node at (2.2,0)  {$\AdmSt(\uQ)$} ;
  \matrix at (4,-.3)
  {\node(l1){$(u,u)$};\\ \node{$(u,p)$}; \\ \node(l3){$(p,u)$};\\
    \node(l4){$(p,p)$};\\ \node(l5){$\phantom{(x,x)}$};\\};
\draw[decorate,decoration=brace] (l1.east) -- (l3.east);
\draw[decorate,decoration=brace] (l4.north east) -- (l5.south east);
\draw[decorate,decoration=brace] (l4.south west) -- (l1.north west);

\node at (0,-1.8)   {$\pBa(\huQ)$};
\node at (1,-1.75)  {$\supset$} ;
\node at (2.2,-1.8) {$\AdmBa(\uQ)$} ;
\path (5.8,.3) node[outer xsep=1](As) {$\Adm_s(\uQ)$} -- +(3, 0) node(St){$\St(\uQ)$} --  ++(0,-1.6)  node[outer xsep=1](Ab) {$\Adm_b(\uQ)$} -- ++(3,0) node(pB){$\pBa'(\uQ)$};
\draw[thick, ->] (As) -- node[above]{$\overline{?}$} (St);
\draw[thick, ->] (Ab) -- node[above]{$\overline{?}$} (pB);
\draw[rounded corners=4pt] (1.2,1.4) rectangle (6.6,-2.1);
\node at (3.9,-2.6) {$\Adm(\uQ)$};
\end{tikzpicture}
\end{Rem}

\subsection{The operator \texorpdfstring{$\tau_f$}{τf}} \label{ssec:AR-adm}
Inspired by~\cite{BY20} and~\cite{BR87} we introduce, relative to a fringing $\uQ\subset\uQ^f$, the combinatorial fringe AR-translate
$\tau_f\df\Adm(\uQ)\ra\Adm(\uQ^f)$ as a convenient device to calculate later on the combinatorial $E$-invariant in symmetric way. 
This is crucial for our goal to relate later on  the 
$E$-invariant with our intersection number. 
See Remark~\ref{rem:kisses} (2), Section~\ref{ssec:combEg} and Theorem~\ref{thm:intersec}.
Let $\uQ$ be a skewed-gentle polarized quiver with fringing $\uQ^f$, and $\huQ$
resp.~$\huQ^f$ the corresponding gentle quivers from Section~\ref{ssec:huQA}.
For each arrow $\alp\in\huQ^f_1$, let $\bh'_\alp$ be the longest right inextensible word for $\huQ^f$ with the following two properties:
\begin{itemize}
\item $\ut(\bh'_\alp)=-\ut(\alp)$,
\item $\bh'_\alp$ contains no inverse letters.
\end{itemize}
In particular, $\bh_\alp:=\alp^{-1}\bh'_\alp$ is a right inextensible word.

Let $\bx=\bx'\II_{i,\rho}\in\St(\uQ)$. By the construction of 
$\uQ^f$, there exists a unique $\alp(\bx)\in\huQ^f_1$ with 
$\us(\alp(\bx))=(i,\rho)$, and thus
$\bx'\bh_{\alp(\bx)} \in\St(\huQ^f)$.  We define now successively
\begin{align*}
  \bx[1]_f &:= \begin{cases} \bx'\bh_{\alp(\bx)} &\text{if } \II_{i,\rho}
    \text{ is unpunctured,}\\
     \bx &\text{if } \II_{i,\rho} \text{ is punctured,}
   \end{cases}\\
   {_f[1]}\bx &:= (\bx^{-1}[1]_f)^{-1},\\ 
  \tau_f(\bx) &:= {_f[1](\bx[1]_f)}= ({_f[1]\bx})[1]_f.
\end{align*}                
Moreover, we agree that $\tau_f(\bx)=\bx$ for all $\bx\in\pBa(\huQ)$.
We leave it as an exercise to show that the operator $\tau_f$ restricts to an injective map from $\Adm(\uQ)$
to $\Adm(\uQ^f)$.  With our notation we have moreover 
$\tau_f(\bx)=\bx$ for all $\bx\in\Adm_b(\uQ)$. 
See Section~\ref{sssec:ExTauf} below for an illustration
of this construction in the context of our running example.

\subsection{The category of windings}
\label{ssec:windings}
A \emph{homomorphism} $G\df\uH\ra\uQ$ between skewed-gentle polarized quivers is given by a pair of maps $G=(G_0, G_1)$  
with $G_0\df H_0\ra Q_0$, $G_1\df H_1\ra Q_1$ and $G_1(H_1^\spe)\subset Q_1^\spe$,  such that
\[
  \us(G_1(\alp))=(G_0(s(\alp)),s_1(\alp)),\quad\text{and}\quad
  \ut(G_1(\alp))=(G_0(t(\alp)),t_1(\alp))
\]
for all arrows $\alp\in H_1$.
The homomorphism $G$ is called \emph{\gls{strict}}, if it sends ordinary arrows to ordinary arrows.

By definition, a \emph{\gls{morphism}} $\phi\df G\ra G'$ between homomorphisms of skewed-gentle polarized quivers
$G\df\uH\ra\uQ$ and $G'\df\uH'\ra\uQ$ is given by a
strict homomorphism
$\phi\df\uH\ra\uH'$ of skewed-gentle  polarized quivers, such that $G'\phi=G$.

If in this situation $\uH$  and $\uH'$ are  connected hereditary and $\uH$ is
moreover of type $\sftA_n$,
then $\phi$ is surjective, and  $\uH'$ is
of type $\sftA_m$ with $m\leq n$. 
If $\uH$ is of type $\sftD'$, then $\phi$ is already an isomorphism.

A \emph{\gls{winding}} of the skewed-gentle quiver $\uQ$ is a homomorphism
$G\df\uH \ra\uQ$ of skewed-gentle polarized quivers, where $\uH$ is a connected hereditary polarized quiver.  
If $\uH$ is of type $\sftA$, we request moreover,
that each morphism which starts in $G$ is in fact an isomorphism.
In view of the above remark, our notion of 
windings is compatible  with the one from~\cite{Kr91}.

For a fixed, skewed-gentle polarized quiver $\uQ$, we are interested in the category of windings $G\df\uH \ra\uQ$.      

\subsection{From words to windings}
\label{ssec:Hw}
Let from now on $\uQ$ be a skewed-gentle polarized quiver.

Following closely~\cite[Sec.~3]{CB89b},
we construct for each  $\bx\in\St(\huQ)\cup\Ba(\huQ)$ a winding
\[
  \glslink{GHx}{G_\bx}\df \glslink{GHx}{\uH(\bx)} \ra\uQ.  
\]
We consider first the case when $\bx=x_0x_1\cdots x_n$
is a string for $\huQ$.  Then 
the underlying graph of $H(\bx)$ is a subgraph of the diagram
$\sftD'_n$ from Table~\ref{tab:AD-graphs}.  
If $x_0$ is unpunctured,
we remove the loop $\eta_0$, else $G(\eta_0)=\eps_{s(x_0)}^*$.  
Similarly, if $x_n$ is unpunctured, the loop $\eta_1$ is removed and else
$G_\bx(\eta_1)=\eps_{t(x_n)}^*$.
For $i=1,2,\ldots, n-1$ the edge $\nu_i$ is oriented to the left if and only if
$w_i$ is a direct letter, and we set $G_\bx(\nu_i)=x_i$.
Else, $\nu_i$ is oriented to the right, and we set $G_\bx(\nu_i)=x_i^{-1}$.
Finally, $G_\bx(i):=t(x_i)$ for $i=1,2,\ldots, n$.

If $\bx=x_0x_1\cdots x_n$ is a band for $\huQ$, the underlying graph of
$H(\bx)$ is of type $\sftA_n$ from Table~\ref{tab:AD-graphs}. In this case, all letters of $\bx$ are direct or inverse ordinary letters. 
For $i=0,1, \ldots, n$ we orient $\nu_i$ anti-clockwise if $x_i$ is a direct letter
and set $G_\bx(\nu_i)=x_i$. Else, $\nu_i$ is oriented clockwise, and we set
$G_\bx(\nu_i)=x_i^{-1}$. As for the vertices, we set $G_\bx(i):=t(x_i)$
for $i=0, 1, \ldots, n$.

We can upgrade $H(\bx)$ to a hereditary skewed-gentle polarized quiver
$\uH(\bx)$ by declaring all 
loops to be special, and taking $s_1(\alp)=s_1(G_\bx(\alp))$
as well as $t_1(\alp)=t_1(G(\alp))$ for all arrows $\alp\in H_1(\bx)$. 

The morphisms $G_\bx\df\uH(\bx)\ra\uQ$ with
$\bx\in\St(\uQ)\cup\pBa(\uQ)$, which we just constructed,
are special cases of windings. See Section~\ref{sssec:ExWindings} below for several examples of windings in the context of our running example~\ref{sssec:skgep}.

\subsection{(Polarized) boundary vertices} \label{ssec:PBV}
Let $\uH$ be a hereditary polarized quiver. 
We say that $i\in H_0$ is a \emph{\glslink{bvertex}{boundary vertex}}
if 
\[
\abs{\{\alp\in H_1\mid s(\alp)=i\} \cup\{\bet\in H_1\mid t(\bet)=i\}}\leq 1.
\]
Similarly, $(i,\rho)\in H_0\times\{-1,+1\}$ is a
\emph{\glslink{bvertex}{polarized} boundary vertex},
if $i$ is a boundary vertex,
and $(i,\rho)\not\in\Ima(\us)\cup\Ima(\ut)$.  
Thus, $\uH$ has exactly two polarized boundary vertices if  its underlying graph is of type $\sfA_n$, and exactly one polarized boundary vertex if the underlying graph is of type $\sfD'_n$. This is in particular the case for $n=1$.  
Note, that the trivial quiver of type $\sfA_1$ has obviously only one boundary vertex, but \emph{two} polarized boundary vertices.
If the underlying graph of $\uH$ is of type $\sftA_n$ or of type $\sftD'_n$, then $\uH$ has no (polarized) boundary vertices.

If, for example, 
$\bx=\II^{-1}_{(i,\rho)}x_1 x_2\cdots x_{n-1}\II_{(i',\rho')}\in\St(\huQ)$ is of type $(u,u)$, the hereditary polarized quiver $\uH(\bx)$ from Section~\ref{ssec:Hw} is of type  $\sfA_n$.  
It has exactly $(1,\rho)$ and $(n,\rho')$ as polarized boundary vertices.
On the other hand, for $i\in\Qspv$, we can consider the hereditary polarized quiver $\uH$ of type $\sfA_1$ with the single (ordinary) vertex $1$, and $G\df\uH\ra\uQ, 1\mapsto i$.  In this case, we have the two polarized boundary vertices $(1,\pm 1)$, and notice that $G$ is \emph{not} of the form $G_\bx$ for some $\bx\in\St(\huQ)$.

\subsection{H-triples and K-triples} \label{ssec:triph}
In~\cite{Kr91} Krause introduced ``admissable'' 
triples as a framework for
the combinatorial description of homomorphisms between representations
of gentle algebras. We adapt his construction here to our setting of
skewed-gentle polarized quivers.

We consider the following properties  of a morphism  
$\phi\df G\ra G'$ of windings $G\df\uH\ra\uQ$ and
$G'\df\uH'\ra\uQ$.  
\begin{itemize}
\item[(q)] If $(j,\rho) \in H_0\times\{-1,1\}$ is a polarized boundary vertex, 
  then there exists no arrow $\alp\in (H'_1)^\ord$ with
  $\ut_{\uH'}(\alp)=(\phi(j),\rho)$. 
\item[(s)] If $(j,\rho)\in H_0\times\{-1,+1\}$ is a polarized boundary vertex,  then there exists no arrow 
$\bet\in (H'_1)^\ord$ with $\us_{\uH'}(\bet)=(\phi(j),\rho)$.
\item[(k)]
  If $j\in H_0$ is a boundary vertex of $\uH$, then $\phi(j)\in H'_0$
  is \emph{not} a boundary vertex of $\uH'$.
\end{itemize}

For $\bx,\by\in\Adm(\uQ)$ an \emph{\gls{H-triple}} 
$(G,\phi_q,\phi_s)$ for $(\bx,\by)$ consists of a winding $G\df\uH_G \ra\uQ$, together with a (strict) morphism 
$\phi_q\df G\ra G_\bx$ with property (q), and a  strict morphism 
$\phi_s\df G\ra G_\by$ with property (s).
Moreover, we request that in case $\phi_q(j)\in H_0^\spe(\bx)$
\emph{and} $\phi_s(j)\in H_0^\spe(\by)$ that 
$j\in (H_G)_0^\spe$.  
Recall, that by definition, $\phi_q$ and $\phi_s$ are strict homomorphisms of (hereditary) polarized quivers.

An H-triple $(G,\phi_q,\phi_s)$ is a \emph{\gls{K-triple}}, if 
$\phi_q$ and $\phi_s$ both have moreover property (k).

Two H-triples $(G,\phi_q,\phi_s)$ and $(G',\phi'_q,\phi'_s)$ are
\emph{equivalent}, if there is an isomorphism of windings $\psi\df G\ra G'$
such that $\phi_q=\phi'_q\psi$ and $\phi_s=\phi'_s\psi$.  In other words,
in the following diagram all triangles must be commutative. 
\[
  \xymatrix{&\uH_G\ar_{\phi_q}[ld]\ar^G[d]\ar^{\phi_s}[rd]\ar@/_1.2pc/_(.62){\psi}[dd]\\
    \uH(\bx)\ar^{G_\bx}[r] &\uQ &\ar_{G_\by}[l] \uH(\by)\\
   &\uH_{G'}\ar^{\phi'_q}[lu]\ar_{G'}[u]\ar_{\phi'_s}[ru] }
\]
We denote the set of equivalence classes of H-triples for 
$(\bx,\by)$ by $\gls{cHQxy}$. 
The subset of  $\cH_\uQ(\bx,\by)$, which consists of the equivalence classes of K-triples, is denoted by
$\glslink{cKQxy}{\cK_\uQ(\bx,\by)}$.  
 The elements of $\cK_\uQ(\bx, \by)$, i.e. the equivalence classes of K-triples, are called \emph{kisses}.

For $(G,\phi_q,\phi_s)\in\cH_\uQ(\bx,\by)$ with
$G\df\uH_G\ra\uQ$, the \emph{\gls{type}} of this triple (or for short of $G$) is the type of the underlying graph of 
$\uH_G$. Thus, the type of
$(G,\phi_q,\phi_s)$ belongs to the list from Table~\ref{tab:AD-graphs}.
Clearly, the type of a  triple is invariant under equivalence.
For later, use we define the set of K-triples of type $\sfA$ as
\[
  \glslink{cKQxy}{\cK_\uQ(\bx,\by)_\sfA}:=\{[(G,\phi_q,\phi_s)]\in\cK_\uQ(\bx,\by)\mid
  \uH_G\text{ is of type } \sfA_m \text{ for some } m\}.
\]

\begin{Def} \label{def:bdyvert}
  Let $\bx, \by\in\Adm(\uQ)$ and 
  $(G,\phi_q,\phi_s)\in\cK_\uQ(\bx,\by)$
  a K-triple with $G\df\uH\ra\uQ$.  If $(i,\rho)$ is a polarized boundary
  vertex of $\uH$,  it follows from the definitions, that there exists an unique arrow $\mu\in H_1(\bx)$ with $\us(\mu)=(\phi_q(i),\rho)$, and an unique arrow $\nu\in H_1(\by)$ with $\ut(\nu)=(\phi_s(i),\rho)$.
  Clearly $\us_\uQ (G_\bx(\mu))=(G(i),\rho)=\ut_\uQ(G_\by(\nu))$. It is easy to see that precisely one of the following four cases occurs,
  see Remark~\ref{rem:kisses}~(3) below:
  \begin{enumerate}[(I)]
  \item
    $\{G_\bx(\mu), G_\by(\nu)\}\subset \Qord$,
  \item
    $G_\bx(\mu)=G_\by(\nu)\in\Qsp$ with $\mu$ and $\nu$ ordinary,
  \item
    $G_\bx(\mu)=G_\by(\nu)\in\Qsp$ with $\mu$ ordinary and $\nu$ special,
 \item
    $G_\bx(\mu)=G_\by(\nu)\in\Qsp$ with $\mu$ special an $\nu$ ordinary.
  \end{enumerate}
Accordingly, we will refer to these Roman numbers as the 
\emph{type} of the polarized boundary vertex 
$(i,\rho)\in H_0\times\{-1,+1\}$ of our K-triple.  It follows from Section~\ref{ssec:PBV} that a kiss of type $\sfA$ has precisely two polarized boundary vertices, and a kiss of type $\sfD'$ has precisely one polarized boundary vertex. A kiss of type $\sftA$ or of type $\sftD'$ has no boundary vertex.
\end{Def}
See Section~\ref{sssec:Ex-H-K-trip} below for several examples of H-triples and K-triples, based on the development of our running example in Section~\ref{ssec:expl-uQ}. We illustrate there also the different cases mentioned in Definition~\ref{def:bdyvert}.

\begin{Rem} \label{rem:kisses}
(1) Clearly $\cK_\uQ(\bx,\by)$ is our generalization of the
kisses from~\cite{BY20} to the context of skewed-gentle quivers. The somehow
more bulky notion of triples, inspired by~\cite{Kr91}, is necessary to deal
properly with bands.

(2) Let $\uQ^f$ be a fringing of $\uQ$ and recall from Section~\ref{ssec:AR-adm}
our construction of the AR-operator $\tau_f\df\Adm(\uQ)\ra\Adm(\uQ^f)$.
Let moreover $\cP_\uQ:=
\{A(\tilde{\bp})\mid\tilde{\bp}\in\widetilde{\cP}_\uQ\}$
be the set of \emph{admissible projective strings}, where
$\widetilde{\cP}_\uQ$ is the set of projective strings from
Section~\ref{ssec:StBa}. Similarly, 
$\cI_\uQ:=\{A(\tilde{\bq})\mid\tilde{\bq}\in\widetilde{\cI}_\uQ\}$
is the set of \emph{admissible injective strings}.
With a little more notation, one can define a bijective operator
$\tau\df\Adm(\uQ)\setminus\cP_\uQ\ra\Adm(\uQ)\setminus\cI_\uQ$, 
which is closely related to the Auslander-Reiten translation in the module category of the skewed-gentle algebra $\Ka\uQ$ associated to $\uQ$,
see~\cite[Sec.~2.6 \& 3.4]{Ge23}.  

In~\cite[Sec.~4.9]{Ge23}
it is shown, that there is a natural bijection
\[
  \cH_\uQ(\bx,\tau(\by))\ra\cK_{\uQ^f}(\tau_f(\bx), \tau_f(\by))
\]
for all $\bx\in\Adm(\uQ)$ and $\by\in\Adm(\uQ)\setminus\cP_\uQ$, where
$\cP_\uQ$ denotes the finite set of \emph{projective} admissible words.
If $\by\in\cP_\uQ$ we have moreover
$\cK_{\uQ^f}(\tau_f(\bx),\tau_f(\by))=\emptyset$ for all $\bx\in\Adm(\uQ)$.  

(3) Obviously, either $\{G_\bx(\mu), G_\by(\nu)\}\subset\Qord$ or
$\{G_\bx(\mu), G_\by(\nu)\}\cap\Qsp\neq\emptyset$. In the first
case we have type (I).  In the second case, we have
$G_\bx(\mu)=G_\by(\nu)\in\Qsp$. In fact, if say $G_\bx(\mu)\in\Qsp$, it
follows from the definitions that 
\[
\us_\uQ(G_\bx(\mu))=(G_\bx(\phi_q(j)),\rho)=(G_\by(\phi_s(j)),\rho)=\ut_\uQ(G_\by(\nu)),
\]
which forces $\rho=-1$ and $G_\bx(\phi_q(j))\in\Qspv$ since
$G_\bx(\mu)\in\Qsp$ by our hypothesis. However, by the definition of
polarized skewed-gentle quivers, this means that $G_\bx(\mu)$ is
the unique arrow $\eps$ with $\ut_\uQ(\eps)=(G_\by(\phi_s(j)),-1)$, which
implies $G_\by(\nu)=G_\bx(\mu)$. Similarly, $G_\by(\nu)\in\Qsp$ implies
also $G_\bx(\mu)=G_\by(\nu)$.
Now, since each arrow is either ordinary or special, this leaves us
a priori with exactly $4=2\times 2$ possibilities. However, it is in
this situation not possible to have $\mu$ and $\nu$ both special, since,
by the definition of an H-triple, in this case $j\in H_0^{\mathrm{sp}}$,
and thus $j$ wouldn't be a boundary vertex. Thus, in case
$\{G_\bx(\mu), G_\by(\nu)\}\not\subset\Qord$ we are indeed left with
precisely one of the types (II), (III) or (IV).
\end{Rem}

\subsection{Combinatorial invariants} \label{ssect: CombIn}
Let again  $\uQ^f$ be a fringing for our skewed-gentle polarized quiver $\uQ$.
\begin{Def}
For $\bx,\by\in\Adm(\uQ)$ we will consider the set
\begin{align*}
\glslink{APQfxy}{\sfA_{\uQ^f}(\bx,\by)} &:=          
  \cK_{\uQ^f}(\tau_f(\bx),\tau_f(\by))_\sfA \coprod
  \cK_{\uQ^f}(\tau_f(\by),\tau_f(\bx))_\sfA,\\
\intertext{of equivalence classes of K-triples of type $\sfA$ between $\tau_f(\bx)$ and
  $\tau_f(\by)$, and the set}
\glslink{APQfxy}{\sfP_\uQ(\bx,\by)} &:=\{(j,i)\in\{0,1\}\times\{0,1\}\mid
(\eta_j,\eta_i)\in\Hsp(\by)\times\Hsp(\bx)\text{ and }\\
&\qquad G_\by(\eta_j)=G_\bx(\eta_i)\} \setminus \sfP'_\uQ(\bx,\by),\text{ with}\\
\sfP'_\uQ(\bx,\by)&:=\begin{cases}
\{(0,0),(1,1)\}&\text{ if } \bx=\by^{\phantom{-1}}\in\AdmSt(\uQ)\\
\{(1,0),(0,1)\}&\text{ if } \bx=\by^{-1}\in\AdmSt(\uQ)\\
   \qquad\emptyset       &\text{ else.}
 \end{cases}
\intertext{ of \emph{pairs of punctured letters of type} $\sfD'$, as well as}
\glslink{APQfxy}{\Diag_b(\bx,\by)}&:=\begin{cases}
\phantom{-}  1 &\text{if } [\bx]=[\by]\quad\in\Adm_b(\uQ),\\
 -1 &\text{if } [\bx]=[\by^{-1}]\in\Adm_b(\uQ),\\
\phantom{-} 0  &\text{else,}
\end{cases}
\intertext{the \emph{band orientation.}}
\end{align*}                 
\end{Def}

\begin{Rem}
(1) The cardinality of $\sfA_{\uQ^f}(\bx,\by)$ is closely related to the
module-theoretic $E$-invariant for representations of the skewed-gentle algebra~$\Ka\uQ$ associated to $\uQ$.  
It does \emph{not} depend on the choice of the fringing 
$\uQ^f$, as discussed in~\cite[Rem.~5.3]{Ge23}.

(2)  By~\cite[Lem.~5.3]{Ge23} we have a natural bijection
\[
  \sfP_\uQ(\bx,\by)\ra
  \cK_{\uQ^f}(\tau_f(\bx), \tau_f(\by))_{\sfD'}\coprod
  \cK_{\uQ^f}(\tau_f(\by), \tau_f(\bx))_{\sfD'},
\]
where $\cK_{\uQ}(\tau_f(\bx),\tau_f(\by))_{\sfD'}$ denotes the set of K-triples
of type $\sfD'$ from $\tau_f(\bx)$ to $\tau_f(\by)$.

(3)  See~\cite[Rem.~5.10~(2)]{Ge23} for an interpretation of 
$\Diag_b(\bx,\by)$.
\end{Rem}

\begin{Def} \label{def:ADpm}
  For $\bx\in\Adm(\uQ)$  and $i\in Q_0$, we set 
  \begin{alignat*}{2}
\glslink{ADx}{A_i^+(\bx)} & :=\{j\in H_0(\tau_f(\bx)) &\mid G_{\tau_f(\bx)}(j)&=i
            \text{ and two ordinary arrows start at } j\},\\
\glslink{ADx}{A_i^-(\bx)} & :=\{j\in H_0(\tau_f(\bx))&\mid G_{\tau_f(\bx)}(j)&=i
            \text{ and two ordinary arrows end at } j\},\\
\glslink{ADx}{D_i^+(\bx)} & :=\{\eta_j\in H_1^{\mathrm{sp}}(\tau_f(\bx))&\ \mid G_{\tau_f(\bx)}(\eta_j)&=\eps_i
\text{ and } \{\alp\in H_1^{\mathrm{ord}}(\tau_f(\bx)) \mid
             t(\alp)=t(\eta_j)\}=\emptyset \},\\
\glslink{ADx}{D_i^-(\bx)} & :=\{\eta_j\in H_1^{\mathrm{sp}}(\tau_f(\bx))&\ \mid G_{\tau_f(\bx)}(\eta_j)&=\eps_i
\text{ and  } \{\bet\in H_1^{\mathrm{ord}}(\tau_f(\bx)) \mid
             s(\bet)=s(\eta_j)\}=\emptyset \}.    
\end{alignat*}
Note, that $D_i^\pm(\bx)=\emptyset$ if $i\in\Qordv$.
\end{Def}

\begin{Rem} Suppose that $\bx\in\Adm(\uQ)$ is not projective in the sense of Remark~\ref{rem:kisses}~(2), then we have
  by~\cite[Lem~5.14]{Ge23} the following equalities:
\begin{align*}
    \abs{A^+_i(\bx)}&=\abs{\cH_\uQ(\bx,\bs_i)_\sfA},&
    \abs{A^-_i(\bx)}&=\abs{\cH_\uQ(\bs_i,\tau(\bx))_\sfA},\\
    \abs{D^+_i(\bx)}&=\abs{\cH_\uQ(\bx,\bs_i)_{\sfD'}},&
    \abs{D^-_i(\bx)}&=\abs{\cH_\uQ(\bs_i,\tau(\bx))_{\sfD'}},
\end{align*}
where we used the ``simple'' strings $\bs_i$ from Section~\ref{ssec:huQA}.
However, we do not need this fact here. 
\end{Rem}

\subsection{Combinatorial E-invariant and g-vectors for decorated admissible words} \label{ssec:combEg}
Let again $\uQ^f$ be a fringing of our skewed-gentle polarized quiver $\uQ$.
For each $\bx\in\Adm(\uQ)$ we define a subset of
$S:=(\{-1,1\}\times\{-1,1\})\cup\{\ast, (\ast,\ast)\}$: 
\[
{\gls{S(x)}} :=\begin{cases}
\{(1,1)\}                        &\text{if } \type(\bx)=(u,u),\\
\{(1,-1), (1,1)\}                &\text{if } \type(\bx)=(u,p),\\
\{(-1,1), (1,1)\}                &\text{if } \type(\bx)=(p,u),\\
\{1,-1\}^2\cup\{(\ast,\ast)\}    &\text{if } \type(\bx)=(p,p),\\
\{\ast\}                &\text{if } \type(\bx)=(b).
\end{cases}
\]
For later use, we introduce several operations  on the elements of $S$, namely
\begin{alignat*}{2}
\glslink{wtix}{\wt(s)} &:=\begin{cases} 2 &\text{if } s=(*,*),\\ 1 &\text{else,}\end{cases} &
 \quad (s_1,s_2) &:= s \text{ for } s\in S\setminus\{*\},\\ 
\glslink{wtix}{s^\iota} &:=\begin{cases} (s_2,s_1)&\text{if } s\in\{-1,1\}^2,\\ s &\text{else}
  \end{cases} &
\glslink{wtix}{s^\chi} &:=\begin{cases} (-s_1,-s_2)&\text{if } s\in\{-1,1\}^2,\\ \ s&\text{else.}
   \end{cases}
\end{alignat*}
Moreover, we introduce symbols $\gls{-bsipm}$ 
for $i\in Q_0$, which represent (together with their decoration), in the spirit of~\cite[(4.8)]{DWZ2}, the \gls{negativesimple} representations of $\Ka\uQ$. 
With this at hand, we define, with the notation from Remark~\ref{rem:Adm}
\begin{align*}
\glslink{DAdm*Q}{\Adm^*(\uQ)}   &:=\{(\bx,s)\mid \bx\in\Adm(\uQ)\text{ and } s\in S(\bx)\},
  \text{ and}\\
\NeSi(\uQ)&:=\{(-\bs_i^{\pm 1},s)\mid i\in Q_0\text{ and }s\in S(\bs_i^{\pm 1})\}\\
\glslink{DAdm*Q}{\DAdm^*(\uQ)} &:=\Adm^*(\uQ)\cup\NeSi(\uQ),\\
  [\Adm^*(\uQ)] &:=\{(\bx,s)\in\Adm^*(\uQ)\mid\bx\in\AdmSt(\uQ)\} \cup
                  [\AdmBa(\uQ)]\times\{*\},\\
\glslink{DAdm*Q}{[\DAdm^*(\uQ)]} &:=[\Adm^*(\uQ)] \cup \NeSi(\uQ)                                \end{align*}
We introduce on $[\Adm^*(\uQ)]$, and on $[\DAdm^*(\uQ)]$
an equivalence relation $\simeq$ by
declaring $(\bx^{-1},s^\iota)\simeq (\bx,s)$. We denote the corresponding set of equivalence classes by $[\Adm^*(\uQ)]/_{\simeq}$ and $[\DAdm^*(\uQ)]/_{\simeq}$, respectively. Note, that 
with the definitions from~\eqref{eqn:tQ},
\[
  \tq\df\NeSi(\uQ)\ra \tQ_0\quad
  (-\bs_i,s)\mapsto \begin{cases}
    (i, s'_2) &\text{if } i\in\Qspv,\\
    (i, o)           &\text{if } i\in\Qordv,
  \end{cases}
\]
induces a bijection from  $\NeSi(\uQ)/_{\simeq}$ to $\tQ_0$, where we agree that for $s=(1, s_2)\in S(\bs_i)$ we have
$s'_2=+$ if $s_2=+1$ and $s'_2=-$ if $s_2=-1$. 

\begin{Def} \label{def:dimv}
Recall the construction of $\tQ_0$ in~\eqref{eqn:tQ}, see 
also~\cite[Sec.~5.1]{Ge23} for more details.
Recall also that in Section~\ref{ssec:Hw} we introduced for each
$\bx\in\Adm(\uQ)$ a winding $G_\bx\df\uH(\bx)\ra\uQ$. Our aim is to
introduce the \emph{dimension vector}
$\bd(\bx,s)\in\NN^{\tQ_0}$ for each $(\bx,s)\in\Adm^*(\uQ)$.
To this end, we introduce for each $\bx\in\Adm(\uQ)$ and $j\in H_0^\spe(\bx)$
the auxiliary vectors $\bd'(\bx), \bd^{(j)}(\bx)\in\QQ^{\tQ_0}$, which we define componentwise:
\begin{align*}
  \bd'(\bx)_{(i,\rho)} &:=\begin{cases}
\phantom{\frac{1}{2}}\abs{G^{-1}_\bx(i)}    &\text{if } i\in\Qordv,\\
\frac{1}{2}\abs{G^{-1}_\bx(i)} &\text{if } i\in\Qspv,
\end{cases}\qquad \text{ and }\\    
\bd^{(j)}(\bx)_{(i,\rho)} &:= \begin{cases}
\phantom{-}\frac{1}{2} &\text{if } (i,\rho)=(G_\bx(j),+),\\
-\frac{1}{2} &\text{if } (i,\rho)=(G_\bx(j),-).
\end{cases}
\end{align*}
Now, we can define
\[
{\gls{bd(xs)}}  :=\begin{cases}  
    \bd'(\bx)  &\text{if } \type(\bx)\in\{ (u,u), (b)\},\\
    \bd'(\bx)+s_2\bd^{(n)}(\bx) &\text{if } \type(\bx)=(u,p),\\
    s_1\bd^{(1)}(\bx)+\bd'(\bx) &\text{if } \type(\bx)=(p,u),\\
    s_1\bd^{(1)}(\bx)+\bd'(\bx)+s_2\bd^{(n)}(\bx) &\text{if } \type(\bx)=(p,p)
    \text{ and } s\neq (*,*),\\
     2\bd'(\bx) &\text{if } \type(\bx)=(p,p) \text{ and } s=(*,*),
\end{cases}
\]
where we assume that the underlying graph of $\uH(\bx)$ is drawn as in
Figure~\ref{tab:AD-graphs} and has $n$ vertices (for $\type(\bx)\neq (b)$). 
\end{Def}

Note, that we have
\[
  \bd(\bs_i,u)_{(j,\rho)}= \del_{\tq(\bs_i, u),(j,\rho)},
\]
where $\del$ is the usual Kronecker delta. 

\begin{Def} \label{def:comb-inv}
For 
  $\{(\bx,s), (\by,t)\}\subset\Adm^*(\uQ)$, 
  $\{(-\bs_j,u), (-\bs_l,v)\}\subset\NeSi(\uQ)$ and
  $(i,\rho)\in\tQ_0$ we define with the above notations   the
  \emph{combinatorial $E$-invariant} and \emph{combinatorial g-vector}
  on $\DAdm^*(\uQ)$ as follows:
\begin{align*}
  \begin{split}
\glslink{cegQ}{e_\uQ}((\bx,s), (\by,t)) &:= \abs{\sfA_{\uQ^f}(\bx,\by)}\times \wt(s)\cdot\wt(t)\; \\
&\qquad +  \sum_{(j,i)\in \sfP_\uQ(\bx,\by)} \!\!\!
d_2(s_{i+1},(t^\chi)_{j+1}) \;
+\; 2\abs{\Diag_b(\bx,\by)} \cdot d_3(s',t^\chi)\quad\in\NN,
\end{split}\\[1ex]
\glslink{cegQ}{e_\uQ}((\bx,s), (-\bs_j,u)) &:=\bd(\bx,\bs)_{\tq(\bs_j,u)},\\
\glslink{cegQ}{e_\uQ}((-\bs_j,u),(-\bs_l,v)) &:= 0,\\
  \begin{split}  
\glslink{cegQ}{\bg_\uQ}(\bx,s)_{(i,\rho)}&:= (\abs{A_i^-(\bx)}-\abs{A_i^+(\bx)})\times \wt(s)\\
  &\qquad
  +\sum_{\eta_j\in D^-_i(\bx)} d_2(\rho\cdot 1, -s_{j+1})
  -\sum_{\eta_j\in D^+_i(\bx)} d_2(s_{j+1},\rho\cdot 1)\ \in\ZZ,
\end{split}\\
\glslink{cegQ}{\bg_\uQ}(-\bs_j,u)_{(i,\rho)} &:= \delta_{\tq(\bs_j,u), (i,\rho)}   
\end{align*}    
where we use additionally the functions
$\glslink{d_i}{d_2}\df\{-1,1,*\}^2\ra\{0,1,2\}$, $\glslink{d_i}{d_3}\df S^2\ra\{0,1\}$, which are
defined as follows:
\[
\begin{array}{r|r|r|r|}
  d_2 & -1 & 1 & *\\ \hline
   -1 &  1 & 0 & 1\\ \hline
    1 &  0 & 1 & 1\\ \hline
    * &  1 & 1 & 2\\ \hline
\end{array}\qquad
d_3(s,t):=\begin{cases}
1 &\text{if } s=t\in\{-1,1\}^2,\\ 0 &\text{else.}
\end{cases}
\]
Moreover, we agree that $s'=(s_2,s_1)$ if $\Diag_b(\bx,\by)=-1$ and $s'=s$ else.
\medskip

We will need also the crucial subset
\begin{equation} \label{deqn:admts}
[\DAdm_\tau^*(\uQ)]:=\{[p]\in[\DAdm^*(\uQ)]\mid e_\uQ(p, p)=0\}
\end{equation}
of $\tau$-\emph{generic} decorated words in $[\DAdm^*(\uQ)]$.
\end{Def}

See Section~\ref{sssec:ExComb-gvect} for some sample calculations of combinatorial g-vectors, 
and Section~\ref{sssec:ExCombE-inv} for some sample calculations of
combinatorial E-invariants in the context of our running example. 

\begin{Rem}
Let $\Ka$ be an algebraically closed field with  $\kar(\Ka)\neq 2$. 
In~\cite[Sec.~6]{Ge23} one of the authors introduced for each $(\bx,s)\in\Adm^*(\uQ)$ a closed subset $\overline{M}^o_{\bx}$ of the representation variety  
$\rep_{\Ka\uQ}^{\bd(\bx,s)}(\Ka)$ of representations of $\Ka\uQ$ with dimension vector $\bd(\bx,s)$, which contains a natural dense set of indecomposable representations.  We extend here the definitions from~\cite[Sec.~6]{Ge23}
  from $\Adm^*(\uQ)$ to the slightly larger set
  $\DAdm^*(\uQ)=\Adm^*(\uQ)\cup\NeSi(\uQ)$.  The set 
  $\NeSi(\uQ)$ should  here be seen as a parameter set for the negative simple representations
  of $\Ka\uQ$.  One of the main results of loc.cit., \cite[Thm.~6.4]{Ge23},
  states, that the functions $e_\uQ$ and $\bg_\uQ$ calculate the generic values of the representation theoretic $E$-invariant resp. g-vector on  the sets of the form 
  $\widebar{M}^o_{(\bx,s)}$ with $(\bx,s)\in\Adm^*(\uQ)$.
  It is straight forward, that our extension here works now also in the expected way for negative simple representations.  In particular,
  $[\DAdm^*_\tau(\uQ)]/_{\simeq}$ parametrizes naturally the \emph{decorated} generically $\tau$-regular indecomposable components of the decorated representation varieties for 
  $\Ka\uQ$, and $e_\uQ$ calculates the generic value of the $E$-invariant between those components.  
\end{Rem}

\subsection{\texorpdfstring{$\tau$}{τ}-rigid strings} 
Let us slightly relax the conditions on the set $\Adm(\uQ)$ from
Definition~\ref{def:admsb}.
We say that a string $\bx=x_0x_1\cdots x_n\in\St(\huQ)$ is
\emph{weakly admissible} if it fulfils the following condition:
\begin{itemize}
\item[(s1')]
  For any $k\in\{1,2,\ldots, n-1\}$ with  $F(x_k)\in\Qsp$ and
  $\bx_{[k-1]}^{-1}\neq\bx^{[k]}$, the letter $x_k$ is 
  direct if and only if $\bx_{[k-1]}^{-1}\succ \bx_{[k+1]}$.
\end{itemize}
Similarly, we say that a band 
$\bx=x_0x_1\cdots x_n\in\pBa(\huQ)$ is \emph{weakly admissible} if it fulfils the following condition:
\begin{itemize}
  \item[(b1')]
For any $k\in\{1,2,\ldots, n\}$ with  $F(x_k)\in\Qsp$ and
$(\bx^{[k-1]})^{-1}\bx_{[k-1]}^{-1}\neq\bx^{[k+1]}\bx_{[k-1]}$ the letter $x_k$ is direct if and only if
 $(\bx^{[k-1]})^{-1}\bx_{[k-1]}^{-1}\succ\bx^{[k+1]}\bx_{[k-1]}$.    
\end{itemize}
Moreover, we denote by $\tAdm(\uQ)$ the union of weakly admissible strings and bands.  
Thus, $\Adm(\uQ)$ is a proper subset of  $\tAdm(\uQ)$.

\subsubsection*{Examples} 
Consider, for the skewed-gentle polarized
quiver from Section~\ref{sssec:skgep} below, the string
$\bx:=\II_{(1,-)}^{-1}
\gam\eps_3\gam^{-1}\II_{(1,-)}\in\St(\huQ)$. 
Obviously, $\bx$ fulfils condition (s1´), but not condition (s1). However, 
$\widebar{\bx}= (\eps_1^*\gam\eps_3^*\gam^{-1})^2$
is not a primitive band.  
So, $\bx$ does not fulfil condition (s2).

Similarly, $\bbe:=\gam\eps_1^{-1}\gam^{-1}\eps_3^{-1}\in\pBa(\huQ)$
fulfils condition (b1) and thus condition (b1'). However,
$\widebar{\bbe}=\gam\eps_1^*\gam^{-1}\eps_3^*$ is a (primitive)
\emph{symmetric} band. It would be interesting to find a primitive
band $\bbe'\in\pBa(\huQ')$ which fulfils condition
(b1') but not condition (b1).

Given $\bx\in\tAdm(\uQ)$ and a
fringing $\uQ^f$ of $\uQ$, it clearly makes sense to consider the set
\[
\sfA_{\uQ^f}(\bx):=\cK_{\uQ^f}(\tau_f(\bx),\tau_f(\bx))_\sfA
\]
of equivalence classes of kisses of type $\sfA$ from $\tau_f(\bx)$ to itself.
We have the following interesting result:

\begin{Lem} \label{lem:tadm}
  Let $\bx\in\tAdm(\uQ)$ be a string with $\sfA_{\uQ^f}(\bx)=\emptyset$.
  If moreover $\bx$  is not of the form $\bx=\by^{-1}\eps_i^{\pm 1}\by$
  for some $i\in\Qspv$ and some word $\by$,   then $\bx\in\Adm(\uQ)$.
\end{Lem}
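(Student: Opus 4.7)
The plan is to show both conditions (s1) and (s2) of Definition~\ref{def:admsb} hold for $\bx$, by treating their respective potential failures separately.

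First I would prove (s1). Condition (s1) differs from weak admissibility (s1$'$) only by the additional requirement that $\bx_{[k-1]}^{-1}\neq \bx^{[k+1]}$ for every middle index $k$ with $F(x_k)\in\Qsp$ (reading $\bx^{[k+1]}$ in place of $\bx^{[k]}$). If this inequality fails at some such $k$, then comparing word lengths forces $n=2k$, and the resulting letterwise identities $x_{k-i}^{-1}=x_{k+i}$ for $i=1,\ldots,k$ produce the decomposition $\bx=\by^{-1}\cdot x_k\cdot \by$ with $\by:=\bx^{[k+1]}$ and $x_k\in\{\eps_i,\eps_i^{-1}\}$ where $\eps_i:=F(x_k)$. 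This is exactly the form excluded by hypothesis, so (s1) must hold.

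For (s2), I would assume $\bx$ is of type $(p,p)$, write $\bx=\II_{i,-1}^{-1}x_1\cdots x_{n-1}\II_{j,-1}$, and set $u:=F(x_1)\cdots F(x_{n-1})$, so that $\widebar{\bx}=\eps_i^*\cdot u\cdot \eps_j^*\cdot u^{-1}$ cyclically. Suppose $\widebar{\bx}=\bv^m$ with $\bv$ primitive and $m\geq 2$. Since $\widebar{\bx}$ contains exactly two special letters, counting forces $m=2$ and $\bv$ to contain a single special letter; writing $\bv=\eps^*\bw$ at this letter and matching $\bv^2$ with $\widebar{\bx}$ up to rotation yields $i=j$, $\eps=\eps_i$ and $\bw=u=u^{-1}$. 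Hence $u$ is palindromic in $\uQ$, i.e.\ $F(x_l)=F(x_{n-l})^{-1}$ for every $l$. From this identity I would construct a K-triple $(G,\phi_q,\phi_s)$ of type $\sfA_r$ (for some $r\geq 1$) by taking $G$ to be the common sub-winding obtained by folding $\uH(\bx)$ across its center, with $\phi_q$ and $\phi_s$ the two natural strict embeddings coming from the left and right halves of $\uH(\bx)$. Since $\bx$ has type $(p,p)$ one has $\tau_f(\bx)=\bx$, so the polarized boundary vertices of $\uH$ map into the interior of $\uH(\bx)$ and property (k) is automatic; properties (q) and (s) then follow from (s1) together with the gentle rules of $\uQ^f$. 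This produces a nontrivial element of $\sfA_{\uQ^f}(\bx)$, contradicting the hypothesis, and so (s2) holds as well.

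The hard part will be the last construction: producing an explicit type-$\sfA$ K-triple out of the palindromic identity $u=u^{-1}$ and checking (q), (s), (k) carefully at each polarized boundary vertex of $\uH$. A delicate point is that $u$ may itself contain further special letters of $\uQ$, at which the palindromic matching could a priori force a kiss of type $\sfD'$ rather than of type $\sfA$. One has to argue, using the center of the palindrome and the form of $\bx$ excluded by (s1) already established, that the natural choice of winding $G$ is indeed of type $\sfA$ and that its polarizations match those of $\uH(\bx)$ at both of its boundary vertices.
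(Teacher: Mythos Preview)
Your handling of (s1) is correct, and indeed this is why the paper's proof jumps directly to the failure of (s2).

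The gap is in your treatment of (s2). The claim that $\widebar{\bx}$ ``contains exactly two special letters'' is false in general: the word $u=F(x_1)\cdots F(x_{n-1})$ will contain a special letter $\eps^*$ whenever some interior $x_l$ satisfies $F(x_l)\in\Qsp$, and then so does $u^{-1}$. Hence your counting argument forcing $m=2$ collapses. In fact the opposite is true: the paper shows the relevant power $k$ is at least $3$ (and odd), not $2$. You yourself flag later that ``$u$ may itself contain further special letters of $\uQ$'', but by then you have already used the false premise to pin down $m$. Consequently the palindromic identity $u=u^{-1}$ need not hold, and your folding construction of a K-triple, which rests on it, does not get off the ground. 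Even in the genuinely palindromic situations, the identity $F(x_l)=F(x_{n-l})^{-1}$ only holds in $\uQ$ and need not lift letter-by-letter to $\huQ$, so the proposed ``fold'' may not define a strict morphism of windings.

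The paper's route is quite different. One first observes that the primitive band $\bh$ with $\widebar{\bx}=\bh^k$ must be symmetric, say $\bh=\eps_i^*\bv\eps_j^*\bv^{-1}$, and then invokes Hansper's result (Proposition~\ref{prp:Adm}) to obtain an admissible $(p,p)$-string $\bz=\II^{-1}_{(j,-1)}\bz'\II_{(i,-1)}$ with $\widebar{\bz}=\bh$. Weak admissibility of $\bx$ then forces it to be an alternating concatenation $\II_{j,-1}\bz_1\eps_i^\delta\bz_2\eps_j^\delta\cdots\bz_k\II_{i,-1}$ with $\bz_r=\bz'$ or $(\bz')^{-1}$ according to parity; the non-palindrome hypothesis rules out $k$ even, so $k\geq 3$. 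The contradicting K-triple of type~$\sfA$ is obtained by mapping the loop-free part of $\uH(\bz)$ onto the \emph{first} and \emph{last} copies of $\bz'$ inside $\uH(\bx)$ via $\phi_s$ and $\phi_q$ respectively, the two polarized boundary vertices being of types~(III) and~(IV). This avoids any folding argument and any counting of special letters.
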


\begin{proof}  Suppose that 
$\bx=x_0x_1\cdots x_l\in\tAdm(\uQ)\setminus\Adm(\uQ)$
is a \emph{string}, which is not of the form 
$\by^{-1}\eps_a^{\pm 1}\by$  for some word $\by$ and 
some $a\in\Qspv$. Then $\bx$ fulfils condition (s1'),
and thus not condition (s2). Since $\bx$ is a string,
it must then be of type $(p,p)$ and 
\begin{equation}\label{eqn:pf3.14}
\widebar{\bx}=F(x_0x_1\cdots x_l)F(x_{l-1}^{-1}\cdots x_1^{-1})=\bh^k
\end{equation}
for some primitive band $\bh=h_0h_1\cdots h_m\in\pBa(\uQ)$ and some $k\geq 2$. Since $\type(\bx)=(p,p)$, we have $h_0=F(x_0)=\eps_i^*$
for some $i\in \Qspv$.  Moreover, we deduce from~\eqref{eqn:pf3.14} that 
$h_1h_2\cdots h_m=h_m^{-1}\cdots h_1^{-1}$, which implies
$\bh=\eps_i^*\bh'\eps_j^*(\bh')^{-1}$ for some $j\in\Qspv$ and
$\bh'=h_1h_2\cdots h_{m'}$ with $2m'+2=m$. In particular,
$\bh\in\pBa'(\uQ)$. Using again that $\bx$ is not of
the form $\by^{-1}\eps_a^*\by$, we conclude now that $k$ must be odd, and so in particular $k\geq 3$. 
On the other hand, by the last part of Proposition~\ref{prp:Adm}, we 
have now $\widebar{\bz}=\bh$ for 
$\bz:=\II_{(i,-1)}^{-1}\bz'\II_{(j,-1)}= A(\bh)$.  

Since $\bx\in\tAdm(\uQ)$, it follows, that for some
$\del\in\{-1,1\}$ we have
\[    \bx=\II_{(i,-1)}^{-1}\bz_1\eps_j^\del\bz_2\eps_i^\del\cdots\bz_k\II_{(j,-1)},
\]
with $\bz_s=\bz'$ if $s$ is odd and $\bz_s=(\bz')^{-1}$ if $s$ is even. Without restriction, we may assume that $\del=+1$. Using the notation from
Section~\ref{ssec:triph}, there exists an H-triple
$(G,\phi_q,\phi_s)\in\sfA_{\uQ^f}(\bx)$,  where the polarized quiver 
$\uH_G$ of type $\sfA$ is obtained from
$\uH(\bz)$ by removing the loops $\eta_0$ and $\eta_1$. Moreover, $\phi_q$
sends $\uH_G$ to the copy of $\uH_G$ which sits at the right end of
$\uH(\bx)$ and $\phi_s$ sends $\uH_G$ which sits at the left end of $\uH(\bx)$.  Note that, in the sense of Definition~\ref{def:bdyvert}, the ``left'' polarized boundary vertex of $\uH_G$ is of type (III) and the ``right'' polarized boundary vertex is of type (IV).
Thus, there exists no string 
$\bx\in\tAdm(\uQ)\setminus\Adm(\uQ)$, which fulfils both
hypothesis of the Proposition.
\end{proof}

\subsection{Examples} \label{ssec:expl-uQ}
Let us illustrate several  of the notions
and constructions from this section on a running example. 

\subsubsection{A skewed-gentle polarized quiver with fringing}
 \label{sssec:skgep}
Consider the following  'polarized' quivers $Q$ and $Q^f$: 
\[\xymatrix{
\phantom{4}&&\phantom{5}\\
&2 \ar[rd]^\bet^<<<-^>>>+\\
1 \ar@(ul,dl)[]^{\eps_1}_<<-_>>-\ar[ru]^\alp^<<<+^>>>- & & 
3\ar[ll]^\gam^<<<+^>>>+\ar@(ur,dr)[]_{\eps_3}^<<-^>>-
}
\qquad\qquad
\xymatrix{
4 && 5\ar[ld]^{\bet'}^<<<+^>>>+ \\
&2 \ar[rd]^\bet^<<<-^>>>+\ar[lu]^{\alp'}^<<<+^>>>+\\
1 \ar@(ul,dl)[]^{\eps_1}_<<-_>>-\ar[ru]^\alp^<<<+^>>>- & & 
3\ar[ll]^\gam^<<<+^>>>+\ar@(ur,dr)[]_{\eps_3}^<<-^>>-
}
\]
The signs at the start- and endpoint of each quiver indicate the
value of the functions $s_1, t_1\df Q_1\ra\{-1,+1\}$. For example,
we have here $\us(\gam)=(3,+1)=\ut(\bet)$ and $\us(\eps_1)=(1,-1)=\ut(\eps_1)$. If we set moreover $\Qsp:=\{\eps_1,\eps_3\}=(Q^f_1)^{\mathrm{sp}}$, we obtain  skewed-gentle polarized quivers $\uQ$ and $\uQ^f$, since
the longest admissible paths are  in this case $\alp\eps_1\gam\eps_3\bet$
resp. $\alp'\alp\eps_1\gam\eps_3\bet\bet'$.

\subsubsection{Strings and their (linear) orderings}
\label{sssec:exStBa}
Consider first the
following list  of strings for $\uQ$: 
\begin{align*}
\br_{2,k} &:= \II_{(2,+)}^{-1}\bet^{-1}
(\eps_3^*\gam^{-1}\eps_1^*\gam)^k\eps_3^*\gam^{-1}\eps_1^*\alp^{-1}
\II_{(2,+)}, &
\br_{3,k} &:=\II_{(3,+)}^{-1}(\eps_3^*\gam^{-1}\eps_1^*\gam)^k
\eps_3^*\gam^{-1}\eps_1^*\alp^{-1}\II_{(2,+)},\\
\br'_{2,k} &:= \II_{(2,+)}^{-1}\bet^{-1}(\eps_3^*\gam^{-1}\eps_1^*
\gam)^k\eps_3^*\gam^{-1}\eps_1^*\II_{(1,+)}, &
\br'_{3,k} &:=\II_{(3,+)}^{-1}(\eps_3^*\gam^{-1}\eps_1^*\gam)^k
\eps_3^*\gam^{-1}\eps_1^*\II_{(1,+)},\\
\bq'_{2,l} &:= \II_{(2,+)}^{-1}\bet^{-1}(\eps_3^*
\gam^{-1}\eps_1^*\gam)^l\eps_3^*\II_{(3,+)}, &
\bq'_{3,l} &:=\II_{(3,+)}^{-1}(\eps_3^*
\gam^{-1}\eps_1^*\gam)^l\eps_3^*\II_{(3,+)},\\
\bq_{2,l} &:= \II_{(2,+)}^{-1}\bet^{-1}(\eps_3^*
\gam^{-1}\eps_1^*\gam)^l\eps_3^*\bet\II_{(2,+)}, &
\bq_{3,l} &:= (\bq'_{2,l})^{-1},\\
\bs_2 &:= \II_{(2,+)}^{-1}\II_{(2,-)},\\
\bp_{2,m} &:= \II_{(2,+)}^{-1}\alp
(\eps_1^*\gam\eps_3^*\gam^{-1})^m\eps_1^*\alp^{-1}\II_{(2,+)}, &
\bp_{1,m} &:= (\bp'_{2,m})^{-1},\\
\bp'_{2,m} &:= \II_{(2,+)}^{-1}\alp
(\eps_1^*\gam\eps_3^*\gam^{-1})^m\eps_1^*\II_{(1,+)}, &
\bp'_{1,m} &:= \II_{(1,+)}^{-1}
(\eps_1^*\gam\eps_3^*\gam^{-1})^m\eps_1^*\II_{(1,+)},\\
\bo'_{2,n} &:= \br_{3,n}^{-1}, & 
\bo'_{1,n} &:= (\br'_{3,n})^{-1},\\
\bo_{2,n}  &:= \br_{2,n}^{-1}, &
\bo_{1,n}  &:= (\br'_{2,n})^{-1},
\end{align*}
where $k, l, m, n\in\ZZ_{\geq 0}$.
Note that precisely the strings
$\bq_{2,l}$, $\bp_{2,m}$, $\bq'_{3,l}$ and $\bp'_{1,m}$  
(for all $l, m\in\ZZ_{\geq 0}$) are symmetric. 
Note also, that, with the notation from  Section~\ref{ssec:StBa},
we have $\tilde{\bp}_{(1,+1)}=\bp_{2,0}$, 
$\tilde{\bp}_{(2,+1)}=\bo_{2,0}=\tilde{\bq}_{(2,+1)}$  and
$\tilde{\bp}_{(3,+1)}=\bp_{2,1}$.  Similarly, we have
$\tilde{\bq}_{(1,+1)}=\bq_{2,1}$ and $\tilde{\bq}_{(3,1)}=\bq_{2,0}$.  Obviously, we have also $\tilde{\bs}_1=\bp'_{1,0}$
and $\tilde{\bs}_3=\bq'_{3,0}$. 

We leave it as an easy exercise to check that this list, together
with $\bs_2^{-1}$, provides (without repetitions) all elements of 
$\St(\uQ)$.  More precisely, we have the linearly ordered sets
$\St_{(1,+1)}(\uQ)=\{\bp_{1,m}, \bp'_{1,m}\mid m\in\ZZ_{\geq 0}\}
\cup\{\bo'_{1,n}, \bo_{1,n}\mid n\in\ZZ_{\geq 0}\}$
with 
\[
\bp_{1,m}>\bp'_{1,m}>\bp_{1,m+1}>\bo_{1,n+1}>\bo'_{1,n}>\bo_{1,n}
\]
for all $m,n\in\ZZ_{\geq 0}$.
$\St_{(2,-1)}(\uQ)=\{\bs_2^{-1}\}$,
$\St_{(2,+1)}(\uQ)=\{\br_{2,k}, \br'_{2,k}\mid k\in\ZZ_{\geq 0}\}
\cup\{\bq'_{2,l}, \bq_{2,l}\mid l\in\ZZ_{\geq 0}\}
\cup\{\bs_2\}\cup\{\bp_{2,m},\bp'_{2,m}\mid m\in\ZZ_{\geq 0}\}
\cup\{\bo'_{2,n},\bo_{2,n}\mid n\in\ZZ_{\geq 0}\}$ with
\[
\br_{2,k}>\br'_{2,k}>\br_{2,k+1}>\bq_{2,l+1}>\bq'_{2,l}>\bq_{2,l}
>\bs_2>
\bp_{2,m}>\bp'_{2,m}>\bp_{2,m+1}>\bo_{2,n+1}>\bo'_{2,n}>\bo_{2,n}
\]
for all $k,l,m,n\in\ZZ_{\geq 0}$, and finally
$\St_{(3,+1)}(\uQ)=\{\br_{3,k}, \br'_{3,k}\mid k\in\ZZ_{\geq 0}\}
\cup\{\bq'_{3,l}, \bq_{3,l}\mid l\in\ZZ_{\geq 0}\}$ with
\[
\br_{3,k}>\br'_{3,k}>\br_{3,k+1}>\bq_{3,l+1}>\bq'_{3,l}>\bq_{3,l}
\]
for all $k,l\in\ZZ_{\geq 0}$.  The set of primitive bands
in normal form has precisely two elements, which are moreover
equivalent, namely
\[
\pBa'(\uQ)=\{\eps_1^*\gam\eps_3^*\gam^{-1},
\eps_3^*\gam^{-1}\eps_1^*\gam\}.
\]

\subsubsection{The operator $A(\bw)$ and admissible strings}
\label{sssec:ExAdm}
We illustrate the construction $A(\bw)$, from Section~\ref{ssec:huQA}, in our situation: 
\begin{align*}
\hbr_{2,k}:= A(\br_{2,k}) &= \II_{(2,+)}\bet^{-1}
(\eps_3^{-1}\gam^{-1}\eps_1^{-1}\gam)^k\eps_3^{-1}\gam^{-1}\eps_1^{-1}\alp^{-1}\II_{(2,+)},\\
\hbr'_{2,k}:= A(\br'_{2,k}) &= \II_{(2,+)}^{-1}\bet^{-1}(\eps_3^{-1}\gam^{-1}\eps_1^{-1}\gam)^k
\eps_3^{-1}\gam^{-1}\eps_1^{-1}\II_{(1,+)},\\
\hbq'_{2,l}:= A(\bq'_{2,l}) &= \begin{cases}
 \II_{2,+}\bet^{-1}(\eps_3^{-1}\gam^{-1}\eps_1^{-1}\gam)^{l'}\eps_3^{-1}
 (\gam^{-1}\eps_1\gam\eps_3)^{l'}\II_{(3,+)} &\text{if } l=2l',\\
\II_{2,+}\bet^{-1}(\eps_3^{-1}\gam^{-1}\eps_1^{-1}\gam)^{l'+1}
\eps_3(\gam^{-1}\eps_1\gam\eps_3)^{l'}\II_{(3,+)} &\text{if } l=2l'+1,\\
\end{cases}\\
\hbq_{2,l}:=A(\bq_{2,l})  &= \begin{cases}
 \II_{(2,+)}^{-1}\bet^{-1}(\eps_3^{-1}\gam^{-1}\eps_1^{-1}\gam)^{l'}\II_{(3,-)}
   & \text{if } l=2l',\\
 \II_{(2,l)}^{-1}\bet^{-1}(\eps_3^{-1}\gam^{-1}\eps_1^{-1}\gam)^{l'}
 \eps_3^{-1}\gam^{-1}\II_{(1,-)} & \text{if } l=2l'+1,\end{cases}\\
\hbp_{2,m}:=A(\bp_{2,m}) &= \begin{cases}
 \II_{(2,+)}^{-1}\alp (\eps_1\gam\eps_3\gam^{-1})^{m'}\II_{1,-}
 &\text{if } m=2m',\\
  \II_{(2,+)}^{-1}\alp (\eps_1\gam\eps_3\gam^{-1})^{m'}
  \eps_1\gam\II_{3,-} &\text{if } m=2m'+1,
\end{cases}\\
\hbp'_{2,m}:= A(\bp'_{2,m}) &= \begin{cases}    
\II_{(2,+)}^{-1}\alp(\eps_1\gam\eps_3\gam^{-1})^{m'}\eps_1
(\gam^{-1}\eps_3^{-1}\gam\eps_1^{-1})^{m'}\II_{1,+}& \text{if } m=2m',\\
\II_{(2,+)}^{-1}\alp(\eps_1\gam\eps_3\gam^{-1})^{m'+1}\eps_1^{-1}
(\gam^{-1}\eps_3^{-1}\gam\eps_1^{-1})^{m'}\II_{(1,+)}& \text{if }m=2m'+1,
\end{cases}\\
\hbr_{3,k}:= A(\br_{3,k}) &=\II_{(3,+)}^{-1}(\eps_3^{-1}\gam^{-1}\eps_1^{-1}\gam)^k
\eps_3^{-1}\gam^{-1}\eps_1^{-1}\alp^{-1}\II_{(2,+)},\\
\hbr'_{3,k}:= A(\br'_{3,k}) &=\II_{(3,+)}^{-1}(\eps_3^{-1}\gam^{-1}\eps_1^{-1}\gam)^k
\eps_3^{-1}\gam^{-1}\eps_1^{-1}\II_{(1,+)},\\
A(\eps_1^*\gam\eps_3\gam^{-1}) &=\II_{(1,-)}^{-1}\gam\II_{(3,-)}.
\end{align*}
Note, that in our situation we have 
$\Adm_b(\uQ)=\{\II_{(1,-)}^{-1}\gam\II_{(3,-)}, 
\II_{(3,-)}^{-1}\gam^{-1}\II_{(1,-)} \}$.

\subsubsection{The operator $\tau_f$}  \label{sssec:ExTauf}
It is  here easy to describe explicitly the 
operator $\tau_f$  from section~\ref{ssec:AR-adm}. In fact, for
each admissible string $\bx\in\AdmSt(\uQ)$, to this end we have just to 
substitute in $\bx$ each trivial letters of the form 
$\II_{(i,+)}^{\pm 1}$ with $i= 1, 2, 3$ and $\II_{(2,-)}^{\pm 1}$
by $(\alp^{-1}\bet'\II_{(5,-)})^{-1}$, 
$((\alp')^{-1}\II_{(4,-)})^{\pm 1}$,
$(\gam^{-1}\eps_1\gam\eps_3\bet\bet'\II_{(5,-)})^{\pm 1}$, 
and $(\bet^{-1}\eps_3\bet\bet'\II_{(5,-)})^{\pm 1}$ respectively.

\subsubsection{Windings} \label{sssec:ExWindings}
In order to illustrate H- and K-triples, as well as the  combinatorial E-invariants and g-vectors below, 
we display several windings $G_\bx: H(\bx)\ra\uQ$ for  admissible 
$\bx$ from our running example. In each case we agree that
$G_\bx(i_j)=i$ for each vertex $i_j$, and
the label of each arrow indicates the value of $G_\bx$ on this arrow, rather than the name of this arrow. For typographical reasons we omit in most cases the (obvious) values of the polarization on the arrows.
\begin{align*}
\uH(\hbq_{2,2})&=\xymatrix{
2_1\ar[r]^<<-^{\bet}^>>+ & 3_1\ar[r]^<<-^{\eps_3}^>>- &3_2\ar[r]^<<+^{\gam}^>>+& 1_1
\ar[r]^<<-^{\eps_1}^>>-& 1_2 &\ar[l]_>>+_{\gam}_<<+
3_3\ar@(ur,dr)[]^<-^{\eps_3}^>-}\\
\uH(\hbq'_{2,1}) &=\xymatrix{
2_1\ar[r]^{\bet} & 3_1\ar[r]^{\eps_3} &3_2\ar[r]^{\gam}& 1_1\ar[r]^{\eps_1} & 1_2&\ar[l]_{\gam} 3_3&\ar[l]_{\eps_3}3_4}\\
\uH(\hbq_{2,1})&=\xymatrix{
2_1\ar[r]^{\bet} & 3_1\ar[r]^{\eps_3} &3_2\ar[r]^{\gam}& 1_1\ar@(ur,dr)[]^{\eps_1}}\\
\uH(\hbp_{2,1})&=\xymatrix{
2_1&\ar[l]_{\alp} 1_1&\ar[l]_{\eps_1} 1_2&\ar[l]_{\gam} 3_1\ar@(ur,dr)[]^{\eps_3}}\\
\uH(\hbp'_{2,1}) &=\xymatrix{
2_1&\ar[l]_{\alp} 1_1&\ar[l]_{\eps_1} 1_2&\ar[l]_{\gam} 3_1&\ar[l]_{\eps_3} 3_2\ar[r]^{\gam} &1_3\ar[r]^{\eps_1}&1_4}\\
\uH(\tau_f(\hbr'_{3,0})) &=\vcenter{\xymatrix{
5_1\ar[r]^{\bet'}& 2_1\ar[r]^{\bet}& 3_1\ar[d]^{\eps_1} &1_2 &\ar[l]_{\gam} 3_3\ar[d]^{\eps_3}& 1_4\ar[r]^{\alp}& 2_2 &\ar[l]_{\bet'} 4_1  \\
&& 3_2\ar[r]_{\gam}& 1_1\ar[u]^{\eps_1} &3_4\ar[r]_{\gam}& 1_3\ar[u]^{\eps_1}&
}}\\
\uH(\II_{1,-}^{-1}\gam\II_{3,-}) &=\xymatrix{
\ar@(ul,dl)[]_{\eps_1} 1_1&\ar[l]_{\gam} 2_1\ar@(ur,dr)[]^{\eps_3}}
\end{align*}

\subsubsection{Examples of H- and K-triples} \label{sssec:Ex-H-K-trip}
(1) Consider the trivial (polarized) quiver $\uH^{(1)}= 2$, together with the morphisms $\phi_q^{(1)}\df \uH^{(1)}\ra\uH(\hbq_{2,2}), 2\mapsto 2_1$
and $\phi_s^{(1)}\df\uH^{(1)}\ra\uH(\hbp_{2,1}), 2\mapsto 2_1$.  Then
$(\uH^{(1)},\phi_q^{(1)},\phi_s^{(1)})$ is an H-triple of type $\sfA$, but it is  \emph{not} a K-triple, because the boundary vertex $2$ is sent, for example, by $\phi_q^{(1)}$ to the boundary vertex $2_1$ of $H(\hbq_{2,2})$.   
    
(2) Consider the quiver 
\[
\uH^{(2)}=\ \xymatrix{
2\ar[r]^{\bet} &3\ar[r]^{\eps_1} & 3'\ar[r]^{\gam} & 1
},\]
together with the canonical inclusions 
$\phi_q^{(2)}\df \uH^{(2)}\ra\uH(\hbq'_{2,1})$ and 
$\phi_s^{(2)}\df\uH^{(2)}\ra\uH(\hbq_{2.1})$. Then, 
$(\uH^{(2)},\phi_q^{(2)}, \phi_s^{(2)}) \in\cH_\uQ(\hbq'_{2,1},\hbq_{2,1})$. 
This H-triple of type $\sfA$ is \emph{not} a K-triple, because the boundary vertex $2$ is sent, for example, by $\phi_q^{(2)}$ to a boundary vertex.  

(3) Consider the quiver 
\[
\uH^{(3)}=\ \xymatrix{1&\ar[l]_{\gam} 2 \ar@(ur,dr)[]^{\eps_3}},
\]
together with the morphism $\phi_q^{(3)}\df\uH^{(3)}\ra\uH(\hbq_{2,2})$ defined
by the assignation  $1\mapsto 1_2$, $3\mapsto 3_3$, and the morphism
$\phi_s^{(3)}\df \uH^{(3)}\ra\uH(\hbp_{2,1})$ defined by the assignation
$1\mapsto 1_2$, $3,\mapsto 3_1$.  Then, 
$[(\uH^{(3)}, \phi_q^{(3)}, \phi_s^{(3)})]\in\cK_\uQ(\hbp_{2,1},\hbq_{2,2})$ is a K-triple of type $\sfD'$.
Here, for the unique polarized boundary vertex  $(1,-1)$ of $\uH^{(3)}$, we are in case (II). 

(4) Consider the quiver 
\[
\uH^{(4)}=\xymatrix{1&\ar[l]_{\eps_1} 1'&\ar[l]_\gam 3} 
\]
together  with
the morphism $\phi_q^{(4)}\df\uH^{(4)}\ra \uH(\hbp_{2,1})$ defined by prescribing $3\mapsto 3_1$,  
and the morphism $\phi_s^{(4)}\df\uH^{(4)}\ra\uH(\hbq_{2,2})$ defined by prescribing
$3\mapsto 3_2$. Then,
$[(\uH^{(4)}, \phi_q^{(4)}, \phi_s^{(4)})]\in\cK_\uQ(\hbp_{2,1},\hbq_{2,2})$
is of type $\sfA$. Note, that for the polarized boundary vertex $(3, -1)$, we have here the case $(IV)$. 

(5) Consider the quiver 
\[
\uH^{(5)} = \xymatrix{3\ar[r]^{\eps_3} &3'\ar[r]^{\gam} &1\ar[r]^{\eps_1}& 1'},
\]
together with the morphism 
$\phi_q^{(5)}\df\uH^{(5)}\ra\uH(\tau_f(\hbr'_{3,0}))$
which is uniquely defined by  prescribing $3\mapsto 3_3$,
and the morphism
$\phi_s^{(5)}\df\uH^{(5)}\ra\uH(\tau_f(\hbr'_{3,0}))$ which is uniquely defined by prescribing $3\mapsto 3_1$.  Then 
$(\uH^{(5)}, \phi_q^{(5)}, \phi_s^{(5)})\in\cK_\uQ(\tau_f(\hbr'_{3,0}),\tau_f(\hbr'_{3,0}))$.  For both polarized boundary vertices, $(3,-1)$ and
$(1,-1)$, correspond to case (I). 

\subsubsection{Combinatorial g-vector} \label{sssec:ExComb-gvect}
Observe, that 
$\tau_f(\hbq_{2,1})=\II_{(4,-^)}{-1}
\alp'\bet^{-1}\eps_3^{-1}\gam^{-1}\II_{(1,-)}$.  
Thus, 
\[
\uH(\tau_f(\hbq_{2,1}))=\xymatrix{4_1&\ar[l]_{\alp'}
2_1\ar[r]^{\bet} & 3_1\ar[r]^{\eps_3} &3_2\ar[r]^{\gam}& 1_1\ar@(ur,dr)[]^{\eps_1}}
\]
It is then straightforward to check that $A_2^+(\hbq_{2,1})=\{2_1\}$ and
$D_1^-(\hbq_{2,1})=\{\eta_1\}$, where we recall that, by our conventions,
the unique (special) loop of $\uH(\tau_f(\hbq_{2,1}))$ is named $\eta_1$
and $G_{\tau_f(\hbq_{2,1})}(\eta_1)=\eps_1$.  All other sets of the  form
$A_i^\pm(\hbq_{2,1})$, or $D_i^\pm(\hbq_{2,1})$, are empty.  
We also recall, that in our situation $\tQ_0=\{(1,+), (1,-), (2,o), (3,+), (3,-)\}$,  and that the possible decorations of $\hbq_{2,1}$ are
$s^+$ and $s^-$ with $s^\pm_1=1$ and $s^\pm_2=\pm 1$. In particular, 
$\wt(s^\pm)=1$. With this at hand, we see from Definition~\ref{def:comb-inv}, that
$\bg_\uQ(\hbq_{2,1},s^\pm)_{(2,o)}=-1$  and
$\bg_\uQ(\hbq_{2,1}, s^{\rho'})_{1,\rho}=d_2(\rho 1, -s_2^{\rho'})=\del_{\rho,-\rho'}$ for $\rho,\rho'\in\{+,-\}$.  
Moreover, $\bg_\uQ(\hbq_{2,1},s^\pm)_{3,\rho}=0$ for $\rho\in\{+,-\}$.    
Similarly, we find $\bg_\uQ(\hbq_{2,2}, s^{\rho'})_{(1,\rho)}=1$,
$\bg_\uQ(\hbq_{2,2}, s^{\rho'})_{(2,o)}=-1$ and 
$\bg_\uQ(\hbq_{2,2}, s^{\rho'})_{(3,\rho)}=-\del_{-\rho',\rho}$ for
$\rho, \rho'\in\{+,-\}$.

\subsubsection{Combinatorial E-invariant}\label{sssec:ExCombE-inv} 
(1) In view of Section~\ref{sssec:ExTauf} we have
\begin{align*}
\tau_f(\hbq_{2,2})&=
\II_{(4,-)}\alp'\bet^{-1}\eps_3^{-1}\gam^{-1}\eps_1^{-1}\gam\II_{(3,-)} 
\quad\text{and}\\ 
\tau_f(\hbp_{2,1})&=\II_{(4,-)}\alp'\alp\eps_1\gam\II_{(3,-)}.  
\end{align*}
Thus, the quivers $\uH(\tau_f(\hbq_{2,2}))$ and 
$\uH(\tau_f(\bp_{2,1}))$ can be obtained from the quivers
$\uH(\tau_f(\hbq_{2,2}))$ and $\uH(\tau_f(\bp_{2,1}))$,
described in Section~\ref{sssec:ExWindings}, by adding in
both cases an arrow $\xymatrix{4&\ar[l]_{\alp'} 2}$ on the left-hand side. 
With this, it is easy to see that 
\[
\sfA_{\uQ^f}(\hbq_{2,2},\hbp_{2,1})=\{[(\uH^{(4)},\phi_q^{(4')},\phi_s^{(4')})]\},
\]
where $\phi_q^{(4')}$ and $\phi_s^{(4')}$
are similarly defined as $\phi_q^{(4)}$ and $\phi_s^{(4)}$ in
Section~\ref{sssec:Ex-H-K-trip}~(4). 
$\sfP_\uQ((\hbq_{2,2},\hbp_{2,1})=\{(1,1)\}$.  This element 
is closely related to the kiss of type $\sfD'$, which was described in Section~\ref{sssec:Ex-H-K-trip}~(3). 
For both strings, $\tau_f(\hbp_{2,1})$ and $\tau_f(\hbq_{2,2})$,
the possible decorations are again $s^\pm$ with $s_1^{\pm}=1$ and $s_2^\pm =\pm 1$. 
Thus, in view of Definition~\ref{def:comb-inv} we have
\[
e_\uQ((\hbq_{2,2}, s^\rho), (\hbp_{2,1}, s^{\rho'})) = 1+d_2(s_2^\rho, -s_2^{\rho'})
=1+\del_{\rho, -\rho'}.
\]
(2) Recall from Section~\ref{sssec:Ex-H-K-trip}~(5) that
\[
\cK_\uQ(\tau_f(\hbr'_{3,0}), \tau_f(\hbr_{3,0}))=
\{[(\uH^{(5)},\phi_q^{(5)}, \phi_s^{(5)})]\} 
\]
consists of a unique K-triple, which is of type $\sfA$.
Thus, by definition, 
\[
\sfA_{\uQ^f}(\hbr'_{3,0}, \hbr'_{3,0})=
\{[(\uH^{(5)},\phi_q^{(5)}, \phi_s^{(5)})], 
[(\uH^{(5)},(\phi'_q)^{(5)}, (\phi'_s)^{(5)})] \}, 
\]
where in $[(\uH^{(5)},(\phi'_q)^{(5)}, (\phi'_s)^{(5)})]$ the
role of the first and second copy of $\tau_f(\hbr'_{3,0})$ are swapped. 
Since $\hbr'_{3,0}$ is of type $(u,u)$,  the only
possible decoration is $s^+=(1,+1)$ and trivially 
$\sfP_\uQ(\hbr'_{3,0}, \hbr'_{3,0})=\emptyset$. 
We conclude, that 
\[
e_\uQ((\hbr'_{3,0}, s^+), (\hbr'_{3,0}, s^+))=
\abs{\sfA_{\uQ^f}(\hbr'_{3,0}, \hbr'_{3,0})}=2.
\]
(3) Finally, note that  $\br:=\II_{1,-1}^{-1}\gam\II_{3,-}$ is of type $(p,p)$,
and thus we have $\tau_f(\br)=\br$.  We find easily
\[
\sfA_{\uQ^f}(\hbp'_{2,1},\br)=\{(\uH^{(4)}, \phi^{(4')}_q, \phi^{(4')}_s)\},
\]
where $\phi^{(4')}\df\uH^{(4)}\ra \tau_f(\hbp'_{2,1})$ is defined by the
assignation $3\mapsto 3_2$, and $\phi^{(4')}_s\df\uH^{(4)}\ra\uH(\br)$ 
is defined by the assignation $3\mapsto 3$. Trivially, 
$\sfP_{\uQ}(\hbp'_{2,1}, \br)=\emptyset$. Thus,  
\[
e_\uQ((\hbp'_{2,1}, s^+), (\br, t))=\wt(t)\in\{1, 2\}.
\]

\section{Curves on punctured surfaces} \label{sec:punct}
\subsection{Basic definitions}
Let $\bSu:=(\Su,\Pu,\Ma)$ be a surface with marked points and
non-empty boundary.
This means  that $\Su$ is a compact, connected oriented surface of genus $g$ with boundary $\dSu=S_1\cup S_2\cup\cdots\cup S_b$ consisting of $b\geq 1$ connected components.
Each $S_i$ is homeomorphic to the unit circle $S^1\subset\CC$,
and the \emph{induced orientation} of $S_i$ is such, that when following this orientation, the surface lies on the left. We assume that $S^1$ is anticlockwise oriented.
$\Ma\subset\dSu$ is a finite set of \emph{marked points} on the
boundary such that $\Ma\cap S_i\neq\emptyset$ for all $i$ and
$\Pu\subset\Su\setminus\dSu$ is a finite set, called \emph{punctures}.
By definition,
\begin{equation} \label{eq:rksu}
n(\bSu) := 6(g-1) +3(b+\abs{\Pu})+\abs{\Ma}
\end{equation}
is \emph{rank} of the marked surface. We will always assume $n(\bSu)\geq 1$ in
order to avoid pathological situations. An oriented \emph{\gls{curve}}  on $(\Su,\Pu,\Ma)$ is a 
piecewise differentiable, locally injective map
$\gam\df [0,1]\ra\Su$, such that (compare to \cite[Definition 5.1 and Theorem 5.2]{GLF23})
\[
  \gam(\{0,1\})\subset\Pu\cup\Ma\text{ and }
  \gam(]0,1[)\subset\Su\setminus (\dSu\cup\Pu).
\]
We say that $\gam$ for  $(\Su,\Pu,\Ma)$ is of \gls{type}
\begin{alignat*}{2}
  (u,u) &\text{ if } \{\gam(0),\gam(1)\}\subset\Ma,\quad &
  (u,p) &\text{ if } \gam(0)\in\Ma, \gam(1)\in\Pu,\\
  (p,p) &\text{ if } \{\gam(0),\gam(1)\}\subset\Pu,\quad &
  (p,u) &\text{ if } \gam(0)\in\Pu,\ \gam(1)\in\Ma.
\end{alignat*}

We consider such curves up to homotopy relative to $\{0,1\}$,  and we
write $\gam\gls{sim}\gam'$ if two curves are homotopic in that sense.
In this situation, we denote by   $\gls{gam-1}$ 
the curve with
\emph{opposite orientation},
that is $\gam^{-1}(t):=\gam(1-t)$ for all $t\in [0,1]$.
An oriented \emph{\gls{loop}} is a locally differentiable, locally injective
map (compare too to \cite[Definition 5.1 and Theorem 5.2]{GLF23})
\[
  \lam\df S^1\ra\Su\setminus (\dSu\cup\Pu).
\]
We consider loops usually  up to free homotopy in $\Su\setminus\Pu$, and we
write in this situation also $\gam\sim\gam'$ if $\gam$ and $\gam'$ are
homotopic in that sense. In this case, the loop $\gam^{-1}$ with
the opposite orientation is given by $\gam^{-1}(z):=\gam(\overline{z})$ for all
$z\in S^1\subset\CC$ and $\overline{z}$ is the complex conjugate of $z$.
We say that a loop $\lam'$ is \emph{\gls{primitive}} if it is not of the form $\lam'~\sim\lam^m$ for some loop $\lam$ and $m\geq 2$.  By definition, the \emph{\gls{type}} of each loop $\lam$
is $(b)$.

Recall, that for $E\subset\Su\setminus\Pu$ the classical
\gls{fundamentalgroupoid} $\gls{fgpoid}$ 
has the elements of $E$ as objects, and morphisms between $e_1$ and  $e_2$ are the homotopy classes of oriented
paths in $\Su\setminus\Pu$.
In this situation, Amiot and Plamondon~\cite[Sec.~5.1]{AP17} discuss the
\gls{fundorbgroupoid}  $\glslink{forgpoid}{\piorb(\Su,E)}$.
By definition, this is the quotient of $\pi_1(\Su\setminus\Pu, E)$ by the
equivalence relation on paths, which is generated by the relations
described in Figure~\ref{fig:orb}, see also \cite[\S2.1]{CG16}. We denote by
$\glslink{forgpoid}{\piorb(\Su, E)^*}$ the set
of \emph{all} morphisms in $\piorb(\Su, E)$ which are not identities.  
\begin{figure}[ht]
\begin{tikzpicture}
\draw[blue,thick](0,0)
  .. controls +(0:1) and +(0:2) .. node[near start, sloped]{$>$} node[near end, sloped]{$>$} (2,2)
  .. controls +(180:2) and +(180:1).. node[near start, sloped]{$>$}(4,0);
 \draw (2.1,1.5) node{$\times\ p$};
 \draw (4.5,1.2) node[scale=1.6]{$\simeq$}; 
 \draw[blue, thick] (5,0)
  .. controls +(0:1.5) and +(-60:1.5) .. node[near start, sloped]{$>$}  ++(1.5,1.5)
  .. controls +(120:1) and +(60:1)   .. node[very near end, sloped]{$>$} node[very near start, sloped]{$>$} ++(1.2,0)
  .. controls +(-120:1.5) and +(180:1.5) .. ++(1.5,-1.5);
 \draw (5,0) + (2.2,1.5) node{$\times\ p$};   
\end{tikzpicture}
\caption{Orbifold homotopy relations}
\label{fig:orb} 
\end{figure}
Following~\cite[Sec.~5.3]{AP17}, the \gls{freeorbifoldfundamentalgroup}
$\glslink{forgpoid}{\piorbfree(\Su)}$ of $\piorb(\Su,E)$ consists
of a set, which by some abuse of notation, is also named  
$\piorbfree(\Su)$, together with natural bijections
$c_e\df\piorbfree(\Su)\ra\piorb(\Su,E)(e,e)_{cl}$ for each $e\in E$.  Here
$\piorb(\Su,E)(e,e)_{cl}$ denotes the set of conjugacy classes of the orbifold fundamental group $\piorb(\Su,E)(e,e)$ with base point $e$.  This is well-defined because we assume $\Sig$ to be connected, and thus the groupoid
$\piorb(\Su,E)$ is also connected. Note that the underlying set of $\piorbfree(\Sig)$ does not depend on the choice of $E$. 
Despite its name, $\piorbfree(\Sig)$ is \emph{not} a group.
The elements of $\piorbfree(\Su)$ can
be identified with the free homotopy classes of loops in $\Su\setminus\Pu$
modulo the relations from Figure~\ref{fig:orb}. This follows for example
from~\cite[Lemma~2.2]{Am16}.   We will discuss in Section~\ref{ssec:orb} a more
concrete description of those objects in terms of signature zero tagged
triangulations.  

\subsection{Admissible curves} \label{ssec:ALC}
Following~\cite[Sec.~3.2]{QZ17}, we define the \emph{\gls{completion}}
$\gls{bargam}$
of a curve, which is  not of type (u, u),
as in Figure~\ref{fig:cpl}. In particular, the completion of a curve, which
connects two (not necessarily different) punctures, is a loop. Notice,
that we orient $\widebar{\gam}$ always as in Figure~\ref{fig:cpl}, regardless
of the orientation of $\gam$.
We agree that  the remaining curves,  and all loops are complete, i.e. we have
in those cases $\widebar{\gam}=\gam$.
\begin{figure}[ht]
\begin{tikzpicture}[scale=2.5]
\fill[lightgray] (.5,0) --  (1,0) arc[start angle = 20, end angle = 90, radius=1.5] --  (-.3, 0.5) -- cycle;
\draw [name path = curve1, thick, <-]
(1,0) arc[start angle = 20, end angle = 90, radius=1.5];
\path[name path = vline1] (0.5, 0) -- (0.5, 1.5);
\fill [black, name intersections={of= curve1 and vline1, by=mk-1}]
   (mk-1) circle (1pt); 
\draw[gray] (mk-1) node[right,black]{$\scriptstyle{m}$} -- +(45:0.7) node[black, name = pt1] {$\times$} node[right, black]{\footnotesize $p$};
\draw[gray] (mk-1) ++(45:0.45) node {$\scriptstyle{\ \gamma}$};
\draw[blue,->] (mk-1) -- +(55:0.7);
\draw[blue] (mk-1)  ++(55:0.7) arc [start angle=145, end angle=-34, radius=.13] node[below]{$\bar{\gam}$} --(mk-1);
\end{tikzpicture}
\qquad
\begin{tikzpicture}[scale=2, baseline=-0.2]
  \draw[gray] (-1,1) node[black]{$\times$} node[black, below right]{\footnotesize $p_1$} -- +(2,0)  node[black] {$\times$} node[black, below left]{\footnotesize $p_2$};
  \draw[blue, ->] (-1, 1.25) -- ++(2,0) node[above left] {\footnotesize $\bar{\gamma}$};
  \draw[blue] (1, 1.25) arc [start angle=90, end angle=-90, radius=.25] -- ++(-2,0) arc [start angle= -90, end angle=-270, radius=0.25]; 
\draw[gray] (-1,1) ++(1,.1) node {\footnotesize $\gamma$};
\end{tikzpicture}
\caption{Completion of curves}
\label{fig:cpl} 
\end{figure}
We say, that a curve, or a loop,  has a \emph{proper resp.~based  kink},
if it encloses a
once-punctured monogon by a self-intersection (including endpoints),
see Figure~\ref{fig:kink}. For details on the treatment of kinks and their invariance under homotopy, we refer the reader to \cite{GLF23}.
\begin{figure}[ht]
\begin{tikzpicture}
  \draw[blue, ->]
  (-1,0) .. controls (0, 0.5) and (0.5, 1) .. (0.5, 1.5) node[right]{$\gamma$}
  arc [start angle=0, end angle=180, radius=0.5]
 .. controls (-0.5,1.2) and  (-0.5,1) .. (1,0);
\draw[black] (0,1) +(0, 0.5) node{$\times$} node[below]{$p$};
\draw (0,-.2) node{$\phantom{x}$};
\draw (0,-.5) node{proper};
\end{tikzpicture}
\qquad
\begin{tikzpicture}[scale=2]
\fill[lightgray]
(.5,0) --  (1,0) arc[start angle = 20, end angle = 90, radius=1.5] --  (-.3, 0.5) -- cycle;
\draw [name path = curve1, thick, <-]
(1,0) arc[start angle = 20, end angle = 90, radius=1.5];
\path [name path = vline1] (0.5, 0) -- (0.5, 1.5);
\fill [black, name intersections={of= curve1 and vline1, by=mk-1}]
   (mk-1) circle (1pt); 
\draw[gray] (mk-1) node[right,black]{$\scriptstyle{m}$}  ++(45:0.7) node[black, name = pt1] {$\times$} node[below left, black]{\footnotesize $p$};
\draw[blue,->] (mk-1) -- +(55:0.7);
\draw[blue] (mk-1)  ++(55:0.7) arc [start angle=145, end angle=-34, radius=.13] node[below]{$\gam$} --(mk-1);
\draw (0.5,-0.2) node{based};
\end{tikzpicture}   
\qquad
\begin{tikzpicture}[scale=2.5]
\coordinate (mk-1) at (0.2,0.2);
\draw (mk-1) node[black] {$\times$}; 
\draw[gray] (mk-1) node[below right,black]{$\scriptstyle{p'}$}  ++(45:0.7) node[black, name = pt1] {$\times$} node[below left, black]{\footnotesize $p$};
\draw[blue,->] (mk-1) -- +(55:0.7);
\draw[blue] (mk-1)  ++(55:0.7) arc [start angle=145, end angle=-34, radius=.13] node[below]{$\gam$} --(mk-1);
\draw (0.5,-0.2) node{based};
\end{tikzpicture}

\caption{Kinks}
        \label{fig:kink} 
     \end{figure}

\begin{Def} \label{def:tcLC}
We denote by $\glslink{tcALC}{\tcAC(\bSu)}$ the set of homotopy classes of curves $\gam$for $\bSu$  which fulfil the following conditions:
\begin{itemize}
\item $\gam$ has no proper kinks,
\item $\gam$ is not null homotopic,
\item $\gam$ is not homotopic to a boundary segment.
\end{itemize}       
Similarly, we denote by $\glslink{tcALC}{\tcAL(\bSu)}$ the set of free homotopy classes of
 non-trivial, primitive loops without kinks, which do not cut out a single 
 puncture.  Moreover, we set
\begin{align*}
\glslink{tcALC}{\tcLC(\bSu)}   &:=\tcAC(\bSu)\cup\tcAL(\bSu),\\
\glslink{tcALC}{\tcAC(\bSu)_b} &:=\{\gam\in\tcAC(\bSu)\mid \type(\gam)=(p,p)\},\quad\text{and}\\
\glslink{tcALC}{\tcLC(\bSu)_s} &:=\tcAC(\bSu)\setminus\tcAC_b(\bSu).
\end{align*}
\end{Def}

However, we also need  a slightly smaller class of curves, which we can relate to admissible strings and bands.
\begin{Def}
$\glslink{cALC}{\cAC(\bSu)}$ is the set of homotopy classes of curves $\gam$ for $(\Su,\Pu,\Ma)$  which fulfil the following conditions:
\begin{itemize}
\item $\gam$ has no kinks,
\item $\gam$ is not null homotopic,
\item $\gam$ is not homotopic to a boundary segment.
\item If $\type(\gam)=(p,p)$,  the completion $\widebar{\gam}$ is primitive \emph{up to orbifold} homotopy.  
\end{itemize}   
Next, we set 
\begin{align*}
\glslink{tcAL}{\tcAL(\bSu)_\sym} &:=
      \{\overline{\gam}^{\pm 1}\mid\gam\in\tcAC_b(\bSu)\}/\sim\\
\glslink{tcAL}{\cAL(\bSu)}  &:=\tcAL(\bSu)\setminus \tcAL(\bSu)_\sym,\\
\glslink{cALC}{\cLC(\bSu)}  &:=\cAL(\bSu)\cup\cAC(\bSu),\\
\glslink{cALC}{\cAC_b(\bSu)}  &:=\{ \gam\in\cAC(\bSu)\mid\type(\gam)=(p,p)\},\\
\glslink{cALC}{\cLC_s(\bSu)}  &:=\cAC(\bSu)\setminus\cAC_b(\bSu),\quad\text{and}\\
 \glslink{cALC}{\cLC_b(\bSu)}  &:=\cAC_b(\bSu)\cup\cAL(\bSu).   
 \end{align*}                    
\end{Def}

This parallels the situation for admissible words:
\\[.5cm]
\hspace*{2cm}\begin{tikzpicture} 
\node at (2.2,0)  {$\cAC(\bSu)$} ;
  \matrix at (4,-.3)
  {\node(l1){$(u,u)$};\\ \node{$(u,p)$}; \\ \node(l3){$(p,u)$};\\
    \node(l4){$(p,p)$};\\ \node(l5){$\phantom{(x,x)}$};\\};
\draw[decorate,decoration=brace] (l1.east) -- (l3.east);
\draw[decorate,decoration=brace] (l4.north east) -- (l5.south east);
\draw[decorate,decoration=brace] (l4.south west) -- (l1.north west);

%
\node at (2.2,-1.8) {$\cAL(\bSu)$} ;
\path (5.8,.3) node[outer xsep=1](As) {$\cLC_s(\bSu)$} -- +(3, 0) node(St){$\tcAC_s(\bSu)$} --  ++(0,-1.6)  node[outer xsep=1](Ab) {$\cLC_b(\bSu)$} -- ++(3,0) node(pB){$\tcAL(\bSu)$};
\draw[thick, ->] (As) -- node[above]{$\overline{?}$} (St);
\draw[thick, ->] (Ab) -- node[above]{$\overline{?}$} (pB);
\draw[rounded corners=4pt] (1.2,1.4) rectangle (6.6,-2.1);
\node at (3.9,-2.6) {$\cLC(\bSu)$};
\end{tikzpicture}

\begin{Rem}
(1) Note, that $\tcAL(\bSu)$ and $\cAL(\bSu)$ do not depend on the set $\Ma$.

(2) The discussion in  \cite[Rem.~5.23~(2)]{AP17} illustrates, why \emph{orbifold} homotopy is needed in the definition for curves of type $(p,p)$ in $\cAC(\bSu)$. 
Apparently, this point was missed  in~\cite[Def.~3.1~(T2)]{QZ17}.
\end{Rem}  

Finally, we denote by $\cAC(\Su,\Pu,\dSu\setminus\Ma)$ the set of homotopy
classes
of curves $\gam\df [0,1]\ra \Su$ without kinks, such that
 \[
   \gam(\{0,1\})\subset (\dSu\setminus\Ma) \cup\Pu \text{ and }
   \gam(]0,1[)\subset\Su\setminus (\dSu\cup\Pu),
 \]
and which do not cut out a, possibly once punctured, monogon, and which are not
 contractible.  Our homotopy relation allows in these cases to ``move'' the
endpoints, which lie on a boundary segment, within the given boundary segment.
See for example~\cite[Sec.~10.3]{GLFS22} for more details.

\subsection{Marked curves and their intersection numbers}
\label{ssec:taggedC}
Similar to section~\ref{ssec:combEg},  we define for each 
$\gam\in\cLC(\bSu)$ a subset $S(\gam)$ of the set of symbols
$S:= (\{-1,1\}\times\{-1,1\})\cup\{*, (*,*)\}$ as follows:
\[
{\gls{S(gam)}}:=\begin{cases}
\{(1,1)\}                        &\text{if } \type(\gam)=(u,u),\\
\{(1,-1), (1,1)\}                &\text{if } \type(\gam)=(u,p),\\
\{(-1,1), (1,1)\}                &\text{if } \type(\gam)=(p,u),\\
\{1,-1\}^2\cup\{(\ast,\ast)\}    &\text{if } \type(\gam)=(p,p),\\
\{\ast\}                &\text{if } \type(\gam)=(b).
\end{cases}
\]
We will use the same operations on $S$ as in Section~\ref{ssec:combEg}.
With this at hand, we define the set of \emph{\glslink{mtcurve}{marked curves}}
\begin{align*}
\glslink{mtALC}{\cLC^*(\bSu)}&:=\{(\gam,s)\mid \gam\in\cLC(\bSu), s\in S(\gam)\},\\
  \intertext{as well as the subset of \glslink{mtcurve}{tagged curves}}
\glslink{mtALC}{\cAC^{\bowtie}(\bSu)}&:=\{(\gam,s)\in\cLC^*(\bSu)\mid s\in\{-1,1\}\times\{-1,1\}\}
\end{align*}                
Note, that $(\gam,s)\in\cAC^{\bowtie}(\bSu)$ implies $\gam\in\cAC(\bSu)$. Thus, $\cAC^{\bowtie}(\bSu)$ can be identified with the set $C^\times(\bSu)$ of tagged curves from~\cite[Def.~3.1]{QZ17}.

 For $(\gam,s)\in\cLC^*(\bSu)$ the  \emph{inverse  orientation} is  $(\gam,s)^{-1}:=(\gam^{-1},s^\iota)$.  
Inverting the orientation defines an equivalence relation on
$\cLC^*(\bSu)$ which we denote by $\simeq$.

We visualize (equivalence classes of) marked curves  $(\gam,s)\in\cLC^*$ by drawing the ordinary curve $\gam\in\cAC(\bSu)$ together with a decoration close to each possible endpoint $\gam(t)\in\Pu$. 
The decoration close to $\gam(0)$ is ``plain˝ if $s_1=1$, it is a ``notch˝ 
$\bowtie$ if $s_1=-1$, and it is $"*"$ if $s_1=*$.
An analogous rule applies to the decoration close to
$\gam(1)$ and the ``value˝ of $s_2$.  Curves of type $(b)$ receive no
decoration at all. 

For $\gam_1\df I_1\ra\Su$ and $\gam_2\df I_2\ra\Su$ belonging to
$\cLC(\bSu)$ we define, generalizing slightly~\cite[Sec.~3.3]{QZ17},
the \emph{\gls{internalintersectionnumber}}
\[
  \intn (\gam_1,\gam_2):=\min\{\abs{\gam'_1\cap\gam'_2}\mid
  \gam'_1\sim \gam_1, \gam'_2\sim\gam_2\},
\]
where
\[
  \gam'_1\cap\gam'_2:=\{(t_1,t_2)\in I_1\times I_2\mid
    \gam'_1(t_1)=\gam'_2(t_2) \not\in (\Pu\cup\Ma)\}.
\]
As observed in~\cite[Rem.~3.5]{ZZZ13}, for a curve $\gam$  with one self-intersection, we have $\intn (\gam,\gam)=2$.

Recall that in Section~\ref{ssect: CombIn} we introduced the notions of \emph{pairs of punctured letters of type $D'$} and \emph{band orientation} in order to define the combinatorial $E$-invariant found in Section~\ref{ssec:combEg}.
We shall now introduce analogous definitions. Namely, for $\gam_1,\gam_2 \in \cA(\bSu)$ we set
\begin{align*}
\glslink{preInt}{\sfP_{\bSu}(\gam_1,\gam_2)}&:=\{(t_1,t_2)\in\{0,1\}\times\{0,1\}\mid
  \gam_1(t_1)=\gam_2(t_2)\in\Pu\}\setminus \sfP'_{\bSu}(\gam_1,\gam_2),
\quad\text{where}\\
  \sfP'_{\bSu}(\gam_1,\gam_2)&:=
  \begin{cases}
\{(0,0), (1,1)\}&\text{if }\gam_1\sim\gam_2^{\phantom{-1}}\in\cAC(\bSu),\\
\{(1,0), (0,1)\}&\text{if }\gam_1\sim\gam_2^{-1}\in\cAC(\bSu),\\
\qquad\emptyset &\text{else.}
\end{cases},\quad\text{and}\\
\glslink{preInt}{\Diag_b(\gam_1,\gam_2)}&:=\begin{cases}
\phantom{-}  1 &\text{if } \gam_1=\gam_2\quad\in\cLC_b(\bSu),\\
 -1 &\text{if } \gam_1=\gam_2^{-1}\in\cLC_b(\bSu),\\
\phantom{-} 0  &\text{else.}
\end{cases}
\end{align*}
Trivially, $\sfP_{\bSu}(\gam_1,\gam_2)\neq\emptyset$ is only possible if
$\gam_1$ and $\gam_2$ share at least one puncture.
\begin{Def} \label{def:mIntersectN}
With this at hand,  we define for each pair of elements
$(\gam,s), (\lam,t)\in\cAC^*(\bSu)$ the \emph{\gls{markedintersectionnumber}}
\begin{multline*}
\glslink{Intn*}{\Intn^*((\gam,s), (\lam,t))}:= \intn(\gam, \lam)\times \wt(s)\cdot\wt(t) \\
   + \sum_{(j,i)\in \sfP_\bSig(\gam,\lam)} \!\!\!d_2(s_{i+1},(t^\chi)_{j+1}) 
  \;+\; 2\abs{\Diag_b(\gam,\lam)} \cdot d_3(s',t^\chi)\quad\in\NN,
\end{multline*}  
where we use the same functions $d_2$ and $d_3$ as in Definition \ref{def:comb-inv}.
  This allows us to introduce the important
subset
\[
  \cLC^*_\tau(\bSu):=
  \{(\gam, s)\in\cLC^*(\bSig)\mid \Intn^*((\gam,s), (\gam,s))=0\}
\]  
of \emph{simple marked curves}. We let $\cLC^*_{\tau}(\bSu)/_{\simeq}$ denote this set of all simple marked curves considered up to the equivalence relation $\simeq$.
\end{Def}

On $\cAC^{\bowtie}(\bSu)$ the marked intersection number $\Intn^*$ is basically an expansion of the quite dense definition
of the tagged intersection number from~\cite[Def.~3.3]{QZ17}.  This expanded version will be convenient when we compare it later on with the combinatorial $E$-invariant $e_\uQ$. See Section~\ref{ssec:ExIntersecN} below for some sample calculations.

We now introduce the notion of a lamination, which coincides with the collection $C^{\circ}(S,M)$ considered by Musiker--Schiffler--Williams~\cite[Definition 8.2]{MSW13}

\begin{Def} \label{def:MSWL}
Following ~\cite{GLFS22}, we say a pair $L = (\gamma, m)$ is a \emph{\gls{lamination}} of $\bSu$ if $\gamma$ is a (finite) subset of $\cLC^*_\tau(\bSu)/{\simeq}$ such that $\Intn^*((\gamma_i,s_i),(\gamma_j, s_j)) = 0$ for all $(\gamma_i,s_i),(\gamma_j, s_j) \in \gamma$ and $m: \gamma \ra \mathbb{Z}_{>0}$ is a function. In other words, with the notation from
Example~\ref{expl:KRS1} we let 
\[
\glslink{LamSu}{\MSW(\bSu)}:=\KRS(\cLC^*_\tau(\bSu)/\simeq, \Intn^*)
\]
denote the set of all laminations of $\bSu$.
\end{Def}

Note that $\MSW(\bSu)$ may also be naturally identified with the set of \emph{integral, unbounded measured laminations} considered by Fomin--Thurston~\cite[Ch.~12]{FT18}. The reader may also be interested in the like-minded work of Fock--Goncharov~\cite[Section 7]{FG07}.

\begin{Rem} \label{rem:taggedA}
The set of \emph{\glslink{taggedarc}{tagged arcs}} is the subset
\[
\glslink{tArc}{\cAC^{\bowtie}_\tau(\bSu)}:=  \{(\gam, s)\in\cAC^{\bowtie}(\bSu)\mid
\Intn^*((\gam, s),(\gam,s))=0\}.
\]
Clearly, the tagged arcs are closed under the equivalence relation
$\simeq$ of inverting the orientation.  Thus, it makes sense
to consider the set of tagged arcs modulo their orientation, which we denote by $\glslink{tArc}{\cAC^{\bowtie}_\tau(\bSu)/_{\simeq}}$.

It follows from the above definitions, that for a tagged arc $(\gam, s)$
the underlying curve $\gam$ is in
fact an arc, since in particular the internal intersection number $\intn(\gam,\gam)=0$. 
Note moreover, that for a tagged arc with
$\gam(0)=\gam(1)\in\Pu$ we have $s_0=s_1\in\{-1,1\}$.  
In view  of Proposition~\ref{Lem:arcs}  below, we may  identify moreover
the set $\cAC^{\bowtie}_\tau(\bSu)/_{\simeq}$
with the set of tagged arcs from~\cite[Def.~7.1]{FST08}.
\end{Rem}

\subsection{Tagged triangulations  and skewed-gentle  polarized quivers}
\label{ssec:tagged3ang}
\begin{Def}\label{def: tagged T} A \emph{\gls{taggedtriangulation}} of $\bSu$ is a maximal collection of
pairwise different equivalence classes of tagged curves
$T=(\gam_i,c_i)_{1\leq i\leq n}$ in $\cA^{\bowtie}(\bSu)$ such that
\[
\Intn^*(\gam_i,c_i),(\gam_j, c_j))=0 \quad\text{for all}\quad 1\leq i,j\leq n.
\]
In particular, all elements of a tagged triangulation are tagged arcs. \end{Def}
It will be convenient for us to draw each arc $\gam_i$ with a specific
orientation. Recall also from~\cite[Prop.~2.10]{FST08} that in this case
$n=n(\bSu)$ is the rank of the marked surface, i.e.~the rank determines the
number of tagged arcs in each triangulation.

\begin{Def}\label{def: sig-zero}In view of~\cite[Def.~9.1]{FST08},
we say that a tagged triangulation $T$ is of \emph{\glslink{sign0}{signature zero}}
if for \emph{each} puncture $p\in\Pu$ there is a 
pair of tagged arcs $(\tau_p,\tau'_p)=((\gam_p,c_p),(\gam_p,c'_p))$ in $T$
with $\gam_p(1)=p$, $\gam_p(0)=m_p\in\Ma$ and $c_{p,2}=1=-c'_{p,2}$.  Note
that we have $c_{p,1}=1=c'_{p,1}$ since $\gam_p(0)\in\Ma$.  
We agree that in this situation the remaining (tagged) arcs of $T$ are labelled as $\tau_1, \tau_2,\ldots \tau_{n-2\abs{\Pu}}$. These arcs necessarily
have both endpoints in $\Ma$ and thus have all the trivial tagging
$(1,1)$. \end{Def} 

In~\cite[Def.~3.7]{QZ17}, signature zero tagged triangulations were called
\emph{admissible}.  Triangulations of this kind were also studied
in~\cite{GLFS16} and in~\cite{AP17}. Furthermore, the corresponding ideal triangulations played an important role in~\cite{GLF23} and may be found there under the name of \emph{signature zero triangulations}. Recall that the defining property of such (ideal) triangulations is that each puncture is enclosed in a self-folded triangle.

It is quite easy to see that $\dSu\neq\emptyset$ implies  the
existence of signature zero tagged triangulations, see for example~\cite[App.~A]{QZ17}.
For $T$ a signature zero tagged triangulation of 
$\bSu$, we cut the surface along the edges of $T$ which are of type $(u,u)$ and call the corresponding
connected components \emph{(generalized) triangles}.

Inspired by~\cite[Sec.~7]{FST08}
and~\cite[Sec.~4.1]{QZ17},  we associate to each signature zero tagged triangulation 
$T=(\tau_p,\tau'_p)_{p\in\Pu}\cup (\tau_i)_{i=1,\ldots n-2\abs{\Pu}}$  a
skewed-gentle polarized
quiver $\uQ=\glslink{Q(T)f}{\uQ(T)}$, together with a fringing $\uQ^f=\glslink{Q(T)f}{\uQ^f(T)}$, as follows:
For $i=1,2,\ldots n-2\abs{\Pu}$, the ordinary vertices $i$ of $\Qordv$
correspond to the arcs $\tau_i$ of type $(u,u)$ of $T$.
The special vertices are identified with the punctures $\Pu$.
The additional $\abs{\Ma}$ fringe vertices correspond to the boundary segments, which connect the marked points $\Ma$. 
From each triangle we obtain an oriented 3-cycle of ordinary arrows, by following the ``vertices˝ in clockwise direction.
Moreover, each arrow which starts or terminates on the left (resp.~right) side of an edge $\gamma$ with respect to its orientation, obtains at this end the polarization $+1$ (resp.~$-1$).  See Figure~\ref{fig:tagtria}.

\begin{figure}
\begin{tikzpicture} 
\clip (-2,2) circle (2.5cm);
  \draw[lightgray, line width=1.0cm, line cap=round] (0,0) +(-120:.5cm)
    arc[start angle=60, end angle=120, radius=3.5];
  \draw[name path = bottom1, thick] (0,0)
  arc[start angle=60, end angle=120, radius=4] node[near end, sloped]{$>$};
  \path [name path= bottom2]  (-2,0) -- (-2,3);
\fill [black, name intersections={of= bottom1 and bottom2, by=mb1}]
   (mb1) circle (2pt) node[below]{$m$}; 
\draw[blue, thick]
   (mb1) .. controls +(170:1.5) and +(180:4) .. node[sloped, near end]{$>$} (-2,4) node[blue, name=a1]{$\cdot$}
   .. controls +(0:4) and +(10:1.5) .. node[sloped]{$<$} (mb1);
   \draw[blue, thick, double distance=3pt]
   (mb1) -- node[very near end, sloped, below=-4.5pt]{$\bowtie$}  +(120:2) node[name=pt1, black, above left=-3pt]{$\times$} node[black, below left=1pt]{$p$};
\draw[blue, thick, double distance=3pt]
(mb1) -- node[very near end, sloped, below=-4.5pt]{$\bowtie$}  +(60:2) node[name=pt2, black, above right=-3pt]{$\times$} node[black, below right]{$q$};
\draw[teal, thick, ->] (pt2) -- node[near start, above]{$+$} node[near end, above]{$+$} (pt1);
\draw[teal, thick, ->] (pt1) -- node[very near start, above]{$+$} node[very near end, left=-1pt]{$-$}(a1);
\draw[teal, thick, ->] (a1) -- node[very near start, right=-1pt]{$-$} node[very near end, above]{$+$} (pt2);
\draw[teal, thick, dotted, ->]  (pt1) + (120:0.3)  arc[start angle=50, end angle=310, radius=0.3];
\draw[teal, thick, dotted, ->]  (pt2) + (60:0.3)  arc[start angle=-220, end angle=-490, radius=0.3];
\fill[teal] (a1) circle (1.5pt);
\fill (mb1) circle (2pt);
\end{tikzpicture}
\quad
\begin{tikzpicture} 
\clip (-2,2) circle (2.5cm);  
 \draw[lightgray, line width=1.0cm, line cap=round] (0,0) +(-120:.5cm)
    arc[start angle=60, end angle=120, radius=3.5];
  \draw[name path = bottom1, thick] (0,0)
  arc[start angle=60, end angle=120, radius=4] node[near end, sloped]{$>$};
  \path [name path= bottom2]  (-2,0) -- (-2,3);
\fill [black, name intersections={of= bottom1 and bottom2, by=mb1}]
   (mb1) circle (2pt) node[below]{$m_1$}; 
\draw[lightgray, line width=1.0cm, line cap=round] (-4,4) +(60:.5cm)
    arc[start angle=240, end angle=300, radius=3.5];
  \draw[name path = top1, thick] (-4,4)
  arc[start angle=240, end angle=300, radius=4] node[near end, sloped]{$<$};
  \path [name path= top2]  (-2,4) -- (-2,2);
\fill [black, name intersections={of= top1 and top2, by=mb2}]
(mb2) circle (2pt) node[above]{$m_2$};
\draw[blue, thick]
(mb1) .. controls +(170:2.8) and +(190:2.8) .. node[sloped, near start]{$<$} node[near end, name=a1]{$\cdot$} (mb2);
\draw[blue, thick]
(mb1) .. controls +(10:2.8) and +(-10:2.8) .. node[sloped, near start]{$>$} node[near end, name=a2]{$\cdot$} (mb2);
\draw[blue, thick, double distance=3pt]
(mb1) -- node[near end, sloped, below right=-4.3pt]{$\!\bowtie$}  +(90:1.2) node[name=pt2, black, above=-3pt]{$\times$} node[black, above=6pt]{$p$};
\fill[teal] (a1) circle (1.5pt);
\fill[teal] (a2) circle (1.5pt);
\fill[black] (mb1) circle (2pt);
\fill[black] (mb2) circle (2pt);
\draw[teal, thick, ->] (a1) -- node[near start, above]{$-$} node[near end, above]{$+$}(a2);
\draw[teal, thick, ->] (a2) -- node[very near start, below]{$+$} node[near end, below]{$+$}(pt2);
\draw[teal, thick, ->] (pt2) -- node[near start, below]{$+$} node[very near end, below]{$-$} (a1);
\draw[teal, thick, dotted, ->] (pt2) +(190:0.15)  arc[start angle=110, end angle=410, radius=.4];
\end{tikzpicture}
\begin{tikzpicture}
\clip (-1.2,.3) rectangle (3,-4.3);
\draw[lightgray, line width=1.0cm, line cap=round] (0,0) +(150:.5cm)
    arc[start angle=150, end angle=210, radius=4.5];
\draw[name path = left1, thick] (0,0)
  arc[start angle=150, end angle=210, radius=4] node[near end, sloped]{$>$} ;
\path [name path= left3]  (1,-0.2) -- (-2,-0.2);
\fill [black, name intersections={of= left1 and left3, by=mb1}]
(mb1) circle (2pt) node[left]{$m_1$};
\path [name path= left2]  (1,-1.0) -- (-2,-1.0);
\draw [black, name intersections={of= left1 and left2, by=lb1}]
(lb1) node[green, name=LB1]{$\cdot$}; 
\path [name path= left4]  (1,-3.7) -- (-3,-3.7);
\fill [black, name intersections={of= left1 and left4, by=mb2}]
(mb2) circle (2pt) node[left]{$m_2$};
\draw[blue, thick, double distance=3pt]
   (mb2) -- node[very near end, sloped, below=-4.6pt]{$\bowtie$}  +(50:1.8) node[name=pt1, black, above right=-3pt]{$\times$} node[black, above =6pt]{$\quad p$};
\draw[blue, thick]  (mb2) .. controls +(10:4) and +(-10:4) .. node[sloped]{$<$} node[near end, name=lb2]{$\cdot$} (mb1);
\draw[green, thick, ->] (LB1) -- node[very near start, above]{$+$} node[near end, above=-2pt]{$-$}  (lb2);
\draw[teal, thick, ->] (lb2) -- node[near start, below right=-8pt]{$\quad -$} node[very near end, right]{$+$} (pt1);
\draw[green, thick, ->] (pt1) --  node[very near start, below left=-2pt]{$+$} node[very near end, below]{$+$}(LB1);
\draw[teal, thick, dotted, ->] (pt1) + (250:0.2) arc[start angle=170, end angle=450, radius=0.3];
\fill (mb2) circle (2pt);
\fill[green] (lb1) circle (1.5pt);
\fill[teal] (lb2) circle (1.5pt);
\end{tikzpicture}
\\[1cm]
\begin{tikzpicture}
\clip (-2,1.7) circle (2.4cm);
\draw[lightgray, line width=1.0cm, line cap=round] (0,0) +(-120:.5cm)
    arc[start angle=60, end angle=120, radius=3.5];
\draw[name path = bottom1, thick] (0,0)
  arc[start angle=60, end angle=120, radius=4] node[near start, sloped]{$>$};
  \path [name path= bottom2]  (-2,0) -- (-2,3);
\fill [black, name intersections={of= bottom1 and bottom2, by=mb1}]
   (mb1) circle (2pt) node[below]{$m_1$};   

 \draw[lightgray, line width=1.0cm, line cap=round] (-1,4) +(0:.5cm)
    arc[start angle=180, end angle=240, radius=3.5];
  \draw[name path =right1, thick] (-1,4)
  arc[start angle=180, end angle=240, radius=4] node[near end, sloped]{$<$};
  \path [name path=right2]  (2,2.5) -- (-2,2.5);
\fill [black, name intersections={of= right1 and right2, by=mb3}]
   (mb3) circle (2pt) node[right]{$m_3$}; 

 \draw[lightgray, line width=1.0cm, line cap=round] (-3,4) +(180:.5cm)
    arc[start angle=0, end angle=-60, radius=3.5];
  \draw[name path = left1, thick] (-3,4)
  arc[start angle=0, end angle=-60, radius=4] node[near end, sloped]{$<$};
  \path [name path= left2]  (-4,2.5) -- (-2,2.5) ;
\fill [black, name intersections={of= left1 and left2, by=mb2}]
   (mb2) circle (2pt) node[left]{$m_2$};    
\draw[blue, thick] (mb1) .. controls +(10:1.3) and +(-80:1.3) .. node[teal, name=a13]{$\cdot$} node[near end, sloped]{$>$} (mb3);
\draw[blue, thick] (mb1) .. controls +(170:1.3) and +(-110:1.3) .. node[teal, name=a21]{$\cdot$} node[near start, sloped]{$>$} (mb2);   
\draw[blue, thick] (mb2) .. controls +(60:1.3) and +(120:1.3) .. node[teal, name=a32]{$\cdot$} node[near end, sloped]{$>$} (mb3);
\draw[teal, thick, ->] (a13) -- node[near start, below=-1pt]{$+$} node[near end, below=-1pt]{$+$} (a21);
\draw[teal, thick, ->] (a21) -- node[near start, left=-1pt]{$+$} node[very near end, left=-1pt]{$-$}(a32);
\draw[teal, thick, ->] (a32) -- node[very near start, right=-1pt]{$-$} node[near end, right=-1pt]{$+$}(a13);
\fill[teal] (a13) circle (1.5pt);
\fill[teal] (a21) circle (1.5pt);
\fill[teal] (a32) circle (1.5pt);
\end{tikzpicture}
\quad
\begin{tikzpicture}
\clip (-1.1,.3) rectangle (4.1,-4.2);
\draw[lightgray, line width=1.0cm, line cap=round] (0,0) +(150:.5cm)
    arc[start angle=150, end angle=210, radius=4.5];
\draw[name path = left1, thick] (0,0)
  arc[start angle=150, end angle=210, radius=4] node[near end, sloped]{$>$};
\path [name path= left3]  (1,-0.2) -- (-2,-0.2);
\fill [black, name intersections={of= left1 and left3, by=mb1}]
(mb1) circle (2pt) node[left]{$m_1$};
\path [name path= left2]  (1,-2.0) -- (-2,-2.0);
\draw [black, name intersections={of= left1 and left2, by=lb1}]
(lb1) node[green, name=LB1]{$\cdot$}; 
\path [name path= left4]  (1,-3.7) -- (-3,-3.7);
\fill [black, name intersections={of= left1 and left4, by=mb2}]
(mb2) circle (2pt) node[left]{$m_2$};
 \draw[lightgray, line width=1.0cm, line cap=round] (2.6,0) +(0:.5cm)
    arc[start angle=180, end angle=240, radius=3.5];
  \draw[name path =right1, thick] (2.6,0)
  arc[start angle=180, end angle=240, radius=4] node[very near start, sloped]{$<$};
  \path [name path=right2]  (4,-2) -- (2,-2);
\fill [black, name intersections={of= right1 and right2, by=mb3}]
   (mb3) circle (2pt) node[right]{$m_3$}; 
\draw[blue, thick] (mb1) .. controls +(10:1.3) and +(120:1.3) .. node[teal, name=a13]{$\cdot$} node[near end, sloped]{$<$} (mb3);
\draw[blue, thick] (mb2) .. controls +(-10:1.3) and +(-80:1.3) .. node[teal, name=a23]{$\cdot$} node[near end, sloped]{$<$} (mb3);
\draw[teal, thick, ->] (a13) -- node[near start, right=-1pt]{$+$} node[near end, right=-1pt]{$-$} (a23);
\draw[green, thick, ->] (a23) -- node[near start, below=-1pt]{$-$} node[near end, below=-1pt]{$+$} (LB1);
\draw[green, thick, ->] (LB1) -- node[near start, above=-1pt]{$+$} node[near end, above=-1pt]{$+$} (a13);

\fill[green] (lb1) circle (1.5pt);
\fill[teal] (a13) circle (1.5pt);
\fill[teal] (a23) circle (1.5pt);
\end{tikzpicture}
\quad
\begin{tikzpicture}
\clip (-1.5,.3) rectangle (2.5,-4.2);
\draw[lightgray, line width=1.0cm, line cap=round] (0,0) +(150:.5cm)
    arc[start angle=150, end angle=210, radius=4.5];
\draw[name path = left1, thick] (0,0)
arc[start angle=150, end angle=210, radius=4] node[very near start, sloped]{$<$};
\path [name path= left2]  (1,-0.2) -- (-2,-0.2);
\fill [black, name intersections={of= left1 and left2, by=mb1}]
(mb1) circle (2pt) node[left]{$m_1$};
\path [name path= left3]  (1,-1.0) -- (-2,-1.0);
\draw [name intersections={of= left1 and left3, by=lb1}]
(lb1) node[green, name=LB1]{$\cdot$};
\path [name path= left4]  (1,-2) -- (-2,-2);
\fill [black, name intersections={of= left1 and left4, by=mb2}]
(mb2) circle (2pt) node[left]{$m_2$};
\path [name path= left5]  (1,-3.0) -- (-2,-3.0);
\draw [black, name intersections={of= left1 and left5, by=lb2}]
(lb2) node[green, name=LB2]{$\cdot$}; 
\path [name path= left6]  (1,-3.7) -- (-3,-3.7);
\fill [black, name intersections={of= left1 and left6, by=mb3}]
(mb3) circle (2pt) node[left]{$m_3$};
\draw[blue, thick] (mb1) .. controls +(-10:3) and +(10:3) .. node[teal, name=a13]{$\cdot$} node[near start, sloped]{$>$} (mb3);
\draw[green, thick, ->] (LB1) -- node[near start, above]{$+$} node[near end, above]{$-$} (a13);
\draw[green, thick, ->] (a13) -- node[near start, below]{$-$} node[near end, below]{$+$}(LB2);

\fill[green] (lb1) circle (1.5pt);
\fill[green] (lb2) circle (1.5pt);
\fill[teal] (a13) circle (1.5pt);
\end{tikzpicture}
\caption{Skewed-gentle polarized quivers with fringing for 
  signature zero tagged triangulations}
\label{fig:tagtria} 
\end{figure}
Thus, the polarization of $\uQ$ depends on the orientation of the ``ordinary˝
edges, however the corresponding skewed-gentle algebra $\Ka\uQ$ does not depend
on those orientations.  It is isomorphic to the  Jacobian algebra  $\cP(T,W_T)$, see for example~\cite[Prop.~4.4]{QZ17}.

 \subsection{Relating admissible words and admissible curves}
\label{ssec:orb}
Recall, that we introduced in Section~\ref{ssec:tagged3ang} the signature zero tagged triangulations of $\bSu$ in the case $\Ma\neq\emptyset$.
In this case, each tagged arc of such a triangulation has at most one notch.

\begin{Def} \label{def-IdealT}
Let $T$ be a signature zero tagged triangulation of $\bSu$.  
We denote,  following for example in~\cite[Def.~5.8 and~5.10]{FT18}, by
$\glslink{Tro}{T^{\circ}}$ the \emph{associatedidealtriangulation}. 

Specifically, for each notched arc 
$\tau'_p=(\gam_p, c'_p)\in T$, let
$\tau_p=(\gam_p, c_p)\in T$ be its unnotched  version. Then we replace each
notched arc $(\gam, c')$ by
the minimal curve
$\gam'_p:=\overline{\gam_p}$,
which encloses $\gam$, see Figure~\ref{fig:idealT}.
\begin{figure}
\begin{tikzpicture}[scale=.8]
 \draw[lightgray, line width=1.0cm, line cap=round] (0,-0.1) +(-120:.5cm)
   arc[start angle=60, end angle=120, radius=3.5];
  \draw[name path = bottom1, thick] (0,0)
  arc[start angle=60, end angle=120, radius=4] node[near end, sloped]{$>$};
  \path [name path= bottom2]  (-2,0) -- (-2,3);
\fill [black, name intersections={of= bottom1 and bottom2, by=mb1}]
   (mb1)  node[below]{$m$}; 
\draw[blue, thick, double distance=3pt]
(mb1) -- node[left=-2pt]{$(\gam_p,c)=\tau_p\phantom{'}$} node[right]{$\tau_p'=(\gam_p,c')$} node[very near end, sloped, below=-5pt]{$\bowtie$}  +(90:1.5) node[name=pt2, black, above=-3pt]{$\times$} node[black, above=6pt]{$p$};
\fill[black] (mb1) circle (2pt);
\node[scale=2] at (1.2,2){$\stackrel{\sig}{\mapsto}$};
 \draw[lightgray, line width=1.0cm, line cap=round] (6,-0.1) +(-120:.5cm)
    arc[start angle=60, end angle=120, radius=3.5];
  \draw[name path = bottom3, thick] (6,0)
  arc[start angle=60, end angle=120, radius=4] node[near end, sloped]{$>$};
  \path [name path= bottom4]  (4,0) -- (4,3);
\fill [black, name intersections={of= bottom3 and bottom4, by=mb2}]
   (mb2)  node[below]{$m$}; 
\draw[blue, thick]
(mb2) -- node[near end, left=-2pt]{$\gam_p$}  +(90:1.5) node[name=pt3, black, above=-3pt]{$\times$} node[black, above=6pt]{$p$};
\draw[blue, thick]
(mb2) .. controls +(120:4) and +(60:4) .. node[very near end, right]{$\gam_p'$}(mb2);
\fill[black] (mb2) circle (2pt);
\end{tikzpicture}
\caption{Construction of $T^{\circ}$}
\label{fig:idealT}
\end{figure}
Strictly speaking, $\gam_p'\in\tcAC(\bSu)\setminus\cAC(\bSu)$ since it cuts out a based kink. In any case,
we will consider $\gam_p'\in T^{\circ}$ as an arc.
In the notation of~\cite[Def.~5.8]{FT18} $\gam_p'=\tau^{-1}(\gam_p,c')$, whilst
$\tau^{-1}(\bet,b)=\bet$ for all arcs $(\bet,b)\in T$ with $b=(1,1)$, i.e.
for all tagged arcs which are not notched.
In view of the frequent use of the letter $\tau$ in this manuscript,
we will write in this situation $\sig$ in place of $\tau^{-1}$. 
\end{Def}

Observe that for a signature zero tagged triangulation $T$,
each puncture $p\in\Pu$ is part of a self-folded triangle $\Del_p$ of $T^{\circ}$.
Moreover, we choose in each triangle $\Del$ of $T^{\circ}$ a point
$x_\Del\in\Del\setminus (T^{\circ}\cup\dSu)$. 

The \gls{dualgraph} $\glslink{Tro}{\Gam(T^{\circ})}$ has by definition as
vertices $\Gam_0(T^{\circ})$ the set of (ideal) triangles $\Del\subset T^{\circ}$,
and two triangles are connected by
 an edge, if they share an arc. In particular, each self-folded triangle 
 $\Del_p$ has a loop $\eps_p$ in 
 $\Gam(T^{\circ})$. We usually identify the vertices
 $\Del$ of $\Gam(T^{\circ})$ with the set of points
\[ 
 E_T:=\{s_\Del\mid \Del\overset{\text{triang.}}{\subset} T^{\circ}\}. 
\]
\subsubsection*{Example} 
We consider the twice punctured bigon 
$\bSig$ from Section~\ref{ssec:ExIntersecN}, together
with the same signature zero triangulation $T$. In
Figure~\ref{fig:dualGr} we  display in  blue the associated
ideal triangulation $T^o$. Note, that $T^o$ consists of 4
triangles, with two  of them self-folded. In the same figure,
we display the dual graph $\Gam(T^o)$ in red, where we place
each vertex inside the corresponding triangle.
\begin{figure}
\begin{tikzpicture}[scale=.6]
\coordinate (mk1) at (0,-3);
\coordinate (mk2) at (0,3);
\coordinate (pk3) at ($(mk1) + (61.5:3)$);
\coordinate (pk1) at ($(mk1) + (120:3)$);
\coordinate (tr1) at (0, 2.5);
\coordinate (tr2) at (0, 1.0);
\coordinate (tr3) at ($(tr2) + (-64:2)$); 
\coordinate (tr4) at ($(tr2) + (-118:2)$);
\draw[lightgray, line width =.5cm] (0,0) circle[radius=3.44cm];
\draw[thick, black] (0,0) circle [radius=3cm];
\fill [black] (mk1) node[below]{$m_1$} circle (2pt);  
\fill [black] (mk2) node[above]{$m_2$} circle (2pt); 
\draw[thick, blue] (0,-.5) circle [radius=2.5cm];
\draw[blue, thick] (mk1) -- (pk3) node[black, name = pt3] {$\times$} node[right, black]{\footnotesize $p_3$};
\draw[blue, thick] (mk1) -- +(75:3)  arc [start angle=165, end angle=-23, radius=.7]  --(mk1);
\draw[blue, thick] (mk1) -- (pk1) node[black, name = pt1]{$\times$} node[left, black]{\footnotesize $p_1$};
\draw[blue, thick] (mk1) -- +(133:3.2)  
arc [start angle=210, end angle=10, radius=.7]  --(mk1);
\fill[red] (tr1) node[name = nt1]{ }  circle (1.8pt);
\fill[red] (tr2) circle (1.8pt);
\fill[red] (tr3) circle (1.8pt);
\fill[red] (tr4) circle (1.8pt);
\draw[red] (tr1) -- (tr2) -- (tr3);
\draw[red] (tr2) -- (tr4);
\draw[red]  (tr3) + (-65:0.3) circle[radius=0.3cm];
\draw[red]  (tr4) + (-117:0.3) circle[radius=0.3cm];
\end{tikzpicture}
\caption{Dual graph of an ideal triangulation}
\label{fig:dualGr}
\end{figure}

As pointed out  in~\cite[Prop.~5.1]{AP17} 
(see~\cite[Sec.~2.2]{Am16} for more details), there  is a deformation retract 
$r\df \Su\setminus\Pu\ra\Gam(T^{\circ})$.
Following~\cite[Sec.~5.1]{AP17}, this induces an isomorphism of orbifold groupoids
 \[
   \glslink{barr}{\overline{r}}\df \piorb(\Su, E_T)\xrightarrow{\sim} \piorb(\Gam(T^{\circ}), E_T),
 \]
 where the orbifold groupoid $\piorb(\Gam(T^{\circ}), E_T)$ of the graph
 $\Gam(T^{\circ})$ is  the usual groupoid 
 $\pi_1(\Gam(T^{\circ}), E_{T^{\circ}})$ modulo  the relations  
 $\eps_p^2=\id_{\Del_p}$ for all $p\in\Pu$, see \cite[\S2.1 and \S6]{GLF23}.
 
 Thus, by slightly restating the discussion
 in~\cite[Sec.~5]{AP17} we have the following result:

 \begin{Prop} \label{prp:AP}
   Let $T=(\tau_p,\tau'_p)_{p\in\Pu}\cup (\tau_i)_{i=1,2,\ldots,n-2\abs{\Pu}}$
     be a signature zero tagged triangulation
   of a marked surface $\bSu$   with non-empty boundary.
   Then the isomorphism of orbifold groupoids   
   $\overline{r}\df \piorb(\Su, E_T)\xrightarrow{\sim} \piorb(\Gam(T^{\circ}), E_T)$
   induces  natural bijections:
\[\xymatrix{
  \piorb(\Su,\Ma)^* \ar[r]^{r_s}_{\sim}& \DSt(\uQ(T)) &\text{ and}\\ 
  \piorbfree(\Su)\setminus\{([\eps_p])_{p\in\Pu}\}\ar[r]^(0.6){r_b}_(0.6){\sim}& [\Ba(\uQ(T))]\\
  \piorbfreepr(\Su)\ar@{^{(}->}[u]\ar[r]_{\sim} &[\pBa(\uQ(T))]\ar@{^{(}->}[u]
}
\]
where $\eps_p$ denotes the loop, which encloses precisely the puncture $p$, and $\piorbfreepr(\bSu)$ denotes the classes of the primitive loops. Moreover, with the notation from Section~\ref{ssec:StBa} we set formally
\[
\DSt(\uQ):=\St(\uQ)\cup\{-\tbs_i^{\pm 1}\mid i\in Q_0\}.
\]
We have in particular with the above notation 
$r_s(\gam'_p)=-\tbs_p$ for all $p\in\Pu$ and 
$r_s(\gam_i)=-\tbs_i$ for $i=1,2,\ldots, n-2\abs{\Pu}$.
\end{Prop}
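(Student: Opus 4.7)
The plan is to factor all three bijections through the isomorphism $\overline{r}$ of orbifold groupoids and then set up a combinatorial dictionary between edge-walks in the dual graph $\Gamma(T^\circ)$ and words for the quiver $\uQ(T)$. A morphism of $\piorb(\Gamma(T^\circ), E_T)$ is by construction an edge-walk between basepoints $s_\Delta$, taken up to the orbifold relations $\eps_p^2=\mathrm{id}_{s_{\Delta_p}}$ at each self-folded triangle $\Delta_p$. The edges of $\Gamma(T^\circ)$ correspond bijectively to the tagged arcs of $T$ (together with boundary segments, for the fringing $\uQ^f(T)$); the ordinary arrows of $\uQ(T)$ arise from the 3-cycles at the non-self-folded triangles, and the special loops $\eps_p$ from the self-folded ones, in accordance with Figure~\ref{fig:tagtria}.

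I would define $r_s$ on a walk $s_{\Delta_0}, e_1, s_{\Delta_1}, \ldots, e_l, s_{\Delta_l}$ of length $l\geq 2$ by reading off a word letter-by-letter: each consecutive pair $(e_i,e_{i+1})$ incident to $\Delta_i$ selects a unique arrow of $\uQ(T)$ with its polarization, and the initial/terminal trivial letters $\II^{-1}_{i,\rho}, \II_{j,\rho'}$ are determined by which side of $e_1$ and $e_l$ contains $\gam(0)$ and $\gam(1)$. The no-backtracking condition on reduced walks matches the string condition, since the ideal $\cR(\uQ)$ forbids exactly those compositions that correspond to backtracking within a triangle 3-cycle, and the orbifold relation $\eps_p^2=\mathrm{id}$ coincides with $\eps_p^2=e_{s(\eps_p)}$ in $\cR(\uQ)$. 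For walks of length $l=1$ crossing only the edge of an arc $\gam_i$ of $T$ of type $(u,u)$, or crossing only the outer edge of a self-folded triangle (corresponding to $\gam_p'$), the word degenerates to a single trivial letter, which we consistently interpret in $\DSt(\uQ(T))$ as the negative simple $-\tbs_i$ or $-\tbs_p$. Well-definedness modulo the orbifold relations is clear, and bijectivity onto $\DSt(\uQ(T))$ follows from the explicit inverse: each arrow of $\uQ(T)$ sits in a unique triangle 3-cycle, so each string lifts uniquely to a walk in $\Gamma(T^\circ)$.

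The band bijection $r_b$ is completely parallel after replacing edge-walks by cyclic ones. Free orbifold homotopy classes of loops correspond to rotation-and-inversion classes of closed walks in $\Gamma(T^\circ)$, which become elements of $[\Ba(\uQ(T))]$ under the letter-by-letter translation. The excluded classes $[\eps_p]$ correspond to the would-be cyclic word of length $1$ consisting of a single special letter $\eps_p^*$, which is not a band since $\eps_p^*\eps_p^*$ fails to be a word in $\uQ(T)$. Primitivity of an orbifold loop matches primitivity of the associated band, yielding the restricted bijection to $[\pBa(\uQ(T))]$. The compatibility with the involutions $?^{-1}$ (walk reversal vs.\ word inversion) is automatic from the construction.

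The main obstacle is the bookkeeping near self-folded triangles. A walk entering $\Delta_p$ can either exit back through $\gam_p$, producing a letter $\eps_p^*$ whose direction or inversion is dictated by the lexicographic comparison of Section~\ref{ssec:huQA}, or avoid $\gam_p$ entirely; these two combinatorial possibilities must be shown to correspond exactly to the two sides of the orbifold homotopy moves in Figure~\ref{fig:orb}. Moreover, the polarization conventions of $\uQ(T)$ around each self-folded triangle have to be checked to encode the distinction ``passing $p$ on one side vs.\ the other'' consistently with the skewed-gentle relations, so that the no-kinks assumption on curves lines up with the no-backtracking condition on strings. Once this local analysis is carried out, the match of non-triviality and ``not homotopic to a boundary segment'' on the surface side with the requirement that the resulting word is not a trivial letter of the fringed quiver $\uQ^f(T)$ is then routine.
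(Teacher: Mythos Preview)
The paper does not give its own proof here; the proposition is presented as a restatement of the discussion in \cite[Sec.~5]{AP17}. Your plan---passing through the deformation retract $\overline{r}$ and then building a letter-by-letter dictionary between reduced edge-walks in $\Gamma(T^\circ)$ and words for $\uQ(T)$---is exactly the content of that reference, so in spirit you are reproducing the intended argument rather than offering an alternative.

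There is one genuine gap you must close. The isomorphism $\overline{r}$ has basepoint set $E_T$ (one interior point per triangle), whereas $r_s$ has domain $\piorb(\Su,\Ma)^*$ with basepoints on the boundary. You jump directly to ``a walk $s_{\Delta_0}, e_1, \ldots, e_l, s_{\Delta_l}$'' without saying how a morphism with endpoints in $\Ma$ yields such a walk, nor how the marked endpoints $\gam(0),\gam(1)$ are recovered from the resulting string. The missing observation is that for a curve in minimal position relative to $T^\circ$, the first triangle $\Delta_0$ it enters has $\gam(0)$ as the vertex \emph{opposite} the first arc $e_1$ crossed; hence the pair $(e_1,\text{side})$ recorded by the initial trivial letter $\II^{-1}_{i,\rho}$ already determines $\gam(0)$, and dually for the last letter and $\gam(1)$. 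Without this, your map is not visibly injective.

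Two smaller corrections. First, your handling of the negative simples is off by one: the arc $\gam_i\in T^\circ$ itself crosses \emph{no} arc of $T^\circ$, so it corresponds to a degenerate (length-zero) crossing sequence, not to a length-$1$ walk; the length-$1$ walks you describe give the simple strings $\tbs_i$, not $-\tbs_i$. Second, your final sentence resolves the boundary-segment issue by appealing to $\uQ^f(T)$, but $r_s$ takes values in $\DSt(\uQ(T))$, not in a fringed version; boundary segments have empty crossing sequence and retract to constant paths at some $s_\Delta$, so one must argue directly that such classes fall outside the image (or that they are to be excluded from the domain), rather than invoking fringe letters that are not available in $\uQ(T)$.
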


Note, that the loops $\eps_p$ with $p\in\Pu$ are \emph{not} primitive, since
they are torsion elements in the orbifold fundamental group. 

\begin{Rem} \label{rem:QZ}
In our notation, Qiu and Zhou defined in~\cite[Sec.~4.2]{QZ17}  (more
precisely in their Construction~4.7 and Construction~4.9) for a signature zero tagged triangulation $T$  an
injective map $\fra\df\St(\huQ(T))\ra\cAC'(\bSu)$. Here
$\cAC'(\bSu)$ denotes the homotopy classes of \emph{all} curves on
$\bSu$ with both endpoints in $\Pu\cup\Ma$.
Thus, $\cAC'(\bSu)\supset\cAC(\bSu)$, since in $\cAC'(\bSu)$
kinks are allowed.
Qiu and Zhou observed~\cite[Lem.~4.12]{QZ17}, that $\fra$ restricts to an injective map
$\glslink{frasb}{\fra_s}\df\DAdm_s(\uQ(T))\ra\cAC(\bSu)$, 
where we agree that $\fra_s(\bs_i)=\tau_i$.

Similarly, we obtain an injective map
$\glslink{frasb}{\fra_b}:[\AdmBa(\uQ(T))]\ra\cAL(\bSu)$.
With the notation from Remark~\ref{rem:Adm} it is clear tha we obtain two commutative diagrams, which are displayed in the 
Diagram~\eqref{eq:2com} below.
In both diagrams, the map $\ebrace{\widebar{?}}$ sends a curve $\gam$ to
the orbifold class $\ebrace{\widebar{\gam}}$ of its completion $\widebar{\gam}$.
These two maps are certainly surjective, since each curve is, up to the orbifold relations, equivalent to a curve without proper kinks. See Remark~\ref{rem:Adm} for the definition of the first row in both diagrams.
\begin{equation} \label{eq:2com}
\vcenter{\xymatrix{\DAdm_s(\uQ(T))\ar[d]_{\fra_s}\ar[r]^{(\widebar{?})_s}&\DSt(\uQ(T))\\
\cLC_s(\bSu)\ar[r]_{\ebrace{\widebar{?}}_s}\ &\piorb(\Su,\Ma)^*\ar[u]_{r_s}}}
\quad\text{and}\quad
\vcenter{\xymatrix{[\Adm_b(\uQ(T))]\ar[d]_{\fra_b}\ar[r]^(0.5){[\widebar{?}]}&[\pBa(\uQ(T))]\\
\cLC_b(\bSu)\ar[r]_(0.55){\ebrace{\widebar{?}}_b}\ &\piorbfree(\Su)\ar[u]_{r_b}}}
\end{equation}
\end{Rem}

\begin{Prop} \label{prp:fra-bij}
The maps $\fra_s$ and $\fra_b$ from Diagram~\eqref{eq:2com}
are bijective. Together they provide a natural bijection
$\fra\df [\DAdm(\uQ(T))]\ra\cLC(\bSu)$.
\end{Prop}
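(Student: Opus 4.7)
The plan is to establish surjectivity of $\fra_s$ and $\fra_b$, as their injectivity is part of the Qiu--Zhou construction recalled in Remark~\ref{rem:QZ} (see \cite[Lem.~4.12]{QZ17}). For each of the two maps the argument proceeds by a diagram chase in~\eqref{eq:2com}, followed by an appeal to the uniqueness of kink-free representatives within each orbifold homotopy class, a result of~\cite{GLF23}.

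Concretely, for $\fra_s$ I fix $\gam\in\cLC_s(\bSu)$ and first form $\alpha:=r_s(\ebrace{\widebar{\gam}}_s)\in\DSt(\uQ(T))$, using the bijectivity of $r_s$ granted by Proposition~\ref{prp:AP}. Hansper's Proposition~\ref{prp:Adm}, in the form of Remark~\ref{rem:Adm}, then guarantees the completion map $(\widebar{?})_s$ is surjective, so I can choose $\bx\in\DAdm_s(\uQ(T))$ with $(\widebar{?})_s(\bx)=\alpha$. Commutativity of the left square in~\eqref{eq:2com} together with the injectivity of $r_s$ yields
\[
\ebrace{\widebar{\fra_s(\bx)}}_s=\ebrace{\widebar{\gam}}_s\in\piorb(\Su,\Ma)^*,
\]
so the two completions are orbifold-homotopic.

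The crucial upgrade from an equality of orbifold classes to the equality $\fra_s(\bx)=\gam$ in $\cLC_s(\bSu)$ rests on two ingredients. First, by construction both $\widebar{\gam}$ and $\widebar{\fra_s(\bx)}$ are kink-free on $\bSu$: the completion operation only attaches small loops around punctures at endpoints and introduces no monogon self-intersection, while the Qiu--Zhou construction of $\fra_s(\bx)$ produces a curve whose fragments cross the triangles of the ideal triangulation $T^{\circ}$ transversely. Second, the uniqueness of the kink-free representative in each orbifold homotopy class, established in~\cite{GLF23}, forces the two completions to coincide in $\tcAC(\bSu)$. Inverting the completion operation (the small loops around punctures at endpoints can be unambiguously detected and removed) then delivers $\fra_s(\bx)=\gam$. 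The argument for $\fra_b$ is formally identical, based on the right-hand square of~\eqref{eq:2com}, the bijectivity of $r_b$, and the surjectivity of $[\widebar{?}]$; the equivalence relations on $[\Adm_b(\uQ(T))]$ and on $\piorbfree(\Su)$ match up by construction. Together $\fra_s$ and $\fra_b$ then produce the claimed bijection $\fra\df[\DAdm(\uQ(T))]\ra\cLC(\bSu)$.

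The main obstacle is verifying in the second step that the Qiu--Zhou curves $\fra_s(\bx)$ and $\fra_b([\bx])$ are genuinely kink-free, since only then does the uniqueness theorem of~\cite{GLF23} apply. This forces one to unpack Constructions~4.7 and~4.9 in~\cite{QZ17} and to match the admissibility conditions (s1), (s2) and (b1), (b2) of Definition~\ref{def:admsb} with the geometric condition of not enclosing a once-punctured monogon. In particular, the inequality $\bx_{[k-1]}^{-1}\neq\bx^{[k]}$ at every special letter is precisely what prevents local backtracking through the self-folded triangle at the corresponding puncture, while (s2) and (b2) rule out the global kinks that would arise from non-primitivity of the underlying band on $\bSu$.
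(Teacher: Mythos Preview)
Your diagram chase is correct and you rightly identify \cite{GLF23} as the crucial input. The gap is in how you invoke it. You assert that the completions $\widebar{\gam}$ and $\widebar{\fra_s(\bx)}$ are kink-free because ``the completion operation only attaches small loops around punctures at endpoints and introduces no monogon self-intersection''. This is false: for a curve of type $(u,p)$ or $(p,u)$ the completion \emph{does} create a based kink---the small loop around the puncture together with the two parallel legs returning to the common endpoint $m\in\Ma$ bounds a once-punctured monogon. The paper itself notes this explicitly for the arcs $\gam'_p=\widebar{\gam}_p$ in Definition~\ref{def-IdealT}. Hence the uniqueness statement for kink-free representatives does not apply to the completions, and the conclusion ``the two completions coincide in $\tcAC(\bSu)$'' is unjustified.

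The paper's argument is organised differently: it applies the main result of \cite{GLF23} not to the completions but directly to the fibres of $\ebrace{\widebar{?}}_s$ (and $\ebrace{\widebar{?}}_b$). Namely, if $\alp,\bet\in\cLC_s(\bSu)$ satisfy $\ebrace{\widebar{\alp}}_s=\ebrace{\widebar{\bet}}_s$, then $\alp=\bet$, or $\alp=\bet^{-1}$ with $r_s(\ebrace{\widebar{\alp}}_s)$ a symmetric string. This fibre description matches exactly that of $(\widebar{?})_s$ recorded in Remark~\ref{rem:Adm}; since $r_s$ is bijective and $\fra_s$ injective, bijectivity of $\fra_s$ follows. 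Note also that your ``inverting the completion'' step silently skips the symmetric alternative: even with the correct input one only obtains $\fra_s(\bx)\in\{\gam,\gam^{-1}\}$, and the case $\fra_s(\bx)=\gam^{-1}$ must still be dealt with by passing to $\bx^{-1}$.
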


\begin{proof}  Both maps are injective, as we already observed.  Since the
maps $r_s$ and $r_b$ are bijective by Proposition~\ref{prp:AP}, in view of
Remark~\ref{rem:Adm} it is sufficient to show the following two claims:
\begin{itemize}
\item  
  If $\ebrace{\alp}_s=\ebrace{\bet}_s$ then either $\alp=\bet$, or
  $\alp=\bet^{-1}$ and $r_s(\ebrace{\alp}_s)$ is a symmetric string.
\item
  If $\ebrace{\lam}_b=\ebrace{\mu}_b$, then either $\lam=\mu$ or
  $\lam=\mu^{-1}\in\cC_b(\Su,\Pu,\Ma)$.
\end{itemize}
However, this is the content of the main result of~\cite{GLF23}.
\end{proof}

\subsection{Intersections of marked curves and combinatorial E-invariant} 
We  define the \emph{rotation}
$\gls{rho12}\df\cAC(\bSu)\ra\cAC(\Su,\Pu, \dSu\setminus\Ma)$,  which moves
the endpoints of a curve $\gam$, which lie in $\Ma$ to the
next boundary segment in direction of the induced orientation of $\dSu$ whilst
the endpoints which lie in $\Pu$ are not moved.

Thus,  $\rho^{1/2}(\gam)=\gam$ if $\gam$ is of type $(p,p)$.  We also agree,
that $\rho^{1/2}$ acts on $\cAL(\bSu)$ as identity.
On the other hand,  let us denote by $\gls{frasb}{\fra_f}\df\AdmSt(\uQ^f)\ra\cC(\Su,\Pu,
\dSu\setminus\Ma)$ the natural extension of the map
$\fra\df\AdmSt(\uQ)\ra\cC(\Su,\Pu,\Ma)$, where we might think of having an
additional marked point sitting just outside of each boundary segment.
See Figure~\ref{fig:rot1}.  Here $\uQ=\uQ(T)$ is the skewed-gentle polarized
quiver associated to a signature zero tagged triangulation $T$ of $\bSu$ and $\uQ^f=\uQ^f(T)$ is the corresponding canonical fringing, see Section~\ref{ssec:tagged3ang}.

Recall moreover, that in Section~\ref{ssec:AR-adm} we defined the AR-operator
$\tau_f\df\Adm(\uQ(T))\ra\Adm(\uQ^f(T))$. We extend this operator
to $\DAdm(\uQ)$ by setting $\tau_f(-\bs_i):=\bq_i^{(f)}$ for all $i\in Q_0$,
where 
\[
\bq_i^{(f)}:=\begin{cases}
\II_{i,-1}^{-1}\bx_{i,-1}^{\min}  &\text{if } i\in\Qspv,\\
(\bx_{i,+1}^{\min})^{-1}\bx_{i,-1}^{\min} &\text{if } i\in\Qordv
\end{cases}
\]
and $\bx_{i,\rho}^{\min}$ is the unique right inextensible word for $\huQ^f$
which contains no inverse letters and 
$\II_{i,\rho}^{-1}\bx_{i,\rho}^{\min}$ is a string.
Note, that with the notation from~Section~\ref{ssec:StBa} we have
\[
\overline{\bq_i^{(f)}}\in\cI_{\uQ^{(f)}}.
\]
Similarly, with the notation of~\cite[Sec.~2.6]{Ge23} we have
$\bar{\bq}_i^{(f)}=\tbq_i\in\cQ(\uQ^f)$ for all $i\in Q_0$.  

\begin{figure}[ht]
  \begin{tikzpicture}
\clip (-0.3,-.3)  rectangle  (-4, 4.2);    
 \draw[lightgray, line width=1.0cm, line cap=round] (0,0) +(-120:.5cm)
    arc[start angle=60, end angle=120, radius=3.5];
  \draw[name path = bottom0, thick] (0,0)
  arc[start angle=60, end angle=120, radius=4] node[very near end, sloped]{$>$};
  \path [name path= bottom1]  (-3,0) -- (-3,3);
\fill [black, name intersections={of= bottom0 and bottom1, by=mb1}]
   (mb1) circle (2pt) node[below]{$m_1$}; 
  \path [name path= bottom2]  (-1,0) -- (-1,2);
\fill [black, name intersections={of= bottom0 and bottom2, by=mb2}]
   (mb2) circle (2pt) node[below]{$m_2$}; 
   \coordinate (mb12) at (-2,0); \fill[gray] (mb12) circle (2pt);
\draw[gray] (mb1) -- (mb12) -- (mb2);    
\draw[lightgray, line width=1.0cm, line cap=round] (-4,4) +(60:.5cm)
    arc[start angle=240, end angle=300, radius=3.5];
  \draw[name path = top0, thick] (-4,4)
 arc[start angle=240, end angle=300, radius=4] node[very near end, sloped]{$<$};
  \path [name path= top2]  (-3,4) -- (-3,2);
\fill [black, name intersections={of= top0 and top2, by=mb4}]
(mb4) circle (2pt) node[above]{$m_4$};
  \path [name path= top1]  (-1,4) -- (-1,3);
\fill [black, name intersections={of= top0 and top1, by=mb3}]
   (mb3) circle (2pt) node[above]{$m_3$}; 
   \coordinate (mb34) at (-2,4); \fill[gray] (mb34) circle (2pt);
\draw[gray] (mb3) -- (mb34) -- (mb4);    
\draw[violet, thick] (mb3) ..controls  +(-160:1) and  +(20:1) .. node[near start, right]{$\alpha$} (mb1);
\draw[violet, opacity=.6, thick] (mb34) ..controls  +(-165:1) and  +(25:1) .. node[near end, above right, scale=.75]{$\rho^{1/2}(\alpha)$}(mb12);
\coordinate (mp) at (-3.8,2.6); \fill[draw] (mp) node{$\times$} node[left=3pt]{$p$};
\draw[magenta, thick]  (mp) .. controls +(-10:1.5) and +(65:1.0) .. node[near start, below left]{$\bet$} (mb1);
\draw[magenta, thick, opacity=.6]  (mp) .. controls +(5:1.5) and +(70:1.0) .. node[very near start, above=-1pt, sloped, scale=.75]{$\ \rho^{1/2}(\bet)$} (mb12);
\end{tikzpicture}
\quad
\begin{tikzpicture}
\clip (0.7,-.4)  rectangle  (-4.5, 4.1);      
\draw[lightgray, line width=1.0cm, line cap=round] (-4,4) +(60:.5cm)
  arc[start angle=240, end angle=300, radius=3.5];
\draw[name path = top0, thick] (-4,4)
 arc[start angle=240, end angle=300, radius=4] node[very near end, sloped]{$<$};
\path [name path= top1]  (-1,4) -- (-1,3);
\fill [black, name intersections={of= top0 and top1, by=mb1}]
   (mb3) circle (2pt) node[above]{$m_1$}; 
\coordinate (mb12) at (-2,4); \fill[gray] (mb12) circle (2pt);
\path [name path= top2]  (-3,4) -- (-3,2);
\fill [black, name intersections={of= top0 and top2, by=mb2}]
(mb4) circle (2pt) node[above]{$m_2$};
\draw[gray] (mb1) -- (mb12) -- (mb2);    
 \draw[lightgray, line width=1.0cm, line cap=round] (0,0) +(-120:.5cm)
    arc[start angle=60, end angle=120, radius=3.5];
\draw[name path = bottom0, thick] (0,0)
  arc[start angle=60, end angle=120, radius=4] node[very near end, sloped]{$>$};
\path [name path= bottom1]  (-3.9,0) -- (-3.9,3);
\fill [black, name intersections={of= bottom0 and bottom1, by=mb3}]
   (mb3) circle (2pt) node[below]{$m_3$}; 
\path [name path= bottom2]  (-2,0) -- (-2,2);
\fill [black, name intersections={of= bottom0 and bottom2, by=mb4}]
   (mb4) circle (2pt) node[below]{$m_4$}; 
\path [name path= bottom2]  (-.1,0) -- (-.1,2);
\fill [black, name intersections={of= bottom0 and bottom2, by=mb5}]
   (mb5) circle (2pt) node[below]{$m_5$}; 
\path [name path= bottom34]  (-3,0) -- (-3,2);
\fill [green, name intersections={of= bottom0 and bottom34, by=s34}]
     (s34) node[name=S34]{$\cdot$}; 
\path [name path= bottom45]  (-1,0) -- (-1,2);
\fill [green, name intersections={of= bottom0 and bottom45, by=s45}]
   (s45) node[name=S45]{$\cdot$}; 
   
\coordinate (mb34) at (-2.8,0); \fill[gray] (mb34) circle (2pt);
\draw[gray] (mb3) -- (mb34) -- (mb4);    
\coordinate (mb45) at (-1.1,0); \fill[gray] (mb45) circle (2pt);
\draw[gray] (mb4) -- (mb45) -- (mb5);    
\draw[blue, thick] (mb3) .. controls +(90:1) and +(-130:1) .. node[name=a1, teal, near end]{$\cdot$}   (mb2);
\draw[blue, thick] (mb2) -- node[name=a2, teal, near start]{$\cdot$} (mb4);
\draw[blue, thick] (mb4) -- node[name=a3, teal, near end]{$\cdot$}   (mb1);
\draw[blue, thick] (mb1) .. controls +(-50:1) and (90:1) .. node[name=a4, teal, near start]{$\cdot$} (mb5);
\draw[teal, thin, ->] (a1) -- (a2);
\draw[teal, thin, ->] (a3) -- (a2);
\draw[teal, thin, ->] (a3) -- (a4);
\draw[green, thin, ->] (a4) -- (S45);
\draw[green, thin, ->] (S45) -- (a3);
\draw[green, thin, ->] (a2) -- (S34);
\draw[green, thin, ->] (S34) -- (a1);
\coordinate (mpl) at (-5,2.5); \coordinate (mpr) at (1.1, 2.5);
\draw[magenta, thick, name path=bet] (mb4) .. controls +(120:1) and +(0:1) .. node[near end, below]{$\ \bet$} (mpl);
\draw[magenta, thick, opacity=.6, name path=rbet] (mb45) .. controls +(100:2) and +(0:1) .. node[near end, above=-1pt, scale=.75, sloped]{$\rho^{1/2}(\bet)$} (mpl);
\draw[violet,thick, name path=alp] (mb3) .. controls +(90:2) and +(180:2) .. node[near end, above right]{$\hspace{1em} \alp$}(mpr);
\draw[violet, thick, opacity=.6, name path=ralp] (mb34) .. controls +(82 :2) and +(185:1) .. node[near end, below=-1pt, scale=0.75, sloped]{$\hspace{.9em} \rho^{1/2}(\alp)$}(mpr);
\fill [gray, opacity=.5, name intersections={of= alp and bet, by=albe}]
   (albe) circle (2pt); 
\fill [gray, opacity=.5, name intersections={of= ralp and rbet, by=ralbe}]
   (ralbe) circle (2pt); 
 \end{tikzpicture}
\quad
\begin{tikzpicture}
\clip (-2.1,1.6) circle (2.45cm);
\draw[lightgray, line width=1.0cm, line cap=round] (0,0) +(-120:.5cm)
    arc[start angle=60, end angle=120, radius=3.5];
\draw[name path = bottom0, thick] (0,0)
  arc[start angle=60, end angle=120, radius=4] node[near start, sloped]{$>$};
  \path [name path= bottom1]  (-3.4,0) -- (-3.4,3);
\fill [black, name intersections={of= bottom0 and bottom1, by=mb1}]
   (mb1) circle (2pt) node[below]{$m_1$};   
  \path [name path= bottom2]  (-.6,0) -- (-.6,3);
\fill [black, name intersections={of= bottom0 and bottom2, by=mb2}]
   (mb2) circle (2pt) node[below]{$m_2$};   
\coordinate (mb12) at (-2.2,0); \fill[gray] (mb12) circle (2pt);
\draw[gray] (mb1) -- (mb12) -- (mb2);    
\path [name path= bottom12]  (-2.3, 0) -- (-2.3, 2);
\fill [green, name intersections={of= bottom0 and bottom12, by=s12}]
     (s12) node[name=S12]{$\cdot$}; 

 \draw[lightgray, line width=1.0cm, line cap=round] (-1,4) +(0:.5cm)
    arc[start angle=180, end angle=240, radius=3.5];
  \draw[name path =right0, thick] (-1,4)
  arc[start angle=180, end angle=240, radius=4] node[near end, sloped]{$<$};
  \path [name path=right1]  (2,2.8) -- (-2,2.8);
\fill [black, name intersections={of= right0 and right1, by=mb3}]
   (mb3) circle (2pt) node[right]{$m_3$}; 

 \draw[lightgray, line width=1.0cm, line cap=round] (-3,4) +(180:.5cm)
    arc[start angle=0, end angle=-60, radius=3.5];
  \draw[name path = left0, thick] (-3,4)
  arc[start angle=0, end angle=-60, radius=4] node[near end, sloped]{$<$};
  \path [name path= left1]  (-4,2.8) -- (-2,2.8) ;
\fill [black, name intersections={of= left0 and left1, by=mb4}]
(mb4) circle (2pt) node[left]{$m_4$};
\draw[blue, thick] (mb1) -- node[teal, name=a13, inner sep=1pt]{$\cdot$} (mb3);
\draw[blue, thick] (mb1) -- node[teal, name=a14]{$\cdot$} (mb4);
\draw[blue, thick] (mb3) -- node[teal, name=a34]{$\cdot$} (mb4);
\draw[blue, thick] (mb2) -- node[teal, name=a23]{$\cdot$} (mb3);
\draw[teal, thin, ->] (a14) -- (a34);
\draw[teal, thin, ->] (a34) -- (a13);
\draw[teal, thin, ->] (a13) -- (a14);
\draw[teal, thin, ->] (a13) -- (a23);
\draw[green, thin, ->] (a23) -- (S12);
\draw[green, thin, ->] (S12) -- (a13);
\coordinate (UP) at (-1.5, 4.5); 
\draw[violet, thick, name path= alp] (mb1) .. controls +(60:1) and +(270:1) .. node[near end, left]{$\alp$} (UP);
\draw[violet, thick, opacity=.6, name path= ralp] (UP) .. controls +(275:1) and +(75:1) .. node[near start, below right=-1pt, scale=.75, sloped]{$\hspace{-2em}\rho^{1/2}(\alp)$} (mb12);
\coordinate (LD) at (-4.5, 0.3); \coordinate (RD) at (0, 0.8);
\draw[magenta, thick, name path=bet] (LD) .. controls +(50:2.5) and +(125:2.5) .. node[very near start, above=-2pt]{$\bet$}(RD);
\coordinate (rLD) at (-4.5, 0); \coordinate (rRD) at (0, 0.6);
\draw[magenta, thick, opacity=.6, name path=rbet] (rLD) .. controls +(50:2.5) and +(125:2.5) .. node[very near start, scale=.75, sloped, below=-2pt]{$\rho^{1/2}(\bet)$} (rRD);
\fill [darkgray, opacity=.5, name intersections={of= alp and bet, by=ints}]
(ints) circle (2pt);
\fill [darkgray, opacity=.5, name intersections={of= ralp and rbet, by=rints}]
   (rints) circle (2pt);
\end{tikzpicture}
 \caption{Operator $\rho^{1/2}$ on curves and near boundary  intersections}
\label{fig:rot1} 
\end{figure}
\begin{Lem}  \label{lem:rot}
Let $\uQ=\uQ(T)$ be the skewed-gentle polarized quiver with
boundary fringing $\uQ^f=\uQ^f(T)$ corresponding to a signature zero
triangulation $T$  of  a punctured surface $\bSu$.  Then, for each
  $\bx\in\DAdm(\uQ)$ we have $\fra_f(\tau_{\uQ^f}(\bx))=\rho^{1/2}\fra(\bx)$.
\end{Lem}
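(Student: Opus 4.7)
The plan is to reduce to checking the assertion independently at each endpoint, since $\tau_{\uQ^f} = {}_f[1]\circ [1]_f$ acts on the two ends of a string independently, and $\rho^{1/2}$ acts independently on the two endpoints of a curve. The assertion then splits into three cases according to the type of $\bx \in \DAdm(\uQ)$: bands, negative simples, and (non-simple) strings, with the string case containing the essential geometric content.

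First I would dispose of the easy cases. If $\bx \in \AdmBa(\uQ)$, then by definition $\tau_{\uQ^f}(\bx) = \bx$; since $\bx$ uses no fringe arrows, $\fra_f(\bx) = \fra(\bx) \in \cAL(\bSu)$ is a loop, on which $\rho^{1/2}$ acts as the identity, so both sides agree. For $\bx = -\bs_i$, Proposition~\ref{prp:AP} identifies $\fra(-\bs_i)$ with the arc $\tau_i$ (or $\tau_p$ if $i \in \Qspv$). The word $\bq_i^{(f)}$ defined in Section~\ref{ssec:AR-adm} consists of the longest right-inextensible words $\bx_{i,\rho}^{\min}$ of direct letters for $\huQ^f$, which — after reading them against the construction of $\uQ^f(T)$ in Figure~\ref{fig:tagtria} — precisely trace the fan of triangles of $T^\circ$ around each unpunctured endpoint of $\tau_i$, exiting through the adjacent boundary segment, while leaving any punctured endpoint fixed. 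This matches the prescription for $\rho^{1/2}(\tau_i)$.

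For the string case, suppose $\bx = x_0 x_1 \cdots x_l$ has an unpunctured right endpoint $x_l = \II_{i,\rho}$, so that $\bx[1]_f = \bx' \bh_{\alpha(\bx)}$ with $\bh_{\alpha(\bx)} = \alp(\bx)^{-1}\bh'_{\alpha(\bx)}$. The curve $\gam = \fra(\bx)$ terminates at a marked point $m \in \Ma$ lying on the arc $\tau_i$. Under Qiu--Zhou's construction (Remark~\ref{rem:QZ}), consecutive direct letters of a word encode consecutive arcs of $T^\circ$ bounding a common triangle, traversed in a fixed rotational direction around their shared vertex. Since $\bh'_{\alpha(\bx)}$ is by definition the longest right-inextensible word of $\huQ^f$ containing no inverse letters, and $\alp(\bx)$ is uniquely determined as the fringe arrow with $\us(\alp(\bx)) = (i,\rho)$, the tail $\alp(\bx)^{-1}\bh'_{\alpha(\bx)}$ spells out exactly the fan of triangles at $m$, terminating at the fringe vertex representing the boundary segment that follows $m$ in the induced orientation of $\dSu$. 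Hence $\fra_f(\bx[1]_f)$ agrees with the half-rotation of $\gam$ at its right endpoint. If instead $x_l$ is punctured, both $[1]_f$ and $\rho^{1/2}$ fix the right end. Applying the same reasoning to $\bx^{-1}$ (via ${}_f[1]\bx = (\bx^{-1}[1]_f)^{-1}$) handles the left endpoint, and since $[1]_f$ and ${}_f[1]$ modify disjoint portions of the word, they commute and their composition corresponds to $\rho^{1/2}$ applied at both ends simultaneously.

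The main obstacle is to verify carefully that the polarization conventions in the fringing $\uQ^f(T)$ — namely the rule that $s_1(\alp) = +1$ for every fringe arrow leaving a boundary vertex, combined with the sign assignment for ordinary arcs in Figure~\ref{fig:tagtria} — match the induced orientation of $\dSu$, so that the fringe arrow $\alp(\bx)$ selected by the definition of $\bh_{\alpha(\bx)}$ indeed points to the boundary segment following $m$ rather than preceding it. This compatibility is implicit in the construction of $\uQ^f(T)$ but requires a direct check against Qiu--Zhou's sign conventions~\cite[Constr.~4.7]{QZ17}; once confirmed, combining the analyses at the two endpoints yields the equality $\fra_f(\tau_{\uQ^f}(\bx)) = \rho^{1/2}\fra(\bx)$.
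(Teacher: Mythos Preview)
Your proposal is essentially correct and gives a direct, self-contained argument. The paper's own proof is much terser: it simply observes that, under the interpretation of $\fra_f$ via an additional virtual marked point on each boundary segment, the statement is ``just the combinatorial content of the well-known statement of~\cite[Thm.~5.2]{QZ17}''. In other words, the paper defers entirely to Qiu--Zhou's identification of the Auslander--Reiten translate with a rotation of curves, whereas you unpack the case analysis (bands, negative simples, strings with punctured or unpunctured ends) and match the combinatorial operator $\tau_f = {}_f[1]\circ[1]_f$ directly against the geometric half-rotation $\rho^{1/2}$. Your approach has the advantage of being independent of the module-theoretic machinery behind~\cite[Thm.~5.2]{QZ17} and of making explicit exactly which orientation convention needs to be checked (your ``main obstacle''); the paper's approach buys brevity by treating the lemma as a known fact in combinatorial disguise. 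One minor quibble: your appeal to Proposition~\ref{prp:AP} for the identity $\fra(-\bs_i)=\tau_i$ is slightly off --- that convention is set in Remark~\ref{rem:QZ} and the extension of $\fra$ to $\DAdm$, not in Proposition~\ref{prp:AP} itself.
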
  

\begin{proof} In view of the above interpretation of $\fra_f$ (with an additional, virtual marked point sitting just outside each boundary segment), 
this is just combinatorial content of the well known statement
of~\cite[Thm.~5.2]{QZ17}.
\end{proof}

We are now ready to state and prove our key result, which extends the result of Yu Qiu and Yu Zhou~\cite[Sec.~7]{QZ17} from tagged curves to
modulated curves and loops. 

\begin{Thm} \label{thm:intersec}
Let $T$ be a signature zero tagged triangulation of a marked surface with non-empty boundary $\bSu$.
\begin{itemize}
\item[(a)]    
The map
\[  
\fra^*\df [\DAdm^*(\uQ(T))]\ra \cLC^*(\bSu), (\bx,s)\mapsto (\fra(x),s)
\]
is a bijection which is compatible with the respective equivalence relations
$\simeq$. 
\item[(b)]
For $(\bx,s), (\by,t)\in\DAdm^*(\uQ)$ we have
  \[
    e_\uQ((\bx,s), (\by,t)))=\Intn^*(\fra^*(\bx,s), \fra^*(\by,t)).
    \]
\end{itemize}
\end{Thm}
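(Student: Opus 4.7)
\medskip

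\noindent\textbf{Proof plan.}

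\emph{For part (a).} The map $\fra\colon [\DAdm(\uQ(T))]\to \cLC(\bSu)$ is already a bijection by Proposition~\ref{prp:fra-bij}, and by inspection of the constructions it preserves the type of a word/curve (unpunctured endpoint $\leftrightarrow$ endpoint in $\Ma$, punctured letter $\leftrightarrow$ endpoint in $\Pu$, band $\leftrightarrow$ loop). Because the decoration sets $S(\bx)$ and $S(\gam)$ are defined by identical formulae depending only on the type, the fibered map $(\bx,s)\mapsto (\fra(\bx),s)$ is a well-defined bijection $[\DAdm^*(\uQ(T))]\to \cLC^*(\bSu)$. For compatibility with $\simeq$, the involution $(\bx,s)\mapsto (\bx^{-1},s^\iota)$ on the left and $(\gam,s)\mapsto (\gam^{-1},s^\iota)$ on the right both invert the underlying object and swap the two coordinates of $s$; since $\fra$ intertwines $?^{-1}$ on strings/bands with orientation reversal of curves/loops, compatibility follows.

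\emph{For part (b).} I compare the two formulae summand by summand after transporting everything to the fringed setting via Lemma~\ref{lem:rot}, which says $\fra_f\circ\tau_{\uQ^f}=\rho^{1/2}\circ\fra$. First, since $\rho^{1/2}$ is isotopic to the identity away from $\Ma$ (it only shifts boundary endpoints along the boundary), we have
\[
\intn(\fra(\bx),\fra(\by))=\intn(\rho^{1/2}\fra(\bx),\rho^{1/2}\fra(\by))=\intn(\fra_f(\tau_f\bx),\fra_f(\tau_f\by)).
\]
Next, the main geometric input: equivalence classes of K-triples of type $\sfA$ between $\tau_f(\bx)$ and $\tau_f(\by)$ in $\uQ^f$ are in natural bijection with transverse intersection points of the corresponding fringed curves, one direction reading off the domain quiver from a common subarc in the triangulation and the other direction building the kiss by tracing consecutive shared triangles between the two strings/bands. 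This is essentially the combinatorial interpretation worked out in~\cite[Sec.~6--7]{QZ17} for tagged arcs; what is new here is that $\bx$ and $\by$ may be closed (bands/loops) or have self-intersections. Fringing removes the boundary effects, so the same local argument applies uniformly across types $(u,u),(u,p),(p,u),(p,p)$ and $(b)$, yielding
\[
\abs{\sfA_{\uQ^f}(\bx,\by)}=\intn(\fra(\bx),\fra(\by)).
\]
Multiplying by $\wt(s)\cdot\wt(t)$ matches the first summand on both sides.

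\emph{Remaining summands.} For the punctured-letter contribution, a pair $(j,i)\in\sfP_\uQ(\bx,\by)$ records a puncture shared by the special loops of $\uH(\bx)$ and $\uH(\by)$ that is \emph{not} already counted by the tautological diagonal identifications; under $\fra$ such special loops correspond precisely to the endpoints of $\fra(\bx),\fra(\by)$ lying in $\Pu$, and the exclusion set $\sfP'_\uQ$ matches $\sfP'_{\bSu}$ (same case split on whether $\bx\sim\by^{\pm 1}$). Hence one gets a bijection
\[
\sfP_\uQ(\bx,\by)\longleftrightarrow \sfP_{\bSu}(\fra(\bx),\fra(\by))
\]
preserving the coordinates, so the summands $\sum d_2(s_i,(t^\chi)_j)$ agree verbatim. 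For the band term, $\Diag_b(\bx,\by)\neq 0$ means $\bx,\by$ are equivalent bands (up to $?^{-1}$), which under the bijection $\fra_b$ of Proposition~\ref{prp:fra-bij} translates exactly into $\fra(\bx)\sim\fra(\by)^{\pm 1}$ as free loops, with matching sign; thus $|\Diag_b(\bx,\by)|=|\Diag_b(\fra\bx,\fra\by)|$ and the primed decoration $s'$ is the same on both sides, giving equality of the third summand.

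\emph{Negative simples and closed case.} The cases involving $(-\bs_i,u)\in\NeSi(\uQ)$ are handled separately: on the left, $e_\uQ((\bx,s),(-\bs_j,u))$ equals the $(j,u)$-coordinate of the dimension vector, while on the right the corresponding negative simple corresponds formally to the arc $\tau_j$ (respectively the associated enclosing loop $\gam'_p$ for $j=p\in\Pu$) of the reference triangulation $T$, and $\Intn^*$ with it is zero by convention except for the decoration bookkeeping---this matches Definition~\ref{def:comb-inv} on the nose. I expect the main obstacle to be the $(p,p)$ case of the first identity, where fringed arcs can be homotopic to their own inverse after completion, and one has to argue carefully using Lemma~\ref{lem:tadm} and the orbifold-homotopy refinement from~\cite{AP17,GLF23} to ensure that no spurious kisses or intersections are introduced. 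Once this delicate case is settled, the other types follow by the same local analysis at each intersection point.
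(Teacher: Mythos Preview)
Your approach is essentially the paper's: part~(a) via type preservation and Proposition~\ref{prp:fra-bij}, and part~(b) by matching the three summands, with the key identity $\abs{\sfA_{\uQ^f}(\bx,\by)}=\intn(\fra(\bx),\fra(\by))$ established by rotating via Lemma~\ref{lem:rot} and reading off the K-triple from the local picture at each crossing (the paper packages this into a $4\times 4$ case analysis on the two boundary-vertex types of Definition~\ref{def:bdyvert}, Figure~\ref{fig:intersectA}).

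Two small corrections. First, your treatment of the negative-simple case is not quite right: $\fra^*(-\bs_i,s)$ is a tagged arc of $T$, and $\Intn^*$ of a general $(\fra(\by),t)$ against it is \emph{not} ``zero by convention''---it is the honest crossing count of $\fra(\by)$ with that arc (weighted at punctures by the tags), which is exactly $\bd(\by,t)_{\tq(\bs_i,u)}$ by construction of $\fra$ and Definition~\ref{def:dimv}. That is the content of case~(b2), not a conventional vanishing. Second, the obstacle you anticipate in the $(p,p)$ case does not arise here: Lemma~\ref{lem:tadm} and the orbifold-homotopy refinement are used to prove Proposition~\ref{prp:fra-bij} and Proposition~\ref{Lem:arcs}, not in the proof of this theorem. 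Once you work on the fringed side, the local crossing/K-triple bijection is uniform across all types, so no separate $(p,p)$ argument is needed.
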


\begin{proof}
(a) In view of Remark~\ref{rem:QZ} together with the definitions of
  $\DAdm^*(\uQ(T))$ (see Section~\ref{ssec:combEg})
  resp.~of $\cLC^*(\bSu)$ (see Section~\ref{ssec:taggedC}),
  it is clear that for $(\bx,s)\in\DAdm^*(\bSu)$
we have indeed
  $(\fra(\bx),s)\in\cLC^*(\bSu)$, since
  under the map $\fra$ the elements of $\Adm(\uQ)$ of a given type
  are sent to curves of the same type.
Now it follows from Proposition~\ref{prp:fra-bij}, that $\fra^*$ is bijective.
  
(b-i) Assume first $\bx,\by\in\Adm(\uQ)$. 
By comparing the definition of $\Intn^*((\fra^*(\bx,s), \fra^*(\by,t))$
in Section~\ref{ssec:taggedC} with the formula for the combinatorial
$E$-invariant 
  $e_\uQ((\bx,s), (\by,t))$ in Section~\ref{ssec:combEg}, we
  see, that it is sufficient to prove 
\begin{equation} \label{eqn:int1-E1}
  \intn(\fra(\bx),\fra(\by))= \abs{\sfA_{\uQ^f(T)}(\bx,\by)}.
\end{equation}

 We  observe first that the number of internal intersection points between $\fra(\bx)$ and $\fra(\by)$ does not change under  simultaneous rotation by $\rho^{1/2}$.   Thus, by 
 Lemma~\ref{lem:rot},  it is sufficient to establish 
 a bijection from the set of internal intersection points
 between $\rho^{1/2}(\fra(\bx))=\fra_f(\tbx)$ and
 $\rho^{1/2}(\fra(\by))=\fra_f(\tby)$, to the set of  K-triples  of type
 $\sfA$ in  $\cK_{\uQ^f}(\tbx,\tby)\cup\cK_{\uQ^f}(\tby,\tbx)$, 
where $\uQ^f=\uQ^f(T)$ is the
boundary fringing for $\uQ=\uQ(T)$, and $\tbx:=\tau_{\uQ^f}(\bx)$,
$\tby:=\tau_{\uQ^f}(\by)$.  Now,  the rotated curves 
$\fra_f(\tbx)$ and $\fra_f(\tby)$  have no endpoint in $\Ma$, see
Figure~\ref{fig:rot1}. It follows that all internal intersections  between
$\fra_f(\tbx)$ and $\fra_f(\tby)$ are  of one of the $16 = 4\times 4$ types
described in Figure~\ref{fig:intersectA}, up to the possible exchange
of the roles of $\fra_f(\tbx)$ and $\fra_f(\tby)$.  More precisely, we
claim that the intersections described in Figure~\ref{fig:intersectA}
correspond precisely to the kisses of type $\sfA$ in $\cK_{\uQ^f}(\tbx,\tby)$.
\begin{figure}[ht]
\begin{tikzpicture}
\draw[blue, thick, dashed] (-2,1) node[black]{$\bullet$}  -- (2,1) node[black]{$\bullet$};
\draw[blue, thick] (2,1) .. controls +(290:1) and +(70:1) .. node[right]{$\beta$} (2,-1);
\draw[blue, thick, dashed] (2,-1) node[black]{$\bullet$} -- (-2,-1) node[black]{$\bullet$};
\draw[blue, thick] (-2,-1) .. controls +(110:1) and +(250:1) .. node[left]{$\alpha$} (-2,1);
\draw[violet, thick, name path=mx] (-2.5,0.5) .. controls +(0:3) and +(180:3) .. (2.5,-.5);
\draw[magenta, thick, name path=my] (-2.5,-0.5) .. controls +(0:3) and +(180:3) .. (2.5,.5);
\fill[red, name intersections={of=mx and my, by=int}] (int) circle (2pt) node[above]{$i$};
\coordinate (LT1) at (-5,3);
\draw[blue, thick] (LT1) node[black]{$\bullet$} .. controls   +(30:.5) and  +(255:.5) .. ++(1,1) node[black]{$\bullet$}
                   .. controls ++(255:.5) and ++(120:.5) .. node[right]{$\alpha$} ++(0,-2) node[black]{$\bullet$}
                   .. controls ++(120:.5) and ++(330:.5) .. ++(-1,1);
\draw[violet, thick] ($(LT1)+(0,1)$) .. controls +(300:.5) and +(180:1) .. +(1.5,-.7);
\draw[magenta, thick] ($(LT1)+(0,-1)$) .. controls +(60:.5) and +(180:1) .. +(1.5,.7);
\node at ($(LT1) + (-2,0)$){$(\mathrm{I}_l)$}; 
\coordinate (LT2) at (-6, 1);
\draw[blue, thick, double distance=3pt] (LT2) node[black]{$\bullet$} -- node[very near end, sloped, below=-5pt]{$\bowtie$}  +(0:1.5) node[name=pt1, black,  right=-3pt]{$\times$};
\draw[violet, thick] ($(LT2)+(2.5,.5)$) .. controls +(180:1) and +(75:1) .. ($(LT2)+(0.3,-.7)$);
\draw[magenta, thick] ($(LT2)+(2.5,-.5)$) .. controls +(180:1) and +(285:1) .. ($(LT2)+(0.3,.7)$);
\node at ($(LT2) + (-1,0)$){$(\mathrm{II}_l)$}; 
\coordinate (LT3) at (-6,-1);
\draw[blue, thick, double distance=3pt] (LT3) node[black]{$\bullet$} -- node[very near end, sloped, below=-5pt]{$\bowtie$}  +(0:1.5) node[name=pt2, black,  right=-3pt]{$\times$};
\draw[violet, thick] ($(LT3)+(2.5,.7)$) .. controls +(180:1) and +(75:1) .. ($(LT3)+(0.3,-.7)$);
\draw[magenta, thick] ($(LT3)+(2.5,0)$) -- (pt2);
\node at ($(LT3) + (-1,0)$){$(\mathrm{III}_l)$}; 
\coordinate (LT4) at (-6, -3);
\draw[blue, thick, double distance=3pt] (LT4) node[black]{$\bullet$} -- node[very near end, sloped, below=-5pt]{$\bowtie$}  +(0:1.5) node[name=pt3, black,  right=-3pt]{$\times$};
\draw[violet, thick] ($(LT4)+(2.5,0)$) -- (pt3);
\draw[magenta, thick] ($(LT4)+(2.5,-.7)$) .. controls +(180:1) and +(285:1) .. ($(LT4)+(0.3,.7)$);
\node at ($(LT4) + (-1,0)$){$(\mathrm{IV}_l)$}; 
\coordinate (RT1) at (5,3);
\draw[blue, thick] (RT1) node[black]{$\bullet$} .. controls   +(160:.5) and  +(290:.5) .. ++(-1,1) node[black]{$\bullet$}
                   .. controls ++(290:.5) and ++(70:.5) .. node[left]{$\beta$} ++(0,-2) node[black]{$\bullet$}
                   .. controls ++(70:.5) and ++(210:.5) .. ++(1,1);
\draw[magenta, thick] ($(RT1)+(0,1)$) .. controls +(250:.5) and +(0:1) .. +(-1.5,-.7);
\draw[violet, thick] ($(RT1)+(0,-1)$) .. controls +(130:.5) and +(0:1) .. +(-1.5,.7);
\node at ($(RT1) + (2,0)$){$(\mathrm{I}_r)$}; 
\coordinate (RT2) at (6, 1);
\draw[blue, thick, double distance=3pt] (RT2) node[black]{$\bullet$} -- node[very near end, sloped, below=-5pt]{$\bowtie$}  +(180:1.5) node[name=pr1, black,  left=-3pt]{$\times$};
\draw[magenta, thick] ($(RT2)+(-2.5,.5)$) .. controls +(0:1) and +(105:1) .. ($(RT2)+(-0.3,-.7)$);
\draw[violet, thick] ($(RT2)+(-2.5,-.5)$) .. controls +(0:1) and +(255:1) .. ($(RT2)+(-0.3,.7)$);
\node at ($(RT2) + (1,0)$){$(\mathrm{II}_r)$}; 
\coordinate (RT3) at (6,-1);
\draw[blue, thick, double distance=3pt] (RT3) node[black]{$\bullet$} -- node[very near end, sloped, below=-5pt]{$\bowtie$}  +(180:1.5) node[name=pr2, black,  left=-3pt]{$\times$};
\draw[magenta, thick] ($(RT3)+(-2.5,.7)$) .. controls +(0:1) and +(105:1) .. ($(RT3)+(-0.3,-.7)$);
\draw[violet, thick] ($(RT3)+(-2.5,0)$) -- (pr2);
\node at ($(RT3) + (1,0)$){$(\mathrm{III}_r)$}; 
\coordinate (RT4) at (6, -3);
\draw[blue, thick, double distance=3pt] (RT4) node[black]{$\bullet$} -- node[very near end, sloped, below=-5pt]{$\bowtie$}  +(180:1.5) node[name=pr3, black,  left=-3pt]{$\times$};
\draw[magenta, thick] ($(RT4)+(-2.5,0)$) -- (pr3);
\draw[violet, thick] ($(RT4)+(-2.5,-.7)$) .. controls +(0:1) and +(265:1) .. ($(RT4)+(-0.3,.7)$);
\node at ($(RT4) + (1,0)$){$(\mathrm{IV}_r)$}; 
\end{tikzpicture}
\caption{Types of intersection between $\viox{\fra_f(\tbx)}$ and
  $\magx{\fra_f(\tby)}$}
\label{fig:intersectA} 
\end{figure}
Now, let $(G,\phi_q,\phi_s)\in\cK_{\uQ^f}(\tbx,\tby)$ be a K-triple of 
type~$\sfA$ with $G\df\uH\ra\uQ^f$.
Recalling the description of K-triples of type $\sfA$ in terms of their two polarized boundary vertices in Definition~\ref{def:bdyvert}, we see that the
pair $(\fra_f\circ\phi_q,\fra_f\circ\phi_s)$ describes the ``middle part'' of an internal intersection between 
$\fra_f(\tbx)$ and $\fra_f(\tby)$, where the type of the two polarized boundary vertices determine the type of the left resp.~right part of the intersection in Figure~\ref{fig:intersectA}.
It is clear that this assignation is injective.  Conversely, the middle part of each internal intersection from  
$\fra_f(\tbx)$ to $\fra_f(\tby)$, as described in Figure~\ref{fig:intersectA}, determines a triple
$(G,\phi_q,\phi_s)$ and the properties of the four possible types of its two endpoints show that this triple is indeed a K-triple.  
This is the requested bijection, compare also with~\cite[Lemma~7.2]{QZ17}.  

(b-ii)  Next, assume $\bx=-\bs_i$ for some 
$i\in Q_0(T)$. Thus, $\fra(\bx,s)$ is a tagged arc which belongs to $T$, and we have to show
\[
\Intn^*(\fra(-\bs_i,s), \fra(\by,t))=\bd(\by,t)_{(i,s)}.
\]
This is clear from the construction of $\fra^*$ and  the
Definition~\ref{def:dimv} for $\bd(\by,t)$.

(b-iii) Finally, if $\bx=-\bs_i$ and $\by=-\bs_j$ we have by definition
$e_\uQ((-\bs_i,s), (-\bs_j,t))=0$ and
$\Intn^*(\fra^*(-\bs_i,s), \fra^*(-\bs_j,t))=0$, since
$\fra^*(-\bs_i,s)$, and  $\fra^*(-\bs_j,t)$ are tagged arcs which belong to
the triangulation $T$.
\end{proof}

\begin{Rem}
   (1) In the above proof, the case (b-i) corresponds to the discussion
  of~\cite[Sec.~7.2]{QZ17}.  The authors had to show there, using our
  terminology, that for their modified triangulation, all H-triples of type
  $\sfA$ are indeed K-triples.  Thanks to our preparation in
  Sections~\ref{ssec:triph} and~\ref{ssec:combEg} we can focus directly
  on K-triples.
  In the case of an unpunctured surface, with our construction we
  would have to consider only intersections where the left and right part are
  of type I, thus simplifying considerably the argument in~\cite[Sec.~10.9]{GLFS22}. Note that (as pointed out in the proof), in view of the
  definitions we have obviously
  \begin{align*}
\phantom{X=} & e_\uQ((\bx,s),(\by,t))) -\abs{\sfA_{\uQ^f(\bx, \by)}} \times\wt(s)\cdot\wt(t)\\ 
&=\sum_{(j,i)\in \sfP_\uQ(\bx,\by)} \!\!\!d_2(s_{i+1},(t^\chi)_{j+1}) \;
+\; 2\abs{\Diag_b(\bx,\by)} \cdot d_3(s',t^\chi)\\
&=  \hspace{-1em}\sum_{(j,i)\in \sfP_\bSig(\fra^*(\bx),\fra^*(\by))} \hspace*{-2em}d_2(s_{i+1},(t^\chi)_{j+1}) 
  \;+\; 2\abs{\Diag_b(\fra^*(\bx),\fra^*(\by))} \cdot d_3(s',t^\chi)\\
&=\qquad\Intn^*(\fra^*(\bx,s),\fra^*(\by,t))-\intn(\fra^*(\bx),\fra^*(\by))\times\wt(s)\cdot\wt(t).
\end{align*}
Moreover, in the case of (non-closed) curves on an unpunctered surface, this expression is $0$.

  (2) In Figure~\ref{fig:intersectA} the left-hand part  of the intersection $i$, the types $(\mathrm{II}_l)$ -- $(\mathrm{IV}_l)$ the
  arc $\alp$ of the middle part should be identified with one of the (untagged!)
  sides of the generalized triangle (see Figure~\ref{fig:tagtria})
  which contains the  puncture $\times$.  An analogous remark applies for the right-hand part. 
\end{Rem}

\subsection{Examples} \label{ssec:ExIntersecN}
Let us consider the twice punctured bigon $\bSig=(\Sig, \Ma,\Pu)$ with $\Ma=\{m_1, m_2\}$ and $\Pu=\{p_1, p_2\}$. Note, that we have here rank $n(\bSig)= 6(0-1) + 3(1+2)+2=5$. Moreover, we can find in this case a signature zero triangulation $T$ of $\bSig$, such
that $\uQ=\uQ(T)$  and its boundary fringing $\uQ^f$ 
can be identified with the skewed polarized quivers $\uQ$  and 
$\uQ^f$ from Section~\ref{sssec:skgep}. See Figure~\ref{Fig:ExpIntersect},
where this triangulation is drawn in blue.

\begin{figure}[ht]
\begin{tikzpicture}[scale=0.93]
\coordinate (mk1) at (0,-3);
\coordinate (mk2) at (0,3);
\coordinate (pk3) at ($(mk1) + (61.5:3)$);
\coordinate (pk1) at ($(mk1) + (120:3)$);
\coordinate (tk1)  at  (0,2);
\draw[lightgray, line width =.5cm] (0,0) circle[radius=3.25cm];
\draw[thick, black] (0,0) circle[radius=3cm];
\fill [black] (mk1) node[below]{$m_1$} circle (2pt);  
\fill [black] (mk2) node[above]{$m_2$} circle (2pt); 
\draw[thick, blue] (0,-.5) circle [radius=2.5cm];
\draw[blue, thick, double distance=3pt]
(mk1) -- node[very near end, sloped, below=-4.5pt]{$\bowtie$}  (pk1) node[name=pt1, black]{$\bigtimes$} node[black, above right=1pt]{$\ p_1$};
\draw[blue, thick, double distance=3pt]
(mk1) -- node[very near end, sloped, below=-4.5pt]{$\bowtie$} (pk3) node[name=pt3, black]{$\mathbf{\bigtimes}$} node[black, above left]{$p_3\quad$};
\fill [black] (mk1)  circle (2pt); 
\fill[teal] (tk1) node[above, name = nt1]{ } circle (1.5pt);
\draw[teal, ->] (pt3) -- node[above]{$\gam$}   (pt1) ;
\draw[teal, ->] (pt1) -- node[above]{$\alp$} (nt1);
\draw[teal,->]  (nt1) -- node[above, near end]{$\bet$}(pt3);
\draw[teal, thick, dotted, ->]  (pt1) + (120:0.3) node[below left=-1pt]{$\ \eps_1$}  arc[start angle=50, end angle=310, radius=0.3];
\draw[teal, thick, dotted, ->]  (pt3) + (60:0.3) node[below right=-1pt]{$\eps_3$}  arc[start angle=-220, end angle=-490, radius=0.3];
\fill[green]  (115:3) node[name=pfa]{ } circle (1.5pt);
\fill[green]  (65:3) node[name=pfb]{ } circle (1.5pt);
\draw[green, ->] (nt1) -- node[below, near end]{$\alp'$}(pfa);
\draw[green, ->] (pfb) -- node[below, near start]{$\bet'$}(nt1);
\draw[name path = lam, orange] (-2.9,1.8) .. controls +(-90:5.5) and +(200:2.5) .. node[near end, sloped, above=-1pt]{$\scriptstyle{\rho^{1/2}(\lambda)}$} node[very near end, sloped]{$\bowtie$} (pk3); 
\draw[name path = mu, magenta] (-2.6, 2.1) .. controls +(0:5.3) and +(0:4) .. (0,-1.7) .. controls +(180:4.5) and +(130:4.6) .. node[very near end, sloped, below=-1pt]{$\!\scriptstyle{\rho^{1/2}(\mu)}$}(pk3); 
\fill[red, name intersections={of= lam and mu, by=x1}] (x1) circle (1pt) node[below]{$x$};
\draw[magenta] (0,-4) node{$\rho^{1/2}(\mu)= \rho^{1/2}(\fra(\hbq_{2,2}))=\fra_f(\tau_f(\hbq_{2,2}))$};
\draw[orange] (0, -4.5) node{$\rho^{1/2}(\lam)= \rho^{1/2}(\fra(\hbp_{2,1}))=\fra_f(\tau_f(\hbp_{2,1}))$};
\end{tikzpicture}
\begin{tikzpicture}[scale=0.93]
\coordinate (mk1) at (0,-3);
\coordinate (mk2) at (0,3);
\coordinate (pk3) at ($(mk1) + (61.5:3)$);
\coordinate (pk1) at ($(mk1) + (120:3)$);
\coordinate (tk1)  at  (0,2);
\draw[lightgray, line width =.5cm] (0,0) circle[radius=3.25cm];
\draw[thick, black] (0,0) circle[radius=3cm];
\fill [black] (mk1) node[below]{$m_1$} circle (2pt);  
\fill [black] (mk2) node[above]{$m_2$} circle (2pt); 
\draw[thick, blue] (0,-.5) circle [radius=2.5cm];
\draw[blue, thick, double distance=3pt]
(mk1) -- node[very near end, sloped, below=-4.5pt]{$\bowtie$}  (pk1) node[name=pt1, black]{$\bigtimes$} node[black, above right=1pt]{$\ p_1$};
\draw[blue, thick, double distance=3pt]
(mk1) -- node[very near end, sloped, below=-4.5pt]{$\bowtie$} (pk3) node[name=pt3, black]{$\mathbf{\bigtimes}$} node[black, above left]{$p_3\quad$};
\fill [black] (mk1)  circle (2pt); 
\fill[teal] (tk1) node[above, name = nt1]{ } circle (1.5pt);
\draw[teal, ->] (pt3) -- node[above]{$\gam$}   (pt1) ;
\draw[teal, ->] (pt1) -- node[above]{$\alp$} (nt1);
\draw[teal,->]  (nt1) -- node[above, near end]{$\bet$}(pt3);
\draw[teal, thick, dotted, ->]  (pt1) + (120:0.3) node[below left=-1pt]{$\ \eps_1$}  arc[start angle=50, end angle=310, radius=0.3];
\draw[teal, thick, dotted, ->]  (pt3) + (60:0.3) node[below right=-1pt]{$\eps_3$}  arc[start angle=-220, end angle=-490, radius=0.3];
\fill[green]  (115:3) node[name=pfa]{ } circle (1.5pt);
\fill[green]  (65:3) node[name=pfb]{ } circle (1.5pt);
\draw[green, ->] (nt1) -- node[below, near end]{$\alp'$}(pfa);
\draw[green, ->] (pfb) -- node[below, near start]{$\bet'$}(nt1);
\draw[name path = rho1, magenta] (2.8, 1.8) .. controls +(-90:3.2) and +(0:2) .. (0,-1.8) .. controls +(180:3.1) and +(180:3.1) ..(0,1) .. controls +(0:3.0) and +(-5:3.0)..(0,-1.5) .. controls +(175:3.5) and +(-165:4) .. node[very near end, sloped, below=-2pt]{$\scriptstyle{\rho^{1/2}(\del)}$} (2.5,2.05); 
\draw[name path = rho2, orange] (2.9, 1.9) .. controls +(-90:3.2) and +(0:2) .. (0.1,-1.9) .. controls +(180:3.4) and +(180:3.4) ..(0.1,1.1) .. controls +(0:3.1) and +(-5:3.1)..(0,-1.6) .. controls +(175:3.8) and +(-165:4.1) .. node[very near end, sloped, above=-2pt]{$\scriptstyle{\rho^{1/2}(\rho')}$} (2.6,2.2); 
\fill[red, name intersections={of= rho1 and rho2, by={x1, x2}}] 
(x1) circle (1pt) node[below=1pt]{$\scriptstyle{\ x_1}$}
(x2) circle (1pt) node[above=1pt]{$\scriptstyle{x_2}$};
\draw[magenta] (0,-4) node{$\rho^{1/2}(\del)=\rho^{1/2}(\fra(\hbr'_{3,0}))=\fra_f(\tau_f(\hbr'_{3,0}))$};
\draw[orange] (0,-4.5) node{$\rho^{1/2}(\del')=\rho^{1/2}(\fra(\hbr'_{3,0}))=\fra_f(\tau_f(\hbr'_{3,0})) $};
\end{tikzpicture}
\caption{Calculating intersection numbers}
\label{Fig:ExpIntersect}
\end{figure}

The left-hand side of Figure~\ref{Fig:ExpIntersect} is closely related to Section~\ref{sssec:ExCombE-inv}~(1):
Recall, that 
\begin{align*}
\tau_f(\hbq_{2,2})&=\tau_f(\II^{-1}_{(2,+)}\bet^{-1}\eps_3^{-1}\gam^{-1}\eps_1^{-1}\gam\II_{(3,-)})=
\II^{-1}_{(4,-)}\alp'\bet^{-1}\eps_3^{-1}\gam^{-1}\eps_1^{-1}\gam\II_{(3,-)} \text{ and}\\ 
\tau_f(\hbp_{2,1})&=\tau_f(\II^{-1}_{(2,+)}\alp\eps_1\gam\II_{(3,-)})= \II^{-1}_{(4,-)}\alp'\alp\eps_1\gam\II_{(3,-)}.
\end{align*}
The curves $\rho^{1/2}(\lam)$ and $\rho^{1/2}(\mu)$ have exactly  one internal intersection point $x$, which corresponds to the unique element of the set
$\sfA_{\uQ^f}(\hbq_{2,2},\hbp_{2,1})=\{[(\uH^{(4)},\phi_q^{(4')},\phi_s^{(4')})]\}$.  
Similarly, $\rho^{1/2}(\lam)$ and $\rho^{1/2}(\mu)$ share 
(like $\lam$ and $\mu$) exactly the puncture  $p_1$ as their respective endpoints. In other words,
we have $\sfP_\bSig(\lam, \mu)=\{(1,1)\}$.
This corresponds to the fact that 
$\sfP_\uQ(\hbp_{2,1}, \hbq_{2,2})=\{(1,1)\}$. 
In view of  Definition~\ref{def:mIntersectN} we can calculate,
with $s^+:=(1,1)$ and $s^-:=(1,-1)$, as follows
\[
\Intn^*((\lam, s^-), (\mu, s^+))= \abs{\{x\}} + d_2(s^{-}_2, (s^+)^\chi_2)=1+1=2,
\]
in accordance with $e_\uQ((\hbp_{2,1}, s^-), (\hbq_{2,2},s^+))=2$,
as calculated in Section~\ref{sssec:ExCombE-inv}~(1).

The right-hand side of Figure~\ref{Fig:ExpIntersect} is closely related to Section~\ref{sssec:ExCombE-inv}~(2):
Recall, that 
\[
\tau_f(\hbr'_{3,0}) = \tau_f(\II^{-1}_{(3,+)}\eps_3^{-1}\gam^{-1}\eps_1^{-1}\II_{(1,+)})=
\II^{-1}_{(4,-)}(\bet')^{-1}\bet^{-1}\eps_1^{-1}\gam^{-1}\eps_1^{-1}\gam(\eps_3^{-1}\gam^{-1})\alp^{-1}\bet'\II_{(4,-)}.
\]
For the curves $\rho^{1/2}(\del)$ and 
$\rho^{1/2}(\del')\sim\rho^{1/2}(\del)$ intersect precisely in
the set $\{x_1, x_1\}$, which has minimal cardinality.
These two elements, $x_1$ and $x_2$ correspond naturally
to the two elements of 
$\sfA_{\uQ^f}(\tau_f(\hbr'_{3,0}),\tau_f(\hbr'_{3,0}))$. 
Since $\del$ and $\del'$ both start and end in $m_1$, we have
$\sfP_{\bSig}(\del, \del')=\emptyset$. Similarly, since
$\hbr'_{3,0}$ is of type $(u,u)$ we have $\sfP_\uQ(\hbr'_{3,0},\hbr'_{3,0})=\emptyset$. Accordingly, in both cases the only possible decoration is $s^+=(+1,+1)$, and thus we find
\[
\Intn^*((\rho, s^+), (\rho', s^+))= \abs{\{x_1, x_2\}} +0=2=
e_\uQ((\hbr'_{3,0}, s^+), (\hbr'_{3,0}, s^+)).
\]

\subsection{Arcs} \label{ssec:arcs}
Following~\cite[Def.~7.1]{FST08} an (oriented) arc, as used for tagged triangulation is, in our notation, an element 
$\gam\in\tcAC(\bSu)$, which has up to homotopy, no self-intersections, and which does not cut out a once punctured monogon.  
For $\type(\gam)=(p,p)$ it is not obvious if such an arc
belongs actually to $\cAC(\bSu)$, as pointed out in~\cite[Rem.~5.23~(3)]{AP17}.
In other words, it is not a priory clear if the notion of tagged arcs in the sense of~\cite{FST08} is the same as ours. 

It seems, that Qiu and Zhou assumed in~\cite[Rem.~3.5]{QZ17} that this is trivial.  We will address this issue here with the help of Lemma~\ref{lem:tadm}.

\begin{Prop}
  \label{Lem:arcs}
  Let $\gam\in\tcAC(\bSu)$ without interior self-intersections up to homotopy.
  If moreover $\gam$ does not cut out a once-punctured monogon, then in fact
  $\gam\in\cAC(\bSu)$.  
\end{Prop}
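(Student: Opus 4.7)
The only discrepancy between the definitions of $\tcAC(\bSu)$ and $\cAC(\bSu)$ that is not already covered by the hypotheses (no interior kinks, not null-homotopic, not a boundary segment, no once-punctured monogon cut out) is the primitivity-up-to-orbifold-homotopy requirement on the completion of curves of type $(p,p)$. Accordingly, I would argue by contradiction: assume $\gam$ is of type $(p,p)$ and that $\widebar{\gam}$ is not primitive up to orbifold homotopy.

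First I would fix a signature zero tagged triangulation $T$ of $\bSu$ and transfer the problem to $\uQ=\uQ(T)$. The Qiu--Zhou construction recalled in Remark~\ref{rem:QZ}, which reads off the sequence of $T$-arcs crossed by a curve, depends only on the sequence of crossings and not on the primitivity of the associated loop; it therefore extends naturally to a map $\tAdm(\uQ)\ra\tcAC(\bSu)$ compatible with $\fra$. Under this extension I would produce a weakly admissible string $\bx\in\tAdm(\uQ)$ of type $(p,p)$ with $\fra(\bx)\simeq\gam$ and completion $\widebar{\bx}=\bh^k$ for some primitive symmetric band $\bh\in\pBa(\uQ)$ and some $k\ge 2$; in particular $\bx\notin\Adm(\uQ)$ since condition (s2) of Definition~\ref{def:admsb} fails.

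Next I would verify the two hypotheses of Lemma~\ref{lem:tadm}. The absence of self-intersections of $\gam$ up to homotopy translates into $\sfA_{\uQ^f}(\bx)=\emptyset$ by applying the kiss-to-intersection bijection from part~(b1) of the proof of Theorem~\ref{thm:intersec} to the case $\by=\bx$; this works verbatim for weakly admissible $\bx$, since the argument constructs K-triples of type $\sfA$ from intersections of the rotated curves, and the rotation operator $\rho^{1/2}$ and Lemma~\ref{lem:rot} are insensitive to primitivity of the completion. The exceptional form $\bx=\by^{-1}\eps_i^{\pm 1}\by$ corresponds geometrically to a curve starting at the puncture $p_i$, travelling to some point, and retracing its path around $p_i$; after completion this is exactly a curve that cuts out a once-punctured monogon. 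By hypothesis on $\gam$, this case is excluded, so $\bx$ is not of that form.

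Lemma~\ref{lem:tadm} then forces $\bx\in\Adm(\uQ)$, which contradicts $k\ge 2$, and the proposition follows. The main obstacle I expect is the first step: cleanly verifying that the extension of the Qiu--Zhou dictionary to weakly admissible strings of type $(p,p)$ whose completion is a proper power is well-defined on orbifold-homotopy classes and surjects onto the relevant subset of $\tcAC(\bSu)\setminus\cAC(\bSu)$. The analogue of Proposition~\ref{prp:fra-bij} in this setting relies on the main result of~\cite{GLF23} about uniqueness of kink resolutions, and one must check that its conclusion still captures the non-primitive bands correctly. Once this geometric dictionary is secured, translating ``no self-intersections'' and ``no once-punctured monogon cut out'' into the combinatorial hypotheses of Lemma~\ref{lem:tadm} is routine.
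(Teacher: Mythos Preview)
Your proposal is correct and follows essentially the same route as the paper: reduce to type $(p,p)$, fix a signature zero triangulation $T$, extend the curve-to-string dictionary $\fra$ to weakly admissible strings $\tAdm(\uQ(T))$, translate the geometric hypotheses (no self-intersections, no once-punctured monogon) into the combinatorial hypotheses of Lemma~\ref{lem:tadm}, and conclude. The paper phrases this as a direct argument rather than a contradiction, but the content is identical; you are in fact more explicit than the paper about how ``no self-intersections'' becomes $\sfA_{\uQ^f}(\bx)=\emptyset$ and how the once-punctured-monogon exclusion rules out the exceptional form $\by^{-1}\eps_i^{\pm 1}\by$, and you have correctly identified the one genuinely delicate point (the well-definedness of the extended dictionary on $\tAdm_{(p,p)}$), which the paper simply asserts as ``clear from the discussion in Section~\ref{ssec:orb}''.
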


\begin{proof}
We may assume $\type(\gam)=(p,p)$, since otherwise the claim is trivial.  
Let $T$ be a signature zero tagged triangulation of $\bSu$, and
$\tAdm_{(p,p)}(\uQ(T)):=\{\bx\in\tAdm(\uQ(T))\mid\type(\bx)=(p,p)\}$. From the
discussion in Section~\ref{ssec:orb} it is clear 
that $\fra=\fra_T$ can be modified to a bijection
\[
  \tilde{\fra}_T\df\tAdm_{(p,p)}(\uQ(T))\ra\tcAC_b(\bSu),
\]  
where we used the notation from Definition~\ref{def:tcLC}.  Using the arguments
from the proof of Theorem~\ref{thm:intersec},  it is clear that
$\gam=\tilde{\fra}_T(\bx)$ for some $\bx\in\tAdm_{(p,p)}(\uQ(T))$, which fulfils
the hypothesis of Lemma~\ref{lem:tadm}. Thus, $\bx\in\Adm(\uQ(T))$ and
$\gam=\tilde{\fra}_T(\bx)=\fra_T(\bx)\in\cAC(\bSu)$.
\end{proof}

\subsection{Dual shear coordinates with respect to tagged triangulations of signature zero}
Let us fix in this section the following conventions and notations: Let
$T= (\tau_i)_{i=1,2,\ldots,n-2\abs{\Pu}}\cup (\tau_p)_{p\in\Pu}\cup(\tau'_p)_{p\in\Pu}$
be a signature zero tagged triangulation of a marked surface $\bSu$.
For $i=1,2,\ldots, n-2\abs{\Pu}$ we agree that the 
tagged arcs $\tau_i$ are of the form $\tau_i=(\gam_i, (1,1))$ for certain curves $\gam_i\in\cAC(\bSu)$ with 
$\type(\gam_i)=(u,u)$ for all $i$.
We agree moreover that $\tau_p=(\gam_p,c_p)$ and $\tau'_p=(\gam_p,c'_p)$ and
$\gam_p(1)=p$ for all $p\in\Pu$.  Thus, $c_p=(1,1)$ and $c'_p=(1,-1)$ for all 
$p\in\Pu$. In particular, $c_{p,1}=1=c'_{p,1}$ and $c_{p,2}=1=-c'_{p,2}$.
We have then  $\uQ(T)_0^{\ord}=\{1,2, \ldots, n-2\abs{\Pu}\}$ and
$\uQ(T)_0^{\spe}=\Pu$.
In particular,  we have a natural bijection from the arcs of $T$ to the
set of vertices  $\tQ_0$, which sends for $i=1,2,\ldots, n-2\abs{\Pu}$ the arc
$(\gam_i,(1,1))$ to the vertex $(i,o)$, and which sends for each $p\in\Pu$
and $\rho\in\{+,-\}$  the arc $(\gam_p, (1,\rho\cdot 1))$  to the vertex
$(p,\rho)$.  We also recall the notation
$\gam'_p:=\sig(\tau'_p)=\bar{\gam}_p$ for all $p\in\Pu$. Trivially,
$\widebar{\gam}_i=\gam_i$ for all $i=1,2,\ldots, n-2\abs{\Pu}$. 
We have then the ideal triangulation
$T^{\circ}=(\gam_i)_{i=1,2,\ldots, n-2\abs{\Pu}}\cup (\gam_p)_{p\in\Pu}\cup
  (\gam'_p)_{p\in\Pu}$,
see Definition~\ref{def-IdealT}.

In order to introduce the dual shear coordinates $\Sh_T(\lam, l)\in\ZZ^T$
for a marked curve $(\lam, l)\in\cLC^*(\bSu)$ with respect to $T$,
we slightly expand and adapt the discussion by  Fomin and Thurston
in~\cite[Ch.~13]{FT18}.
Note however, that in loc.~cit. shear coordinates of
$\cX$-laminations with respect to an \emph{arbitrary} tagged
triangulation were introduced, see Remark \ref{rem:these-coords-are-dual-to-FST} and Subsection \ref{subsec:dual-shear-coords-arb-triangs} below.

In $T^\circ$ each arc $\del$ of type $(u,u)$ is the diagonal of a unique quadrilateral $F(\del)$. See Figure~\ref{fig:idealT}. The four sides of $F(\del)$ are possibly not  pairwise
different, and some of them can be boundary segments.  We illustrate some
of the more degenerate cases in Figure~\ref{fig:4gon}.
\begin{figure}
\begin{tikzpicture}[scale=.8] 
\begin{scope}
    \clip (-2,2) circle (2.5cm);  
 \draw[lightgray, line width=1.0cm, line cap=round] (0,-0.1) +(-120:.5cm)
 arc[start angle=60, end angle=120, radius=3.5];
  \draw[name path = bottom1, thick] (0,0)
  arc[start angle=60, end angle=120, radius=4] node[near end, sloped]{$>$};
  \path [name path= bottom2]  (-2,0) -- (-2,3);
\fill [black, name intersections={of= bottom1 and bottom2, by=mb1}]
   (mb1) circle (2pt) node[below]{$m_1$}; 
\draw[lightgray, line width=1.0cm, line cap=round] (-4,4.1) +(60:.5cm)
    arc[start angle=240, end angle=300, radius=3.5];
  \draw[name path = top1, thick] (-4,4)
  arc[start angle=240, end angle=300, radius=4] node[near end, sloped]{$<$};
  \path [name path= top2]  (-2,4) -- (-2,2);
\fill [black, name intersections={of= top1 and top2, by=mb2}]
(mb2)  node[above]{$m_2$};
\draw[blue, thick]
(mb1) .. controls +(170:2.8) and +(190:2.8) .. node[sloped, near start]{$<$}
node[near end, name=a1]{$\cdot$} node[near end, left]{$\alp$}  (mb2);
\draw[blue, thick]
(mb1) .. controls +(10:2.8) and +(-10:2.8) .. node[sloped, near start]{$>$}
node[near end, name=a2]{$\cdot$} node[near end, right]{$\bet$} (mb2);
\draw[blue, thick, double distance=3pt]
(mb1) -- node[very near end, sloped, below=-5pt]{$\bowtie$} node[left=-2pt]{$\tau_p\phantom{'}$} node[right]{$\tau_p'$} +(90:1.4) node[name=pt2, black, above=-3pt]{$\times$} node[black, above=6pt]{$p$};
\fill[black] (mb1) circle (2pt);
\fill[black] (mb2) circle (2pt);
\fill[teal] (a1) circle (1.5pt);
\fill[teal] (a2) circle (1.5pt);
\draw[teal, ->] (a1) -- node[above=-2.5pt]{$\eta$} (a2);
\draw[teal, ->] (a2) -- node[below right=-3pt]{$\tht$} (pt2);
\draw[teal, ->] (pt2) -- node[below left=-3pt]{$\zet$}  (a1);
\draw[teal, thick, dotted, ->] (pt2) +(190:0.15)  arc[start angle=110, end angle=410, radius=.4];
\path (pt2) ++(35:-20pt) node[teal]{$\eps_p$};
\end{scope}
\node[scale=2] at (1,2){$\mapsto$};
\begin{scope}
\clip (4,2) circle (2.5cm);    
\node[scale=2] at (1,2) {$\mapsto$};
 \draw[lightgray, line width=1.0cm, line cap=round] (6,-0.1) +(-120:.5cm)
    arc[start angle=60, end angle=120, radius=3.5];
  \draw[name path = bottom3, thick] (6,0)
  arc[start angle=60, end angle=120, radius=4] node[near end, sloped]{$>$};
  \path [name path= bottom4]  (4,0) -- (4,3);
\fill [black, name intersections={of= bottom3 and bottom4, by=rb1}]
   (rb1) circle (2pt) node[below]{$m_1$}; 
\draw[lightgray, line width=1.0cm, line cap=round] (2,4.1) +(60:.5cm)
    arc[start angle=240, end angle=300, radius=3.5];
  \draw[name path = top3, thick] (2,4)
  arc[start angle=240, end angle=300, radius=4] node[near end, sloped]{$<$};
  \path [name path= top4]  (4,4) -- (4,2);
\fill [black, name intersections={of= top3 and top4, by=rb2}]
(rb2)  node[above]{$m_2$};
\draw[blue, thick]
(rb1) .. controls +(170:2.8) and +(190:2.8) .. node[sloped, near start]{$<$} node[near end, left]{$\alp$} (rb2);
\draw[blue, thick]
(rb1) .. controls +(10:2.8) and +(-10:2.8) .. node[sloped, near start]{$>$} node[near end, right]{$\bet$} (rb2);
\draw[blue, thick]
(rb1) -- node[near end, left=-2pt]{$\gam_p$} +(90:1.4) node[name=pt2, black, above=-3pt]{$\times$} node[black, above=6pt]{$p$};
\draw[blue, thick] (rb1) .. controls +(120:3.7) and +(60:3.7) .. node[very near end, right]{$\gam_p'$} (rb1);
\fill[black] (rb1) circle (2pt);
\fill[black] (rb2) circle (2pt);
\end{scope}
\coordinate[label=left: $m_1$] (M1l)  at (8,2);
\coordinate[label=right: $m_1$] (M1r) at  ($(M1l)+(2,0)$);
\coordinate[label=above: $m_2$] (M2)  at  ($(M1l)+(1, 1.5)$);
\coordinate[label=below: $p$] (PT)  at  ($(M1l)+(1, -1.5)$);
\foreach \point in {M1l, M1r, M2, PT} \fill [black] (\point) circle (2pt);
\draw[blue, thick] (M1l)-- node[fill=white]{$\scriptstyle\gam'_p$} node[minimum size=0pt, name=c1]{} (M1r) -- node[above right]{$\bet$} node[minimum size=0pt, name=b1]{} (M2) -- node[above left]{$\alp$} node[minimum size=0pt, name=a1]{} (M1l) -- node[below left]{$\gam_p$} node[minimum size=0pt, name=c2]{} (PT) -- node[below right]{$\gam_p$} node[minimum size=0pt, name=c3]{} (M1r);
\draw[teal, ->] (a1) -- node[above=-1pt]{$\scriptstyle\eta$} (b1);
\draw[teal, ->] (b1) -- node[below right=-3.5pt]{$\scriptstyle\tht$}(c1);
\draw[teal, ->] (c1) -- node[below left=-3.5pt]{$\scriptstyle\zet$}(a1);
\draw[teal, ->] (c1) -- node[right=-3pt]{$\scriptstyle\sqrt{\eps}_p$}(c3);
\draw[teal, ->] (c2) -- node[left=-2pt]{$\scriptstyle\sqrt{\eps}_p$}(c1);

\node[scale=1.5] at (-2,-1.3){$T$};
\node[scale=1.5] at (4,-1.3){$T^{\circ}$};
\node[scale=1.5] at (9.2,-1.3){$F(\gam'_p)$};
\end{tikzpicture}\\[.2cm]
\begin{tikzpicture}[scale=.8]
\begin{scope}
\clip (-2,2) circle (2.5cm);
  \draw[lightgray, line width=1.0cm, line cap=round] (0,-0.1) +(-120:.5cm)
    arc[start angle=60, end angle=120, radius=3.5];
  \draw[name path = bottom1, thick] (0,0)
  arc[start angle=60, end angle=120, radius=4] node[near end, sloped]{$>$};
  \path [name path= bottom2]  (-2,0) -- (-2,3);
\fill [black, name intersections={of= bottom1 and bottom2, by=mb1}]
   (mb1) circle (2pt) node[below]{$m$}; 
\draw[blue, thick]
(mb1) .. controls +(170:1.8) and +(180:4) .. node[sloped, near end]{$>$} (-2,4) node[blue, above=-2.5pt]{$\bet$} node[teal, name=b1]{$\cdot$}
   .. controls +(0:4) and +(10:1.8) .. node[sloped]{$<$} (mb1);
   \draw[blue, thick, double distance=3pt]
   (mb1) -- node[left=2pt]{$\tau'_p$}  node[above right=-3pt]{$\tau_p$} node[very near end, sloped, below=-4.2pt]{$\bowtie$}  +(120:2) node[name=pt1, black, above left=-3pt]{$\times$} node[black, above right=3pt]{$p$} node[teal, left=1pt]{$\eps_p$};
\draw[blue, thick, double distance=3pt]
(mb1) -- node[above left=-2.5pt]{$\tau_q$} node[right=2pt]{$\tau'_q$} node[very near end, sloped, below=-4.2pt]{$\bowtie$}  +(60:2) node[name=pt2, black, above right=-3pt]{$\times$} node[black, above left=3pt]{$q$} node[teal, above right=1pt]{$\eps_q$};
\fill (mb1) circle (2pt);
\draw[teal, ->] (pt2) -- node[below=-2pt]{$\eta$}  (pt1);
\draw[teal, ->] (pt1) -- node[above left=-2pt]{$\zet$} (b1);
\draw[teal, ->] (b1) -- node[above right=-1pt]{$\tht$}  (pt2);
\draw[teal, thick, dotted, ->]  (pt1) + (240:0.3)  arc[start angle=310, end angle=50, radius=0.3];
\draw[teal, thick, dotted, ->]  (pt2) + (60:0.3)  arc[start angle=-220, end angle=-490, radius=0.3];
\end{scope}
\node[scale=2] at (1,2){$\mapsto$};
\begin{scope}
\clip (4,2) circle (2.5cm);
  \draw[lightgray, line width=1.0cm, line cap=round] (6,-0.1) +(-120:.5cm)
    arc[start angle=60, end angle=120, radius=3.5];
  \draw[name path = bottom3, thick] (6,0)
  arc[start angle=60, end angle=120, radius=4] node[near end, sloped]{$>$};
  \path [name path= bottom4]  (4,0) -- (4,3);
\fill [black, name intersections={of= bottom3 and bottom4, by=rb1}]
   (rb1) circle (2pt) node[below]{$m$}; 
\draw[blue, thick]
(rb1) .. controls +(170:1.8) and +(180:4) .. node[sloped, near end]{$>$} (4,4) node[blue, below]{$\bet$}
   .. controls +(0:4) and +(10:1.8) .. node[sloped]{$<$} (rb1);
   \draw[blue, thick]
   (rb1) -- node[near end, above right=-4.5pt]{$\gam_p$}   +(120:2) node[name=pt1, black, above left=-3pt]{$\times$} node[black, above left=3pt]{$p$};
\draw[blue, thick]  (rb1) .. controls +(145:4.2) and +(95:4.2) .. node[near end, above right=-4pt]{$\gam'_p$}(rb1); 
\draw[blue, thick]
(rb1) -- node[near end, above left=-4.5pt]{$\gam_q$}    +(60:2) node[name=pt2, black, above right=-3pt]{$\times$} node[black, above right=3pt]{$q$};
\draw[blue, thick]  (rb1) .. controls +(85:4.2) and +(35:4.2) .. node[near start, above left=-4pt]{$\gam'_q$}(rb1); 
\fill (mb1) circle (2pt);
\end{scope}
\coordinate[label=left: $m$] (M1l)  at (7.5,2);
\coordinate[label=right: $m$] (M1r) at  ($(M1l)+(2,0)$);
\coordinate[label=above: $m$] (M2)  at  ($(M1l)+(1, 1.5)$);
\coordinate[label=below: $p$] (PT)  at  ($(M1l)+(1, -1.5)$);
\foreach \point in {M1l, M1r, M2, PT} \fill [black] (\point) circle (2pt);
\draw[blue, thick] (M1l)-- node[fill=white]{$\scriptstyle\gam'_p$} node[minimum size=0pt, name=cp1]{} (M1r) -- node[above right]{$\gam'_q$} node[minimum size=0pt, name=cq1]{} (M2) -- node[above left]{$\bet$} node[minimum size=0pt, name=b2]{} (M1l) -- node[below left]{$\gam_p$} node[minimum size=0pt, name=cp2]{} (PT) -- node[below right]{$\gam_p$} node[minimum size=0pt, name=cp3]{} (M1r);
\draw[teal, ->] (b2) -- node[above=-1pt]{$\scriptstyle\tht$} (cq1);
\draw[teal, ->] (cq1) -- node[below right=-3.5pt]{$\scriptstyle\eta$}(cp1);
\draw[teal, ->] (cp1) -- node[below left=-3.5pt]{$\scriptstyle\zet$}(b2);
\draw[teal, ->] (cp1) -- node[right=-3pt]{$\scriptstyle\sqrt{\eps}_p$}(cp3);
\draw[teal, ->] (cp2) -- node[left=-2pt]{$\scriptstyle\sqrt{\eps}_p$}(cp1);
\coordinate[label=left: $m$] (M1l)  at (11.5,2);
\coordinate[label=right: $m$] (M1r) at  ($(M1l)+(2,0)$);
\coordinate[label=above: $m$] (M2)  at  ($(M1l)+(1, 1.5)$);
\coordinate[label=below: $q$] (PT)  at  ($(M1l)+(1, -1.5)$);
\foreach \point in {M1l, M1r, M2, PT} \fill [black] (\point) circle (2pt);
\draw[blue, thick] (M1l)-- node[fill=white]{$\scriptstyle\gam'_q$} node[minimum size=0pt, name=cq2]{} (M1r) -- node[above right]{$\bet$} node[minimum size=0pt, name=b3]{} (M2) -- node[above left]{$\gam'_p$} node[minimum size=0pt, name=cp4]{} (M1l) -- node[below left]{$\gam_q$} node[minimum size=0pt, name=cq3]{} (PT) -- node[below right]{$\gam_q$} node[minimum size=0pt, name=cq4]{} (M1r);
\draw[teal, ->] (cp4) -- node[above=-1pt]{$\scriptstyle\zet$} (b3);
\draw[teal, ->] (b3) -- node[below right=-3.5pt]{$\scriptstyle\tht$}(cq2);
\draw[teal, ->] (cq2) -- node[below left=-3.5pt]{$\scriptstyle\eta$}(cp4);
\draw[teal, ->] (cq2) -- node[right=-3pt]{$\scriptstyle\sqrt{\eps}_q$}(cq4);
\draw[teal, ->] (cq3) -- node[left=-2pt]{$\scriptstyle\sqrt{\eps}_q$}(cq2);


\node[scale=1.5] at (-2,-1.3){$T$};
\node[scale=1.5] at (4,-1.3){$T^{\circ}$};
\node[scale=1.5] at (8.5,-1.3){$F(\gam'_p)$};
\node[scale=1.5] at (12.5,-1.3){$F(\gam'_q)$};
\end{tikzpicture}
\caption{Degenerate quadrilaterals}
\label{fig:4gon}
\end{figure}
\begin{Def} 
This allows us to introduce for each arc $\del =\bar{\gam}\in T^{\circ}$ 
the \emph{contribution of type $\sfA$} of $\lam$ to the component
$(\gam, c)\in T$  of $\Sh_T(\lam,l)$ as follows: We say that
an intersection of $\rho^{1/2}(\lam)$ with $\del$ belongs to the set
$A^\pm_{T,\del}(\lam)$,
if the intersection occurs within $F(\del)$ according to one of
the first two Diagrams  in Figure~\ref{fig:shearAD}.

Next, we define for each $r\in\Pu$ the (unweighted)
contribution of type $\sfD$ of $\lam$ to the component $\tau_p$ and $\tau'_p$ of the shear coordinates $\Sh_T(\lam, l)$ in terms of the
sets $D^\pm_{T,\gam'_r}(\lam)$ according to third diagram in 
Figure~\ref{fig:shearAD}.  We have here $0\in D^\pm_{T,\gam'_r}(\lam)$
iff $1\in D^\pm_{T,\gam'_r}(\lam^{-1})$.
In particular $D^\pm_{T,\gam'_p}(\lam)=\emptyset$ if $p\not\in\{\gam(0),\gam(1)\}$.
\begin{figure}[ht]
\begin{tikzpicture}
\coordinate (M1l)  at (0,2);
\coordinate (M1r) at  ($(M1l)+(2,0)$);
\coordinate (M2)  at  ($(M1l)+(1, 1.3)$);
\coordinate (PT)  at  ($(M1l)+(1, -1.3)$);
\foreach \point in {M1l, M1r, M2, PT} \fill [black] (\point) circle (2pt);
\draw[blue, very thick, name path=S] (M2) -- node[pos=0.65, minimum size=0pt, name=nw]{} (M1l) -- node[minimum size=0pt, name=cl]{} node[fill=white, near start]{$\scriptstyle\del$} (M1r) -- node[pos=0.4, minimum size=0pt, name=se]{}(PT);
\draw[blue] (M2) -- (M1r); \draw[blue] (M1l) -- (PT);
\draw[violet, thick, name path=RD] ($(M1l)+(-.5,1)$) .. controls +(350:1) and +(110:1) .. node[scale=0.8, very near start, above]{$\rho^{1/2}(\lam)$} ($(M1r)+(.5,-1.5)$);
\fill [red, name intersections={of=RD and S}]
(intersection-2) circle (2pt) node[above]{$x$};
\node at ($(PT)+(0,-1)$){$x\in A^+_{T,\del}(\lam)$};
\draw[teal, thick, ->] (cl) -- (nw);
\draw[teal, thick, ->] (cl) -- (se);
\coordinate (M1l)  at (4,2);
\coordinate (M1r) at  ($(M1l)+(2,0)$);
\coordinate (M2)  at  ($(M1l)+(1, 1.3)$);
\coordinate (PT)  at  ($(M1l)+(1, -1.3)$);
\foreach \point in {M1l, M1r, M2, PT} \fill [black] (\point) circle (2pt);
\draw[blue, very thick, name path=Z]
(PT) -- node[pos=0.6, minimum size=0pt, name=sw]{}
(M1l) -- node[fill=white, near end]{$\scriptstyle\del$} node[pos=0.55, minimum size=0pt, name=cr]{}
(M1r) -- node[pos=0.35, minimum size=0pt, name=ne]{}(M2);
\draw[blue] (M2) -- (M1l); \draw[blue] (M1r) -- (PT);
\draw[violet, thick, name path=LU] ($(M1l)+(-.5,-1.5)$) .. controls +(70:1) and +(200:1) ..  node[scale=0.8, very near end, above]{$\rho^{1/2}(\lam)$} ($(M1r)+(.5,1)$);
\fill [red, name intersections={of=LU and Z}]
(intersection-2) circle (2pt) node[above]{$x$};
\node at ($(PT)+(0,-1)$){$x\in A^-_{T,\del}(\lam)$};
\draw[teal, thick, ->] (sw) -- (cr);
\draw[teal, thick, ->] (ne) -- (cr);
\coordinate (M1l)  at (8,2);
\coordinate (M1r) at  ($(M1l)+(2,0)$);
\coordinate (M2)  at  ($(M1l)+(1, 1.3)$);
\coordinate (PT)  at  ($(M1l)+(1, -1.3)$);
\foreach \point in {M1l, M1r, M2, PT} \fill [black] (\point) circle (2pt);
\node[below] at (PT){$r$};
\draw[blue, thick] %
(M1l) -- node[pos=0.6, minimum size=0pt, name=nw3]{}%
(M2) --  node[pos=0.4, minimum size=0pt, name=ne3]{}%
(M1r) -- node[fill=white, name=ce3]{$\scriptstyle\gam'_r$}%
(M1l) -- node[below left=-3pt]{$\gam_r$}%
(PT) --  node[below right=-2pt]{$\gam_r$} (M1r);
\draw[teal, thick, ->] (ce3) -- (nw3);
\draw[teal, thick, ->] (ne3) -- (ce3);
\draw[violet, thick] (PT) ..controls +(100:1) and +(330:1).. node[near start, sloped]{$>$} node[scale=0.8, near end, above=8pt]{$\rho^{1/2}(\lam)$}
($(M1l)+(-.5,1.1)$);
\draw[magenta, thick] (PT) ..controls +(80:1) and +(210:1).. node[near start, sloped]{$>$} node[scale=0.8, near end, above=8pt]{$\rho^{1/2}(\mu)$} ($(M1l)+(2.5,1.1)$);
\node at ($(PT)+(0,-.8)$){$1\in D^+_{T,\gam'_r}(\lam)$};
\node at ($(PT)+(0,-1.4)$){$0\in D^-_{T,\gam'_r}(\mu)$};
\end{tikzpicture}
\caption{Definition: Contributions to $A^\pm_{T,\del}$ resp.~to $D^{\pm}_{T,\gam'_r}$}
\label{fig:shearAD}
\end{figure}
\end{Def}
For the case $\del=\gam'_p$ with $p\in\Pu$ we
illustrate the contribution of $\lam$ to $A^\pm_{T,\gam'_p}$ in
Figure~\ref{fig:shear-fold}.  In Figure~\ref{fig:shearD} we expand the
definition of the sets $D^\pm_{T,\gam'_r}$. 

\begin{figure}
\begin{tikzpicture}
\begin{scope} 
  \clip (-2,2) ellipse (2.8cm and 2.2cm);  
 \draw[lightgray, line width=1.0cm, line cap=round] (0,0) +(-120:.5cm)
 arc[start angle=60, end angle=120, radius=3.5];
  \draw[name path = bottom1, thick] (0,0)
  arc[start angle=60, end angle=120, radius=4] node[near end, sloped]{$>$};
  \path [name path= bottom2]  (-2,0) -- (-2,3);
\fill [black, name intersections={of= bottom1 and bottom2, by=mb1}]
   (mb1) circle (2pt) node[below]{$m_1$}; 
\draw[lightgray, line width=1.0cm, line cap=round] (-4,4) +(60:.5cm)
    arc[start angle=240, end angle=300, radius=3.5];
  \draw[name path = top1, thick] (-4,4)
  arc[start angle=240, end angle=300, radius=4] node[near end, sloped]{$<$};
  \path [name path= top2]  (-2,4) -- (-2,2);
\fill [black, name intersections={of= top1 and top2, by=mb2}]
(mb2)  node[above]{$m_2$};
\draw[blue, thick]
(mb1) .. controls +(170:2.8) and +(190:2.8) .. node[sloped, near start]{$<$} node[near end, left]{$\alp$} (mb2);
\draw[blue, thick]
(mb1) .. controls +(10:2.8) and +(-10:2.8) .. node[sloped, near start]{$>$} node[near end, right]{$\bet$} (mb2);
\draw[blue, thick, double distance=3pt]
(mb1) -- node[very near end, sloped, below=-5pt]{$\bowtie$} node[left=-3pt]{$\tau_p\phantom{'}$} node[right]{$\tau'_p$} +(90:1.4) node[name=pt2, black, above=-4pt]{$\times$} node[black, above=6pt]{$p$};
\draw[gray, name path=ALPP] (mb1) .. controls +(130:3.7) and +(50:3.7) .. node[very near end, right]{$\gam'_p$} (mb1);
\draw[violet, thick, name path=LAM] (-4.5,3.2) .. controls +(0:4.5) and +(0:4.5) .. node[near start, sloped]{$>$} node[scale=.7, very near end, above]{$\rho^{-1/2}(\lam)$} (-4.5,1.2);
\fill[red, name intersections={of= ALPP and LAM}]
(intersection-2) circle (2pt) node[right]{$x$};
\fill[black] (mb1) circle (2pt);
\fill[black] (mb2) circle (2pt);
\end{scope}
\node[scale=2] at (.8,2) {$\leadsto$};
\coordinate[label=left: $m_1$] (M1l)  at (2.2 ,2);
\coordinate[label=right: $m_1$] (M1r) at  ($(M1l)+(2,0)$);
\coordinate[label=above: $m_2$] (M2)  at  ($(M1l)+(1, 1.5)$);
\coordinate[label=below: $p$] (PT)  at  ($(M1l)+(1, -1.5)$);
\foreach \point in {M1l, M1r, M2, PT} \fill [black] (\point) circle (2pt);
\draw[blue, thick, name path=ALP2] (M2) -- node[above left=1pt]{$\alp$} (M1l)-- node[above left]{$\gam'_p$} (M1r) -- node[below right]{$\gam_p$} (PT);
\draw[blue] (M1r) -- node[above right]{$\bet$} (M2);
\draw[blue] (M1l) -- node[below left]{$\gam_p$} (PT);
\draw[violet, thick, name path=LAM2] ($(M1l)+(.6,1.5)$) .. controls +(295:4.7) and +(275:4.4) .. node[near start, sloped]{$>$} node[below]{$\rho^{1/2}(\lam)$} ($(M1l)+(0.2,0.8)$);
\fill[red, name intersections={of= ALP2 and LAM2}]
(intersection-3) circle (2pt) node[above right]{$x$};
\node at ($(PT)+(0,-1.8)$){$x\in A^+_{T,\gam'_p}(\lam)$};
\begin{scope} 
  \clip (7.5,2) ellipse (2.8cm and 2.2cm);  
 \draw[lightgray, line width=1.0cm, line cap=round] (9.5,0) +(-120:.5cm)
 arc[start angle=60, end angle=120, radius=3.5];
  \draw[name path = bottom1, thick] (9.5,0)
  arc[start angle=60, end angle=120, radius=4] node[near end, sloped]{$>$};
  \path [name path= bottom2]  (7.5,0) -- (7.5,3);
\fill [black, name intersections={of= bottom1 and bottom2, by=mb1}]
   (mb1) circle (2pt) node[below]{$m_1$}; 
\draw[lightgray, line width=1.0cm, line cap=round] (5.5,4) +(60:.5cm)
    arc[start angle=240, end angle=300, radius=3.5];
  \draw[name path = top1, thick] (5.5,4)
  arc[start angle=240, end angle=300, radius=4] node[near end, sloped]{$<$};
  \path [name path= top2]  (7.5,4) -- (7.5,2);
\fill [black, name intersections={of= top1 and top2, by=mb2}]
(mb2)  node[above]{$m_2$};
\draw[blue, thick]
(mb1) .. controls +(170:2.8) and +(190:2.8) .. node[sloped, near start]{$<$} node[near end, left]{$\alp$} (mb2);
\draw[blue, thick]
(mb1) .. controls +(10:2.8) and +(-10:2.8) .. node[sloped, near start]{$>$} node[near end, right]{$\bet$} (mb2);
\draw[blue, thick, double distance=3pt]
(mb1) -- node[very near end, sloped, below=-5pt]{$\bowtie$} node[left=-4.5pt]{$\tau_p\phantom{'}$} node[right=-2.5pt]{$\tau'_p$} +(90:1.4) node[name=pt2, black, above=-3pt]{$\times$} node[black, above=6pt]{$p$};
\draw[gray, name path=ALPP] (mb1) .. controls +(130:3.7) and +(50:3.7) .. node[very near end, right]{$\gam'_p$} (mb1);
\draw[violet, thick, name path=LAM] (10.5,3) .. controls +(200:2.5) and +(0:2.5) .. node[near start, sloped]{$<$} node[scale=.7, near end, sloped, above left]{$\rho^{-1/2}(\lam)$} (5.5,1.2);
\fill[black] (mb1) circle (2pt);
\fill[black] (mb2) circle (2pt);
\end{scope}
\node at ($(mb1)+(0,-1.5)$) {no contribution};  
\end{tikzpicture}

\begin{tikzpicture}
\begin{scope}
\clip (-2,2.2) ellipse (2.8cm and 2.3cm);
  \draw[lightgray, line width=1.0cm, line cap=round] (0,0) +(-120:.5cm)
    arc[start angle=60, end angle=120, radius=3.5];
  \draw[name path = bottom1, thick] (0,0)
  arc[start angle=60, end angle=120, radius=4] node[near end, sloped]{$>$};
  \path [name path= bottom2]  (-2,0) -- (-2,3);
\fill [black, name intersections={of= bottom1 and bottom2, by=mb1}]
   (mb1) circle (2pt) node[below]{$m$}; 
\draw[blue, thick]
(mb1) .. controls +(170:1.8) and +(180:4) .. node[sloped, near end]{$>$} (-2,4) node[blue, below]{$\bet$}
   .. controls +(0:4) and +(10:1.8) .. node[sloped]{$<$} (mb1);
   \draw[blue, thick, double distance=3pt]
   (mb1) -- node[left=3pt]{$\tau'_p$}  node[above right=-2.5pt]{$\tau_p$} node[very near end, sloped, below=-4.2pt]{$\bowtie$}  +(120:2) node[name=pt1, black, above left=-3pt]{$\times$} node[black, above left=3pt]{$p$};
\draw[gray, name path=ALP] (mb1) .. controls +(150:4.2) and +(90:4.2) ..node[above right=-2pt]{$\gam'_p$} (mb1);   
\draw[blue, thick, double distance=3pt]
(mb1) -- node[above left=-3pt]{$\tau_q$} node[right=2pt]{$\tau'_q$} node[very near end, sloped, below=-4.2pt]{$\bowtie$}  +(60:2) node[name=pt2, black, above right=-3pt]{$\times$} node[black, above left=1pt]{$q$};
\draw[gray, name path] (mb1) .. controls +(90:4.2) and +(30:4.2) ..node[near start, above left=-4pt]{$\gam'_q$} (mb1); 
\fill (mb1) circle (2pt);
\draw[violet, thick, name path=LAM] ($(mb1)+(.2,4.3)$) .. controls +(300:1.5) and +(338:3.5) .. node[sloped]{$>$} node[scale=0.8, very near start, left]{$\rho^{1/2}(\lam)$} ($(mb1)+(-0.3,1.8)$) .. controls +(158:2) and +(175:1.5) .. node[very near start,sloped]{$<$} ($(mb1)+(-0.2,1.3)$)  .. controls  +(355:3.5) and +(300:1.5)   .. node[near end, sloped]{$<$}   ($(mb1)+(.7,3.8)$) ;
\fill[red, name intersections={of= ALP and LAM}]
(intersection-1) circle (2pt) node[above right=-2pt]{$x$};
\end{scope}
\node[scale=2] at (1,2){$\leadsto$};
\coordinate[label=above: $p$] (LD) at (2.5,1.8);
\coordinate[label=above: $m$] (LT) at ($(LD)+(1.5,1.5)$);
\coordinate[label=below: $m$] (MD) at ($(LD)+(3,0)$);
\coordinate[label=above: $m$] (RT) at ($(LD)+(4.5,1.5)$);
\coordinate[label=above right: $q$] (RD) at ($(LD)+(6,0)$);
\foreach \point in {LD, LT, MD, RT, RD} \fill [black] (\point) circle (2pt);
\draw[blue,thick, name path=ZZ] (LD) -- node[right]{$\gam_p$} (LT) -- node[left]{$\gam'_p$} (MD) -- node[right]{$\gam'_q$} (RT);
\draw[blue] (LT) -- node[below]{$\beta$} (RT) -- node[below left=-2.5pt]{$\gam_q$} (RD) -- node[above=-2.5pt]{$\gam_q$} (MD) -- node[below]{$\gam_p$} (LD);
\draw[violet, thick, name path=LAM2]
($(LD)+(3.3, 2)$) .. controls +(270:1) and +(170:1.3) .. node[scale=.7, very near start, above left]{$\rho^{1/2}(\lam)$} node[near start, sloped]{$>$} ($(LD)+(6,.8)$) .. controls +(350:2) and +(320:2) .. ($(LD)+(4.3,-.3)$) .. controls +(140:2) and +(45:2) .. node[very near end, sloped]{$<$}($(LD)+(0,0.5)$) .. controls +(225:1.5) and +(200:1.5) .. ($(LD)+(1.5,0.3)$) .. controls +(20:1) and +(140:1) .. ($(LD)+(4,-0.4)$) .. controls +(320:2.7) and +(350:2.7) .. ($(LD)+(6,1)$) .. controls +(170:1.3) and +(270:.8) .. node[near start, sloped]{$<$} ($(LD)+(3.7,2)$);
\fill[red, name intersections={of= ZZ and LAM2}]
(intersection-2) circle (2pt) node[above right=-1pt]{$x$};
\node at ($(LD)+(1.5,-1)$){$x\in A^-_{T,\gam'_p}(\lam)$};
\node at ($(LD)+(4,-1.8)$){no contribution to $A^\pm_{T,\gam'_q}(\lam)$};
\end{tikzpicture}
\caption{Contribution to $A^\pm_{T,\del}$ for self-folded triangles}
\label{fig:shear-fold}
\end{figure}
\begin{figure}[ht]
\begin{tikzpicture}[scale=.9]
\begin{scope} 
  \clip (-2,2) ellipse (2.8cm and 2.2cm);  
 \draw[lightgray, line width=1.0cm, line cap=round] (0,-0.05) +(-120:.5cm)
 arc[start angle=60, end angle=120, radius=3.5];
  \draw[name path = bottom1, thick] (0,0)
  arc[start angle=60, end angle=120, radius=4] node[near end, sloped]{$>$};
  \path [name path= bottom2]  (-2,0) -- (-2,3);
\fill [black, name intersections={of= bottom1 and bottom2, by=mb1}]
   (mb1) circle (2pt) node[below]{$m_1$}; 
\draw[lightgray, line width=1.0cm, line cap=round] (-4,4.05) +(60:.5cm)
    arc[start angle=240, end angle=300, radius=3.5];
  \draw[name path = top1, thick] (-4,4)
  arc[start angle=240, end angle=300, radius=4] node[near end, sloped]{$<$};
  \path [name path= top2]  (-2,4) -- (-2,2);
\fill [black, name intersections={of= top1 and top2, by=mb2}]
(mb2)  node[above]{$m_2$};
\draw[blue, thick]
(mb1) .. controls +(170:2.8) and +(190:2.8) .. node[sloped, near start]{$<$} node[near end, left]{$\alp$} (mb2);
\draw[blue, thick]
(mb1) .. controls +(10:2.8) and +(-10:2.8) .. node[sloped, near start]{$>$} node[near end, right]{$\bet$} (mb2);
\draw[blue, thick, double distance=3pt]
(mb1) -- node[very near end, sloped, below=-5pt]{$\bowtie$} node[left=-3.5pt]{$\tau_p\phantom{'}$} node[right=-2pt]{$\tau'_p$} +(90:1.4) node[name=pt2, black, above=-3pt]{$\times$} node[black, above=6pt]{$p$};
\draw[gray, name path=ALPP] (mb1) .. controls +(130:3.7) and +(50:3.7) .. node[above]{$\gam'_p$} (mb1);
\draw[violet, thick, name path=LAM] (-4.5,1.2) .. controls +(0:1) and +(180:1) .. node[sloped]{$<$} node[scale=.8, sloped, above]{$\ \rho^{-1/2}(\lam)$} (pt2);
\draw[magenta, thick, name path=LAM] (pt2) .. controls +(0:1) and +(180:1) .. node[sloped]{$<$} node[scale=.8, sloped, below]{$\rho^{-1/2}(\mu)$} (.5, 2.5);
\fill[black] (mb1) circle (2pt);
\fill[black] (mb2) circle (2pt);
\end{scope}
\node at ($(mb1)+(3,-1.5)$) {$0\in D^+_{T,\gam'_p}(\lam)$};
\node at ($(mb1)+(3,-2.0)$) {$1\in D^-_{T,\gam'_p}(\mu)$};
\begin{scope}
\coordinate (MM) at (-.5,0);
\clip ($(3.9,2.2)+(MM)$) ellipse (2.6cm and 2.3cm);
  \draw[lightgray, line width=1.0cm, line cap=round] ($(6,-0.05)+(MM)$) +(-120:.5cm)
    arc[start angle=60, end angle=120, radius=3.5];
  \draw[name path = bottom1, thick] ($(6,0)+(MM)$)
  arc[start angle=60, end angle=120, radius=4] node[near end, sloped]{$>$};
  \path [name path= bottom2]  ($(4,0)+(MM)$) -- ($(4,3)+(MM)$);
\fill [black, name intersections={of= bottom1 and bottom2, by=mb1}]
   (mb1) circle (2pt) node[below]{$m$}; 
\draw[blue, thick]
(mb1) .. controls +(170:1.8) and +(180:4) .. node[sloped, near end]{$>$} ($(4,4)+(MM)$) node[blue, below]{$\bet$}
   .. controls +(0:4) and +(10:1.8) .. node[sloped]{$<$} (mb1);
   \draw[blue, thick, double distance=3pt]
   (mb1) -- node[left=2pt]{$\tau'_p$}  node[above right=-2pt]{$\tau_p$} node[very near end, sloped, below=-4.2pt]{$\bowtie$}  +(120:2) node[name=pt1, black, above left=-3pt]{$\,\,\times$} node[black, above left=3pt]{$p$};
\draw[gray, name path=ALP] (mb1) .. controls +(150:4.2) and +(90:4.2) ..node[above right=-1pt]{$\gam'_p$} (mb1);   
\draw[blue, thick, double distance=3pt]
(mb1) -- node[above left=-3pt]{$\tau_q$} node[right=1pt]{$\tau'_q$} node[very near end, sloped, below=-4.2pt]{$\bowtie$}  +(60:2) node[name=pt2, black, above right=-3pt]{$\!\times$} node[black, above right=2pt]{$q$};
\draw[gray, name path] (mb1) .. controls +(90:4.2) and +(30:4.2) ..node[above left=-4.5pt]{$\gam'_q$} (mb1); 
\fill (mb1) circle (2pt);
\draw[magenta, thick] (pt1) .. controls +(10:1) and +(180:1)..node[sloped, near end]{$<$} node[sloped, scale=.7, near start, above]{$\ \rho^{-1/2}(\mu)$} ($(mb1)+(2.5,0.5)$);
\draw[violet, thick] ($(pt1)+(-0.2,0)$) .. controls +(180:1) and  +(0:1) .. node[scale=.7,near end, below right=-6pt]{$\rho^{-1/2}(\lam)$} node[sloped]{$<$}  ($(mb1)+(-2.7,.5)$);
\end{scope}
\begin{scope}
\coordinate (RT) at (4.8,0);
\clip ($(4.1,2.2)+(RT)$) ellipse (2.6cm and 2.3cm);
  \draw[lightgray, line width=1.0cm, line cap=round] ($(6,-0.05)+(RT)$) +(-120:.5cm)
    arc[start angle=60, end angle=120, radius=3.5];
  \draw[name path = bottom1, thick] ($(6,0)+(RT)$)
  arc[start angle=60, end angle=120, radius=4] node[near end, sloped]{$>$};
  \path [name path= bottom2]  ($(4,0)+(RT)$) -- ($(4,3)+(RT)$);
\fill [black, name intersections={of= bottom1 and bottom2, by=mb1}]
   (mb1) circle (2pt) node[below]{$m$}; 
\draw[blue, thick]
(mb1) .. controls +(170:1.8) and +(180:4) .. node[sloped, near end]{$>$} ($(4,4)+(RT)$) node[blue, below]{$\bet$}
   .. controls +(0:4) and +(10:1.8) .. node[sloped]{$<$} (mb1);
   \draw[blue, thick, double distance=3pt]
   (mb1) -- node[left=1pt]{$\tau'_p$}  node[above right=-3.5pt]{$\tau_p$} node[very near end, sloped, below=-4.2pt]{$\bowtie$}  +(120:2) node[name=pt1, black, above left=-3pt]{$\,\times$} node[black, above left=3pt]{$p$};
\draw[gray, name path=ALP] (mb1) .. controls +(150:4.2) and +(90:4.2) ..node[above right=-1pt]{$\gam'_p$} (mb1);   
\draw[blue, thick, double distance=3pt]
(mb1) -- node[above left=-3pt]{$\tau_q$} node[right=1pt]{$\tau'_q$} node[very near end, sloped, below=-4.2pt]{$\bowtie$}  +(60:2) node[name=pt2, black, above right=-3pt]{$\!\times$} node[black, above right=2pt]{$q$};
\draw[gray, name path] (mb1) .. controls +(90:4.2) and +(30:4.2) ..node[above left=-3.5pt]{$\gam'_q$} (mb1); 
\fill (mb1) circle (2pt);
\draw[magenta, thick] (pt2) .. controls +(10:1) and +(180:1)..node[sloped, near end]{$<$} node[scale=.7, very near end, below left=-3pt]{$\ \rho^{-1/2}(\mu)$} ($(mb1)+(2.5,0.5)$);
\draw[violet, thick] (pt2) .. controls +(180:1) and  +(0:1) .. node[sloped, scale=.7,near start, above =2pt]{$\rho^{-1/2}(\lam)$} node[sloped, near end]{$<$}  ($(mb1)+(-2.7,.5)$);
;
\end{scope}
\node at ($(mb1)+(0,-1.5)$) {$0\in D^+_{T,\gam'_q}(\lam)$};
\node at ($(mb1)+(0,-2.0)$) {$1\in D^-_{T,\gam'_q}(\mu)$};
\end{tikzpicture}
\caption{Contributions to $D^\pm_{T,\gam'_r}$ for $r\in\Pu$}
\label{fig:shearD}
\end{figure}
\begin{Def} 
For a signature zero tagged triangulation $T$ of $\bSu$ and a tagged arc  $(\gam, c)\in T$ with $c=(c_1, c_2)\in S(\gam)$,  the corresponding component of the shear coordinate   $\Sh_T(\lam, l)$ with
respect to $T$   of a marked curve $(\lam, l)\in\cLC^*(\bSu)$ (with $l=(l_1, l_2)\in S(\lam)$) is
\begin{align*}  
    \Sh_T(\lam, l)_{(\gam,c)} &
    := (\abs{A^-_{T,\bar{\gam}}(\lam)}-\abs{A^+_{T,\bar{\gam}}(\lam)})\times \wt(l)\\
&  + \sum_{t\in D^-_{T,\bar{\gam}}(\lam)} d_2(c_2, -l_{t+1})
  - \sum_{t\in D^+_{T,\bar{\gam}}(\lam)} d_2(l_{t+1}, c_2)\quad\in\ZZ,
\end{align*}
where we used the above notation as well as the function $d_2$ from Definition~\ref{def:comb-inv}.
In particular, $\bar{\gam}\in T^{\circ}$ is the completion of the arc $\gam$. 
\end{Def}

\begin{Rem}\label{rem:these-coords-are-dual-to-FST}
The shear coordinates we work with are in some sense dual to the shear coordinates defined in~\cite[Sec.~13]{FT18}, see Subsection \ref{subsec:dual-shear-coords-arb-triangs}, and in particular, Example~\ref{ex:FT-shear-coords-VS-dual-shear-coords}, for more details.
\end{Rem}

\begin{Prop} \label{prop: g=sh}  Let $T$ be a signature zero tagged triangulation of $\bSu$ with
skewed-gentle polarized  quiver $\uQ(T)$. 
Then, for each $(\bx,s)\in\DAdm^*(\uQ(T))$, we have
\[
  \bg_{\uQ(T)}(\bx,s) = \Sh_T(\fra^*(\bx,s)),
\]
where we used the above-mentioned identification between the arcs of $T$ and the vertices $\tQ_0$.
In other words, the combinatorial  g-vector of $(\bx,s)$ from
Definition~\ref{def:comb-inv} can be identified with the shear coordinates
of the corresponding marked curve $\fra^*(\bx,s)$. 
\end{Prop}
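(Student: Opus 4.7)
The plan is to mirror the strategy of the proof of Theorem~\ref{thm:intersec}, but now counting signed local contributions to each arc of $T^\circ$ separately rather than intersection points between two curves. First, I would dispose of the negative simples: for $\bx = -\bs_j$, the formula $\bg_\uQ(-\bs_j,u)_{(i,\rho)} = \delta_{\tq(\bs_j,u),(i,\rho)}$ holds by definition, while $\fra^*(-\bs_j,u)$ is precisely the tagged arc of $T$ corresponding to $\tq(\bs_j,u) \in \tQ_0$. A direct inspection of the dual shear coordinate definition on the tagged arcs of $T$ itself---using that after rotation $\rho^{1/2}$ such an arc meets the arcs of $T^\circ$ only in locally controlled configurations whose A$^\pm$ contributions cancel and whose only surviving D-contribution occurs at its own position---yields the required Kronecker delta.

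For the main case $\bx \in \Adm(\uQ(T))$, I would apply Lemma~\ref{lem:rot} to replace $\fra(\bx)$ with $\fra_f(\tau_f\bx) = \rho^{1/2}(\fra(\bx))$, since dual shear coordinates are defined precisely via the local intersection patterns of $\rho^{1/2}(\fra(\bx))$ with the arcs of $T^\circ$. The plan is then to establish, for each vertex $i \in Q_0$, natural bijections
\[
A^\pm_i(\bx) \leftrightarrow A^\pm_{T,\bar{\gam}_i}(\fra(\bx)), \qquad D^\pm_i(\bx) \leftrightarrow D^\pm_{T,\bar{\gam}_i}(\fra(\bx)),
\]
where the D-bijection matches the index $j \in \{0,1\}$ of each special loop $\eta_j$ in $\uH(\tau_f\bx)$ with the corresponding endpoint index $t \in \{0,1\}$. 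The verification is local and runs in the spirit of the K-triple analysis of Theorem~\ref{thm:intersec}: an interior vertex of $\uH(\tau_f\bx)$ lying over $i$ with two outgoing (resp.~incoming) ordinary arrows produces exactly one crossing of $\fra_f(\tau_f\bx)$ with $\bar{\gam}_i$ of type A$^+$ (resp.~A$^-$) in the sense of Figure~\ref{fig:shearAD}, the polarizations at that vertex dictating which side of $\bar{\gam}_i$ the curve enters from; an analogous local picture, drawn from Figures~\ref{fig:shear-fold} and~\ref{fig:shearD}, handles the D-contributions at a puncture.

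Once these bijections are in place, the identity follows by matching the formula in Definition~\ref{def:comb-inv} with the dual shear coordinate formula term by term. The weighting factors coincide because $\fra^*$ preserves the decoration, i.e.~$s = l$; and the $d_2$-terms agree because under the identification of arcs of $T$ with vertices of $\tQ_0$ described in Section~\ref{ssec:tagged3ang}, the second tagging component $c_2$ of an arc $(\gam_i, c_i) \in T$ corresponds exactly to the polarization $\rho \cdot 1$ at the vertex $(i,\rho) \in \tQ_0$.

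The main obstacle will be the D-type bookkeeping: correctly identifying the index $j \in \{0,1\}$ of the special loop $\eta_j$ in $\uH(\tau_f\bx)$ with the endpoint index of $\fra(\bx)$ at the relevant puncture, and handling the self-folded-triangle case where the completion $\bar{\gam}_p$ is a loop rather than the tagged arc itself. In particular one must verify that the vanishing-arrow condition in Definition~\ref{def:ADpm} (``no ordinary arrow ends at $t(\eta_j)$'' or ``no ordinary arrow starts at $s(\eta_j)$'') translates under $\fra_f \circ \tau_f$ to the curve approaching $p$ on the correct side of $\bar{\gam}_p$, matching the D$^+$/D$^-$ configurations displayed in Figure~\ref{fig:shearD}. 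This requires checking each of the four type cases $\type(\bx) \in \{u,p\}^2$ separately and confirming that the orientation conventions align between the string combinatorics of $\tau_f\bx$ and the surface-geometric data of $\fra_f(\tau_f\bx)$, with particular care for the symmetric strings (type $(p,p)$ with $s=(*,*)$) where the two endpoint indices collapse.
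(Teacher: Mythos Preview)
Your approach is essentially identical to the paper's: dispose of negative simples first, then for $\bx \in \Adm(\uQ(T))$ establish the bijections $A^\pm_i(\bx) \leftrightarrow A^\pm_{T,\bar\gam_i}(\fra(\bx))$ and $D^\pm_i(\bx) \leftrightarrow D^\pm_{T,\bar\gam_i}(\fra(\bx))$ by local inspection via Lemma~\ref{lem:rot} and Figures~\ref{fig:shearAD}--\ref{fig:shearD}, and conclude by matching the two formulas term by term. One small correction to your negative-simple paragraph: for an \emph{ordinary} vertex $v$ the arc $\gam_v$ has type $(u,u)$, so there are no $D$-contributions and the surviving term is a single $A^-$ contribution at its own diagonal (the $A^\pm$ contributions do not ``cancel''); the paper treats the ordinary and special cases separately for exactly this reason.
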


\begin{proof}
Suppose first that $\bx=-\bs_v$  for some $v\in Q(T)_0$.
We have then $\fra(-\bs_v)=\gam_v$ by definition.
It is easy to see that in case 
$v\in Q(T)_0^{\ord}$ we obtain 
\[
  \abs{A^-_{T, \widebar{\gam}_{v'}}(\gam_v)}=\del_{v',v}
\]
for all $v'\in Q(T)_0$.
Moreover, we have in this case $A^+_{T,\widebar{\gam}_v'}(\gam_v)=\emptyset$ for all
$v'\in Q(T)_0$ and trivially  $D^\pm_{T,\widebar{\gam}_p}(\gam_v)=\emptyset$
for all $p\in\Pu=Q(T)_0^{\spe}$.
On the other hand, for $v\in Q(T)_0^{\spe}=\Pu$ we have
\[
  \abs{D^-_{T, \widebar{\gam}_p}(\gam_v)}=\del_{p,v}
\]
for all $p\in\Pu$. Moreover, it is easy to see that in this case
$D^+_{T,\widebar{\gam}_p}(\gam_v)=\emptyset$ for all $p\in\Pu$ and
$A^\pm_{T,\widebar{\gam}_{v'}}=\emptyset$ for all $v'\in Q(T)_0$.
It follows now from the definitions that
\[
  \Sh_T(\fra^*(-\bs_v,u))_{(\gam_q,c)}=\Sh_T(\gam_v,u)_{(\gam_q,c)}=\del_{(q,c),(v,u)}=\bg_{\uQ(T)}(-\bs_v,u)_{q,c_2}.
\]
Thus, from now on, we may assume $\bx\in\Adm(\uQ(T))$. 
After comparing the  definitions, we see that, with our convention in place,
we only have to show 
\begin{alignat*}{2}
  \abs{A^\pm_{T,\widebar{\gam}_v}(\fra(\bx))} &=\abs{A^\pm_v(\bx)}
  &&\quad\text{for each } v\in Q(T)_0, \text{ and}\\
  D^\pm_{T,\widebar{\gam}_p}(\fra(\bx))  &=D^\pm_p(\bx)
  &&\quad\text{for each } p\in Q(T)_0^{\spe}=\Pu. 
\end{alignat*}
for any $\bx\in\Adm(\uQ(T))$. In view of the definition of $\fra_f$ and the
orientation of the arrows of $\uQ(T)$, which are displayed in
Figure~\ref{fig:shearAD}, it is clear that we have indeed bijections between
the elements of $A^\pm_{T,\widebar{\gam}_v}(\fra(\bx))$ and the proper
sources resp.~sinks $A^\pm_v(\bx)$ of $H(\tau_f(\bx))$
(see Definition~\ref{def:ADpm}) for all $v\in\uQ(T)_0$.

On the other hand, suppose that for example 
$\tau_f(\bx)=\II^{-1}_{(p,-1)}\bx'$ for some $p\in\Pu\subset\uQ(T)_0$.  
Then, by definition, $0\in D^+_p(\bx)$ if and only if 
$1\in H(\tau_f(\bx))_0$ is a source in $\uH(\tau_f(\bx))$ after removing the loop $\eta_0$.
The other three possibilities are similar.
\end{proof}

\begin{Rem}
  The above argument is very similar to the considerations of
  the second author's Ph.D. Thesis~\cite[Thm.~10.0.5]{LF10}.
  See also~\cite[Sec.~10.10]{GLFS22}.  
\end{Rem}

It is now easy to prove a crucial special of our main result, Theorem~\ref{thm1}. This special case deals with tagged signature zero triangulations of $\bSu$  in the sense of~\cite[Def.~9.1]{FST08}.
Let $\uQ=\uQ(T)$ be the corresponding skewed-gentle polarized quiver, see Section~\ref{ssec:tagged3ang}.  In this situation, $A(T)=\Ka\uQ(T)$.

\begin{Thm} \label{thm:sign0}
Let $\bSu$ be a marked surface with $\dSu\neq\emptyset$. For each signature zero tagged triangulation $T$ of $\bSu$ with corresponding skewed-gentle Jacobian algebra
$A(T)=\cP_\Ka(Q(T), W(T))$, we have an isomorphism   
of tame, framed  partial KRS-monoids
\[
  \pi_T\df (\MSW(\bSu),\Intn^*,+, \Sh_T) \ra
  (\DecIrr^\tau(A(T)), e_{A(T)},\oplus, g_{A(T)}),
\]
which intertwines dual shear coordinates with generic $g$-vectors.
\end{Thm}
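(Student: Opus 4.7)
The proof is essentially an assembly of the three substantial results that precede it, so I would present it as a diagram chase rather than any new argument. Specifically, I would combine the skewed-gentle parametrization (Theorem~\ref{thm:SkewedGIso}), the intersection/E-invariant identification (Theorem~\ref{thm:intersec}), and the shear coordinate/g-vector identification (Proposition~\ref{prop: g=sh}).

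First I would check that the bijection $\fra^*\df [\DAdm^*(\uQ(T))]/_{\simeq}\to\cLC^*(\bSu)/_{\simeq}$ of Theorem~\ref{thm:intersec}(a) restricts to a bijection on the $\tau$-generic parts. This is immediate from Theorem~\ref{thm:intersec}(b) with $(\bx,s)=(\by,t)$: we have $[p]\in[\DAdm^*_\tau(\uQ(T))]$ iff $e_\uQ(p,p)=0$ iff $\Intn^*(\fra^*(p),\fra^*(p))=0$ iff $\fra^*(p)\in\cLC^*_\tau(\bSu)/_{\simeq}$. Moreover, by Theorem~\ref{thm:intersec}(b) again, $\fra^*$ intertwines $e_\uQ$ with $\Intn^*$ pointwise on this restricted domain.

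Next I would apply the functor $\KRS(-,-)$ from Example~\ref{expl:KRS1} to both sides. Since $\fra^*$ is a bijection respecting the symmetric pairings $e_\uQ$ and $\Intn^*$, it extends (by the universal property of $\KRS$, i.e.~by taking finite non-negative combinations) to an isomorphism of tame partial KRS-monoids
\[
\KRS(\fra^*)\df\KRS([\DAdm^*_\tau(\uQ(T))]/_{\simeq},e_\uQ)\ra \KRS(\cLC^*_\tau(\bSu)/_{\simeq},\Intn^*)=\MSW(\bSu).
\]
Moreover, by Proposition~\ref{prop: g=sh}, the framings $\bg_{\uQ(T)}$ and $\Sh_T$ agree on indecomposables, so $\KRS(\fra^*)$ upgrades to an isomorphism of \emph{framed} partial KRS-monoids once we transport the framing linearly through the direct sum decomposition (this is well-defined because the framing on a KRS-monoid extends uniquely additively, see Example~\ref{expl:KRS1}).

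Finally I would invert $\KRS(\fra^*)$ and post-compose with Theorem~\ref{thm:SkewedGIso} (applied to the skewed-gentle polarized quiver $\uQ(T)$, noting that $A(T)=\Ka\uQ(T)$ as recalled at the end of Section~\ref{ssec:tagged3ang}) to obtain
\[
\pi_T\df \MSW(\bSu)\xrightarrow{\KRS(\fra^*)^{-1}}\KRS([\DAdm^*_\tau(\uQ(T))]/_{\simeq},e_\uQ,\bg_\uQ)\xrightarrow{\widetilde{M}^o}\DecIrr^\tau(A(T)),
\]
which by construction intertwines $\Intn^*$ with $e_{A(T)}$ and $\Sh_T$ with $\bg_{A(T)}$. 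Uniqueness of $\pi_T$ follows from the fact that the g-vector framing on $\DecIrr^\tau(A(T))$ is faithful by Theorem~\ref{cor:fd}(b). There is really no obstacle remaining at this stage: the substantive work lies entirely in establishing Theorem~\ref{thm:intersec} and Proposition~\ref{prop: g=sh}, whereas the present theorem is purely an assembly via the $\KRS$ construction.
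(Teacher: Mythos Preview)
Your proposal is correct and essentially identical to the paper's own proof: both assemble the isomorphism as the composite of $\widetilde{M}^o$ from Theorem~\ref{thm:SkewedGIso} with the $\KRS$-level map induced by $\fra^*$, using Theorem~\ref{thm:intersec}(b) to match $e_\uQ$ with $\Intn^*$ (and hence to restrict to the $\tau$-generic parts) and Proposition~\ref{prop: g=sh} to match the framings. Your remark on uniqueness via faithfulness of the $g$-vector framing is a harmless addition not present in the paper's proof of this particular statement.
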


\begin{proof}
By Theorem~\ref{thm:SkewedGIso} we have an isomorphism of tame, framed and free partial KRS-monoids
\begin{equation} \label{eq:KRSIso1}
\widetilde{M}^o\df \KRS([\DAdm^*_\tau(\uQ)]/_{\simeq}, e_\uQ, \bg_\uQ)\ra
    (\DecIrr^\tau(\Ka\uQ), e_{\Ka\uQ}, \oplus, \bg_{\Ka\uQ}).
\end{equation}
Furthermore, by Theorem \ref{thm:intersec} and Proposition \ref{prop: g=sh}  we have the following bijection
\[  
\fra^*\df [\DAdm^*(\uQ)]/_{\simeq}\ra \cLC^*(\bSu)/_{\simeq},~(\bx,s)\mapsto (\fra(x),s)
\]
which is compatible with the respective equivalence relation
$\simeq$, and satisfies
  \[\label{e=int}
    e_\uQ((\bx,s), (\by,t)))=\Intn^*(\fra^*(\bx,s), \fra^*(\by,y))
    \]
    and
    \[
  \bg_{\uQ}(\bx,s) = \Sh_T(\fra^*(\bx,s))
\]
for all $(\bx,s), (\by,t)\in [\DAdm^*(\uQ)]/_{\simeq}$. \newline

Note that $\fra^*$ restricts to the following bijection
\[
    [\DAdm^*_\tau(\uQ)]/_{\simeq} \ra  \cLC^*_\tau(\bSu)/_{\simeq}.
\]
which in turn gives us trivially the following isomorphism of tame, framed and free partial KRS-monoids
 \begin{equation} \label{eq:KRSIso2}
    \MSW(\bSu):=\KRS(\cLC^*_\tau(\bSu)/_{\simeq}, \Intn^*, \Sh_T)\ra
        \KRS([\DAdm^*_\tau(\uQ)]/_{\simeq}, e_\uQ, \bg_\uQ),  f\mapsto f\circ\fra^*.
 \end{equation}
see Definition~\ref{def:MSWL} and Example~\ref{expl:KRS1}.
The composition of the isomorphisms in~\eqref{eq:KRSIso1} and~\eqref{eq:KRSIso2} is the
claimed isomorphism.
\end{proof}

\subsection{Dual shear coordinates with respect to arbitrary tagged triangulations}\label{subsec:dual-shear-coords-arb-triangs}

Let $T$ be a tagged triangulation of $\bSu$, and let $\lam$ be either a tagged arc or a simple closed curve on $\bSu$. Essentially following \cite{FT18}, we shall define the vector $\Sh_T(\lam)=(\Sh_T(\lam)_{\gam})_{\gam\in T}$ of \emph{dual shear coordinates} of $\lam$ with respect to $T$. We make a small notational compromise, so that the notation in this subsection lies between the notation used by Fomin--Shapiro--Thurston and Fomin--Thurston and the notation we have been using in the rest of the paper.

\begin{case} Suppose first that $T$ has non-negative signature $\delta_T$, i.e., that $T$ has non-negative signature at each puncture. Thus, at any given puncture $p$, either all tagged arcs in $T$ incident to $p$ are tagged plain, or exactly two tagged arcs in $T$ are incident to $p$, their underlying ordinary arcs being isotopic to each other, their tags at $p$ differing from one another, and the tags at their other endpoint being plain. 

As shown by Fomin--Shapiro--Thurston, there is an ideal triangulation $T^\circ$ canonically representing $T$. Namely, for each puncture $p\in\mathbb{P}$ at which $T$ has signature zero, i.e., $\delta_T(p)=0$, let $i_p,j_p,$ be the two tagged arcs in $T$ that are incident to $p$, with $i_p$ tagged \emph{plain} at $p$, and $j_p$ tagged \emph{notched} at $p$. Then $T^\circ:=\{\gam^\circ\ | \ \gam\in T\}$ is obtained from $T$ by setting $\gam^\circ:=\gam$ for every $\gam\in T$ both of whose ends are tagged \emph{plain}, and by replacing each $j_p$ with a loop $j_p^\circ$ closely surrounding $i_p^\circ$, for $i_p$ and $j_p$ as above. Thus, the corresponding ordinary arcs $i_p^\circ,j_p^\circ\in T^\circ$ form a self folded triangle, $i_p^\circ$ being the folded side, and $j_p^\circ$ being the loop closely surrounding $i_p^\circ$. See \cite[Sections 9.1 and 9.2]{FST08}.

If $\lam$ is a simple closed curve, set $L:=\lam$. Otherwise, let $L$ be the curve obtained from $\lam$ by modifying its two ending segments according to the following rules:
\begin{itemize}
\item in sync with the definition of $\rho^{1/2}(\lam)$, any endpoint incident to a marked point in the boundary is slightly slided along the boundary segment lying immediately to its right as in Figure \ref{Fig_rhohalf} (here, we stand upon the surface using its orientation, and look from the marked point towards the interior of surface, note that we use the orientation of $\Su$ to determine what is right and what is left);
\begin{figure}[ht]
\begin{tikzpicture}[scale=0.7]
\begin{scope} 
  \clip (-2,1.1) ellipse (2.8cm and 1.8cm);  
 \draw[lightgray, line width=1.0cm, line cap=round] (0,-0.2) +(-120:.5cm)
 arc[start angle=60, end angle=120, radius=3.5];
  \draw[name path = bottom1, thick] (0,0)
  arc[start angle=60, end angle=120, radius=4] node[very near end, sloped]{$>$};
  \path [name path= bottom2]  (-2.8,0) -- (-2.8,3);
\fill [black, name intersections={of= bottom1 and bottom2, by=mb1}]
   (mb1) circle (2pt) node[below]{$m_1$};   
  \path [name path= bottom3]  (-0.9,0) -- (-0.9,3);
\fill [black, name intersections={of= bottom1 and bottom3, by=mb2}]
   (mb2) circle (2pt) node[below]{$m_2$}; 
\draw[blue, thick]  (-2.0,2.5) .. controls +(280:1) and +(110:1) .. (mb1);   
\end{scope}
\draw[->, gray,line width=0.8mm, decorate, decoration={snake, segment length=4mm, amplitude = 2mm, post length=2mm}] (0.8,1) -- +(1.7,0);
\coordinate  (rs) at (7.5,0);
\begin{scope} 
  \clip (-2,1.1)+(rs) ellipse (2.8cm and 1.8cm);  
 \draw[lightgray, line width=1.0cm, line cap=round] (7.5,-0.2) +(-120:.5cm)
 arc[start angle=60, end angle=120, radius=3.5];
  \draw[name path = bottom4, thick] (0,0)+(rs)
  arc[start angle=60, end angle=120, radius=4] node[very near end, sloped]{$>$};
  \path [name path= bottom5]  (4.7,0) -- (4.7,3);
\fill [black, name intersections={of= bottom4 and bottom5, by=mb3}]
   (mb3) circle (2pt) node[below]{$m_1$};   
  \path [name path= bottom6]  (6.5,0) -- (6.5,3);
\fill [black, name intersections={of= bottom4 and bottom6, by=mb4}]
   (mb4) circle (2pt) node[below]{$m_2$}; 
\path [name path=bottom7] (5.3,0) -- (5.3,3);
\fill [blue, name intersections={of= bottom4 and bottom7, by=mb5}] (mb5) circle(1pt);
\draw[blue, thick]  (-2.0,2.5)+(rs) .. controls +(290:1) and +(120:1) .. (mb5);   
\end{scope}
\end{tikzpicture}
\caption{Slightly sliding endpoints lying on the boundary}
\label{Fig_rhohalf}
\end{figure}

\item any ending segment of $\lam$ tagged \emph{plain} at a puncture $q$ is replaced with a non-compact curve segment spiralling towards $q$ in the clockwise sense as  in Figure~\ref{Fig_untagged}.
\begin{figure}[ht]
\begin{tikzpicture}[scale=0.6]
\draw[blue, thick] (-5,0) .. controls +(20:2) and +(200:2) ..(-2,0) node[black]{$\times$} node[right, black ]{$q$};  
\draw[->, gray,line width=0.8mm, decorate, decoration={snake, segment length=4mm, amplitude = 2mm, post length=2mm}] (-1,0) -- (1,0);
\draw[blue, thick] (2,0) .. controls +(22:1) and +(210:1) ..(3.7,1.54); 
\draw[blue, thick, domain=4:30,smooth,variable=\t,samples=300]
plot ({3*exp(-0.1*\t)*cos(\t r)+5}, {3*exp(-0.1*\t)*sin(-\t r)});
\draw (5,0) node[black]{$\times$} node[right=1.2mm, black ]{$q$};  
\end{tikzpicture}
\caption{Tagged  plain to clockwise spiral}\label{Fig_untagged}
\end{figure}

\item any ending segment of $\lam$ tagged \emph{notched} at a puncture $q$ is replaced with a non-compact curve segment spiralling towards $q$ in the counter-clockwise sense, as in Figure~\ref{Fig_tagged}
\begin{figure}[ht]
\begin{tikzpicture}[scale=0.6]
\draw[blue, thick] (-5,0) .. controls +(20:2) and +(200:2) ..
node[very near end, sloped]{$\bowtie$}(-2,0) node[black]{$\times$} node[right, black ]{$q$};  
\draw[->, gray,line width=0.8mm, decorate, decoration={snake, segment length=4mm, amplitude = 2mm, post length=2mm}] (-1,0) -- (1,0);
\draw[blue, thick] (2,0) .. controls +(22:1) and +(150:1) ..(3.7,-1.54); 
\draw[blue, thick, domain=4:30,smooth,variable=\t,samples=300]
plot ({3*exp(-0.1*\t)*cos(\t r)+5}, {3*exp(-0.1*\t)*sin(\t r)});
\draw (5,0) node[black]{$\times$} node[right=1.2mm, black ]{$q$};  
\end{tikzpicture}
\caption{Tagged notched to anti-clockwise spiral}\label{Fig_tagged}
\end{figure}
\end{itemize}

Take an arc $\gam\in T$. In order to define the shear coordinate 
$\Sh_T(\lam)_{\gam}$, we need to consider two subcases.

\begin{subcase}
Suppose that the ordinary arc $\gam^{\circ}\in T^\circ$ is not the folded side of a self-folded triangle of $T^\circ$. Then $\gam^\circ$ is contained in exactly two ideal triangles of $T^\circ$, and the union $\overline{\lozenge}_{\gam^\circ}$ of these two triangles is either a quadrilateral (if $\gamma^\circ$ does not enclose a self-folded triangle) or a digon (if $\gamma^\circ$ encloses a self-folded triangle). In any of these two situations, the complement in $\overline{\lozenge}_{\gam^\circ}$ of the union of the arcs belonging to $T^\circ\setminus\{\gam^\circ\}$ can be thought to be an open quadrilateral $\lozenge_{\gam^\circ}$ in which $\gamma^\circ$ sits as a diagonal. The shear coordinate $\Sh_T(\lam)_{\gam}$ is defined to be the number of segments  of $\lozenge_{\gam^\circ}\cap L$ that form the shape of a letter $Z$ when crossing $\gam^\circ$ minus the  number of segments  of $\lozenge_{\gam^\circ}\cap L$ that form the shape of a letter $S$ when crossing $\gam^\circ$.
\end{subcase}

\begin{subcase}
Suppose that the ordinary arc $\gam^{\circ}\in T^\circ$ is the folded side of a self-folded triangle of $T^\circ$. Then there is a puncture $p\in\Pu$ such that $\gam=i_p$ and $\gam^\circ=i_p^\circ$. Define an auxiliary curve $L'$ by replacing any ending spiral of $L$ that goes towards the puncture $p$, with a non-compact curve segment spiralling towards $p$ in the opposite sense. Thus, the ending spirals not going towards $p$ remain unchanged, and $L'=L=\lam$ if $\gam$ is a simple closed curve. The shear coordinate $\Sh_T(\lam)_{\gam}$ is defined to be the number of segments  of $\lozenge_{j_p^\circ}\cap L'$ that form the shape of a letter $Z$ when crossing $j_p^\circ$ minus the  number of segments  of $\lozenge_{j_p^\circ}\cap L'$ that form the shape of a letter $S$ when crossing $j_p^\circ$. That is,
$$
\Sh_T(\lam)_{i_p}:=\Sh_T(\lam')_{j_p}.
$$
where $\lam'$ is obtained from $\lam$ by switching the tags of 
$\lam$ at the puncture $p$.
\end{subcase}
\end{case}

\begin{case} Suppose that $T$ is an arbitrary tagged triangulation of $\bSu$. Then $\delta_T^{-1}(-1)$ is the set of punctures at which $T$ has negative signature. Set $T'$ to be the tagged triangulation obtained from $T$ by changing from notched to plain all the tags incident to punctures 
in~$\delta_T^{-1}(-1)$. Thus, $T'$ is a tagged triangulation of signature zero, so dual shear coordinates with respect to $T'$ have already been defined. Set
$$
\Sh_T(\lam):=\Sh_{T'}(\lam'),
$$
where $\lam'$ is obtained from $\lam$ by switching all the tags of $\lam$ at the punctures belonging to the set $\delta_T^{-1}(-1)$.
\end{case}

\begin{Expl}\label{ex:FT-shear-coords-VS-dual-shear-coords} In Figure \ref{Fig_dualShearCoords_vs_FSTShearCoords},
        \begin{figure}[!h]
                \centering
                \includegraphics[scale=.1]{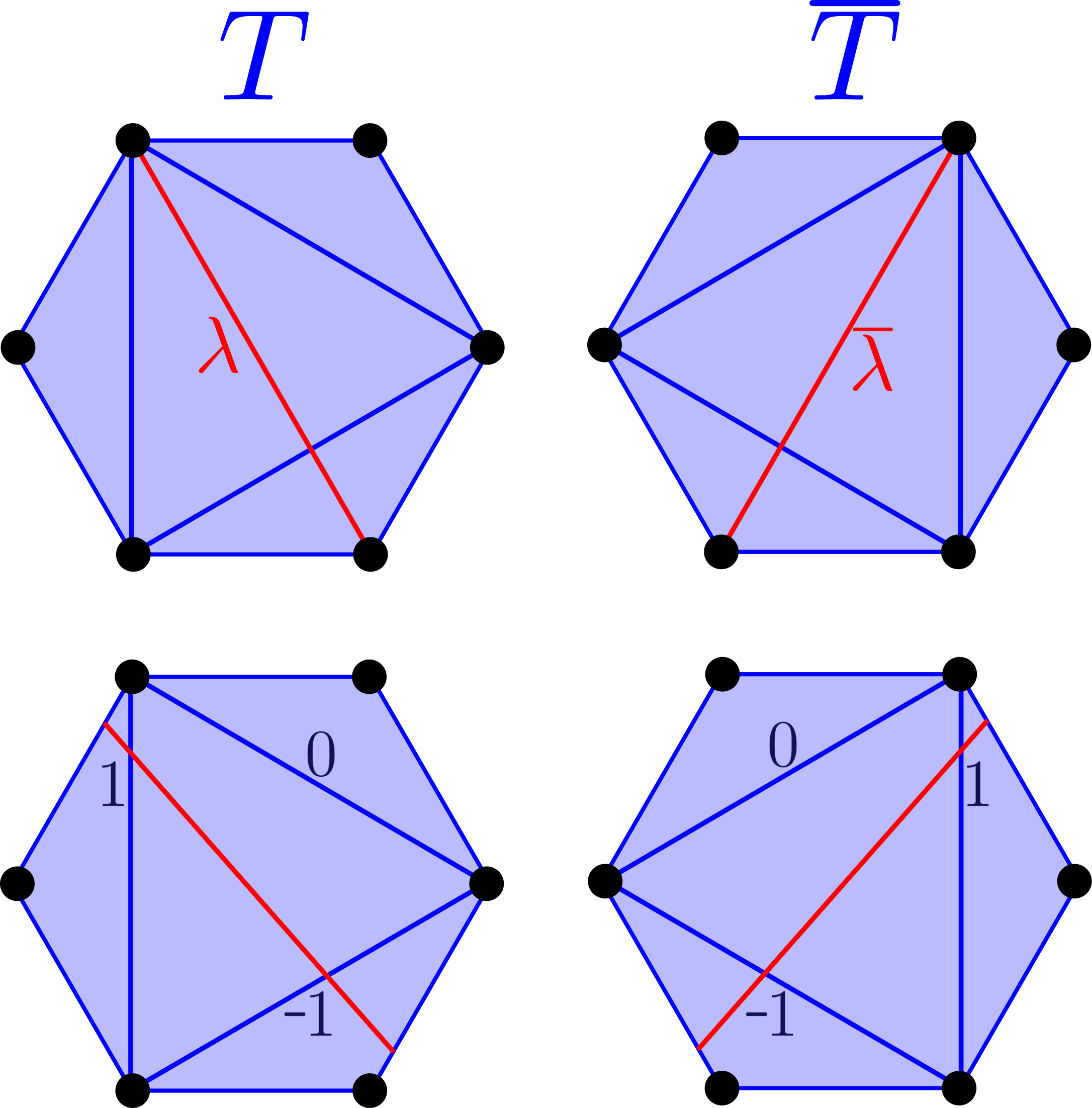}
                \caption{Left: computation of the vector of dual shear coordinates $\Sh_T(\lam)$. Right: computation of the vector of Fomin--Thurston's shear coordinates $\Sh_{\overline{T}}^{\operatorname{FST}}(\overline{\lam})$.}\label{Fig_dualShearCoords_vs_FSTShearCoords}
        \end{figure}
the reader can glimpse the relation between the dual shear coordinates we have defined above, and the shear coordinates used by Fomin--Thurston, namely,
$$
\Sh_T(\lam)=\Sh_{\overline{T}}^{\operatorname{FST}}(\overline{\lam}),
$$
where $\Sh_T(\lam)$ is the vector of dual shear coordinates we have defined above, $\overline{T}$ and $\overline{\lambda}$ are the images of $T$ and $\lambda$ in the surface obtained as the \emph{mirror image} of $\bSu$, and $\Sh_{\overline{T}}^{\operatorname{FST}}(\overline{\lam})$ is the vector of shear coordinates of Fomin--Thurston, \cite[Definition 13.1]{FT18}.
\end{Expl}

Denote by $B_{\operatorname{FST}}(T)$ the skew-symmetric matrix associated to $T$ by Fomin--Shapiro--Thurston \cite[Definitions 4.1 and 9.6]{FST08}, and denote by $B_{\operatorname{DWZ}}(Q)$ the skew-symmetric matrix associated by Derksen--Weyman--Zelevinsky to an arbitrary 2-acyclic quiver $Q$ \cite[Equation (1.4)]{DWZ2} and \cite[Section 2 and Equation (7.1)]{DWZ1}. Since we have chosen to define $Q(T)$ by drawing the arrows in the clockwise sense within each triangle of $T$ (i.e., following the convention of \cite{LF-QPsurfs1} and not the convention from \cite{ABCP}), we have
$$
B_{\operatorname{DWZ}}(Q(T))=-B_{\operatorname{FST}}(T).
$$
Furthermore, for dual shear coordinates, \cite[Theorem 13.5]{FT18} can be restated as follows.

\begin{Thm}\label{thm:behavior-of-dual-shear-coords-under-matrix-mutation}
    Let $T$ and $T'$ be tagged triangulations of $\bSu$ related by the flip of a tagged arc $k\in T$, and let $\lam$ be either a tagged arc or a simple closed curve on $\bSu$. Then
    $$
    \left[\begin{array}{c}-B_{\operatorname{FST}}(T')\\
    \Sh_{T'}(\lam)\end{array}\right]
    =\mu_k\left(
    \left[\begin{array}{c}-B_{\operatorname{FST}}(T)\\
    \Sh_T(\lam)\end{array}\right]\right).
    $$
\end{Thm}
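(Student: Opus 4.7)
The plan is to reduce the statement directly to Fomin--Thurston's mutation formula \cite[Theorem~13.5]{FT18} by exploiting the identity
\[
\Sh_T(\lam)=\Sh_{\overline{T}}^{\operatorname{FST}}(\overline{\lam})
\]
that was observed in Example~\ref{ex:FT-shear-coords-VS-dual-shear-coords}, where the overline denotes the mirror image. First I would verify the companion identity
\[
B_{\operatorname{FST}}(\overline{T})=-B_{\operatorname{FST}}(T),
\]
which is essentially a direct calculation: reversing the orientation of $\Sigma$ swaps the ``left'' and ``right'' of every arc, and therefore flips the sign of every contribution in the defining formula of $B_{\operatorname{FST}}(T)$ (\cite[Definitions~4.1 and~9.6]{FST08}). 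Next, I would check that the flip of $T$ at $k$ corresponds, under mirror image, to the flip of $\overline{T}$ at the same tagged arc $\overline{k}$, which is immediate since the notion of flip only uses the incidence data of the triangulation and not the orientation of $\Sigma$.

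With these compatibilities in place, applying \cite[Theorem~13.5]{FT18} to the pair $(\overline{T},\overline{\lam})$ yields
\[
\left[\begin{array}{c}B_{\operatorname{FST}}(\overline{T}')\\ \Sh_{\overline{T}'}^{\operatorname{FST}}(\overline{\lam})\end{array}\right]
=\mu_k\left(\left[\begin{array}{c}B_{\operatorname{FST}}(\overline{T})\\ \Sh_{\overline{T}}^{\operatorname{FST}}(\overline{\lam})\end{array}\right]\right).
\]
Substituting $B_{\operatorname{FST}}(\overline{T})=-B_{\operatorname{FST}}(T)$, $B_{\operatorname{FST}}(\overline{T}')=-B_{\operatorname{FST}}(T')$, and the dual-shear-coordinate identity gives precisely the claimed equation.

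The main subtlety, and the step I expect to need the most care, is to make sure that the identification $\Sh_T(\lam)=\Sh_{\overline{T}}^{\operatorname{FST}}(\overline{\lam})$ is established in full generality — including the case of a tagged triangulation of arbitrary signature, the case where $\gamma\in T$ is the folded side of a self-folded triangle of $T^\circ$, and the case where $\lam$ is a simple closed curve (as opposed to a tagged arc). Each of these cases must be checked against the corresponding conventions in \cite[Section~13]{FT18}: the definition of FST shear coordinates through $Z$-/$S$-crossings, the passage through the auxiliary curve $\lam'$ obtained by switching tags at the puncture surrounded by a self-folded triangle, and the reduction from arbitrary signature to signature zero via tag-switching. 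Once this dictionary is nailed down on the level of definitions, the transport of \cite[Theorem~13.5]{FT18} through the mirror image is purely formal, because matrix mutation commutes with the overall sign flip on the $B$-part: $\mu_k(-B)=-\mu_k(B)$, and correspondingly the extended-matrix formula for the shear-coordinate row, when applied with the sign-flipped $B$-matrix, reproduces the same recursion on the shear-coordinate row.
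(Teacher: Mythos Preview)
Your proposal is correct and matches the paper's treatment: the paper presents this theorem simply as a restatement of \cite[Theorem~13.5]{FT18} without further proof, and your argument supplies exactly the mirror-image dictionary (already set up in Example~\ref{ex:FT-shear-coords-VS-dual-shear-coords}) needed to make that restatement precise.
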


Denote the entries of $B_{\operatorname{FST}}(T)$ as $b_{ij}$, and the entries of $-B_{\operatorname{FST}}(T)$ as $\beta_{ij}$. Since $B_{\operatorname{FST}}(T)$ and $-B_{\operatorname{FST}}(T)$ are transpose to each other, Theorem \ref{thm:behavior-of-dual-shear-coords-under-matrix-mutation} states in particular that
\begin{align}\label{eq:unraveled-recursion-satisfied-by-dual-shear-coords}
    \Sh_{T'}(\lam)_{\gam} &= 
\begin{cases}
-\Sh_T(\lam)_{\gam} & \text{if $\gamma=k$};\\
\Sh_T(\lam)_{\gam}+\sgn(\Sh_T(\lam)_k)[\Sh_T(\lam)_k\beta_{k\gamma}]_+ & \text{if $\gamma\neq k$};
\end{cases}\\
\nonumber &=
\begin{cases}
-\Sh_T(\lam)_{\gam} & \text{if $\gamma=k$};\\
\Sh_T(\lam)_{\gam}+\sgn(\Sh_T(\lam)_k)[b_{\gamma k}\Sh_T(\lam)_k ]_+ & \text{if $\gamma\neq k$}.
\end{cases}
\end{align}

\subsection{Proof of Theorem~\ref{thm1}}
Since $\dSu\neq\emptyset$, it is easy to see that $\bSu$ has at least one tagged triangulation of signature zero. For such a triangulation, we can apply Theorem~\ref{thm:sign0}. 

Next, suppose that for a tagged triangulation $T=(\tau_1, \tau_2, \ldots, \tau_n)$ of $\bSu$
we have already the requested isomorphism
\[
\pi_T\df\MSW(\bSu)\ra\DecIrr^\tau(A(T)),
\]
which intertwines the dual shear coordinates $\Sh_T$ with the generic $g$-vector
$\bg_{A(T)}$.  Consider the triangulation $T'$ which is obtained from $T$ by flipping an arc $\tau_k$. 
By~\cite[Theorem~8.1]{LF16} the QP-mutation $\mu_k(Q(T), W(T))$ is right equivalent
to $(Q(T'), W(T'))$.  This yields by Proposition~\ref{prp:JaMut}~(e) an isomorphism
\[
\tilde{\mu}_k\df \DecIrr^\tau(A(T))\ra\DecIrr^\tau(A(T'))
\]
of partial KRS-monoids. By Proposition~\ref{prp:JaMut}~(c) we have 
\[
\bg_{A(T')}(\tilde{\mu}_k(Z))=\gam^{B}_k(\bg_{A(T)}(Z))
\]
for all $Z\in\DecIrr^\tau(A(T))$, where $B=-B_{\operatorname{DWZ}}(Q(T))$ is the matrix defined in terms of $Q(T)$ at the beginning of 
Subsection~\ref{subsec:Jac-algs-and-muts}, and $\gam^{B}_k$ is the piecewise linear, bijective map from Equation~\eqref{eq:gproj-transf}. On the other hand, by \eqref{eq:unraveled-recursion-satisfied-by-dual-shear-coords} for every $\lam\in\MSW(\bSu)$ we have
\[
\Sh_{T'}(\lam)=\gam^{B}_k(\Sh_T(\lam)).
\]
Thus, $\pi_{T'}=\tilde{\mu}_k\circ\pi_T$ is the desired isomorphism
for the triangulation $T'$.

Now, since the boundary of $\Sigma$ is not empty, by~\cite[Prop.~7.10]{FST08}, 
the exchange graph for tagged triangulations is connected. Thus,
we can conclude by induction, that we have the desired isomorphism for all tagged triangulations.

\subsection*{Acknowledgments}
The first author acknowledges partial support from PAPIIT grant IN116723
(2023-2025). The work of this paper originated during the third author’s back-to-back visits at IMUNAM courtesy of the first author’s CONACyT-239255 grant and a DGAPA postdoctoral fellowship. He is grateful for the stimulating environment IMUNAM fostered, and for the additional generous support received from the second author’s grants: CONACyT-238754 and a C\'{a}tedra Marcos Moshinsky.

Finally, we wish to thank an anonymous referee for careful reading and numerous suggestions, which have improved the readability of our paper.
\section*{Index of Notations}
\printglossaries

\end{document}